\documentclass{myifcolog}

\usepackage{graphicx} 
\usepackage{tikz}
\usetikzlibrary{arrows.meta}
\usetikzlibrary{calc}
\usepackage{tikz-cd}
\usepackage{xfrac}
\usepackage{graphicx}
\usepackage{amsmath}
\usepackage{amssymb}
\usepackage{relsize}
\usepackage{mathtools}
\usepackage{bussproofs}
\usepackage{proof}
\usepackage{enumitem}
\usepackage{comment}
\usepackage{float}
\usepackage{amsthm}
\usepackage{mathtools}
\usepackage{mathrsfs} 
\usepackage{esvect,pgf, tikz, color}
\usetikzlibrary{arrows, automata, positioning}

{\Cdef\scalefactor{#1}\begin{center}\proofSkipAmount \leavevmode}%
	{\scalebox{\scalefactor}{\DisplayProof}\proofSkipAmount \end{center} }

\newcommand\C {C}
\newcommand{\K}{K}
\newcommand\IM{\mathsf{IM}}
\newcommand\Cl {\mathsf{Cl}}
\newcommand\Prop {\mathsf{Prop}}

\newcommand\X {\mathbin{\mathsf{X}}}
\newcommand\G {\mathbin{\mathsf{G}}}

\newcommand\Ck{\mathsf{C_0}}
\newcommand\Crt{\mathsf{C_1}}
\newcommand\R{\mathsf{R}}

\newcommand\A{\mathsf{A}}
\newcommand{\agi}{a}
\newcommand{\agii}{b}
\newcommand{\agiii}{c}
\newcommand{\E}{E}
\newcommand\CICK{\mathsf{cICK}}
\newcommand{\ICK}{\mathsf{ICK}}
\newcommand{\LICK}{\mathcal{L}_{\ICK}}
\newcommand{\fw}{\mathcal{M}}
\newcommand{\fv}{\mathcal{N}}
\newcommand{\Rel}{\mathrel{R}}
\newcommand{\f}{\mathsf{f}}
\renewcommand{\u}{\mathsf{u}}
\newcommand{\W}{\mathcal{W}}
\newcommand{\rel}{\mathcal{R}}
\newcommand{\Fr}{\mathcal{F}}
\newcommand{\Gr}{\mathcal{G}}
\newcommand{\Frc}{\mathscr{F}}
\newcommand{\Grc}{\mathscr{G}}
\newcommand{\V}{\mathcal{V}}

\newcommand{\fclass}{\mathscr{F}}
\newcommand{\hick}{\mathrm{ICK}}
\newcommand{\hickt}{\mathrm{ICKT}}
\newcommand{\hicks}{\mathrm{ICKS4}}
\newcommand{\hickss}{\mathrm{ICKS5}}
\newcommand{\hickp}{\mathrm{P}}
\newcommand{\cick}{\mathrm{cICK}}
\newcommand{\cickt}{\mathrm{cICKT}}
\newcommand{\cicks}{\mathrm{cICKS4}}
\newcommand{\cickss}{\mathrm{cICKS5}}
\newcommand{\nick}{\mathrm{nICK}}
\newcommand{\nickt}{\mathrm{nICKT}}
\newcommand{\nicks}{\mathrm{nICKS4}}

\newcommand{\an}{\mathsf{a}}

\newcommand{\dimp}{\mathbin{\tikz[baseline=-.5ex] \draw[-to reversed] (0,0) -- (1em,0);}}

\newtheorem{theorem}{Theorem}
\newtheorem{definition}[theorem]{Definition}
\newtheorem{lemma}[theorem]{Lemma}
\newtheorem{proposition}[theorem]{Proposition}
\newtheorem{corollary}[theorem]{Corollary}
\newtheorem{example}[theorem]{Example}

\numberwithin{theorem}{section}

\title{Intuitionistic Common Knowledge}
\addauthor[lukas.zenger@pku.edu.cn]{Lukas Zenger\thanks{This work has been support by the Swiss National Science Foundation, Grant P500-2\_235387}}{Department of Philosophy, Peking University}
\titlerunning{Intuitionistic Common Knowledge}
\authorrunning{Zenger}
\titlethanks{}

\date{}
\keywords{Intuitionistic Modal Logic, Common Knowledge, Cyclic Proofs}

\renewcommand{\firstfoottext}{}

\begin{document}

\maketitle

\begin{abstract}
\noindent We study an intuitionistic version of common knowledge logic ($\mathsf{CK}$), called $\ICK$, which was introduced by Jäger and Marti \cite{jager-intuitionistic_2016}. $\ICK$ extends intuitionistic propositional logic ($\mathsf{IPL}$) by multiple box modalities interpreted as knowledge operators for various agents and a common knowledge operator. Formulae are interpreted over birelational Kripke models satisfying a simple interaction principle between the intuitionistic order and the modal accessibility relations. Furthermore, we consider the restriction to reflexive, S4 and S5 models. We present axiomatizations as well as analytic cyclic sequent calculi for all considered logics and prove them to be sound and complete. Furthermore, we establish the finite model property and decidability, show that proof-search in the cyclic calculi can be automated, provide a translation of $\mathsf{CK}$ over S5 into $\ICK$ over S5 and establish that the proof-search and validity problems of all considered logics can be solved in exponential time.
\end{abstract}

\section{Introduction}\label{s: introduction}

Common knowledge is a central notion of group knowledge in epistemic logic, which captures not only that all agents know a fact, but also that every agent knows that every agent knows it, and so on ad infinitum. Epistemic logics in general and logics of common knowledge ($\mathsf{CK}$) in particular are typically devised as classical multi-modal systems in which each agent is equipped with a knowledge modality and common knowledge is expressed via a greatest fixed point construction. Since the seminal work of Halpern and Moses~\cite{Halpern_1990}, logics of common knowledge $(\mathsf{CK})$ have played an important role in areas such as distributed systems, multi-agent reasoning, and game theory. Accordingly, the mathematical theory of $\mathsf{CK}$ has been thoroughly investigated, for example its proof theory~\cite{alberucci_2005, studer_2009, Marti_2018, rooduijn_analytic_2022}. For a general introduction to epistemic logic and common knowledge, see~\cite{ditmarsch_2017}. 

In recent years, there has been growing interest in developing intuitionistic variants of epistemic logic~\cite{Williamson_1992, Artemov_2016, jager-intuitionistic_2016, proietti_2012, Hirai_2010}. Such systems aim to combine the constructive semantics of intuitionistic logic with modal operators for knowledge. Broadly speaking, two lines of research have emerged. On the one hand, intuitionistic epistemic logics, such as $\mathsf{IEL}$ introduced by Artemov and Protopopescu~\cite{Artemov_2016}, which aim to provide an intuitionistic reading of knowledge. To that end knowledge is given a Brouwer–Heyting–Kolmogorov (BHK) interpretation in terms of verification. On the other hand, more `constructive' versions of intuitionistic epistemic logics have been proposed, with potential applications to computer science~\cite{Hirai_2010}. Such logics generally treat knowledge in a way similar to classical epistemic logic and instead focus on their mathematical and computational properties, as well as extensions with group knowledge concepts such as common knowledge. As such these kind of logics belong to the family of intuitionistic modal fixed point logics, which has been extensively studied in recent years (see e.g.~\cite{fernandez-duque_2018, Boudou_2021, afshari_ill-founded_2023, afshari_intuitionistic_2024, afshari2025intuitionistic, Pacheco_2024, fernandez-duque_sound_2024, Fernandez-Duque_complexity_2025, Boudou_2017}).

A prominent example of the latter approach is intuitionistic common knowledge logic ($\ICK$), introduced by Jäger and Marti~\cite{jager-intuitionistic_2016}. The logic $\ICK$ extends intuitionistic propositional logic ($\mathsf{IPL}$) by epistemic modalities for a finite set of agents together with a common knowledge operator defined as a greatest fixed point. Semantically, formulae are interpreted over birelational Kripke models equipped with an intuitionistic order and modal accessibility relations. To ensure persistence of truth (i.e. monotonicity along the intuitionistic order) in our semantics, a compatibility condition is placed on the modal relations. In contrast to verification-based systems, knowledge in $\mathsf{ICK}$ follows the standard axioms of epistemic logic, and common knowledge is treated analogously to the classical case. Intuitively, $\mathsf{ICK}$ can be understood as a logic of knowledge over evolving information states: worlds represent stages of information growth, and knowledge captures what agents can justify given their current information. The common knowledge operator then expresses facts that remain stable across all epistemically possible developments of the system.

While the basic properties of $\mathsf{ICK}$ have been established — including sound and complete axiomatizations of $\ICK$ over the classes of all frames and of reflexive frames~\cite{marti_2017}, sequent calculi~\cite{jager-intuitionistic_2016} and variations accounting for extensions with distributed knowledge or public announcements~\cite{jager-distributed_2016, murai_2022, murai_2024} — its proof theory and computational aspects are not yet fully understood. In particular, existing systems rely on proof calculi with cut or induction principles, and do not provide fully analytic proof systems or direct methods for automated reasoning. Moreover, extensions of $\mathsf{ICK}$ to richer classes of epistemic frames, such as S4 and S5, have not been systematically developed, and the complexity of checking validity in such systems remains largely unexplored (with the notable exception of $\ICK$ over the class of all frames, which has recently been shown to be \textsc{ExpTime}-complete~\cite{Fernandez-Duque_complexity_2025}).

This paper addresses these gaps by providing four main contributions. First, we extend $\ICK$ to additional frame classes which satisfy the standard epistemic frame conditions S4 and S5. We analyze the properties of the resulting logics and provide sound and complete Hilbert-style axiomatizations for each case. We also present our interpretation of $\ICK$ as a logic for reasoning about knowledge over evolving information states, and by doing so we connect our work to the original interpretation of Kripke semantics for $\mathsf{IPL}$ by Kripke~\cite{Kripke_1965}. Second, we introduce Gentzen-style sequent calculi for all considered logics. In difference to~\cite{jager-intuitionistic_2016}, which develops (non-analytic) sequent calculi with induction rules, we work with cyclic and non-wellfounded calculi. These calculi eliminate the need for explicit induction rules and avoid unrestricted use of cut, thus remaining analytic in the sense that all formulae occurring in proofs are drawn from a suitable closure of the root formula. We then show soundness and analytic completeness for the non-wellfounded calculi by constructing a uniform approach for performing proof-search. Completeness of the cyclic calculi then follows by showing how non-wellfounded proofs can be pruned into cyclic proofs. This part builds on previous work from a joint publication with Afshari, Grotenhuis and Leigh~\cite{afshari_intuitionistic_2024}.  Third, we show that proof-search in our cyclic calculi can be reduced to solving parity games. This yields effective procedures for constructing proofs and establishes that proof-search can be automated. Finally, fourth, we establish exponential upper bounds for the complexity of proof-search and checking validity, which we obtain as a consequence of our proof-theoretic analysis. In the S5 case, we further obtain \textsc{ExpTime}-completeness by providing a translation of classical common knowledge logic over S5 into the intuitionistic setting, which serves as a polynomial-time reduction and simultaneously clarifies the relationship between classical and intuitionistic approaches to common knowledge. \smallskip

\noindent \textbf{Layout.} The remainder of the paper is structured as follows. Section~\ref{s: intuitionistic common knowledge logic} introduces the syntax and semantics of $\mathsf{ICK}$ and its variants and discusses our interpretation of $\ICK$. Section~\ref{s: axiomatization} presents Hilbert-style axiomatizations and proves soundness and completeness. Section~\ref{s: translation} establishes a translation from classical common knowledge logic into the intuitionistic setting. Section~\ref{s: cyclic proof systems} develops cyclic sequent calculi and proves their soundness and completeness. Section~\ref{s: automated proof-search} studies automated proof-search via parity games and derives complexity bounds. Section~\ref{s: conclusion} concludes with directions for future work.

\section{Intuitionistic Common Knowledge Logic}\label{s: intuitionistic common knowledge logic}

This section provides an overview of $\ICK$. We first introduce the syntax and the semantics in terms of birelational Kripke models. We then establish several basic properties of our semantics, outline a possible interpretation of $\ICK$ as a logic to reason about knowledge over evolving information states and remark upon the expressiveness of the language.

\subsection{Syntax}\label{ss: syntax}
 The \emph{language} $\LICK$ of $\ICK$ consists of a finite and non-empty set $\A$ of \emph{agents names}, the constant $\bot$ (\emph{absurdity}), a denumerable set $\Prop$ of \emph{atomic propositions}, the connectives $\wedge$ (\emph{conjunction}), $\vee$ (\emph{disjunction}), $\rightarrow$ (\emph{implication}), the \emph{knowledge modalities} $\K_\agi$ for $\agi \in \A$ and the \emph{common knowledge operator} $\C$, as well as brackets $($ and $)$. \emph{Formulae} of $\mathcal{L}_\mathrm{ICK}$ are given by the following grammar in Backus-Naur form:
\begin{equation*}
    \varphi ::= \bot \, \lvert \, p \, \lvert \, (\varphi \wedge \varphi) \, \lvert \, (\varphi \vee \varphi) \, \lvert \, (\varphi \rightarrow \varphi) \, \lvert \, (\K_\agi \varphi) \, \lvert \, (\C \varphi)
\end{equation*}
where $p \in \Prop$ and $\agi \in \A$. We use Latin letters $p,q,r$ and so on to denote atoms and Greek letters $\varphi, \psi, \gamma$ and so on to denote formulae (both atoms and formulae may be annotated). Agents are denoted by $\agi, \agii, \agiii$ and so on. Outermost brackets are typically omitted, i.e. we write for example $\varphi \rightarrow \psi$ instead of $(\varphi \rightarrow \psi)$, as long as the is no danger of confusion. We write $\varphi \in \LICK$ to denote that $\varphi$ is an $\LICK$-formula. The constant $\top$ (\emph{truth}) is defined by $\top \coloneqq \bot \rightarrow \bot$, and the connectives $\neg$ (\emph{negation}) by $\neg \varphi \coloneqq \varphi \rightarrow \bot,$ and $\leftrightarrow$ (\emph{if and only if}) by $\varphi \leftrightarrow \psi \coloneqq (\varphi \rightarrow \psi) \wedge (\psi \rightarrow \varphi)$. Furthermore, the group knowledge operator $\E$ (\emph{everybody knows}) is defined by
\begin{equation*}
    \E \varphi  \coloneqq \bigwedge_{\agi \in \A} \K_\agi \varphi.
\end{equation*}
The knowledge modalities are modal boxes. Note that the language $\LICK$ does not feature the dual diamond modalities. Importantly, box and diamond are not interdefinable in the intuitionistic setting. For readers familiar with modal fixed point logics, we remark that $\C \varphi$ can be defined as in the classical case as the greatest fixed point of the propositional function $\mathsf{x} \mapsto \varphi \wedge \E \mathsf{x}$, see~\cite{marti_2017} for a proof and~\cite{Zenger_2025} for a brief discussion of fixed point operators in intuitionistic modal fixed point logics.

\begin{definition}\label{d: closure}
         A set of formulae $\Gamma$ is called \emph{closed} if $\Gamma$ satisfies conditions 1 to 3 below. Moreover, $\Gamma$ is called \emph{negation closed} if $\Gamma$ is closed and additionally satisfies condition 4.
         \begin{enumerate}
        \item If $\psi_1 \ast \psi_2 \in \Gamma$, then $\psi_i \in \Gamma$ where $i \in \{0,1\}$ and $\ast \in \{\wedge, \vee, \rightarrow\}$.
        \item If $K_\agi \psi \in \Gamma$, then $\psi \in \Gamma$ for $\agi \in \A$.
        \item If $\C \psi \in \Gamma$, then $\psi \in \Gamma$ and $\K_\agi \C \psi \in \Gamma$ for all $\agi \in \A$.
        \item If $K_\agi \psi \in \Gamma$, then $\neg K_\agi \psi \in \Gamma$.
    \end{enumerate}
     The \emph{closure} of a formula $\varphi$ is the smallest closed set $\Cl(\varphi)$ of formulae which contains $\varphi$.  The \emph{negation closure} of a formula $\varphi$ is the smallest negation closed set $\Cl^\neg(\varphi)$ of formulae which contains $\varphi$. Finally, we define $\Cl(\Gamma):= \bigcup_{\varphi\in\Gamma}\Cl(\varphi)$ and $\Cl^\neg(\Gamma) \coloneqq \bigcup_{\varphi \in \Gamma} \Cl^\neg (\varphi)$.
     \end{definition}



 The following lemma is easily verified by an induction on the structure of formulae.

\begin{lemma}\label{l: negation closure is finite}
    For any formula $\varphi$, $\Cl (\varphi)$ and $\Cl^\neg(\varphi)$ are finite. Moreover, if $\Gamma$ is a finite set of formulae, then $\Cl(\Gamma)$ and $\Cl^\neg(\Gamma)$ are finite.
\end{lemma}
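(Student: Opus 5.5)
The plan is to define, by recursion on the structure of $\varphi$, an explicit finite set $S(\varphi)$ containing $\varphi$, and then show that $S(\varphi)$ is closed (resp.\ negation closed). Minimality of $\Cl(\varphi)$ and $\Cl^\neg(\varphi)$ then gives $\Cl(\varphi)\subseteq S(\varphi)$, hence finiteness. The only subtlety is that condition 3 is not structurally decreasing: $\C\psi$ produces $\K_\agi\C\psi$, which is longer than $\C\psi$. So a naive induction on formula size does not apply, and the recursion must absorb these formulae directly.

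For the closure, set $S(p)=\{p\}$ and $S(\bot)=\{\bot\}$. Set $S(\psi_1\ast\psi_2)=\{\psi_1\ast\psi_2\}\cup S(\psi_1)\cup S(\psi_2)$ and $S(\K_\agi\psi)=\{\K_\agi\psi\}\cup S(\psi)$. The key clause is
\begin{equation*}
S(\C\psi)=\{\C\psi\}\cup\{\K_\agi\C\psi : \agi\in\A\}\cup S(\psi).
\end{equation*}
Each $S(\varphi)$ is finite by induction on $\varphi$, because $\A$ is finite.

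Closedness is checked by induction on $\varphi$, assuming each $S(\psi)$ for subformulae $\psi$ is closed. The only new members of $S(\C\psi)$ are $\C\psi$ and the $\K_\agi\C\psi$. Condition 3 for $\C\psi$ holds because $\psi\in S(\psi)$ and every $\K_\agi\C\psi$ was added. Condition 2 for $\K_\agi\C\psi$ requires $\C\psi$, which is present. The other clauses are immediate, since unions of closed sets are closed.

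For the negation closure, set $S^\neg(\varphi)=S(\varphi)\cup\{\neg\K_\agi\psi,\ \bot : \K_\agi\psi\in S(\varphi)\}$, where $\neg\K_\agi\psi=\K_\agi\psi\to\bot$. This set is finite. Condition 4 holds by construction. Condition 1 applied to $\neg\K_\agi\psi$ only requires $\K_\agi\psi$, already in $S(\varphi)$, and $\bot$, which we added. The new formulae are neither boxes nor $\C$-formulae, so conditions 2 to 4 create no further obligations, and $S^\neg(\varphi)$ is negation closed. For a finite set $\Gamma$, the sets $\Cl(\Gamma)$ and $\Cl^\neg(\Gamma)$ are finite unions of finite sets.

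I expect the main obstacle to be the non-well-founded-looking clause 3. It is handled by including the $\K_\agi\C\psi$ explicitly in $S(\C\psi)$ and noting that closing them under condition 2 returns only to $\C\psi$, so no infinite regress arises.
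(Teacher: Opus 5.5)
Your proof is correct and follows the same approach as the paper, which only states that the lemma follows by induction on the structure of formulae. Your explicit finite witness $S(\varphi)$, which adds the formulae $\K_\agi\C\psi$ directly in the $\C\psi$ clause to get past the non-decreasing condition 3, supplies the detail that the paper leaves implicit.
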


The \emph{complexity} of a formula is defined as follows.

\begin{definition}\label{d: complexity}
    The \emph{complexity} $c(\varphi)$ of a formula $\varphi$ is defined inductively as follows.
    \begin{itemize}
        \item $c(\bot) = c(p) = 0$
        \item $c(\varphi \ast \psi) = c(\varphi) + c(\psi) +1$ for $\ast \in \{\wedge, \vee, \rightarrow\}$
        \item $c(\K_\agi \varphi) = c(\C \varphi) = c(\varphi) + 1$
    \end{itemize}
    Given a finite set of formulae $\Gamma$, the \emph{complexity} of $\Gamma$ is defined as $c(\Gamma) = \sum_{\varphi \in \Gamma}c(\varphi)$
\end{definition}

\subsection{Semantics}\label{ss: semantics}

Formulae are evaluated over Kripke models equipped with an intuitionistic partial order capturing information growth, together with an epistemic accessibility relation for each agent. Such structures are called \emph{birelational models}. Given a set $X$, we denote by $\mathcal{P}(X)$ the \emph{power set} of $X$.

\begin{definition}
    A \emph{birelational frame} is a tuple $\Fr= (\W, \leq, \rel)$ such that the following hold.
    \begin{enumerate}
        \item $\W$ is a non-empty set.
        \item $(\W, \leq)$ is a poset.\footnote{Note that usually such semantics are defined where $\leq$ is a preorder and not a partial order. While this difference matters for the constructive modal logic $\mathsf{CK}$, it does not matter here due to the lack of a diamond operator in our language. We chose to use a partial order since this aligns better with our intended motivation of modeling knowledge under information growth.}
        \item $\rel = \{ R_\agi \mid \agi \in \A\}$ such that for each $\agi \in \A$, $R_\agi \subseteq W \times W$.
    \end{enumerate}
    A \emph{birelational model} is a tuple $\fw=(\Fr, \V)$ where $\Fr =(\W, \leq \rel)$ is a birelational frame and $\V:\W \longrightarrow \mathcal{P}(\Prop)$ is a function which is monotone in $\leq$: for all $w,v \in \W$ if $w \leq v$, then $\V(w) \subseteq \V(v)$. The model $\fw =(\Fr, \V)$ is said to be \emph{based} on $\Fr$.
\end{definition}

Birelational frames are denoted by $\Fr$ and $\Gr$ and we denote classes of birelational frames by $\Frc$ and $\Grc$. Birelational models are denoted by $\fw$ and $\fv$. Elements of $\W$ are called \emph{worlds} or \emph{states} and are denoted by $w,v,u$ and so on. The relation $\leq$ is called the \emph{intuitionistic order} and $v$ is called an \emph{intuitionistic successor} of $w$ if $w \leq v$. For each agent $\agi$ the relation $R_\agi$ is called the \emph{modal accessibility relation} of $\agi$ and $v$ is called a \emph{modal successor} of $w$ if $w \Rel_\agi v$. The function $\V$ is called a \emph{valuation} and assigns to each world the set of propositions `true' at that world. For our purposes we are interested in a specific class of birelational frames and models, whose elements satisfy a simple interaction principle between $\leq$ and each $R_\agi$. We follow~\cite{jager-intuitionistic_2016} and call such frames and models \emph{epistemic}.

\begin{definition}\label{d: model}
    An \emph{epistemic frame} is a birelational frame $\Fr= (\W, \leq, \rel)$ such that for each $\agi \in \A$ and for all $w,v,u \in W$:
        \begin{equation}\label{e: triangle confluence}
            \text{if } w \leq v \text{ and } v \Rel_\agi u, \text{ then } w \Rel_\agi u.
        \end{equation}
    An \emph{epistemic model} is a birelational model based on an epistemic frame.
\end{definition}

We will usually call epistemic frames and epistemic models simply frames and models. The property of $R_\agi$ expressed in (\ref{e: triangle confluence}) is called \emph{triangle confluence}, see Figure \ref{f: triangle confluence} for a depiction. There are two main reasons to consider such a property. On the one hand when evaluating modalities via the `classical' truth conditions (e.g. $w \models K_\agi \varphi$ if $v \models \varphi$ for all $w \Rel_\agi v$), triangle confluence ensures that knowledge is preserved under information growth and so allows us to keep the \emph{Monotonicity Property} of intuitionistic logic (c.f. Lemma~\ref{l: monotonicity}). Without triangle confluence, more complicated `intuitionistic' truth conditions would be required to retain monotonicity. On the other hand, triangle confluence is a natural condition to consider in our proposed interpretation of $\ICK$. Both reasons are discussed in more detail below in Subsection~\ref{ss: expressivity} and Subsection~\ref{ss: interpretation}, respectively. Given an epistemic frame $\Fr=(\W, \leq, \rel)$, define the relation
\begin{equation*}
    R := \bigcup_{R_\agi \in \rel} R_\agi
\end{equation*}
and let $R^*$ be the reflexive and transitive closure of $R$. Given $w \in \W$ define $R_\agi(w) \coloneqq \{v \in \W \mid w \Rel_\agi v\}$ for each $\agi \in \A$ and $R^*(w) \coloneqq {\{v \in \W \mid w \Rel^* v\}}$. Furthermore let $w^\uparrow \coloneqq \{ v \in \W \mid w \leq v\}$.

\begin{figure}[t]
    \centering
    \begin{tikzcd}[ampersand replacement=\&,row sep=large,column sep=large]
        v \ar[r, dashed, "R_\agi"] \& u\\
        w \ar[u,"\le"]\ar[ur,dashed,"R_\agi"', color=red] 
    \end{tikzcd}
    \caption{The diagram above depicts the triangle confluence condition. The partial order $\leq$ is depicted by solid arrows, the relation $R_\agi$ by dashed arrows and the part stipulated by the confluence condition is marked in red.}
    \label{f: triangle confluence}
\end{figure}

\begin{definition}
    The \emph{truth relation} $\models$ between worlds $w$ of an epistemic model $\fw$ and formulae $\varphi, \psi$ is defined inductively as follows.
\begin{center}
    \begin{tabular}{l l l}
    $\fw, w \not \models \bot$ & & \\
    $\fw,w \models p$ & iff & $p \in \V(w)$ \\
    $\fw,w \models \varphi \wedge \psi$ & iff & $\fw,w \models \varphi$ and $\fw,w \models \psi$ \\
    $\fw,w \models \varphi \vee \psi$ & iff & $\fw,w \models \varphi$ or $\fw,w \models \psi$ \\
    $\fw,w \models \varphi \rightarrow \psi$ & iff & for all $v \in w^\uparrow$ if $\fw,v \models \varphi$, then $\fw,v \models \psi$ \\
    $\fw,w \models \K_\agi \varphi$ & iff & for all $v \in R_\agi(w)$, $\fw,v \models \varphi$ \\
    $\fw, w \models \C \varphi$ & iff & for all $v \in R^*(w)$, $\fw,v \models \varphi$.
    \end{tabular}
\end{center}
A formula $\varphi$ is called \emph{true at world $w$} if $\fw, w \models \varphi$ . Furthermore, $\varphi$ is called \emph{true in $\fw$}, written $\fw \models \varphi$, if $\fw, w \models \varphi$ for all $w \in \W$. Let $\fclass$ be a class of epistemic frames. Then $\varphi$ is called \emph{satisfiable} over $\fclass$ if there exists an epistemic model $\fw$ based on a frame in $\fclass$ and a world $w$ such that $\fw, w \models \varphi$; otherwise $\varphi$ is called \emph{unsatisfiable} over $\fclass$. A formula $\varphi$ is called \emph{valid} over $\fclass$ if $\fw \models \varphi$ for all epistemic models $\fw$ based on a $\fclass$-frame; otherwise $\varphi$ is called \emph{invalid} over $\fclass$.
\end{definition}

A crucial feature of our semantics is the \emph{Monotonicity Property} which states that truth is monotone in $\leq$. This property is well-established for Kripke semantics of intuitionistic logic. That the extended language with knowledge and common knowledge operators retains this property is due to triangle confluence, as illustrated in the proof of the following lemma.

\begin{lemma}[Monotonicity]\label{l: monotonicity}
    Let $\fw=(\W, \leq, \rel, \V)$ be an epistemic model, $w, v \in \W$ and $\varphi \in \LICK$. If $\fw, w \models \varphi$ and $w \leq v$, then $\fw, v \models \varphi$.
\end{lemma}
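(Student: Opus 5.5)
We prove the statement by induction on the structure of $\varphi$, simultaneously for all worlds $w \leq v$. The propositional cases are standard. For $\bot$ there is nothing to show. For an atom $p$, monotonicity of $\V$ gives $p \in \V(w) \subseteq \V(v)$. Conjunction and disjunction follow directly from the induction hypothesis. For implication no induction hypothesis is needed: transitivity of $\leq$ gives $v^\uparrow \subseteq w^\uparrow$, so the universal condition defining $w \models \varphi \rightarrow \psi$ restricts to the required condition at $v$.

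The modal cases are where triangle confluence (\ref{e: triangle confluence}) is used, and this is the only genuinely non-routine point.

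For $\K_\agi \varphi$, suppose $w \models \K_\agi\varphi$ and $w \leq v$, and let $u \in R_\agi(v)$. Triangle confluence gives $w \Rel_\agi u$, so $u \models \varphi$ by the truth condition at $w$. Hence $v \models \K_\agi \varphi$. Again no induction hypothesis on $\varphi$ is needed.

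For $\C\varphi$, we first prove an auxiliary claim: if $w \leq v$, then $R^*(v) \subseteq R^*(w) \cup \{v\}$. More precisely, any $R$-path $v = v_0 \Rel v_1 \Rel \dots \Rel v_n$ with $n \geq 1$ yields $w \Rel v_1$ by triangle confluence applied to the first step, and hence $w \Rel^* v_n$. The path of length $0$ gives $v$ itself, which need not lie in $R^*(w)$. This case is covered separately: $w \models \C\varphi$ implies $w \models \varphi$ because $R^*$ is reflexive, and the induction hypothesis for $\varphi$ then gives $v \models \varphi$.

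Now suppose $w \models \C\varphi$ and let $u \in R^*(v)$. Either $u = v$, handled as just described, or $u \in R^*(w)$, in which case $u \models \varphi$ directly. Hence $v \models \C\varphi$.

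The main care needed is exactly this zero-length path in the $\C$ case. It is the one place in the modal cases where the induction hypothesis is actually invoked.
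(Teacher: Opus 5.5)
Your proof is correct and follows essentially the same route as the paper: structural induction, transitivity of $\leq$ for implication, triangle confluence for $\K_\agi$, and for $\C\varphi$ a split into the zero-length path (handled via reflexivity of $R^*$ and the induction hypothesis) and paths of positive length (handled by applying triangle confluence to the first step). Your auxiliary claim $R^*(v) \subseteq R^*(w) \cup \{v\}$ is simply a cleaner packaging of the paper's case distinction on the path length $n$.
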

\begin{proof}
    By induction on the structure of $\varphi$. The case for $\varphi= \bot$ trivially holds. For $\varphi = p \in \Prop$ if $\fw, w \models p$, then $p \in \V(w)$. Since $w \leq v$, the monotonicity of $\V$ implies that $p \in \V(v)$. Hence $\fw, v \models p$. The cases for $\varphi = \psi \wedge \chi$ and $\varphi = \psi \vee \chi$ follow directly from the truth conditions for $\wedge$ and $\vee$ and from the induction hypothesis. \smallskip
    
   \noindent Suppose $\varphi = \psi \rightarrow \chi$ and $\fw,w \models \psi \rightarrow \chi$. By definition, for all $u \in w^\uparrow$ if $\fw,u \models \psi$, then $\fw,u \models \chi$. If $u \in v^\uparrow$, then the transitivity of $\leq$ implies that $u \in w^\uparrow$ and so if $\fw, u \models \psi$, then $\fw, u \models \chi$. Hence $\fw, v \models \psi \rightarrow \chi$. \smallskip

   \noindent Let $\agi \in \A$ and suppose $\varphi = \K_\agi \psi$ and $\fw, w \models \K_\agi \psi$. By definition, for all $u \in R_\agi(w)$ holds $\fw, u \models \psi$. If $u \in R_\agi(v)$, then triangle confluence implies $u \in R_\agi(w)$ and therefore $\fw, u \models \psi$. Hence $\fw, v \models \K_\agi \psi$. \smallskip

   \noindent Suppose $\varphi = \C \psi$ and $\fw, w \models \C \psi$. By definition, for all $u \in R^*(w)$ holds $\fw, u \models \psi$. In particular, $\fw, w \models \psi$ since $w \Rel^* w$. Suppose $v \Rel^* u$. Then there exists $n < \omega$ and worlds $u_0, \ldots u_n$ with $u_0 = v$, $u_n = u$ and for all $k \in [n]$\footnote{We write $[n]$ for the set $\{0, 1, \ldots, n-1\}$ and $k \in [n]$ for $0 \leq k \leq n-1$.} there exists $\agi_k \in \A$ such that $u_k \Rel_{\agi_k} u_{k+1}$. We proceed by induction on $n$. If $n = 0$, then $v = u$. Since $\fw, w \models \psi$ and $w \leq v$, the induction hypothesis yields $\fw, u \models \psi$. If $n > 0$, then triangle confluence implies that $w \Rel_{\agi_0} u_1$ and so that $w \Rel^* u$. Hence $\fw, u \models \psi$ and therefore $\fw, v \models \C \psi$.
\end{proof}

We call an epistemic frame $\Fr=(\W, \leq, \rel)$ \emph{reflexive} if for all $\agi \in \A$ the modal accessibility relation $R_\agi \in \rel$ is reflexive. Similarly, $\Fr$ is called \emph{transitive} and \emph{symmetric} if the modal accessibility relations for each agent are transitive and symmetric, respectively. A frame $\Fr$ is an \emph{S4 frame} if each modal accessibility relation is reflexive and transitive. Moreover an S4 frame is an \emph{S5 frame} if, additionally, each modal accessibility relation is symmetric (i.e. the modal relations of S5 frames are equivalence relations). Furthermore, we call an epistemic model a \emph{reflexive/S4/S5 model} if it based on a reflexive/S4/S5 frame, respectively. A frame $\Fr=(\W, \leq, \rel)$ is \emph{finite} if $\W$ is finite and a model is finite if it is based on a finite frame. For the purposes of this paper a \emph{logic} is a set of formulae valid over a given class of frames.

\begin{definition}
    Define the following logics.
    \begin{itemize}
        \item  $\mathbf{ICK}$ is the set of valid $\LICK$-formulae over the class of all (epistemic) frames.
        \item $\mathbf{ICKT}$ is the set of valid $\LICK$-formulae over the class of reflexive frames.
        \item $\mathbf{ICKS4}$ is the set of valid $\LICK$-formulae over the class of S4 frames.
        \item $\mathbf{ICKS5}$ is the set of valid $\LICK$-formulae over the class of S5 frames.
    \end{itemize}
\end{definition}

Note that for reflexive frames triangle confluence implies that every intuitionistic successor of $w$ is also a modal successor. For S4 frames, due to transitivity, every node in the intuitionistic tree rooted at $w$ is a modal successor. The case for S5 is discussed below. These properties fit naturally into our proposed interpretation for $\ICK$. Before discussing the interpretation, we briefly establish some basic properties of our logics. First of all, we show that knowledge in $\ICK$ validates the standard axioms from classical epistemic logic.

\begin{lemma}\label{l: valid formulae}
    Let $\varphi, \psi$ be $\LICK$-formulae and $\agi \in \A$. Then
    \begin{enumerate}
        \item $\K_\agi (\varphi \rightarrow \psi) \rightarrow (\K_\agi \varphi \rightarrow \K_\agi \psi) \in \mathbf{ICK}$;
        \item $\C (\varphi \rightarrow \psi) \rightarrow (\C \varphi \rightarrow \C \psi) \in \mathbf{ICK}$;
        \item $\C \varphi \leftrightarrow \varphi \wedge \E \C \varphi \in \mathbf{ICK}$;
        \item $\K_\agi \varphi \rightarrow \varphi \in \mathbf{ICKT}$;
        \item $\K_\agi \varphi \rightarrow \K_\agi \K_\agi \varphi \in \mathbf{ICKS4}$.
    \end{enumerate}
\end{lemma}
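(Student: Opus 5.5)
All five items are checked semantically, by unfolding the truth conditions. The common pattern is this: to show an implication $\alpha \rightarrow \beta$ holds at a world $w$, fix $v \in w^\uparrow$ with $\fw, v \models \alpha$ and show $\fw, v \models \beta$. For nested implications $\alpha \rightarrow (\beta \rightarrow \gamma)$ we take a further $u \in v^\uparrow$ with $\fw,u\models\beta$. At that point Lemma~\ref{l: monotonicity} transports $\alpha$ from $v$ up to $u$, so every hypothesis is available at a single world. After this step, only the clauses for $\K_\agi$ and $\C$ remain to be handled.

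For item 1, fix $w \le v \le u$ with $v \models \K_\agi(\varphi\rightarrow\psi)$ and $u \models \K_\agi\varphi$. By monotonicity, $u \models \K_\agi(\varphi\rightarrow\psi)$. Take any $x \in R_\agi(u)$. Then $x \models \varphi\rightarrow\psi$ and $x\models\varphi$, and since $x \le x$ we get $x\models\psi$. Item 2 is the same argument with $R^*(u)$ in place of $R_\agi(u)$.

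For item 3, left to right: suppose $v \models \C\varphi$. Since $v \Rel^* v$, we have $v\models\varphi$. For $x \in R_\agi(v)$ and $y \in R^*(x)$ we have $y\in R^*(v)$, so $x \models \C\varphi$; ranging over all agents gives $v\models \E\C\varphi$. Right to left: suppose $v\models\varphi\wedge\E\C\varphi$ and $y\in R^*(v)$. Either $y=v$, in which case $y\models\varphi$. Or the path starts $v \Rel_\agi x \Rel^* y$, in which case $x\models \C\varphi$ and hence $y\models\varphi$. This case split on the length of the $R^*$-path is the only mildly delicate point of item 3.

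For items 4 and 5 we use the frame conditions. In a reflexive frame, $v\models \K_\agi\varphi$ gives $v\models\varphi$ directly, since $v \Rel_\agi v$. In an S4 frame, suppose $v\models\K_\agi\varphi$, and let $v\Rel_\agi x\Rel_\agi y$. Transitivity gives $v \Rel_\agi y$, so $y\models\varphi$; hence $x\models\K_\agi\varphi$ and $v \models \K_\agi\K_\agi\varphi$. Reflexivity is not needed for item 5.

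I do not expect a genuine obstacle anywhere. The only thing requiring care is to quantify over intuitionistic successors correctly in the implication clauses, and this is exactly where Lemma~\ref{l: monotonicity} (and through it, triangle confluence) is used.
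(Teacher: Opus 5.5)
Your proposal is correct and follows essentially the same semantic argument as the paper, including the case split on the length of the $R^*$-path in item 3 and the direct use of reflexivity and transitivity in items 4 and 5. The only difference is cosmetic. In item 1 you lift $\K_\agi(\varphi \rightarrow \psi)$ from $v$ to $u$ via Lemma~\ref{l: monotonicity}, whereas the paper applies triangle confluence directly to get $R_\agi(v') \subseteq R_\agi(v)$; that is exactly the $\K_\agi$ case of the monotonicity lemma.
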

\begin{proof}
    The proofs are standard albeit a bit tedious. For example for 1. let $\Fr=(\W, \leq, \rel)$ be an epistemic frame, $\fw =(\Fr, \V)$ an epistemic model based on $\Fr$ and $w \in \W$ a world. Let $v \in w^\uparrow$ and suppose $\fw, v \models \K_\agi(\varphi \rightarrow \psi)$. Then $\fw, u \models \varphi \rightarrow \psi$ for all $u \in R_\agi(v)$. Let $v' \in v^\uparrow$ and suppose that $\fw, v' \models \K_\agi \varphi$. Then for all $u' \in R_\agi(v')$ holds $\fw, u' \models \varphi$. By triangle confluence $u' \in R_\agi(v)$ (since $v \leq v'$ and $v' \Rel_\agi u'$) and therefore $\fw,u' \models \varphi \rightarrow \psi$. Hence $\fw, u' \models \psi$. Since $u' \in R_\agi(v')$ was arbitrary we conclude $\fw, u' \models \K_\agi \psi$ and so, since $v' \in v^\uparrow$ was arbitrary, we have $\fw, v \models \K_\agi \varphi \rightarrow \K_\agi \psi$. Since $v \in w^\uparrow$ was arbitrary as well, $\fw, w \models \K_\agi (\varphi \rightarrow \psi) \rightarrow (\K_\agi \varphi \rightarrow \K_\agi \psi)$. Hence, $\K_\agi (\varphi \rightarrow \psi) \rightarrow (\K_\agi \varphi \rightarrow \K_\agi \psi)$ is valid over the class of all epistemic frames. The case for 2. is similar. \smallskip
    
    \noindent For 3. let $\fw=(\W, \leq, \rel, \V)$ be an arbitrary epistemic model, $w \in \W$ an arbitrary world and $\varphi \in \LICK$ an arbitrary formula. For the direction from left-to-right let $v \in w^\uparrow$ and suppose that $\fw, v \models \C \varphi$. Then for any $u \in R^*(v)$ holds $\fw, u \models \varphi$. In particular, $v \in R^*(v)$ and so $\fw, v \models \varphi$. Furthermore, let $u \in W$ and suppose that there exists $\agi \in \A$ with $v \Rel_\agi u$. Let $u' \in W$ and suppose $u \Rel^* u'$. Clearly, this implies that $v \Rel^* u'$ and so that $\fw, u' \models \varphi$. Hence $\fw, u \models \C \varphi$. Since $u$ was arbitrary, we have $\fw, v \models \K_\agi \C \varphi$. Since $\agi \in \A$ was arbitrary, $\fw, v \models \bigwedge_{\agi \in \A} \K_\agi \C \varphi$, i.e. $\fw, v \models \E \C \varphi$. Hence $\fw, w \models \C \varphi \rightarrow \varphi \wedge \E \C \varphi$. For the direction from right-to-left let $v \in w^\uparrow$ and suppose that $\fw, v \models \varphi \wedge \E \C \varphi$. Moreover let $u \in R^*(v)$. Then there exists $n < \omega$ and worlds $u_0, \ldots, u_n \in W$ with $u_0 = v$, $u_n = u$ and for all $k \in [n]$ there exists $\agi_k \in \A$ with $u_k \Rel_{\agi_k} u_{k+1}$. We show $\fw, u \models \varphi$ by induction on $n$. For $n=0$ we have $v= u$ and $\fw, u \models \varphi$ by assumption. For $n>0$, since $\fw, v \models \E \C \varphi$ we have $\fw, u_1 \models \C \varphi$ and since $u_1 \Rel^* u$ it holds that $\fw, u \models \varphi$. Hence $\fw, v \models \C \varphi$ and $\fw, w \models \varphi \wedge \E \C \varphi \rightarrow \C \varphi$. The cases for 4. and 5. follow from the respective frame conditions.
\end{proof}

While knowledge validates the classical axioms, $\ICK$ as a whole inherits many of the properties of intuitionistic logic. For example, the \emph{Semantic Disjunction Property}, which states that whenever a disjunction is valid, then one of the disjuncts is valid, holds for $\ICK$ over all considered classes (with the exception of the class of S5 frames; see below). The proof of the following lemma was first presented in~\cite{marti_2017}, without considering the S4 case.

\begin{lemma}[Semantic Disjunction Property]\label{l: semantic disjunction property}
   Let $\varphi, \psi$ be $\LICK$-formulae. If $\varphi \vee \psi$ is valid over one of the classes of epistemic, reflexive or S4 frames, then $\varphi$ is valid or $\psi$ is valid over the respective class of frames.
\end{lemma}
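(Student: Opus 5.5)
The plan is to argue by contraposition with the standard ``gluing'' construction for the intuitionistic disjunction property. Fix one of the three frame classes and suppose neither $\varphi$ nor $\psi$ is valid over it. Then there are models $\fw_1=(\W_1,\leq_1,\rel_1,\V_1)$ and $\fw_2=(\W_2,\leq_2,\rel_2,\V_2)$ in the class, with worlds $w_1\in\W_1$ and $w_2\in\W_2$, such that $\fw_1,w_1\not\models\varphi$ and $\fw_2,w_2\not\models\psi$. I would take the disjoint union of $\fw_1$ and $\fw_2$ and add a fresh world $w_0$ to obtain a model $\fw$. The goal is to show that $\fw$ is again in the class and that $w_0$ refutes $\varphi\vee\psi$.

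The new model is defined as follows. Set $\W=\W_1\cup\W_2\cup\{w_0\}$ and let $\leq$ be the union of $\leq_1$ and $\leq_2$, together with $w_0\leq w_0$ and $w_0\leq v$ for all $v\in w_1^\uparrow\cup w_2^\uparrow$. This relation is clearly still a partial order. Put $\V(w_0)=\emptyset$, which keeps the valuation monotone. For each agent $\agi$, let $R_\agi$ be the union of the old relations together with the pairs $w_0\Rel_\agi u$ for $u\in R_\agi(w_1)\cup R_\agi(w_2)$. In the reflexive and S4 cases I would additionally add $w_0\Rel_\agi w_0$.

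Next come the frame conditions, which I expect to be the main (if routine) obstacle.

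Triangle confluence only needs checking for $w_0$ in the first position. Suppose $w_0\leq v$ and $v\Rel_\agi u$ with $v\neq w_0$. Then $w_i\leq v$ for some $i$, so confluence in $\fw_i$ gives $w_i\Rel_\agi u$, and hence $w_0\Rel_\agi u$. If $v=w_0$ there is nothing to check.

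Reflexivity holds by construction, since $w_0\Rel_\agi w_0$ was added.

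For transitivity, suppose $w_0\Rel_\agi u\Rel_\agi u'$. If $u=w_0$ the claim is trivial. Otherwise $u\in R_\agi(w_i)$, and transitivity of $R_\agi$ in $\fw_i$ gives $w_i\Rel_\agi u'$, so $w_0\Rel_\agi u'$. No old world has $w_0$ as an $\leq$- or $R_\agi$-successor, so no other new instances arise.

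This is exactly where S5 fails. Symmetry would force $u\Rel_\agi w_0$ for old worlds $u$, which destroys the separation between the old models and the new root. That explains why S5 is excluded from the statement.

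Then I would prove, by induction on formulae, that $\fw,v\models\chi$ iff $\fw_i,v\models\chi$ for every $v\in\W_i$. This works because the sets $v^\uparrow$, $R_\agi(v)$ and $R^*(v)$ computed in $\fw$ coincide with those computed in $\fw_i$; in other words, $\fw_i$ is a generated submodel of $\fw$. The implication, $\K_\agi$ and $\C$ cases all go through directly.

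Finally, suppose $\fw,w_0\models\varphi\vee\psi$. Then $\fw,w_0\models\varphi$ or $\fw,w_0\models\psi$. Since $w_0\leq w_1$ and $w_0\leq w_2$, Lemma~\ref{l: monotonicity} gives $\fw,w_1\models\varphi$ or $\fw,w_2\models\psi$. By the preservation claim, this means $\fw_1,w_1\models\varphi$ or $\fw_2,w_2\models\psi$, a contradiction. Hence $\varphi\vee\psi$ is invalid over the class.
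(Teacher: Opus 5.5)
Your proposal is correct and follows essentially the same route as the paper. Both arguments glue the two countermodels beneath a fresh root with empty valuation, close the modal relations so that triangle confluence (and reflexivity/transitivity in the T and S4 cases) hold, use that each original model is a generated submodel of the glued one, and conclude via Monotonicity. The only difference is presentational: you write the relations out explicitly, e.g.\ $R_\agi(w_0)=R_\agi(w_1)\cup R_\agi(w_2)$ plus $(w_0,w_0)$ in the reflexive and S4 cases, and check the frame conditions directly, whereas the paper just takes the appropriate closure.
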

\begin{proof}
    Suppose $\varphi \vee \psi$ is valid over one of the classes of epistemic frames, reflexive frames or S4 frames and suppose towards contradiction that neither $\varphi$ nor $\psi$ are valid over this class. Let $\fw =(\W^\fw, \leq^\fw, \rel^\fw, \V^\fw)$ and $\fv =(\W^\fv, \leq^\fv, \rel^\fv, \V^\fv)$ be epistemic models based on frames from the respective class and let $w \in \W^\fw$ and $v \in \W^\fv$ such that $\fw, w \not \models \varphi$ and $\fv, v \not \models \psi$. We assume without loss of generality that $\W^\fw \cap \W^\fv = \emptyset$ (otherwise rename worlds). Let $u$ be a fresh world not occurring in $\W^\fw$ or in $\W^\fv$. Define the model $\fw_G =(\W, \leq, \rel, \V)$ as follows.
    \begin{itemize} 
        \item $\W \coloneqq \{u\} \cup \W^\fw \cup \W^\fv$.
        \item Let $\leq_0 \coloneqq \{(u,w), (u,v)\} \cup \leq^\fw \cup \leq^\fv$ and define $\leq \coloneqq (\leq_0)^\ast$ (i.e. $\leq$ is the reflexive-transitive closure of $\leq_0$).
        \item For each $\agi \in \A$ let $R_\agi^0 \coloneqq R_\agi^\fw \cup R_\agi^\fv$ and define $R_\agi$ to be $R_\agi^0$ closed under triangle confluence with respect to $\leq$. Furthermore, if $\fw$ and $\fv$ are based on reflexive or S4 frames, then additionally $R_\agi$ shall be closed under reflexivity or reflexivity and transitivity, respectively.
        \item $\V: \W \longrightarrow \mathcal{P}(\Prop)$ is given by
        \begin{equation*}
            \V(s)  \coloneqq
            \begin{cases}
                 \V^\fw(s) &\text{ if } s \in \W^\fw; \\
                \V^\fv(s) &\text{ if } s \in \W^\fv; \\
                \emptyset & \text{ if } s= u. \\
            \end{cases}
        \end{equation*}
    \end{itemize}
    Note that $(\W, \leq, \rel)$ is an epistemic/reflexive/S4 frame, as $\leq$ is a partial order and each $R_\agi$ is triangle confluent and satisfies the relevant frame conditions by construction. Moreover, the valuation $\V$ is monotone in $\leq$ since $\V(u) \subseteq \V(w)$, $\V(u) \subseteq \V(v)$ and $\V(s) = \V^\fw(s)$ for $s \in \W^\fw$ and $\V(s) = \V^\fv(s)$ for $s \in \W^\fv$. Therefore $\fw_G$ is an epistemic model based on the respective type of frame. Note that for all formulae $\gamma$ and all $s \in \W^\fw$ holds that $\fw_G, s \models \gamma$ if and only if $\fw, s \models \gamma$ (since worlds `below' $s$ play no role in evaluating formulae at $s$). Similarly, for all formulae $\gamma$ and all $s \in \W^\fv$ holds that $\fw_G, s \models \gamma$ if and only if $\fv, s \models \gamma$. Since $\varphi \vee \psi$ is valid, it holds that $\fw_G, u \models \varphi \vee \psi$ and so $\fw_G, u \models \varphi$ or $\fw_G, u \models \psi$. In the first case persistence of truth implies that $\fw_G, w \models \varphi$ and therefore $\fw, w \models \varphi$. Similarly, in the second case persistence of truth implies that $\fw_G, v \models \psi$ and hence $\fv, v \models \psi$. Both cases thus lead to contradiction, implying that $\varphi$ is valid or $\psi$ is valid. 
\end{proof}

Let us now consider the case for S5. We have already observed that for S4 frames each node in the intuitionistic tree rooted at $w$ is a modal successor of $w$, due to triangle confluence and the S4 frame conditions. When adding symmetry, it further follows that every node of the intuitionistic rooted at $w$ belongs to the equivalence of $w$ under $R_\agi$. This implies that whenever $w \leq v$ then $w$ and $v$ have the exact same modal successors. Therefore when evaluating a formula of the form $K_\agi \varphi \rightarrow \psi$ at $w$, we do not have to take higher worlds into account. In other words, such implications can be evaluated classically.

\begin{lemma}\label{l: classical truth conditions for implication}
    Let $\fw=(\W, \leq, \rel, \V)$ be an S5 model, $w \in \W$, $\agi \in \A$ and $\varphi, \psi \in \LICK$. Then the following hold.
    \begin{center}
        $\fw, w \models \K_\agi \varphi \rightarrow \psi$ if and only if $\fw, w \not \models \K_\agi \varphi$ or $\fw, w \models \psi$.
    \end{center}
\end{lemma}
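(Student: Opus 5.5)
The plan is to first isolate a key structural fact about S5 models and then derive both directions of the equivalence from it together with Monotonicity (Lemma~\ref{l: monotonicity}).

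\textbf{Key claim.} If $w \leq v$, then $R_\agi(w) = R_\agi(v)$.
For $R_\agi(v) \subseteq R_\agi(w)$ this is just triangle confluence. For the converse, suppose $w \Rel_\agi u$. By reflexivity $v \Rel_\agi v$, so triangle confluence applied to $w \leq v$ and $v \Rel_\agi v$ gives $w \Rel_\agi v$. Symmetry then yields $v \Rel_\agi w$. Together with $w \Rel_\agi u$, transitivity gives $v \Rel_\agi u$. The consequence we need is that $\fw, w \models \K_\agi \varphi$ iff $\fw, v \models \K_\agi \varphi$ whenever $w \leq v$, since both are quantifications over the same set of modal successors.

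\textbf{Left-to-right.} Assume $\fw, w \models \K_\agi \varphi \rightarrow \psi$. If $\fw, w \models \K_\agi \varphi$, then instantiating the implication's truth condition at $v = w \in w^\uparrow$ (reflexivity of $\leq$) gives $\fw, w \models \psi$. Otherwise $\fw, w \not\models \K_\agi \varphi$. Either way the disjunction holds.

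\textbf{Right-to-left.} Suppose first that $\fw, w \models \psi$. Then for every $v \in w^\uparrow$, Monotonicity gives $\fw, v \models \psi$, so the implication holds at $w$. Suppose instead that $\fw, w \not\models \K_\agi \varphi$. By the key claim, $\fw, v \not\models \K_\agi \varphi$ for every $v \in w^\uparrow$, so the implication holds vacuously at $w$.

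The only non-routine step is the key claim, specifically the inclusion $R_\agi(w) \subseteq R_\agi(v)$. It requires combining all three S5 conditions with triangle confluence, in the order reflexivity, then confluence, then symmetry, then transitivity. The rest is a direct unfolding of the truth conditions.
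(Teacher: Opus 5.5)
Your proposal is correct and follows essentially the same argument as the paper. Left-to-right instantiates at $w \in w^\uparrow$. Right-to-left uses Monotonicity for $\psi$, and the chain reflexivity, triangle confluence, symmetry, transitivity to get $R_\agi(w) \subseteq R_\agi(v)$ for $w \leq v$; the paper only differs in phrasing the right-to-left direction by contraposition rather than splitting into your two cases.
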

\begin{proof}
    For the left-to-right direction suppose $\fw, w \models \K_\agi \varphi \rightarrow \psi$. Then for all $v \in w^\uparrow$ if $\fw, v \models \K_\agi \varphi$, then $\fw, v \models \psi$. Since $w \in w^\uparrow$ we obtain that $\fw, w \not \models \K_\agi \varphi$ or $\fw, w \models \psi$. For the right-to-left direction we proceed by contraposition. Suppose $\fw, w \not \models \K_\agi \varphi \rightarrow \psi$. Then there exists $w \leq v$ such that $\fw, v \models \K_\agi \varphi$ and $\fw, v \not \models \psi$. Since $w \leq v$ the Monotonicity Lemma implies $\fw, w \not \models \psi$. Now suppose $w \Rel_\agi u$. Since $w \leq v $ and $v \Rel_\agi v$ by reflexivity of $R_\agi$, triangle confluence implies that $w \Rel_\agi v$. By symmetry of $R_\agi$ we have $v \Rel_\agi w$ and so by transitivity of $R_\agi$, $v \Rel_\agi u$. Thus $\fw, u \models \varphi$, implying that $\fw, w \models \K_\agi \varphi$. 
\end{proof}

From this observation  we obtain that $\mathbf{ICKS5}$ satisfies a `boxed' version of the law of excluded middle: for any proposition $p$ the formula $\K_\agi p \vee \neg \K_\agi p$ is valid. 

\begin{lemma}\label{c: classical truth conditions for implications}
    Let $\varphi \in \LICK$ and $\agi \in \A$. Then
    \begin{enumerate}
        \item $ \K_\agi \varphi \vee \neg \K_\agi \varphi \in \mathbf{ICKS5}$;
        \item $\C \varphi \vee \neg \C \varphi \in \mathbf{ICKS5}$;
        \item $ \neg \K_\agi \varphi \rightarrow \K_\agi \neg \K_\agi \varphi \in \mathbf{ICKS5}$.
    \end{enumerate}
\end{lemma}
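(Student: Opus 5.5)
The plan is to reduce all three items to one semantic fact about S5 models: if $w \leq v$, then $R_\agi(w) = R_\agi(v)$ for every agent $\agi$, and consequently $R^*(w) = R^*(v)$. This fact is implicit in the proof of Lemma~\ref{l: classical truth conditions for implication}; I will first isolate it. From $w \leq v$ and $v \Rel_\agi v$ (reflexivity), triangle confluence gives $w \Rel_\agi v$, and symmetry gives $v \Rel_\agi w$. Transitivity then yields $R_\agi(w) \subseteq R_\agi(v)$ via $v \Rel_\agi w$, while triangle confluence directly yields $R_\agi(v) \subseteq R_\agi(w)$. Since $R^*$ is generated by the $R_\agi$, the first step of any $R^*$-path from $w$ or from $v$ lands in the same set. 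Together with the two facts $w \Rel^* v$ and $v \Rel^* w$, which follow from $w \Rel_\agi v$ and $v \Rel_\agi w$, this gives $R^*(w) = R^*(v)$.

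For item 1, fix an S5 model and a world $w$. If $\fw, w \models \K_\agi \varphi$, the disjunction holds. Otherwise take any $v \in w^\uparrow$. Since $R_\agi(v) = R_\agi(w)$, we have $\fw, v \not\models \K_\agi\varphi$, so vacuously $\fw, w \models \K_\agi\varphi \rightarrow \bot$, which is $\neg\K_\agi\varphi$.

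Item 2 is the same argument using $R^*(v) = R^*(w)$. Note that this needs the full equality of the $R^*$ sets, not just inclusion, which is why I establish the mutual $R^*$-reachability of $w$ and $v$ above.

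For item 3, take $v \in w^\uparrow$ with $\fw, v \models \neg\K_\agi\varphi$, and let $v \Rel_\agi u$; I must show $\fw, u \models \neg\K_\agi\varphi$. Since $R_\agi$ is an equivalence relation, $R_\agi(u) = R_\agi(v)$. By reflexivity $v \in w^\uparrow$ implies $v \in v^\uparrow$, so $\fw, v \not\models \K_\agi\varphi$, and hence $\fw, u \not\models \K_\agi\varphi$. For every $u' \in u^\uparrow$, the first step gives $R_\agi(u') = R_\agi(u)$, so $\fw, u' \not\models \K_\agi\varphi$. Therefore $\fw, u \models \neg\K_\agi\varphi$.

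Alternatively, item 3 follows from item 1 together with Lemma~\ref{l: classical truth conditions for implication}, but the direct argument above is cleaner. The only real obstacle is the $R^*$ step in item 2, namely ensuring the $R^*$ sets coincide. This is handled by the mutual-reachability observation, so everything else is routine unfolding of the truth conditions.
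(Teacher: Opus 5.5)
Your proof is correct and is essentially the paper's argument. The paper runs the same reflexivity, triangle-confluence, symmetry and transitivity chain inline in each item, and uses an induction on path length for item 2. You instead factor it once into the invariance $R_\agi(w)=R_\agi(v)$ and $R^*(w)=R^*(v)$ for $w\leq v$, which makes all three items immediate.

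One small caveat concerns your aside that item 3 follows from item 1 together with Lemma~\ref{l: classical truth conditions for implication}. That is not quite right as stated: the lemma only lets you evaluate $\neg\K_\agi\varphi$ classically, and you would still need the Euclidean property of $R_\agi$ for the modal step. Your direct argument does not rely on this aside, so the proof stands.
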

\begin{proof}
    For 1. let $\fw=(\W, \leq \rel, \V)$ be an S5-model. Let $w \in \W$, $\agi \in \A$ and $\varphi \in \LICK$. If $\fw,w \models K_\agi \varphi$, then $\fw, w \models K_\agi \varphi \vee \neg K_\agi \varphi$. Otherwise, $\fw,w \not \models \K_\agi \varphi$. Then there exists a world $v \in R_\agi(w)$ with $\fw, v \not \models \varphi$. For arbitrary $u \in w^\uparrow$, reflexivity and triangle confluence give $w \mathrel{R_\agi} u$. By symmetry and transitivity $u \mathrel{R_\agi} v$. Hence, for any $u \in w^\uparrow$ there exists a world $v$ with $u \mathrel{R_\agi} v$ such that $\fw,v \not \models \varphi$. Thus $\fw,w \models \neg \K_\agi \varphi$ and so $\fw, w \models \K_\agi \varphi \vee \neg \K_\agi \varphi$. \smallskip
 
    \noindent For 2. let $\fw=(\W, \leq \rel, \V)$ be an S5-model, $w \in \W$ and $\varphi \in \LICK$. If $\fw, w \models \C \varphi$, then $\fw, w \models \C \varphi \vee \neg \C \varphi$. Otherwise $\fw, w \not \models \C \varphi$, which implies that there exists a world $v \in R^*(w)$ with $\fw, v \not \models \varphi$. For any $u \in w^\uparrow$ we show by induction on the length $n$ of the path from $w$ to $v$ that $v \in R^*(u)$ and hence that $\fw, v \not \models \C \varphi$. For $n=0$ we have $w = v$ and hence that $\fw, w \not \models \varphi$. As before we have $u \Rel_\agi w$ by reflexivity, symmetry and triangle confluence and so $w \in R^*(u)$. For $n > 0$ we have that $w \Rel_{\agi_1} u_1 \Rel_{\agi_2} \ldots u_{n-1} \Rel_{\agi_{n-1}} v$. By induction hypothesis $u_{n-1} \in R^*(u)$ and hence $v \in R^*(u)$. Thus $\fw, u \not \models \C \varphi$, implying that $\fw, w \models \neg \C \varphi$ and so $\fw, w \models \C \varphi \vee \neg \C \varphi$. \smallskip

   \noindent For 3. let $\fw=(\W, \rel, \V)$ be an S5 model. Let $w \in W$, $\agi \in \A$ and $\varphi \in \LICK$. Let $v \in w^\uparrow$ and suppose that $\fw,v \models \neg \K_\agi \varphi$. Then there exists a world $v' \in R_\agi(v)$ with $\fw, v' \not \models \varphi$. Let $u \in \W$ such that $v \mathrel{R_\agi} u$. We want to show that $\fw,u \models \neg \K_\agi p$. To that end consider an arbitrary $u' \in u^\uparrow$. By reflexivity, triangle confluence and symmetry $u' \mathrel{R_\agi} u$. Moreover, since $v \mathrel{R_\agi} u$, also $u \mathrel{R_\agi} v$ and hence $u' \mathrel{R_\agi} v$. Finally, since $v \mathrel{R_\agi} v'$, $u' \mathrel{R_\agi} v'$. Hence, for any $u' \in u^\uparrow$ there exists $v'$ with $u' \mathrel{R_\agi} v'$ and $\fw,v' \not \models \varphi$. Therefore $\fw, u \models \neg \K_\agi \varphi$ and so $\fw, v \models \K_\agi \neg \K_\agi \varphi$. All together $\fw,w \models \neg \K_\agi \varphi \rightarrow \K_\agi \neg \K_\agi \varphi$.
\end{proof}

Knowledge over S5 models does therefore behave `classically' and not `intuitionistically'. In particular, the (semantic) disjunction property fails!

\begin{lemma}\label{l: disjunction property S5 fails}
    Let $p \in \Prop$ and $\agi \in \A$. The formula $\K_\agi p \vee \neg \K_\agi p$ is valid over the class of S5 frames, however neither $\K_\agi p$ nor $\neg \K_\agi p$ are valid.
\end{lemma}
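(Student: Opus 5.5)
Validity of $\K_\agi p \vee \neg \K_\agi p$ over S5 frames is the special case $\varphi = p$ of item 1 of Lemma~\ref{c: classical truth conditions for implications}, so I would simply cite it. What remains is to exhibit two S5 models: one with a world falsifying $\K_\agi p$, and one with a world falsifying $\neg \K_\agi p$.

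For $\K_\agi p$, I would take a single world $w$ with $\leq = \{(w,w)\}$, $R_\agi = \{(w,w)\}$ for every agent, and $\V(w) = \emptyset$. This is trivially an S5 frame: the order is a partial order, each $R_\agi$ is an equivalence relation, and triangle confluence holds because the only instance is $w \leq w$, $w \Rel_\agi w$. Since $w \Rel_\agi w$ and $p \notin \V(w)$, we get $\fw, w \not\models \K_\agi p$.

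For $\neg \K_\agi p$, I would take the same one-world frame but set $\V(w) = \{p\}$. Then $\fw, w \models \K_\agi p$. Since $w \in w^\uparrow$, and $\fw,w \models \K_\agi p$ while $\fw,w\not\models \bot$, it follows that $\fw, w \not\models \K_\agi p \rightarrow \bot$, i.e. $\fw, w \not\models \neg \K_\agi p$.

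The only point needing care is that the models really are S5 epistemic models: that triangle confluence and monotonicity of $\V$ hold. In one-world models this is immediate, so I expect no real obstacle. The proof is just these two countermodels together with the cited lemma.
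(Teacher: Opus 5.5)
Your proposal is correct and follows the paper's proof: the paper also gets validity from item 1 of Lemma~\ref{c: classical truth conditions for implications} and calls the invalidity of $\K_\agi p$ and $\neg \K_\agi p$ ``immediate''. Your two one-world S5 countermodels (with $\V(w)=\emptyset$ and $\V(w)=\{p\}$ respectively) spell out that step explicitly and correctly.
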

\begin{proof}
    That $\K_\agi p \vee \neg \K_\agi p$ is valid over the class of S5-frames was proven in Corollary \ref{c: classical truth conditions for implications}. That neither $\K_\agi p$ nor $\neg \K_\agi p$ are valid is immediate.
\end{proof}

The proof of Lemma \ref{l: semantic disjunction property} breaks down for S5 models in the observation that worlds `below' $s$ do not play a role in evaluating formulae at $s$, since $u \leq s$ implies $s \Rel_\agi u$. Hence the property that $\fw_G, s \models \gamma$ if and only if $\fw, s \models \gamma$ for all $s \in \W^\fw$ does not hold.

\subsection{Interpretation}\label{ss: interpretation}

We present our interpretation of $\ICK$ as a logic to reason about knowledge over growing information states. We give one particular interpretation of agents as constructive mathematicians, but remark that many other interpretations are compatible with $\ICK$.

In 1965, Kripke introduced Kripke models for intuitionistic logic~\cite{Kripke_1965}, which are partially ordered sets $(\W, \leq)$ equipped with a monotone valuation function. Kripke established an interpretation of such models as information orderings: each world corresponds to an information state and the $\leq$-relation models extensions of states with additional information (but without losing any of the previously available data). In this view, propositions absent from a world are not considered to be false, but rather unsupported by the available information and may thus become true at a later state. Implications $p \rightarrow q$ then express consequences of gaining information: formally, $p \rightarrow q$ is true at world $w$ if the classical implication holds in all worlds $v \in w^\uparrow$; the formula $p \rightarrow q$ therefore expresses that if information $p$ is gained, information $q$  is gained as well. When applied to the philosophy of intuitionism, we may regard each information state $w$ as the set of theorems of mathematics that have been proven up to a certain time point. States $v \in w^\uparrow$ model later stages where more theorems have been added. In such reading, the truth relation $\models$ should not be read as 'truth' but rather as `has been proven': $w \models \varphi$ expresses that the statement formalized by $\varphi$ has been proven at $w$. In this setting $w \models p \to q$ expresses that if $p$ is proven, then we obtain a proof of $q$ as well. In other words, a proof of $p \to q$ is a transformation of proofs of $p$ into proofs of $q$, which aligns with the BHK interpretation of $\mathsf{IPL}$. On the other hand $w \not \models \varphi$ expresses that $\varphi$ has not (yet) been proven. The lack of proof does not imply that $\varphi$ is false or unprovable, as at a later point $v \geq w$ a proof of $\varphi$ might be found and so $v \models \varphi$. Negations $\neg \varphi$ instead express `unprovability': $w \models \neg \varphi$ means that a proof has been found showing that proofs of $p$ can be transformation into proofs of $\bot$, which implies that $p$ cannot be proven and so that $v \not \models \varphi$ for all $v \in w^\uparrow$. It follows that the \emph{law of excluded middle} does not hold.

Within this interpretation the logic $\ICK$ may be regarded as an epistemic logic to reason about what agents (mathematicians) know about mathematics. The actual world represents the set of theorems that has been proven up to this point. However, most mathematicians are unaware of all proven mathematical theorems. Instead, they only know a subset of the actual world, namely those theorems they proved themselves or those where they read and understood a proof. This is formalized by the modal accessibility relation, which relates each state with all states that some agent $\agi$ considers possible, i.e. all states which are consistent with $\agi$'s information.  Whenever a new theorem is proven, the actual world $w$ is updated with this additional information and thus changes from $w$ to $v$ for some $v \in w^\uparrow$. If the new theorem is proven by an agent different from $\agi$, it is highly likely that $\agi$ itself is unaware of the existence of this theorem (or the fact that it has ben proven) and so $\agi$ is unaware that the actual world has changed. To ensure that the change of $w$ to $v$ does not fundamentally change $\agi$'s knowledge (especially when $\agi$ is unaware of the change), triangle confluence captures that all worlds considered possible by $\agi$ at $v$ were also considered possible by $\agi$ at $w$. The modality $K_\agi$ then captures $\agi$'s knowledge. The modality $\E$ (`everybody knows') expresses the shared knowledge of the mathematical community. $\C$, on the other hand, expresses those mathematical facts that are commonly known by all agents. This includes logical reasoning principles commonly used in proofs or basic results that are reasonable to be claimed as common knowledge (e.g. `$2$ is even').
Considering $\ICK$ over the class of all frames implies that only very limited idealizations are placed on $\agi$'s knowledge; the only assumption is that $\agi$ is capable of applying modus ponens and therefore knows $\psi$ whenever they knows $\varphi$ and that $\varphi$ implies $\psi$. In particular it is not assumed that $\agi$ is infallible, so they may have proven a theorem wrongly and thus believe it to be true even though there is no correct proof; or worse, they might give a flawed proof of a wrong statement and thus believe a statement that is wrong. Therefore $\agi$ might not consider the actual world to be a possibility. Note that without reflexivity, agents do not necessarily consider worlds `above' the actual world possible. This is crucial for the belief interpretation, as $\agi$ might have proven a wrong theorem and extensions of the current world might contradict their beliefs. The logic $\ICK$ over the class of all epistemic frames may therefore be regarded as a weak epistemic logic reasoning about mathematicians beliefs about mathematics. Adding additional frame conditions on epistemic frames then corresponds to idealizations of mathematicians. Reflexivity corresponds to the fact that every agent $\agi$ considers the actual world a possibility and therefore that mathematicians do not make mistakes and only provide correct proofs. Reflexivity combined with triangle confluence guarantees that at $w$ agent $\agi$ considers all extensions $v \in w^\uparrow$ to be possible, which is a natural assumption for agents that are infallible: since $\agi$ only provides correct proofs, their knowledge is a subset of the actual world and every extension of it should be consistent with their knowledge. $\ICK$ over reflexive frames is thus an epistemic logic reasoning about infallible mathematicians knowledge about mathematics. S4 frame conditions further idealize the agents by assuming that they are aware of their own knowledge (positive introspection). So far, adding additional frame conditions has led to more and more idealized agents. Interestingly enough, this is no longer the case when moving to S5 epistemic frames. Here, the combination of S5 frame conditions with triangle confluence leads to models where the entire intuitionistic tree belongs to the equivalence class of agent $\agi$, implying (due to monotonicity) that $\agi$ only knows those formulas that are already true at the lowermost world. In that sense $\ICK$ over S5 frames models hyper-skeptical agents: mathematicians who only believe the most basic theorems (such as logical tautologies) and distrust any theorem that has ever been proven at a later point. This is also shown by the fact that the `boxed law of excluded middle' ($K_\agi p \vee \neg K_\agi p$ is valid) holds in this logic, showing that for any proposition either $\agi$ knows it (namely if the proposition is contained at the root) or $\agi$ will never know the proposition (since they distrust any proof). Overall, $\ICK$ can be regarded as an epistemic logic for reasoning about knowledge states of mathematicians with various degrees of idealization. Importantly, all mathematicians are constructivists in the sense that they do not believe (or know) that any proposition is either true or false. The interpretation presented here can of course be adapted to account for agents which are not mathematicians, but who require constructive reasoning, which might be of interest to areas where such reasoning is studied. We leave such considerations for future work.

\subsection{Expressivity}\label{ss: expressivity}

We finish this section with a few remarks about the expressivity of $\ICK$. We do not provide any proofs but refer to the relevant papers where the discussed results are established.

First, it is possible to evaluate formulae of $\LICK$ over the class of birelational models, which contains models that are not triangle confluent. In this case the Monotonicity Property fails, since modal successors of $w$ and $v$ for $w \leq v$ are unrelated. Thus it is possible that $K_\agi \varphi$ is true at $w$ but false at $v$. To retain monotonicity we need to use `intuitionistic' truth conditions for the epistemic modalities when evaluating formulae on birelational models. Namely,
\begin{center}
    \begin{tabular}{l l l}
$\fw, w \models_i \K_\agi \varphi$ & iff & for all $v \in w^\uparrow$ and for all $u \in R_\agi(v)$, $\fw, u \models_\agi \varphi$\\
$\fw, w \models_i\C \varphi$ & iff & for all $v \in (\Tilde{R})^*(w)$, $\fw, v \models_\agi \varphi$\\
    \end{tabular}
\end{center}
with $\Tilde{R}$ defined as $\bigcup_{R_\agi \in \rel}(R_\agi \circ \leq)$ where $\circ$ denotes relation composition.\footnote{In other words, $w \mathrel{\Tilde{R}} v$ holds if there exists $u$ with $w \leq u$ and $u \Rel_\agi v$ for some $\agi \in \A$.} Note that the intuitionistic truth conditions take into account exactly the same worlds as the standard truth conditions introduced before when the model is closed under triangle confluence. It is therefore not surprising that the following lemma holds. For a proof, see for example~\cite{litak_2018, Zenger_2025}.

\begin{lemma}
    Let $\mathbf{ICK_B}$ denote the set of valid $\LICK$-formulae over the class of birelational frames (with respect to $\models_i$). Then $\mathbf{ICK_B} = \mathbf{ICK}$.
\end{lemma}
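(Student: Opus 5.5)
We prove the two inclusions separately. Throughout, $\models_i$ is read with the intuitionistic clauses for $\K_\agi$ and $\C$ above (the subscript $\agi$ in the displayed clauses is a typo for $i$), and with the usual clauses for the other connectives, including the intuitionistic clause for $\rightarrow$.

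For $\mathbf{ICK_B} \subseteq \mathbf{ICK}$, observe that every epistemic frame is a birelational frame. It then suffices to show that on an epistemic model $\fw$, for all worlds $w$ and formulae $\varphi$, we have $\fw, w \models_i \varphi$ iff $\fw, w \models \varphi$. This goes by induction on $\varphi$, and only the modal cases need an argument.

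\begin{itemize}
\item For $\K_\agi$, triangle confluence together with reflexivity of $\leq$ gives $R_\agi \circ \leq = R_\agi$ as sets of pairs. Here $w \mathrel{(R_\agi\circ\leq)} u$ means there is $v$ with $w \leq v \Rel_\agi u$. Hence the intuitionistic clause quantifies over exactly $R_\agi(w)$, and the induction hypothesis closes the case.
\item For $\C$, the same identity gives $\Tilde R = R$, hence $(\Tilde R)^* = R^*$, and the case closes in the same way.
\end{itemize}

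So the two truth relations agree on epistemic models. Any formula valid over all birelational frames under $\models_i$ is therefore valid over all epistemic frames under $\models$.

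For $\mathbf{ICK} \subseteq \mathbf{ICK_B}$, we argue by contraposition. Suppose $\fw=(\W,\leq,\rel,\V)$ is a birelational model and $\fw, w \not\models_i \varphi$. We build an epistemic model $\fw^c$ on the same worlds, with the same $\leq$ and $\V$, by replacing each $R_\agi$ with $R^c_\agi \coloneqq R_\agi \circ \leq$, that is, $w \Rel^c_\agi u$ iff there is $v$ with $w \leq v \Rel_\agi u$. We then need three facts.

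\begin{enumerate}
\item $R^c_\agi$ is triangle confluent. If $w \leq w'$ and $w' \leq v \Rel_\agi u$, then transitivity of $\leq$ gives $w \leq v$, so $w \Rel^c_\agi u$. Hence $\fw^c$ is an epistemic model.
\item For all $w$ and $\varphi$, $\fw, w \models_i \varphi$ iff $\fw^c, w \models \varphi$, by induction on $\varphi$.
\begin{itemize}
\item For $\K_\agi$, the set of $u$ quantified over in the $\models_i$ clause is exactly $R^c_\agi(w)$.
\item For $\C$, we have $\Tilde R = \bigcup_\agi R^c_\agi = R^c$, so $(\Tilde R)^* = (R^c)^*$ and the quantified sets coincide.
\item The propositional cases are unchanged, since $\leq$ and $\V$ are untouched.
\end{itemize}
\item Consequently $\fw^c, w \not\models \varphi$, so $\varphi \notin \mathbf{ICK}$.
\end{enumerate}

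The main point needing care is that the inductions in both directions treat the modal clauses at the level of relation identities, $R_\agi \circ \leq = R_\agi$ on epistemic frames and $\Tilde R = R^c$ in general, before invoking the induction hypothesis. One must also check that $(\Tilde R)^*$ is computed with the same composed relations at every step, and not only at the first step. This holds because $\Tilde R$ is defined as a single relation whose reflexive–transitive closure is taken, and that relation equals $R^c$ outright.
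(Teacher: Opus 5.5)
Your proof is correct and follows the route the paper indicates. The paper gives no proof of its own: it remarks that on triangle-confluent models the intuitionistic clauses quantify over exactly the same worlds as the standard ones and defers the details to the cited literature. Your model $\fw^c$ is precisely the triangle closure $\hat{\fw}$ (with $R_\agi \circ \leq$) that the paper later uses in its completeness argument, and your identities $R_\agi \circ \leq = R_\agi$ on epistemic frames and $\Tilde{R} = R^c$ in general are exactly what make both inclusions go through.
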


Second, $\ICK$ admits an alternative interpretation as an intuitionistic version of \emph{linear temporal logic} ($\mathsf{LTL}$). For this the language $\LICK$ contains only one agent whose modality is denoted by $\X$, where $\X \varphi$ expresses that $\varphi$ is true in the next time step. The common knowledge operator $\C$ then expresses arbitrary (finite) iterations of $\X$ and is thus interpreted as the temporal `henceforth' operator $\G$, where $\G \varphi$ expresses that $\varphi$ is true in all future time steps. For this interpretation it is natural to restrict the class of birelational models to those models where the unique modal relation is a function. A \emph{functional model} is therefore a birelational model $\fw = (\W, \leq, f, \V)$ where $f$ is a partial function. A \emph{total functional model} is a functional model where $f$ is a total function which models time. Due to the lack of triangle confluence, the intuitionistic truth conditions are used to retain the Monotonicity Property. Surprisingly, the resulting logic once again coincindes with $\mathbf{ICK}$, implying that $\LICK$ cannot distinguish between birelational, epistemic and functional frames. Similarly, $\LICK$ cannot distinguish between serial birelational frames (i.e. birelational models where the modal accessibility relation is serial), serial epistemic frames and total functional frames. For a semantic proof of the following lemma, see~\cite[Theorem 3.21]{Zenger_2025}. For a proof theoretic proof, see {\cite[Theorem 5.15]{afshari_intuitionistic_2024}}.

\begin{lemma}\label{l: functional and triangle frames}
    Consider a unimodal version of $\LICK$ and let $\mathbf{ILTL}$ denote the set of valid $\LICK$-formulae over the class of functional frames (with respect to $\models_i$). Then $\mathbf{ILTL} = \mathbf{ICK}$.
\end{lemma}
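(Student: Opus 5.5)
We need to show that, for the unimodal language, validity over functional frames under $\models_i$ coincides with validity over epistemic frames under $\models$. We prove the two inclusions separately. In each direction we argue contrapositively: a countermodel of one kind is transformed into a countermodel of the other kind.

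\emph{$\mathbf{ICK} \subseteq \mathbf{ILTL}$.} Let $\fw=(\W,\leq,f,\V)$ be a functional model with $\fw,w\not\models_i\varphi$. Define $R \coloneqq f\circ\leq$, i.e. $w \Rel v$ iff there is $u\ge w$ with $f(u)=v$. This $R$ is triangle confluent by transitivity of $\leq$, so $(\W,\leq,R,\V)$ is an epistemic model. We then show by induction on formulae that $\fw,w\models_i\psi$ iff $(\W,\leq,R,\V),w\models\psi$:
\begin{itemize}
\item The modal clause holds because $\{u : \exists v\ge w,\ u=f(v)\} = R(w)$.
\item The $\C$ clause holds because $(\Tilde R)^* = R^*$, since $\Tilde R = R$ by construction.
\end{itemize}
This direction is routine.

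\emph{$\mathbf{ILTL} \subseteq \mathbf{ICK}$.} Let $\fw=(\W,\leq,R,\V)$ be an epistemic model with $\fw,w_0\not\models\varphi$. We unravel $\fw$ into a functional model whose worlds are finite "histories"
\[
(w_0,v_1,\dots,v_n) \quad\text{with } v_i\in\W \text{ and } v_{i}\Rel v_{i+1}.
\]
The order $\leq'$ lets a history be extended intuitionistically at its last entry, and also branch, so that each distinct $R$-choice becomes its own $\leq'$-successor. The partial function $f'$ then appends the chosen successor. Concretely, we use pairs $(v,u)$ with $v\Rel u$ as "committed" worlds: a world $v$ sits below one copy $\hat v_u$ per $R$-successor $u$, and $f'(\hat v_u)=u$. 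The valuation of $\hat v_u$ is $\V(v)$, and $\leq$-successors of $\hat v_u$ are copies of $\leq$-successors of $v$.

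The key step is a truth lemma: $\fw,v\models\psi$ iff $v$ (and each of its copies) satisfies $\psi$ under $\models_i$. This needs two facts.
\begin{itemize}
\item \emph{Modal clause.} The set of $f'$-images reachable via $\leq'$ from $v$ must equal $R(v)$ up to copies. This is exactly where triangle confluence enters: the $f'$-images above $v$ come from $R$-successors of $\leq$-successors of $v$, which are $R$-successors of $v$ itself.
\item \emph{Implication clause.} The $\leq'$-successors of $v$ must be copies of $\leq$-successors of $v$, with matching truth values.
\end{itemize}
The $\C$ clause then follows because $(\Tilde R')^*$ projects onto $R^*$, proved by induction on path length as in the Monotonicity Lemma.

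\emph{Main obstacle.} The difficulty is making the construction a genuine partial order on which $f'$ is a function, while keeping valuations monotone and the truth lemma valid, since copies multiply along both relations. I would handle this by working with the generated tree unravelling: nodes are finite alternating sequences of $\leq$-steps and "choose successor $u$" steps. I would then prove the truth lemma by simultaneous induction on $\psi$ over all nodes, using a bisimulation-style invariant: each node projects to a world of $\fw$, and its $\leq'$- and $(f'\circ\leq')$-successors project exactly onto the $\leq$- and $R$-successors of that world.
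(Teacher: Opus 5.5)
Your argument is correct in outline, but it follows a different route from the one the paper describes. The paper gives no proof of this lemma. It cites a semantic proof in \cite[Theorem 3.21]{Zenger_2025} and a proof-theoretic one in \cite[Theorem 5.15]{afshari_intuitionistic_2024}, and describes only the latter. There the unimodal cyclic calculus is shown sound over functional frames under $\models_i$. It is shown complete for them by adapting Prover's turn (the choice rule) in the proof-search game, so that a refutation yields a functional countermodel. Since one calculus is then sound and complete for both frame classes, the logics coincide.

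You instead transform countermodels directly in both directions. Your first direction ($R \coloneqq f\circ\leq$, so $\Tilde{R}=R$) is fine as written. For the second, the invariant you isolate is exactly what is needed: the projection is monotone, and the $\leq'$- and $(f'\circ\leq')$-successors of a node project exactly onto the $\leq$- and $R$-successors of its image. This suffices because truth under $\models_i$ depends only on $\leq'$ and $f'\circ\leq'$. The proof-theoretic route obtains the equality as a by-product of a uniform completeness theorem, but needs the whole proof-search and determinacy machinery. Your route is elementary, applies to arbitrary (also infinite) epistemic models, and extends verbatim to $n$ agents with $n$ partial functions.

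One point needs to be made explicit, because a naive version of your construction breaks exactly there. Your modal-clause argument via triangle confluence only shows that the $f'$-images above a node project into $R(v)$. The converse inclusion at a committed copy $\hat v_u$ holds only if, above $\hat v_u$, there is again a fresh copy of $v$ itself (via the reflexive step $v\leq v$), carrying its own commitments to every $u'\in R(v)$.

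If copies are made only once, or ``$\leq$-successors'' is read strictly, the truth lemma fails. Take $v$ $\leq$-maximal with $R(v)=\{u_1,u_2\}$, $u_1\models p$ and $u_2\not\models p$. Then $\hat v_{u_1}\models_i \K p$, so $v\not\models_i\neg\K p$, although $\fw,v\models\neg\K p$.

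With the re-copying, every node lies below an infinite ascending chain, so the functional model is infinite even when $\fw$ is finite. This is unavoidable. The formula $\neg\neg(\K(p\vee q)\to \K p\vee\K q)$ holds on every finite functional frame: at a $\leq$-maximal world $m$ the set $(f\circ\leq)(m)$ has at most one element, so the implication holds at $m$, and every world lies below such an $m$. Yet the formula is not in $\mathbf{ICK}$: take a maximal world with two $R$-successors, one satisfying only $p$ and the other only $q$.

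For a similar reason, do not require strict alternation of $\leq$-steps and choice steps. A node whose projection has no $R$-successor must still have $\leq'$-successors projecting onto all $\leq$-successors of its image.
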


This result can be generalized to an arbitrary finite number $n$ of agents and functional models with $n$ functional relations.

\section{Axiomatizations}\label{s: axiomatization}

This section discusses Hilbert-style axiomatizations for $\ICK$. We begin by introducing four axiomatizations which capture the validities of $\ICK$ over the classes of all frames, of reflexive frames, of S4 frames and of S5 frames respectively. The axiomatizations are \emph{modular} in the sense that the basic axiomatization $\hick$ which captures validities over the class of all frames is the foundation for the other three systems, which can be obtained by adding additional axioms. After establishing basic properties about derivability in these systems, we then show soundness by a standard argument and completeness via canonical model constructions. Axiomatizations for $\ICK$ over the classes of all frames and of reflexive frames were already developed and proven sound and complete by Marti in his PhD thesis~\cite{marti_2017}.\footnote{Marti's axiomatizations use slightly different axioms and rules for common knowledge than our axiomatizations.} Here we extend Marti's work to the frame classes S4 and S5. Moreover we provide a modular approach to axiomatizing $\ICK$ and give uniform proofs of soundness and completeness. Additionally, since the constructed canonical model for each axiomatization is finite, we obtain the finite model property for all considered logics as a corollary of completeness and therefore decidability of the validity problem (c.f. Table~\ref{tab:validity problem}).

\subsection{Basic Definitions and Properties}

An axiomatization consists of a finite set of \emph{axiom schemes} and a finite set of \emph{inference rules}. An \emph{inference rule} takes the shape
\begin{equation*}
    \infer[\mathsf{r}]{\psi}{\varphi_1 & \ldots & \varphi_n}
\end{equation*}
where $\varphi_1, \ldots, \varphi_n$ are called the \emph{premises} and $\psi$ is called the \emph{conclusion}. In the presentation of a rule the formulae $\varphi_i$ and $\psi$ are metavariables which can be uniformly substituted to obtain \emph{instances} of the rule. 
An axiom scheme is a rule without premises, i.e. a formula. We call instances of an inference rule \emph{rule instances} and instances of axiom schemes \emph{axioms}. We assume familiarity with standard proof-theoretic notions (see~\cite[Chapter 2]{Zenger_2025}).

\begin{table}
\centering
        \begin{tabular}{|l l|}
        \hline
         $\mathsf{Int}$: &  Intuitionistic tautologies  \\
         $\mathsf{K_\agi}$: & $\K_\agi(\varphi \rightarrow \psi) \rightarrow (\K_\agi \varphi \rightarrow \K_\agi \psi)$  \\
          $\mathsf{\K_\agi^c}$:& $(\K_\agi \varphi \rightarrow \psi) \rightarrow (\neg \K_\agi \varphi \vee \psi)$  \\
          $\mathsf{T_\agi}$:&  $\K_\agi \varphi \rightarrow \varphi$   \\
         $\mathsf{S4_\agi}$:& $\K_\agi \varphi \rightarrow \K_\agi \K_\agi \varphi$  \\
         $\mathsf{S5_\agi}$ & $\neg \K_\agi \varphi \rightarrow \K_\agi \neg \K_\agi \varphi$ \\
         $\mathsf{Fix}$:& $\C \varphi \leftrightarrow (\varphi \wedge \E \C \varphi)$  \\
         \hline
        \end{tabular}
     \quad
     \begin{tabular}{|l l|}
        \hline
        $\mathsf{MP}$: $\infer{\psi}{\varphi & \varphi \rightarrow \psi}$ & $\mathsf{Nec_\agi}$: $\infer{\K_\agi \varphi}{\varphi}$ \\
         $\mathsf{Mon}$: $\infer{\C \varphi \rightarrow \C \psi}{\varphi \rightarrow \psi}$ &  $\mathsf{Ind}$: $\infer{\varphi \rightarrow \C \varphi}{\varphi \rightarrow \E \varphi}$\\
         \hline
        \end{tabular}
     \caption{Axioms and rules for $\ICK$.}  
     \label{t: axioms and rules of Hilbert system}
\end{table}

\begin{definition}
    Consider the inference rules and axioms depicted in Table~\ref{t: axioms and rules of Hilbert system}. Define the following axiom systems.
    \begin{enumerate}
        \item $\hick$ consists of the axioms $\mathsf{Int}$, $\mathsf{K_\agi}$ for $\agi \in \A$, $\mathsf{Fix}$ and the rules $\mathsf{MP}$, $\mathsf{Nec_\agi}$ for $\agi \in \A$, $\mathsf{Mon}$ and $\mathsf{Ind}$.
        \item $\mathrm{ICKT}$ extends $\mathrm{ICK}$ by the axioms $\mathsf{T_\agi}$ for $\agi \in \A$.
        \item $\mathrm{ICKS4}$ extends $\mathrm{ICKT}$ by the axioms $\mathsf{S4_\agi}$ for $\agi \in \A$.
        \item $\mathrm{ICKS5}$ extends $\mathrm{ICKS4}$ by the axioms $\mathsf{\K_\agi^c}$ and $\mathsf{S5_\agi}$ for $\agi \in \A$.
    \end{enumerate}
\end{definition}
The axiom scheme $\mathsf{Int}$ consists of all \emph{intuitionistic tautologies}, i.e. all formulae $\varphi \in \LICK$ which are obtained from a valid formula of intuitionistic propositional logic via a uniform substitution. For example the formula $p \rightarrow p$ is intuitionistically valid and so $\K_\agi \C q \rightarrow \K_\agi \C q$ is an intuitionistic tautology (obtained from substituting $p$ by $\K_\agi \C q$). We make use of intuitionistic tautologies instead of building our axiomatizations on top of an axiomatization for intuitionistic logic to shorten derivations. The axioms $\mathsf{K_a}$, $\mathsf{T_a}$, $\mathsf{S4_a}$ and $\mathsf{S5_\agi}$ are the standard axioms for knowledge from classical epistemic logic (see e.g.~\cite{ditmarsch_2017}). The axiom $\mathsf{K_a}$ expresses that $K_\agi$ distributes over $\to$ and so that $K_\agi$ is a \emph{normal modality}. The axiom $\mathsf{T_a}$ expresses \emph{factivity of knowledge}: knowledge implies truth. Axiom $\mathsf{S4_a}$ expresses \emph{positive introspection}: if $\agi$ knows $\varphi$, then $\agi$ knows that they know $\varphi$. Axiom $\mathsf{S5_\agi}$ on the other hand expresses \emph{negative introspection}: $\agi$ is aware of the limitation of their knowledge, i.e. they know what they don't know. Axiom $\mathsf{\K_\agi^c}$ is the `classicality of knowledge' axiom, which expresses that knowledge behaves classically in the following sense. Note that $(\neg \K_\agi \varphi \vee \psi) \rightarrow (\K_\agi \varphi \rightarrow \psi)$ is valid over the class of all frames. Thus in the presence of $\mathsf{\K_\agi^c}$  knowledge satisfies the classical equivalence $(\neg \K_\agi \varphi \vee \psi) \leftrightarrow (\K_\agi \varphi \rightarrow \psi)$. The axiom $\mathsf{Fix}$ characterizes $\C \varphi$ as a fixed point of the propositional function $\mathsf{x} \mapsto \varphi \wedge \E \mathsf{x}$ and expresses the equivalence $\C \varphi \leftrightarrow \varphi \wedge \E \C \varphi$. The rules $\mathsf{MP}$ and $\mathsf{Nec_a}$ are standard, while $\mathsf{Mon}$ characterizes $\C$ as a monotone operator. The rule $\mathsf{Ind}$ formalizes induction: the rule expresses that if $\varphi \rightarrow \E \varphi$ and $\varphi$ have been obtained, then we can derive $\C \varphi$.

\emph{Derivations} in our axiom systems are finite trees labelled by formulae according to the axioms and rules of the system. We use standard terminology regarding trees, such as root, node, leaf, parent, child, and so on. For precise definitions, see~\cite[Chapter 2]{Zenger_2025}.
    
\begin{definition}
    Let $\Gamma \cup \{\varphi\} \subseteq \LICK$ and $\hickp \in \{ \hick, \hickt, \hicks, \hickss\}$. A \emph{$\hickp$-derivation of $\varphi$ with assumptions in $\Gamma$} is a finite tree $\pi$ labelled by $\LICK$-formulae such that the following hold.
    \begin{enumerate}
    \item The root is labelled by $\varphi$.
        \item Every leaf is labelled by an axiom or by a formula $\psi \in \Gamma$.
        \item Every parent node is labelled by the conclusion of a rule instance of $\hickp$ with its children labelled by the corresponding premises.
        \item If a node $u \in \pi$ is labelled by the conclusion of a rule instance of $\mathsf{Nec}$, $\mathsf{Mon}$ or $\mathsf{Ind}$, then every leaf of the subtree of $\pi$ rooted at $u$ is labelled by an axiom.
    \end{enumerate}
    We write $\Gamma \vdash_\mathrm{P} \varphi$ if there exists a $\hickp$-derivation of $\varphi$ with assumptions in $\Gamma$ and call $\varphi$ \emph{derivable from $\Gamma$ in $\hickp$}. If $\Gamma = \emptyset$, then we write $\vdash_\mathrm{P} \varphi$ and call $\varphi$ simply \emph{derivable in $\hickp$}. If the proof system $\mathrm{P}$ is clear from context, we also write $\Gamma \vdash \varphi$.
\end{definition}

Note that the rules $\mathsf{Nec}$, $\mathsf{Mon}$ and $\mathsf{Ind}$ cannot be applied to assumptions. We denote sets of formulae by $\Gamma$, $\Delta$ and so on and we write $\Gamma, \Delta$ for $\Gamma \cup \Delta$ and $\Gamma, \varphi$ for $\Gamma \cup \{\varphi\}$. If $\Gamma= \emptyset$, define $\bigwedge \Gamma \coloneqq \top$ and $\bigvee \Gamma \coloneqq \bot$. Derivations (with or without assumptions) are denoted by $\pi$ or $\tau$. Note that since $\hickt, \hicks$ and $\hickss$ are extensions of $\hick$, every $\hick$-derivation is also a derivation in the extended systems. 
Before we prove soundness, we establish basic properties about derivability in our systems, including the Deduction Theorem. The following lemma is straightforward.

\begin{lemma}\label{l: derivable formulae}
    Let $\varphi, \psi \in \LICK$ and $\hickp \in \{\hick, \hickt, \hicks, \hickss\}$.
    \begin{enumerate}
    \item\label{l: derivable formulae item 1} If $\vdash_\hickp \varphi$, then $\vdash_\hickp \gamma \rightarrow \varphi$ for any $\gamma \in \LICK$.
    \item\label{l: derivable formulae item 2} If $\vdash_{\hickp} \varphi \rightarrow \psi$ and $\vdash_{\hickp} \psi \rightarrow \chi$, then $\vdash_{\hickp} \varphi \rightarrow \chi$.
    \item\label{l: derivable formulae item 3} $\vdash_\hickp (\varphi \rightarrow \psi) \rightarrow (\neg \psi \rightarrow \neg \varphi)$ (contraposition).\footnote{Note that the other direction, i.e. $(\neg \psi \rightarrow \neg \varphi) \rightarrow (\varphi \rightarrow \psi)$ is \emph{not} valid intuitionistically!}
    \end{enumerate}
\end{lemma}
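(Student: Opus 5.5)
All three items follow by combining the axiom scheme $\mathsf{Int}$ with $\mathsf{MP}$. No modal axioms or rules are needed, so the argument is uniform in $\hickp \in \{\hick, \hickt, \hicks, \hickss\}$. The key observation is that each statement is an instance of an intuitionistic propositional tautology. Because $\mathsf{Int}$ is closed under uniform substitution of arbitrary $\LICK$-formulae for propositional variables, every such instance is an axiom of $\hick$, and hence of all extensions of $\hick$.

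For item~\ref{l: derivable formulae item 1}, take the tautology $p \rightarrow (q \rightarrow p)$ and substitute $\varphi$ for $p$ and $\gamma$ for $q$. This gives the axiom $\varphi \rightarrow (\gamma \rightarrow \varphi)$. Given a derivation of $\varphi$, one application of $\mathsf{MP}$ yields $\gamma \rightarrow \varphi$.

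For item~\ref{l: derivable formulae item 2}, use the tautology $(p \rightarrow q) \rightarrow ((q \rightarrow r) \rightarrow (p \rightarrow r))$ with $\varphi, \psi, \chi$ substituted for $p, q, r$. Two applications of $\mathsf{MP}$, with the given derivations of $\varphi \rightarrow \psi$ and $\psi \rightarrow \chi$, produce $\varphi \rightarrow \chi$. The derivations built in items 1 and 2 have only axioms at their leaves, so the side condition on $\mathsf{Nec}$, $\mathsf{Mon}$ and $\mathsf{Ind}$ is never violated.

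For item~\ref{l: derivable formulae item 3}, note that $\neg \psi$ abbreviates $\psi \rightarrow \bot$. The formula $(p \rightarrow q) \rightarrow ((q \rightarrow \bot) \rightarrow (p \rightarrow \bot))$ is an instance of the transitivity tautology above with $r := \bot$, so it is itself an intuitionistic tautology. Substituting $\varphi, \psi$ gives the desired formula directly as an instance of $\mathsf{Int}$.

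The only point needing any care is confirming that these schemata are intuitionistically valid rather than merely classically valid. This is standard: the first two are the $\mathsf{K}$ combinator and the composition principle, both derivable in any Hilbert system for $\mathsf{IPL}$. Contraposition in this direction needs no excluded middle, unlike its converse, as the footnote notes. I expect no real obstacle.
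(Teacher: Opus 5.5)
Your proposal is correct and follows the same approach as the paper. The paper writes out only item 1, using the tautology $\varphi \rightarrow (\gamma \rightarrow \varphi)$ and one application of $\mathsf{MP}$, and says the other items are similar; your treatment of items 2 and 3 as $\mathsf{Int}$ instances plus at most two applications of $\mathsf{MP}$ fills in exactly those cases.
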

\begin{proof}
    The proofs are standard. For example for 1. suppose that $\vdash_\hickp \varphi$ and let $\gamma \in \LICK$. Since $\varphi \rightarrow (\gamma \rightarrow \varphi)$ is an intuitionistic tautology, we obtain $\vdash_\hickp \gamma \rightarrow \varphi$ by $\mathsf{MP}$. The other cases are similar.
\end{proof}

\begin{lemma}[Assumption weakening] \label{l: assumption weakening}
    Let $\hickp \in \{\hick, \hickt, \hicks, \hickss\}$. If $\Gamma \vdash_\mathrm{P} \varphi$ and $\Gamma \subseteq \Delta$, then $\Delta \vdash_\mathrm{P} \varphi$.
\end{lemma}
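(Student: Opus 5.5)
The proof is essentially immediate from the definition of a derivation with assumptions. Suppose $\Gamma \vdash_\hickp \varphi$ and $\Gamma \subseteq \Delta$. We take a $\hickp$-derivation $\pi$ of $\varphi$ with assumptions in $\Gamma$ and claim that the very same labelled tree $\pi$ is a $\hickp$-derivation of $\varphi$ with assumptions in $\Delta$. We check the four clauses of the definition one by one.

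Clause 1 is unchanged, since the root of $\pi$ is still labelled by $\varphi$. Clause 3 concerns only the local rule instances of $\hickp$ at parent nodes, which do not mention the assumption set, so it holds verbatim. Clause 4 requires that below any application of $\mathsf{Nec}_\agi$, $\mathsf{Mon}$ or $\mathsf{Ind}$ every leaf is labelled by an axiom. This condition refers only to axioms and not to the assumption set, so it is also preserved.

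The only clause that mentions the assumptions is clause 2. Every leaf of $\pi$ is labelled either by an axiom or by some $\psi \in \Gamma$. Since $\Gamma \subseteq \Delta$, any such $\psi$ also lies in $\Delta$, so every leaf is labelled by an axiom or by a formula in $\Delta$. Hence $\pi$ witnesses $\Delta \vdash_\hickp \varphi$.

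There is no real obstacle here. The one point worth noting explicitly is the side condition in clause 4 on $\mathsf{Nec}_\agi$, $\mathsf{Mon}$ and $\mathsf{Ind}$: it must not silently depend on $\Gamma$, and it does not.
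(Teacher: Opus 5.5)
Your proof is correct and follows the paper's approach: the paper just notes that the claim is immediate from the definition of a derivation with assumptions. You make that explicit by checking, clause by clause, that the same tree witnessing $\Gamma \vdash_\hickp \varphi$ also witnesses $\Delta \vdash_\hickp \varphi$, and your remark that the side condition on $\mathsf{Nec}_\agi$, $\mathsf{Mon}$ and $\mathsf{Ind}$ does not depend on the assumption set is the only point needing care.
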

\begin{proof}
    This follows immediately from the definition of a derivation with assumptions.
\end{proof}

\begin{theorem}[Deduction Theorem]\label{t: deduction theorem for IM}
    Let $\hickp \in \{\hick, \hickt, \hicks, \hickss\}$ and let $\Gamma \cup \{\varphi, \psi\} \subseteq \LICK$. Then $\Gamma, \varphi \vdash_\mathrm{P} \psi$ if and only if $\Gamma \vdash_\mathrm{P} \varphi \rightarrow \psi$.
\end{theorem}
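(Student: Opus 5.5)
We prove the two directions separately. The right-to-left direction is immediate. Suppose $\Gamma \vdash_\hickp \varphi \rightarrow \psi$. By Lemma~\ref{l: assumption weakening} we also have $\Gamma, \varphi \vdash_\hickp \varphi \rightarrow \psi$. The one-node tree labelled $\varphi$ is a derivation of $\varphi$ from $\Gamma, \varphi$. A single application of $\mathsf{MP}$ then gives $\Gamma, \varphi \vdash_\hickp \psi$. The side condition on $\mathsf{Nec}$, $\mathsf{Mon}$ and $\mathsf{Ind}$ is not affected, since $\mathsf{MP}$ is unrestricted.

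For the left-to-right direction we fix a $\hickp$-derivation $\pi$ of $\psi$ with assumptions in $\Gamma \cup \{\varphi\}$. We show by induction on the height of the subderivation rooted at a node $u$ that, if $u$ is labelled $\chi$, then $\Gamma \vdash_\hickp \varphi \rightarrow \chi$. The cases are as follows.

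\begin{itemize}
\item If $u$ is a leaf labelled by an axiom or by some $\chi \in \Gamma$, then $\Gamma \vdash_\hickp \chi$. The intuitionistic tautology $\chi \rightarrow (\varphi \rightarrow \chi)$ and $\mathsf{MP}$ yield $\Gamma \vdash_\hickp \varphi \rightarrow \chi$. This is the argument of Lemma~\ref{l: derivable formulae}.\ref{l: derivable formulae item 1}, relativised to assumptions.
\item If $u$ is a leaf labelled by $\varphi$ itself, then $\varphi \rightarrow \varphi$ is an instance of $\mathsf{Int}$.
\item If $u$ is obtained by $\mathsf{MP}$ from $\theta$ and $\theta \rightarrow \chi$, the induction hypothesis gives $\Gamma \vdash \varphi \rightarrow \theta$ and $\Gamma \vdash \varphi \rightarrow (\theta \rightarrow \chi)$. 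Two applications of $\mathsf{MP}$ with the tautology $(\varphi \rightarrow (\theta \rightarrow \chi)) \rightarrow ((\varphi \rightarrow \theta) \rightarrow (\varphi \rightarrow \chi))$ give $\Gamma \vdash \varphi \rightarrow \chi$.
\end{itemize}

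The only case requiring care, and the one I expect to be the real point of the theorem, is when $u$ is the conclusion of $\mathsf{Nec_\agi}$, $\mathsf{Mon}$ or $\mathsf{Ind}$. Here condition~4 in the definition of derivation is essential. Every leaf of the subtree rooted at $u$ is labelled by an axiom, so this subtree uses no assumptions at all. It is therefore itself a derivation witnessing $\vdash_\hickp \chi$. By the leaf case argument, $\chi \rightarrow (\varphi \rightarrow \chi)$ and $\mathsf{MP}$ give $\Gamma \vdash_\hickp \varphi \rightarrow \chi$. Without this restriction the step would fail: from $\varphi \vdash \K_\agi \varphi$ we could not obtain $\vdash \varphi \rightarrow \K_\agi \varphi$, which is invalid.

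Finally, we check that the derivations produced in each case satisfy condition~4 again. The only new rule applications are $\mathsf{MP}$. Every subtree ending in $\mathsf{Nec}$, $\mathsf{Mon}$ or $\mathsf{Ind}$ is copied unchanged from an assumption-free subtree of $\pi$, so its leaves are still axioms. The argument is uniform in $\hickp$, because the additional axioms of $\hickt$, $\hicks$ and $\hickss$ only enter the axiom-leaf case. Applying the induction claim to the root of $\pi$ gives $\Gamma \vdash_\hickp \varphi \rightarrow \psi$.
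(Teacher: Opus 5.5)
Your proposal is correct and follows essentially the same route as the paper: weakening plus $\mathsf{MP}$ for the right-to-left direction, and induction on derivation height for the left-to-right direction, with the $\mathsf{Nec}$/$\mathsf{Mon}$/$\mathsf{Ind}$ case handled by noting that condition~4 makes the subderivation assumption-free. Your explicit check that the constructed derivations again satisfy condition~4 is a small detail the paper leaves implicit.
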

\begin{proof}
    For the direction from right to left suppose $\Gamma \vdash_\mathrm{P} \varphi \rightarrow \psi$. Hence $\Gamma, \varphi \vdash_\mathrm{P} \varphi \rightarrow \psi$ by Lemma \ref{l: assumption weakening}. Since $\Gamma, \varphi \vdash_\mathrm{P} \varphi$, applying $\mathsf{MP}$ to $\varphi$ and $\varphi \rightarrow \psi$ yields $\Gamma, \varphi \vdash_\mathrm{P} \psi$. \smallskip
    
    For the direction from left to right we proceed by induction on the height $h(\pi)$ of the derivation $\pi$ witnessing $\Gamma, \varphi \vdash_\mathrm{P} \psi$. \smallskip 
    
    \noindent \textsc{$h(\pi) = 0$}: Then either $\psi$ is an instance of an axiom scheme of $\mathrm{P}$ or $\psi \in \Gamma \cup \{\varphi\}$. In the first case $\vdash_\mathrm{P} \psi$ and thus by Lemma \ref{l: derivable formulae}, Item \ref{l: derivable formulae 2 item 2}. $\vdash_\mathrm{P} \varphi \rightarrow \psi$. By Lemma \ref{l: assumption weakening}, $\Gamma \vdash_\mathrm{P} \varphi \rightarrow \psi$. In the second case first suppose $\psi = \varphi$. Then $ \varphi \rightarrow \psi$ is an intuitionistic tautology and therefore $\Gamma \vdash_\mathrm{P} \varphi \rightarrow \psi$. Otherwise $\psi \in \Gamma$, implying that $\Gamma \vdash \psi$. Since $\psi \rightarrow (\varphi \rightarrow \psi)$ is an intuitionistic tautology, $\Gamma \vdash_\mathrm{P} \psi \rightarrow (\varphi \rightarrow \psi)$. Applying $\mathsf{MP}$ yields $\Gamma \vdash_\mathrm{P} \varphi \rightarrow \psi$. \smallskip 
    
   \noindent $h(\pi) > 0$: Consider the last rule applied in $\pi$ (i.e. the rule instance where the conclusion labels the root). \smallskip
    
     \noindent \textsc{1.} Suppose the last rule applied in $\pi$ is an instance of $\mathsf{MP}$:
        \begin{equation*}
            \infer{\psi}{\gamma & \gamma \rightarrow \psi}
        \end{equation*}
        The induction hypothesis implies that there are derivations $\pi_0, \pi_1$ witnessing $\Gamma \vdash_\mathrm{P} \varphi \rightarrow \gamma$ and $\Gamma \vdash_\mathrm{P} \varphi \rightarrow (\gamma \rightarrow \psi)$. The following derivation shows $\Gamma \vdash \varphi \to \psi$:
        \begin{prooftree}
            \AxiomC{$\pi_0$}
            \noLine
            \UnaryInfC{$\varphi \rightarrow \gamma$}
            \AxiomC{$\pi_1$}
            \noLine
            \UnaryInfC{$ \varphi \rightarrow (\gamma \rightarrow \psi)$}
            \AxiomC{}
            \RightLabel{$\mathsf{Int}$}
            \UnaryInfC{$ (\varphi \rightarrow (\gamma \rightarrow \psi)) \rightarrow ((\varphi \rightarrow \gamma) \rightarrow (\varphi \rightarrow \psi))$}
            \RightLabel{$\mathsf{MP}$}
            \BinaryInfC{$ (\varphi \rightarrow \gamma) \rightarrow (\varphi \rightarrow \psi)$}
            \RightLabel{$\mathsf{MP}$}
            \BinaryInfC{$ \varphi \rightarrow \psi$}
        \end{prooftree}
  
    \noindent \textsc{2.} Suppose the last rule applied in $\pi$ is an instance of $\mathsf{r}$ for $\mathsf{r} \in \{\mathsf{Nec}, \mathsf{Mon}, \mathsf{Ind}\}$, with premise $\gamma$ and conclusion $\psi$. By definition of a derivation from assumptions, every leaf of $\pi$ is labelled by an axiom, implying that $\vdash_\mathrm{P} \psi$. Lemma \ref{l: derivable formulae}, Item \ref{l: derivable formulae 2 item 2} implies that $\vdash_\mathrm{P} \varphi \rightarrow \psi$ and so, by Lemma \ref{l: assumption weakening}, $\Gamma \vdash_\mathrm{P} \varphi \rightarrow \psi$. \qedhere
\end{proof}

\begin{corollary}\label{c: deduction theorem}
    For any finite set of formulae $\Gamma \cup \{\varphi\}$ the following holds:
    \begin{center}
        $\Gamma \vdash_\mathrm{P} \varphi$ if and only if $\vdash_\mathrm{P} \bigwedge \Gamma \rightarrow \varphi$.
    \end{center}
\end{corollary}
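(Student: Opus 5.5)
The plan is to prove Corollary~\ref{c: deduction theorem} by induction on the number $n$ of formulae in the finite set $\Gamma$, repeatedly applying the Deduction Theorem (Theorem~\ref{t: deduction theorem for IM}) and using intuitionistic tautologies to pass between nested implications and a single conjunction. Fix an enumeration $\Gamma = \{\gamma_1, \ldots, \gamma_n\}$ and read $\bigwedge \Gamma$ as $\gamma_1 \wedge \ldots \wedge \gamma_n$ with some fixed bracketing. Different bracketings and orderings are provably equivalent via $\mathsf{Int}$ and Lemma~\ref{l: derivable formulae}, Item 2, so the choice does not matter.

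The first step is to show that $\Gamma \vdash_\hickp \varphi$ if and only if $\vdash_\hickp \gamma_1 \rightarrow (\gamma_2 \rightarrow \cdots (\gamma_n \rightarrow \varphi)\cdots)$. This is done by peeling off the assumptions one at a time: applying Theorem~\ref{t: deduction theorem for IM} to $\{\gamma_1,\ldots,\gamma_{n-1}\}, \gamma_n \vdash \varphi$ gives equivalence with $\{\gamma_1,\ldots,\gamma_{n-1}\} \vdash \gamma_n \rightarrow \varphi$. Iterating this down to the empty set of assumptions yields the claim. In the base case $n=0$ we have $\bigwedge\emptyset = \top$, and we must check that $\vdash \varphi$ iff $\vdash \top \rightarrow \varphi$. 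The forward direction is Lemma~\ref{l: derivable formulae}, Item 1. The backward direction uses $\mathsf{MP}$ with $\top = \bot\rightarrow\bot$, which is an intuitionistic tautology.

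The second step is to convert the curried form into the conjunctive form. The formulae $(\gamma_1 \wedge \cdots \wedge \gamma_n \rightarrow \varphi) \rightarrow (\gamma_1 \rightarrow \cdots \rightarrow \gamma_n \rightarrow \varphi)$ and its converse are substitution instances of IPL-valid formulae, hence axioms of $\mathsf{Int}$. One application of $\mathsf{MP}$ in each direction then finishes the proof.

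There is no real obstacle here. The only point needing care is bookkeeping: the side condition on $\mathsf{Nec}$, $\mathsf{Mon}$ and $\mathsf{Ind}$ (item 4 of the definition of derivation) is not disturbed. This is automatic, since we only invoke Theorem~\ref{t: deduction theorem for IM}, which already handles it, together with $\mathsf{MP}$ on axioms.
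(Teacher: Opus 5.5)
Your proposal is correct and is essentially the paper's argument: both rest on the Deduction Theorem together with the intuitionistic currying/uncurrying tautologies, and both handle the base case via $\top$ and $\mathsf{MP}$. The only difference is bookkeeping. The paper uncurries one assumption at a time inside an induction on $\lvert\Gamma\rvert$, while you first curry away all assumptions and then pass to $\bigwedge\Gamma$ in a single step.
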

\begin{proof}
    By induction on $\lvert \Gamma \rvert$. For $\lvert \Gamma \rvert = 0$ we have that $\Gamma = \emptyset$ and so that $\vdash_\mathrm{P} \varphi$ holds if and only if $\vdash_\mathrm{P} \top \rightarrow \varphi$ holds (since $\varphi \rightarrow (\top \rightarrow \varphi)$ and $(\top \to \varphi) \to \varphi$ are intuitionistic tautologies). \smallskip \\
    Suppose $\lvert \Gamma \rvert = n+1$. Let $\Gamma =  \Gamma_0 \cup \Gamma_1$ where $\Gamma_0 = \{ \varphi_1, \ldots, \varphi_n \}$ and $\Gamma_1 = \{\varphi_{n+1}\}$. By the Deduction Theorem $\Gamma \vdash_\mathrm{P} \varphi$ if and only if $\Gamma_0 \vdash_\mathrm{P} \varphi_{n+1} \rightarrow \varphi$. By induction hypothesis
    \begin{equation*}
        \Gamma_0 \vdash_\mathrm{P} \varphi_{n+1} \rightarrow \varphi \text{ if and only if } \vdash_\mathrm{P} \bigwedge \Gamma_0 \rightarrow (\varphi_{n+1} \rightarrow \varphi).
    \end{equation*}
    It remains to show that
    \begin{equation*}\label{e: c: deduction theorem}
        \vdash_\mathrm{P} \bigwedge \Gamma_0 \rightarrow (\varphi_{n+1} \rightarrow \varphi) \text{ if and only if } \vdash \bigwedge \Gamma \rightarrow \varphi.
    \end{equation*}
   Note that $(p \rightarrow (q \rightarrow r)) \rightarrow((p \wedge q) \rightarrow r)$ is intuitionistically valid. Thus
    \begin{equation*}
         (\bigwedge \Gamma_0 \rightarrow (\varphi_{n+1} \rightarrow \varphi)) \rightarrow (\bigwedge \Gamma \rightarrow \varphi)
    \end{equation*}
    is an intuitionistic tautology and hence derivable. Applying $\mathsf{MP}$ yields
    \begin{equation*}
        \vdash_\mathrm{P}  \bigwedge \Gamma_0 \rightarrow (\varphi_{n+1} \rightarrow \varphi) \text{ implies }  \vdash \bigwedge \Gamma \rightarrow \varphi.
    \end{equation*}
    The direction from right-to-left is similar but uses $((p \wedge q) \rightarrow r) \rightarrow (p \rightarrow (q \rightarrow r))$ instead.  Hence,
    \begin{equation*}
        \Gamma \vdash_\mathrm{P} \varphi \text{ if and only if } \vdash_\mathrm{P} \bigwedge \Gamma \rightarrow \varphi.
    \end{equation*}
    which concludes the proof.
\end{proof}

We finish the subsection by establishing some derivable properties of knowledge in our systems. 

\begin{lemma}\label{l: derivable formulae 2}
   Let $\varphi, \psi \in \LICK$, $\agi \in \A$ and $\hickp \in \{\hick, \hickt, \hicks, \hickss\}$.
   \begin{enumerate}
       \item\label{l: derivable formulae 2 item 1} $\vdash_{\hickp} \K_\agi (\varphi \wedge \psi) \rightarrow (\K_\agi \varphi \wedge \K_\agi \psi)$ and $\vdash_{\hickp} (\K_\agi \varphi \wedge \K_\agi \psi) \rightarrow \K_\agi (\varphi \wedge \psi)$.
    \item\label{l: derivable formulae 2 item 2} $\vdash_{\hickp} (\K_\agi \varphi \vee \K_\agi \psi) \rightarrow \K_\agi (\varphi \vee \psi)$.
    \item\label{l: derivable formulae 2 item 3} If $\vdash_{\hickp} \varphi$, then $\vdash_{\hickp} \C \varphi$ (necessitation for $\C$).
    \item\label{l: derivable formulae 2 item 4} $\vdash_{\hickss} K_\agi \varphi \vee \neg K_\agi \varphi$.
   \end{enumerate}
\end{lemma}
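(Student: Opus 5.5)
We derive each item by combining $\mathsf{Int}$, $\mathsf{MP}$, $\mathsf{Nec}_\agi$ and $\mathsf{K}_\agi$, together with the transitivity of implication from Lemma~\ref{l: derivable formulae}. Since $\hick$ is contained in each $\hickp$, Items~1--3 only need derivations in $\hick$.

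\emph{Item 1.} For the first implication, $(\varphi \wedge \psi) \rightarrow \varphi$ is an intuitionistic tautology. $\mathsf{Nec}_\agi$ gives $\K_\agi((\varphi \wedge \psi) \rightarrow \varphi)$, and $\mathsf{K}_\agi$ with $\mathsf{MP}$ yields $\K_\agi(\varphi \wedge \psi) \rightarrow \K_\agi \varphi$. Symmetrically we obtain $\K_\agi(\varphi \wedge \psi) \rightarrow \K_\agi \psi$. The tautology $(\alpha \rightarrow \beta) \rightarrow ((\alpha \rightarrow \gamma) \rightarrow (\alpha \rightarrow \beta \wedge \gamma))$ and two applications of $\mathsf{MP}$ combine these. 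For the converse, necessitate the tautology $\varphi \rightarrow (\psi \rightarrow \varphi \wedge \psi)$. Two uses of $\mathsf{K}_\agi$ and Lemma~\ref{l: derivable formulae}(2) give $\K_\agi \varphi \rightarrow (\K_\agi \psi \rightarrow \K_\agi(\varphi \wedge \psi))$, and uncurrying via an $\mathsf{Int}$ instance finishes.

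\emph{Item 2.} From the tautologies $\varphi \rightarrow \varphi \vee \psi$ and $\psi \rightarrow \varphi \vee \psi$, $\mathsf{Nec}_\agi$ and $\mathsf{K}_\agi$ give $\K_\agi\varphi \rightarrow \K_\agi(\varphi\vee\psi)$ and $\K_\agi\psi \rightarrow \K_\agi(\varphi\vee\psi)$. The tautology $(\alpha\rightarrow\gamma)\rightarrow((\beta\rightarrow\gamma)\rightarrow(\alpha\vee\beta\rightarrow\gamma))$ then combines them.

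\emph{Item 3.} This is the only step that uses the fixed point machinery. Suppose $\vdash \varphi$. Then $\vdash \K_\agi \varphi$ for each $\agi$ by $\mathsf{Nec}_\agi$, so Item~1 (iterated over the finitely many agents) gives $\vdash \E\varphi$. By Lemma~\ref{l: derivable formulae}(1) we get $\vdash \varphi \rightarrow \E\varphi$, and $\mathsf{Ind}$ gives $\vdash \varphi \rightarrow \C\varphi$. $\mathsf{MP}$ with $\vdash \varphi$ yields $\vdash \C\varphi$. All leaves here are axioms, so the side condition on $\mathsf{Ind}$ and $\mathsf{Nec}$ is satisfied.

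\emph{Item 4.} Since $\mathsf{K}^c_\agi$ is available in $\hickss$, take its instance with $\psi := \K_\agi\varphi$:
\[
(\K_\agi\varphi \rightarrow \K_\agi\varphi) \rightarrow (\neg\K_\agi\varphi \vee \K_\agi\varphi).
\]
The antecedent is an $\mathsf{Int}$ instance, so $\mathsf{MP}$ gives $\neg\K_\agi\varphi \vee \K_\agi\varphi$. Commutativity of $\vee$, an intuitionistic tautology, finishes the item.

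None of the steps is a real obstacle. The only point requiring care is Item~3, where we must check that $\mathsf{Ind}$ is applied to a closed derivation, i.e.\ one without assumptions, and that $\E$ of a theorem is derivable; the latter is supplied by Item~1.
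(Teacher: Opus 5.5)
Your proposal is correct and follows the paper's proof essentially step for step: the same necessitated tautologies combined with $\mathsf{K}_\agi$ and $\mathsf{MP}$ for Items 1--2, the $\mathsf{Nec}_\agi$/$\mathsf{Ind}$/$\mathsf{MP}$ route for Item 3, and the same instance of $\mathsf{K}^c_\agi$ for Item 4. Two minor slips in Item 3. First, passing from $\vdash_{\hickp} \K_\agi\varphi$ for all $\agi$ to $\vdash_{\hickp} \E\varphi$ does not come from Item 1, which distributes a single $\K_\agi$ over $\wedge$; it is plain conjunction introduction via an $\mathsf{Int}$ instance and $\mathsf{MP}$. Second, Item 3 is carried out in $\hickp$ rather than in $\hick$, since the given derivation of $\varphi$ may use the extra axioms of $\hickp$.
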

\begin{proof}
    \noindent 1. Observe that $(\varphi \wedge \psi) \rightarrow \varphi$ and $(\varphi \wedge \psi) \rightarrow \psi$ are intuitionistic tautologies. Applying $\mathsf{Nec_\agi}$ yields $\vdash_\hickp \K_\agi ((\varphi \wedge \psi) \rightarrow \varphi)$ and $\vdash_\hickp \K_\agi ((\varphi \wedge \psi) \rightarrow \psi)$. By the $\mathsf{K_\agi}$-axiom and $\mathsf{MP}$ we obtain $\vdash_\hickp \K_\agi(\varphi \wedge \psi) \rightarrow \K_\agi \varphi$ and $\vdash_\hickp \K_\agi(\varphi \wedge \psi) \rightarrow \K_\agi \psi$. Since 
    \begin{equation*}
     (\K_\agi(\varphi \wedge \psi) \rightarrow \K_\agi \varphi) \rightarrow ((\K_\agi(\varphi \wedge \psi) \rightarrow \K_\agi \psi) \rightarrow (\K_\agi(\varphi \wedge \psi) \rightarrow (\K_\agi \varphi \wedge \K_\agi \psi)))
    \end{equation*}
    is an intuitionistic tautology, we obtain $\vdash_\hickp \K_\agi(\varphi \wedge \psi) \rightarrow (\K_\agi \varphi \wedge \K_\agi \psi)$ by applying $\mathsf{MP}$ twice. For the other direction observe that
    \begin{equation}\label{e: 1}
        \vdash_\hickp \varphi \rightarrow (\psi \rightarrow (\varphi \wedge \psi))
    \end{equation}
    as this formula is an intuitionistic tautology. Applying $\mathsf{Nec_\agi}$ yields
    \begin{equation}\label{e: 2}
        \vdash_\hickp \K_\agi (\varphi \rightarrow (\psi \rightarrow (\varphi \wedge \psi))).
    \end{equation}
    Note that the following formula is an instance of $\mathsf{K}$ and is hence derivable:
    \begin{equation}\label{e: 3}
        \vdash_\hickp \K_\agi (\varphi \rightarrow (\psi \rightarrow (\varphi \wedge \psi))) \rightarrow (\K_\agi \varphi \rightarrow \K_\agi (\psi \rightarrow (\varphi \wedge \psi))).
    \end{equation}
    Applying $\mathsf{MP}$ to (\ref{e: 2}) and (\ref{e: 3}) yields
    \begin{equation}\label{e: 4}
        \vdash_\hickp \K_\agi \varphi \rightarrow \K_\agi (\psi \rightarrow (\varphi \wedge \psi)).
    \end{equation}
    The following is an instance of $\mathsf{K_\agi}$:
    \begin{equation}\label{e: 5}
        \vdash_\hickp \K_\agi (\psi \rightarrow (\varphi \wedge \psi)) \rightarrow (\K_\agi \psi \rightarrow \K_\agi (\varphi \wedge \psi)).
    \end{equation}
    Item \ref{l: derivable formulae item 2} of Lemma \ref{l: derivable formulae} applied to (\ref{e: 4}) and (\ref{e: 5}) yields
    \begin{equation}\label{e: 6}
        \vdash_\hickp \K_\agi \varphi \rightarrow (\K_\agi \psi \rightarrow \K_\agi (\varphi \wedge \psi)).
    \end{equation}
    Observe that the following is an intuitionistic tautology and thus derivable:
    \begin{equation}\label{e: 7}
        \vdash_\hickp (\K_\agi \varphi \rightarrow (\K_\agi \psi \rightarrow \K_\agi (\varphi \wedge \psi))) \rightarrow ((\K_\agi \varphi \wedge \K_\agi \psi) \rightarrow \K_\agi (\varphi \wedge \psi))
    \end{equation}
    Applying $\mathsf{MP}$ to (\ref{e: 6}) and (\ref{e: 7}) thus yields
    \begin{equation}\label{e: 8}
       \vdash_\hickp (\K_\agi \varphi \wedge \K_\agi \psi) \rightarrow \K_\agi (\varphi \wedge \psi)
    \end{equation}
    which concludes the proof. \smallskip
    
    \noindent 2. Observe that $\varphi \rightarrow (\varphi \vee \psi)$ and $\psi \rightarrow (\varphi \vee \psi)$ are intuitionistic tautologies. Applying $\mathsf{Nec_\agi}$ yields $\vdash_\hickp \K_\agi(\varphi \rightarrow (\varphi \vee \psi))$ and $\vdash_\hickp \K_\agi(\psi \rightarrow (\varphi \vee \psi))$. By using the $\mathsf{K_\agi}$-axiom and $\mathsf{MP}$ we obtain $\vdash_\hickp \K_\agi \varphi \rightarrow \K_\agi (\varphi \vee \psi)$ and $\vdash_\hickp \K_\agi \psi \rightarrow \K_\agi (\varphi \vee \psi)$. Since
    \begin{equation*}
        (\K_\agi \varphi \rightarrow \K_\agi(\varphi \vee \psi)) \rightarrow ((\K_\agi \psi \rightarrow \K_\agi(\varphi \vee \psi)) \rightarrow ((\K_\agi \varphi \vee \K_\agi \psi) \rightarrow \K_\agi(\varphi \vee \psi)))
    \end{equation*}
    is an intuitionistic tautology, we obtain $\vdash_\hickp (\K_\agi \varphi \vee \K_\agi \psi) \rightarrow \K_\agi (\varphi \vee \psi)$ by applying $\mathsf{MP}$ twice. \smallskip 
    
    \noindent 3. Suppose that $\vdash_\hickp \varphi$. Applying $\mathsf{Nec_\agi}$ yields $\vdash_\hickp \K_\agi \varphi$. By Item \ref{l: derivable formulae item 1}. of Lemma \ref{l: derivable formulae} we obtain $\vdash_\hickp \varphi \rightarrow \K_\agi \varphi$. Since $\agi \in \A$ was arbitrary, we obtain $\vdash_\hickp \varphi \rightarrow \E \varphi$ by repeatedly using the (correct substitution instances of the) intuitionistic tautology 
    \begin{equation*}
        (p \rightarrow q) \rightarrow ((p \rightarrow r) \rightarrow (p \rightarrow (q \wedge r))
    \end{equation*}
    and $\mathsf{MP}$. Applying $\mathsf{Ind}$ yields $\vdash_\hickp \varphi \rightarrow \C \varphi$. Finally, by applying $\mathsf{MP}$ we obtain $\vdash_\hickp \C \varphi$. \smallskip

    \noindent 4. By axiom $\mathsf{\K_\agi^c}$ we have $\vdash_\hickss (\K_\agi \varphi \rightarrow \K_\agi \varphi) \rightarrow (\neg \K_\agi \varphi \vee \K_\agi \varphi)$. Since $\K_\agi \varphi \rightarrow \K_\agi \varphi$ is an intuitionistic tautology we obtain $\vdash_\hickss \neg \K_\agi \varphi \vee \K_\agi \varphi$ by $\mathsf{MP}$. Finally, $\vdash_\hickss {\K_\agi \varphi \vee \neg  \K_\agi \varphi}$ since $(\neg \K_\agi \varphi \vee \K_\agi \varphi) \to (\K_\agi \varphi \vee \neg \K_\agi \varphi)$ is an intuitionistic tautology.
\end{proof}




\subsection{Soundness}

We will now show that each given axiomatization is sound with respect to its corresponding class of frames: $\vdash_\hickp \varphi$ implies $\varphi \in \mathbf{P}$. The proof strategy is standard: we first show that axioms are valid and rules preserve validity. Soundness is then established by induction on the height of proofs in the usual way.

\begin{lemma}\label{l: axioms of ICK are valid}
    Every instance of an axiom scheme from Table \ref{t: axioms and rules of Hilbert system} is valid over the corresponding class of epistemic frames.
\end{lemma}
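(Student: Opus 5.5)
We go through the axiom schemes of Table~\ref{t: axioms and rules of Hilbert system} one at a time. For each scheme we fix an arbitrary model $\fw=(\W,\leq,\rel,\V)$ based on a frame of the relevant class and a world $w$, and we check truth at $w$.

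Most of the schemes are already handled by earlier results.
\begin{itemize}
\item $\mathsf{K_\agi}$, $\mathsf{Fix}$, $\mathsf{T_\agi}$ and $\mathsf{S4_\agi}$ are exactly Items~1, 3, 4 and 5 of Lemma~\ref{l: valid formulae}. Validity over a class of frames passes to every subclass, so these hold in all the larger systems as well.
\item $\mathsf{S5_\agi}$ is Item~3 of Lemma~\ref{c: classical truth conditions for implications}.
\end{itemize}

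For $\mathsf{Int}$ we take an intuitionistic tautology $\sigma(\chi)$, where $\chi$ is a valid $\mathsf{IPL}$ formula and $\sigma$ is a substitution. From $\fw$ we build the intuitionistic Kripke model $(\W,\leq,\V')$ with $\V'(v)=\{p \mid \fw,v\models\sigma(p)\}$. By the Monotonicity Lemma~\ref{l: monotonicity}, $\V'$ is monotone. An induction on $\chi$ then shows that $\fw,v\models\sigma(\chi)$ if and only if $v$ forces $\chi$ in this model. This works because the clauses for $\bot,\wedge,\vee,\rightarrow$ in our truth relation are exactly the Kripke clauses and refer only to $\leq$. Since $\chi$ is valid in $\mathsf{IPL}$, it is forced everywhere, and hence $\sigma(\chi)$ is true everywhere in $\fw$.

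The remaining scheme is $\mathsf{\K_\agi^c}$, which we only need over S5 frames. Let $v\geq w$ with $\fw,v\models \K_\agi\varphi\rightarrow\psi$. By Lemma~\ref{l: classical truth conditions for implication}, either $\fw,v\not\models \K_\agi\varphi$ or $\fw,v\models\psi$.
\begin{itemize}
\item In the second case, $v \models \neg \K_\agi\varphi\vee\psi$ immediately.
\item In the first case, $v\not\models\K_\agi\varphi$. The argument in the proof of Item~1 of Lemma~\ref{c: classical truth conditions for implications} shows that on S5 frames this already gives $v\models\neg\K_\agi\varphi$: every $u\geq v$ sees a world refuting $\varphi$. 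Hence $v\models\neg\K_\agi\varphi\vee\psi$.
\end{itemize}
Since $v\geq w$ was arbitrary, the implication holds at $w$.

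The main obstacle is the $\mathsf{Int}$ case. The substitution-lifting argument has to be stated with some care, since $\sigma(p)$ may be a modal formula. That is why we use Monotonicity to get a legitimate intuitionistic model. The other cases are either citations of earlier lemmas or short unfoldings of the truth conditions.
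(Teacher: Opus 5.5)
Your proof is correct and takes the same route as the paper: $\mathsf{K_\agi}$, $\mathsf{Fix}$, $\mathsf{T_\agi}$ and $\mathsf{S4_\agi}$ are reduced to Lemma~\ref{l: valid formulae}, and $\mathsf{\K_\agi^c}$ and $\mathsf{S5_\agi}$ to Lemma~\ref{c: classical truth conditions for implications}. You also supply two details the paper leaves out: the substitution argument for $\mathsf{Int}$ (via Monotonicity), which the paper omits entirely, and an explicit derivation of $\mathsf{\K_\agi^c}$ from the argument of Item~1, which the paper only cites.
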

\begin{proof}
    The case for $\mathsf{Int}$ is omitted. The cases for $\mathsf{K_\agi}$, $\mathsf{T_\agi}$ $\mathsf{S4_\agi}$ and $\mathsf{Fix}$ were proven in Lemma~\ref{l: valid formulae}. The cases for $\mathsf{\K_\agi^c}$ and $\mathsf{S5_\agi}$ were shown in Lemma~\ref{c: classical truth conditions for implications}.
\end{proof}

\begin{lemma}\label{l: rules are valid}
    The rules $\mathsf{MP}$, $\mathsf{Nec}$, $\mathsf{Mon}$ and $\mathsf{Ind}$ preserve validity over the class of epistemic frames (and hence over the classes of reflexive, S4 and S5 epistemic frames as well): if the premises are valid, then the conclusion is valid, too.
\end{lemma}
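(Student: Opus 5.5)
We treat the four rules one at a time. Throughout, fix an arbitrary class $\fclass$ among all, reflexive, S4 and S5 epistemic frames; each argument only uses triangle confluence and the Monotonicity Lemma (Lemma~\ref{l: monotonicity}), so it works uniformly for every such class. Validity means truth at every world of every model based on a frame in $\fclass$, so each case fixes an arbitrary such model $\fw$ and world $w$.

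For $\mathsf{MP}$, if $\varphi$ and $\varphi \rightarrow \psi$ are valid, then $\fw, w \models \varphi$ and $\fw, w \models \varphi \rightarrow \psi$. Since $w \in w^\uparrow$ by reflexivity of $\leq$, we get $\fw, w \models \psi$.

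For $\mathsf{Nec_\agi}$, if $\varphi$ is valid, then it holds at every $v \in R_\agi(w)$, hence $\fw, w \models \K_\agi \varphi$.

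For $\mathsf{Mon}$, assume $\varphi \rightarrow \psi$ is valid. Then at every world $u$ of $\fw$, $\fw, u \models \varphi$ implies $\fw, u \models \psi$, again using $u \in u^\uparrow$. Now take $v \in w^\uparrow$ with $\fw, v \models \C \varphi$. For every $u \in R^*(v)$ we have $\fw, u \models \varphi$, hence $\fw, u \models \psi$, so $\fw, v \models \C \psi$. Thus $\fw, w \models \C\varphi \rightarrow \C \psi$.

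For $\mathsf{Ind}$, which is the only case needing a genuine induction, assume $\varphi \rightarrow \E \varphi$ is valid. Then at each world $u$, $\fw, u \models \varphi$ implies $\fw, u \models \K_\agi \varphi$ for all $\agi \in \A$. Take $v \in w^\uparrow$ with $\fw, v \models \varphi$ and let $u \in R^*(v)$, witnessed by a path $v = u_0 \Rel_{\agi_0} u_1 \cdots \Rel_{\agi_{n-1}} u_n = u$. We show $\fw, u_k \models \varphi$ by induction on $k$:
\begin{itemize}
\item the base case $k = 0$ is the assumption $\fw, v \models \varphi$;
\item if $\fw, u_k \models \varphi$, then $\fw, u_k \models \K_{\agi_k}\varphi$, so $\fw, u_{k+1} \models \varphi$.
\end{itemize}
Hence $\fw, v \models \C \varphi$ and so $\fw, w \models \varphi \rightarrow \C\varphi$.

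The main (mild) obstacle is only bookkeeping: validity of an implication must be unpacked to pointwise implication at each world, which is immediate from reflexivity of $\leq$. No use of frame conditions beyond membership in $\fclass$ is needed, which justifies the parenthetical claim about the subclasses.
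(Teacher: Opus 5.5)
Your proof is correct and is essentially the paper's argument run directly rather than by contraposition (for $\mathsf{Ind}$ the paper picks a least counterexample along the $R$-path where you induct forward), and because you argue model by model it properly justifies the parenthetical claim about the subclasses. One small inaccuracy: none of your cases actually uses triangle confluence or the Monotonicity Lemma, only reflexivity of $\leq$ and truth of the valid premise at every world of the fixed model, so that opening remark should be dropped.
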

\begin{proof}
 We show the contrapositive: if the conclusion is falsifiable, then so is one of the premises. Let $\fw=(\W, \leq, \rel, \V)$ be an epistemic model and let $w \in \W$ be a world. The cases for $\mathsf{MP}$ and $\mathsf{Nec}$ are standard and omitted. \smallskip
 
 
 \noindent \textsc{Rule $\mathsf{Mon}$.} Suppose that $\fw,w \not \models \C \varphi \rightarrow \C \psi$. Then there exists $v \in w^\uparrow$ with $\fw,v \models \C \varphi$ but $\fw,v \not \models \C \psi$. Hence there exists $u \in R^*(v)$ with $\fw,u \not \models \psi$. However, $\fw,u \models \varphi$ by assumption and so $\fw,u \not \models \varphi \rightarrow \psi$. Therefore $\varphi \rightarrow \psi$ is falsifiable. \smallskip
 
 \noindent \textsc{Rule $\mathsf{Ind}$.} Suppose that $\fw,w \not \models \varphi \rightarrow \C \varphi$. So there exists $v \in w^\uparrow$ with $\fw, v \models \varphi$ and $\fw, v \not \models \C \varphi$. Let $n$ be the least natural number such that there exist $u_0, \ldots, u_n \in \W$ with $u_0 = v$, for all $i \in [n]$ there exists $\agi_i \in \A$ such that $u_i \mathrel{R_{\agi_i}} u_{i+1}$, and $\fw, u_n \not \models \varphi$. By assumption $n > 0$. Then $\fw, u_{n-1} \models \varphi$ but $\fw, u_{n-1} \not \models \E \varphi$. Hence $\fw, u_{n-1} \not \models \varphi \rightarrow \E \varphi$, implying that $\varphi \rightarrow \E \varphi$ is falsifiable.
\end{proof}

\begin{theorem}[Soundness]\label{t: soundness}
    The axiomatizations $\hickp \in \{\hick, \hickt, \hicks, \hickss\}$ are sound: if $ \vdash_\mathrm{P} \varphi$, then $ \varphi$ is valid over the respective class of epistemic frames.
\end{theorem}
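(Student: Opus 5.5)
The plan is a standard induction on the height $h(\pi)$ of a $\hickp$-derivation $\pi$ of $\varphi$ (with no assumptions). The claim proved by induction is: every formula labelling a node of $\pi$ is valid over the class of epistemic frames corresponding to $\hickp$. Since $\Gamma=\emptyset$, every leaf is labelled by an axiom. The side condition on $\mathsf{Nec}$, $\mathsf{Mon}$ and $\mathsf{Ind}$ plays no role here, because there are no assumptions to protect.

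For the base case $h(\pi)=0$, the root is an instance of an axiom scheme of $\hickp$. Lemma~\ref{l: axioms of ICK are valid} gives validity over the respective class; for $\mathsf{Int}$ we still verify the omitted case. An intuitionistic tautology is a substitution instance $\sigma(\chi)$ of an $\mathsf{IPL}$-valid $\chi$. Given an epistemic model $\fw$, define a purely intuitionistic Kripke model on $(\W,\leq)$ by letting $p$ hold at $w$ iff $\fw,w\models\sigma(p)$. By Lemma~\ref{l: monotonicity} this valuation is monotone, so it is a legitimate Kripke model. An easy induction on $\chi$ then shows that $\chi$ holds there at $w$ iff $\fw,w\models\sigma(\chi)$, since the clauses for $\bot,\wedge,\vee,\rightarrow$ coincide. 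Hence $\sigma(\chi)$ is valid by the Kripke completeness (soundness direction) of $\mathsf{IPL}$.

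For the inductive step, consider the last rule applied at the root. The premises are labelled by roots of subderivations of smaller height, so they are valid by the induction hypothesis. Lemma~\ref{l: rules are valid} shows that $\mathsf{MP}$, $\mathsf{Nec}$, $\mathsf{Mon}$ and $\mathsf{Ind}$ preserve validity over all epistemic frames. The arguments there work pointwise on an arbitrary model in the class: a falsifying world for the conclusion yields a falsifying world for a premise in the same model. So they preserve validity over reflexive, S4 and S5 frames as well. The omitted cases are immediate. For $\mathsf{MP}$, validity of $\varphi$ and $\varphi\rightarrow\psi$ at $w$ gives $\psi$ at $w$ using $w\in w^\uparrow$. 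For $\mathsf{Nec}$, if $\varphi$ holds everywhere, then $\K_\agi\varphi$ holds everywhere. Thus the conclusion is valid, which completes the induction.

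I do not expect a real obstacle. The most delicate point is the $\mathsf{Int}$ case, because it requires transferring $\mathsf{IPL}$-validity to substitution instances in birelational models. The translation to an intuitionistic Kripke model described above, which relies on monotonicity, handles this cleanly.
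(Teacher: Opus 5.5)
Your proposal is correct and follows the paper's proof, which is likewise an induction on derivation height using Lemma~\ref{l: axioms of ICK are valid} and Lemma~\ref{l: rules are valid}, with the details omitted. The details you add are sound and fill in what the paper leaves implicit: the induced intuitionistic Kripke model for the $\mathsf{Int}$ case, and the observation that the rule-preservation arguments stay inside a single model and therefore restrict to the reflexive, S4 and S5 subclasses.
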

\begin{proof}
    By a standard induction on the height of derivations using Lemma~\ref{l: axioms of ICK are valid} and Lemma~\ref{l: rules are valid}. The details are omitted. \qedhere
\end{proof}

\subsection{Completeness}

We will now establish completeness of our axiomatizations. Completeness is shown via a canonical model construction, where we construct for each formula $\varphi$ a canonical model based on the correct type of frame. Our construction faces three main difficulties. First, since $\ICK$ is built on top of $\mathsf{IPL}$, we must adapt the standard canonical model construction for $\mathsf{IPL}$ (see e.g.~\cite{Bezhanishvili_?}) which uses \emph{prime theories} instead of \emph{maximally consistent sets} as the worlds of the canonical model. Second, we must account for the various frame conditions that our canonical models should satisfy. Third, the presence of common knowledge implies that our logics are not \emph{compact}. Therefore for the standard canonical model construction the \emph{Truth Lemma} fails. To solve this issue we restrict to a finite fragment of the language which contains all formulae necessary to evaluate $\varphi$ correctly. As a consequence our canonical models are finite and so we obtain the finite model property from the completeness proof.

We begin by introducing the auxiliary notions needed to define canonical models and establishing useful properties to prove the Truth Lemma. From now on let $\hickp$ stand for any of the axiomatizations in $\{\hick, \hickt, \hicks, \hickss\}$. Fix an arbitrary finite and closed set of formulae $\Sigma \subseteq \LICK$. Since knowledge over S5 frames behaves classically, whenever $K_\agi \varphi$ belongs to $\Sigma$, we must also take its negation $\neg K_\agi \varphi$ into account when dealing with $\hickss$. Therefore if $\hickp = \hickss$, we additionally assume $\Sigma$ to be negation closed.

\begin{definition}\label{d: sigma prime theory}\index{prime theory!for $\IM$}
    A \emph{$\Sigma$-prime theory} (relative to $\mathrm{P}$) is a set of formulae $\Gamma \subseteq \Sigma$ such that the following hold.
    \begin{enumerate}
        \item $\Gamma$ is \emph{consistent}: $\Gamma \not \vdash_\mathrm{P} \bot$.
        \item $\Gamma$ is \emph{deductively closed} with respect to $\Sigma$: if $\Gamma \vdash_\mathrm{P} \varphi$ and $\varphi \in \Sigma$, then $\varphi \in \Gamma$.
        \item $\Gamma$ satisfies the \emph{disjunction property}: if $\varphi \vee \psi \in \Gamma$, then $\varphi \in \Gamma$ or $\psi \in \Gamma$.
    \end{enumerate}
\end{definition}

$\Sigma$-prime theories will form the worlds of the canonical model. For the S5 case, we require prime theories to satisfiy an additional property which we call \emph{$\K$-maximal consistency}.

\begin{definition}
   Let $\Sigma$ be negation closed. A $\Sigma$-prime theory $\Gamma$ is \emph{$\K$-maximally consistent} if for any $\agi \in \A$ and any $\K_\agi \varphi \in \Sigma$, $\K_\agi \varphi \in \Gamma$ if and only if $\neg \K_\agi \varphi \not \in \Gamma$.
\end{definition}

$\K$-maximally consistent prime theories satisfy the property that for any $\K_\agi \varphi \in \Sigma$, either $\K_\agi \varphi$ or $\neg \K_\agi \varphi$ belongs to the prime theory. In that sense the theories are maximally consistent, since for any formula $\K_\agi \varphi \in \Sigma$ if $\Gamma$ is a $\K$-maximally consistent prime theory not containing $\K_\agi \varphi$ (or $\neg \K_\agi \varphi$, respectively), then $\Gamma \cup \{ \K_\agi \varphi \}$ ($\Gamma \cup \{ \neg \K_\agi \varphi \})$ is inconsistent. Observe that not every $\Sigma$-prime theory relative to $\hickss$ is $\K$-maximally consistent. For a simple counterexample consider $\Sigma = \{\bot, p, \K_\agi p, \neg \K_\agi p\}$ and note that $\Gamma = \{p\}$ is a $\Sigma$-prime theory but not $K$-maximally consistent. We now show that every consistent set of formulae $\Gamma \subseteq \Sigma$ can be extended into a prime theory.

\begin{lemma}[Lindenbaum]\label{l: Lindenbaum for IM}
    If $\Gamma \subseteq \Sigma$ and $\Gamma \not \vdash_\mathrm{P} \varphi$, then there exists a $\Sigma$-prime theory $\Delta$ (relative to $\mathrm{P}$) with $\Gamma \subseteq \Delta$ and $\Delta \not \vdash_\mathrm{P} \varphi$. If, furthermore, $\hickp = \hickss$ and $\Sigma$ is negation closed, then $\Delta$ is $\K$-maximally consistent.
\end{lemma}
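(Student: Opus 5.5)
Since $\Sigma$ is finite, I would build $\Delta$ by a finite saturation. Enumerate $\Sigma$ as $\psi_1, \ldots, \psi_n$, put $\Delta_0 \coloneqq \Gamma$, and at stage $i$ set $\Delta_{i} \coloneqq \Delta_{i-1} \cup \{\psi_i\}$ if $\Delta_{i-1}, \psi_i \not\vdash_\mathrm{P} \varphi$, and $\Delta_i \coloneqq \Delta_{i-1}$ otherwise. Let $\Delta \coloneqq \Delta_n$.

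By construction each stage preserves $\Delta_i \not\vdash_\mathrm{P} \varphi$, so $\Delta \not\vdash_\mathrm{P} \varphi$. This gives consistency, because $\bot \to \varphi$ is an intuitionistic tautology. For deductive closure relative to $\Sigma$, suppose $\Delta \vdash \psi_i$ with $\psi_i \in \Sigma$ but $\psi_i \notin \Delta$. Then $\Delta_{i-1}, \psi_i \vdash \varphi$. By the Deduction Theorem (Theorem~\ref{t: deduction theorem for IM}) and Lemma~\ref{l: assumption weakening}, $\Delta \vdash \psi_i \to \varphi$, and hence $\Delta \vdash \varphi$ by $\mathsf{MP}$, a contradiction.

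For the disjunction property, suppose $\psi \vee \chi \in \Delta$ while $\psi, \chi \notin \Delta$. Since $\Sigma$ is closed, both disjuncts lie in $\Sigma$, so at their stages adding them gave derivations of $\varphi$. Weakening gives $\Delta, \psi \vdash \varphi$ and $\Delta, \chi \vdash \varphi$. The Deduction Theorem together with the tautology $(\psi\to\varphi)\to((\chi\to\varphi)\to(\psi\vee\chi\to\varphi))$ then yields $\Delta \vdash \varphi$, a contradiction. Throughout, derivations with assumptions are only combined via $\mathsf{MP}$, which is permitted.

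For the S5 clause, let $\K_\agi\psi \in \Sigma$, so that $\neg\K_\agi\psi \in \Sigma$ by negation closure. First, both cannot lie in $\Delta$, since together they derive $\bot$ and $\Delta$ is consistent. Second, suppose neither lies in $\Delta$. Then, as in the disjunction case, $\Delta \vdash \K_\agi\psi \to \varphi$ and $\Delta \vdash \neg\K_\agi\psi \to \varphi$. By Lemma~\ref{l: derivable formulae 2}, Item~\ref{l: derivable formulae 2 item 4}, we have $\vdash_{\hickss} \K_\agi\psi \vee \neg\K_\agi\psi$, and the disjunction-elimination tautology again gives $\Delta \vdash \varphi$, a contradiction. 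Hence exactly one of $\K_\agi\psi$ and $\neg\K_\agi\psi$ belongs to $\Delta$, which is $\K$-maximal consistency.

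I expect no real obstacle. The only delicate point is making sure every step uses the assumptions only through $\mathsf{MP}$ and the Deduction Theorem, never through $\mathsf{Nec}$, $\mathsf{Mon}$ or $\mathsf{Ind}$.
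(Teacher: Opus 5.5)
Your proposal is correct and follows the paper's proof essentially step for step. It uses the same finite enumeration of $\Sigma$ (adding $\psi_i$ exactly when $\Delta_{i-1},\psi_i \not\vdash_\mathrm{P} \varphi$), the same Deduction-Theorem-plus-$\mathsf{MP}$ arguments for deductive closure and the disjunction property, and the same appeal to $\vdash_{\hickss} \K_\agi\psi \vee \neg\K_\agi\psi$ with disjunction elimination for $\K$-maximal consistency; your remark that consistency follows via the tautology $\bot \to \varphi$ just spells out a step the paper leaves implicit.
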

\begin{proof}
Suppose $\Gamma \not \vdash_\mathrm{P} \varphi$ and consider an enumeration $\psi_0, \ldots, \psi_k$ of the formulae in $\Sigma$. We first show how to construct a set of formulae $\Delta$ by induction on $n \leq k$.
\begin{itemize}
    \item Define $\Delta_0 \coloneqq \Gamma$
    \item Define
    \begin{equation*}
        \Delta_{n+1} \coloneqq \begin{cases}
                               \Delta_n \cup \{\psi_n\}, \text{ if } \Delta_n, \psi_n \not \vdash_\mathrm{P} \varphi \\\
                               \Delta_n, \text{ otherwise }\\
                        \end{cases}
    \end{equation*}
\end{itemize}
Observe that $\Delta_n \subseteq \Delta_{n+1}$ for each $n \leq k$. Define $\Delta := \Delta_{k+1}$. \smallskip

A simple inductive proof shows that $\Delta_n \not \vdash_\mathrm{P} \varphi$ for each $n \leq k+1$. In fact, the base case holds by assumption and the induction step follows immediately by construction and the induction hypothesis. Therefore $\Delta \not \vdash_\mathrm{P} \varphi$, implying that $\Delta$ is consistent. Next, for showing that $\Delta$ is deductively closed, suppose that $\Delta \vdash_\mathrm{P} \chi$ for some $\chi \in \Sigma$. Then there exists $n \leq k$ such that $\chi = \psi_n$. Consider $\Delta_{n+1}$. If $\psi_n \in \Delta_{n+1}$, then $\psi_n \in \Delta$ and we are done. Otherwise, $\Delta_n, \psi_n \vdash_\mathrm{P} \varphi$, implying that $\Delta, \psi_n \vdash_\mathrm{P} \varphi$. By the Deduction Theorem $\Delta \vdash_\mathrm{P} \psi_n \rightarrow \varphi$. But since $\Delta \vdash_\mathrm{P} \psi_n$ by assumption, $\Delta \vdash_\mathrm{P} \varphi$ which is a contradiction. Hence $\psi_n \in \Delta_{n+1}$ and so $\psi_n \in \Delta$, implying that $\Delta$ is deductively closed with respect to $\Sigma$. For the disjunction property, suppose $\chi \vee \gamma \in \Delta$. Since $\Delta \subseteq \Sigma$, $\chi \vee \gamma \in \Sigma$ and since $\Sigma$ is closed also $\chi, \gamma \in \Sigma$. Therefore there are $n_1, n_2 \leq k$ with $\chi = \psi_{n_1}$ and $\gamma = \psi_{n_2}$. Suppose towards contradiction that $\psi_{n_1} \not \in \Delta$ and $\psi_{n_2} \not \in \Delta$. Thus $\Delta_{n_1}, \psi_{n_1} \vdash_\mathrm{P} \varphi$ and $\Delta_{n_2}, \psi_{n_2} \vdash_\mathrm{P} \varphi$. Therefore also $\Delta, \psi_{n_1} \vdash_\mathrm{P} \varphi$ and $\Delta, \psi_{n_2} \vdash_\mathrm{P} \varphi$. By the Deduction Theorem $\Delta \vdash_\mathrm{P} \psi_{n_1} \rightarrow \varphi$ and $\Delta \vdash_\mathrm{P} \psi_{n_2} \rightarrow \varphi$. Since $\psi_{n_1} \vee \psi_{n_2} \in \Delta$, also $\Delta \vdash_\mathrm{P} \psi_{n_1} \vee \psi_{n_2}$. Observe that
\begin{equation*}
    (\psi_{n_1} \rightarrow \varphi) \rightarrow ((\psi_{n_2} \rightarrow \varphi) \rightarrow ((\psi_{n_1} \vee \psi_{n_2}) \rightarrow \varphi))
\end{equation*}
is an intuitionistic tautology. Hence, by applying $\mathsf{MP}$ three times we obtain $\Delta \vdash_\mathrm{P} \varphi$, a contradiction. Therefore $\psi_{n_1} \in \Delta$ or $\psi_{n_2} \in \Delta$, implying that $\Delta$ satisfies the disjunction property and is hence a $\Sigma$-prime theory extending $\Gamma$, such that $\Delta \not \vdash_\mathrm{P} \varphi$. \smallskip

Now let $\Sigma$ be negation closed and $\hickp = \hickss$. Suppose towards contradiction that $\Delta$ is not $\K$-maximally consistent. Then there exists $\agi \in \A$ and $\K_\agi \psi \in \Sigma$ such that either (1) $\K_\agi \psi \in \Delta$ and $\neg \K_\agi \psi \in \Delta$ or (2) $\K_\agi \psi \not \in \Delta$ and $\neg \K_\agi \psi \not \in \Delta$. Clearly, (1) immediately gives $\Delta \vdash_\hickss \bot$, which is a contradiction. So suppose (2). First of all since $\Sigma$ is negation closed and $\K_\agi \psi \in \Sigma$, also $\neg \K_\agi \psi \in \Sigma$, implying that there are $n_1, n_2 \leq k$ with $\K_\agi \psi = \psi_{n_1}$ and $\neg \K_\agi \psi = \psi_{n_2}$. By assumption $\psi_{n_1} \not \in \Delta$, implying that $\Delta_{n_1}, \psi_{n_1} \vdash_\hickss \varphi$ and so $\Delta, \psi_{n_1} \vdash_\hickss \varphi$. Similarly, $\Delta, \psi_{n_2} \vdash_\hickss \varphi$ as well. By the Deduction Theorem $\Delta \vdash_\hickss \K_\agi \psi \rightarrow \varphi$ and $\Delta \vdash_\hickss \neg \K_\agi \psi \rightarrow \varphi$. By Lemma \ref{l: derivable formulae 2}, Item \ref{l: derivable formulae 2 item 4}, $\Delta \vdash_\hickss \K_\agi \psi \vee \neg \K_\agi \psi$ and so $\Delta \vdash_\hickss \varphi$ by the same argument as above; contradiction. Hence $\Delta$ is $\K$-maximally consistent.
\end{proof}

Next, we establish that prime theories locally behave like worlds in a model. The following lemma will thus prove useful in the proof of the Truth Lemma.

\begin{lemma}\label{l: properties of prime theories}
    Let $\Gamma$ be a $\Sigma$-prime theory (relative to $\mathrm{P}$).
    \begin{enumerate}
        \item\label{l: properties of prime theories, Item 1} If $\varphi \wedge \psi \in \Sigma$, then $\varphi \wedge \psi \in \Gamma$ if and only if $\varphi \in \Gamma$ and $\psi \in \Gamma$.
        \item\label{l: properties of prime theories, Item 2} If $\varphi \vee \psi \in \Sigma$, then $\varphi \vee \psi \in \Gamma$ if and only if $\varphi \in \Gamma$ or $\psi \in \Gamma$.
        \item\label{l: properties of prime theories, Item 3}  $\C \varphi \in \Gamma$ if and only if $\varphi \in \Gamma$ and $\{\K_\agi \C \varphi \mid \agi \in \A\} \subseteq  \Gamma$.
        \item\label{l: properties of prime theories, Item 4} If $\mathsf{T_\agi} \in \mathrm{P}$, then $\K_\agi \varphi \in \Gamma$ implies $\varphi \in \Gamma$.
    \end{enumerate}
\end{lemma}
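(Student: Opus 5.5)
Each item follows from the defining properties of a $\Sigma$-prime theory (consistency, deductive closure relative to $\Sigma$, disjunction property), combined with the fact that $\Sigma$ is closed (Definition~\ref{d: closure}) and with suitable derivable formulae. The recurring pattern is: exhibit a derivable implication $\vdash_\hickp \chi \rightarrow \theta$, use assumption weakening (Lemma~\ref{l: assumption weakening}) and $\mathsf{MP}$ to get $\Gamma \vdash_\hickp \theta$ whenever $\chi \in \Gamma$, and then conclude $\theta \in \Gamma$ by deductive closure, after checking that $\theta \in \Sigma$ via closure of $\Sigma$.

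For Item~\ref{l: properties of prime theories, Item 1}, suppose $\varphi \wedge \psi \in \Gamma$. The tautologies $(\varphi\wedge\psi)\rightarrow\varphi$ and $(\varphi\wedge\psi)\rightarrow\psi$ give $\Gamma \vdash \varphi$ and $\Gamma \vdash \psi$. Since $\varphi,\psi \in \Sigma$ by closure, both belong to $\Gamma$. Conversely, $\varphi \rightarrow (\psi \rightarrow (\varphi\wedge\psi))$ and two applications of $\mathsf{MP}$ give $\Gamma \vdash \varphi \wedge \psi$, and $\varphi\wedge\psi \in \Sigma$ by hypothesis.

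For Item~\ref{l: properties of prime theories, Item 2}, left-to-right is exactly the disjunction property. For right-to-left, use $\varphi \rightarrow (\varphi\vee\psi)$ (respectively $\psi \rightarrow (\varphi \vee \psi)$) together with $\varphi\vee\psi \in \Sigma$ and deductive closure.

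Item~\ref{l: properties of prime theories, Item 3} is the only place where the modal axioms enter. Here I read the statement with the implicit hypothesis $\C\varphi \in \Sigma$. Without it the right-to-left direction can fail trivially, since $\Gamma \subseteq \Sigma$, so I would make that assumption explicit.
\begin{itemize}
\item Left to right: from $\C\varphi \in \Gamma$ and the left-to-right half of $\mathsf{Fix}$, extracted with the tautology $(\alpha\leftrightarrow\beta)\rightarrow(\alpha\rightarrow\beta)$, we get $\Gamma \vdash \varphi \wedge \E\C\varphi$. Projecting the conjunction $\E\C\varphi = \bigwedge_{\agi}\K_\agi\C\varphi$ then yields $\Gamma\vdash\varphi$ and $\Gamma \vdash \K_\agi\C\varphi$ for each $\agi$. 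Clause 3 of closedness puts $\varphi$ and every $\K_\agi\C\varphi$ in $\Sigma$, so they lie in $\Gamma$.
\item Right to left: from $\varphi$ and all $\K_\agi\C\varphi$ in $\Gamma$, repeated conjunction introduction gives $\Gamma \vdash \varphi\wedge\E\C\varphi$. The right-to-left half of $\mathsf{Fix}$ then gives $\Gamma\vdash\C\varphi$, hence $\C\varphi\in\Gamma$.
\end{itemize}

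For Item~\ref{l: properties of prime theories, Item 4}, the axiom $\mathsf{T_\agi}$ and $\mathsf{MP}$ give $\Gamma\vdash\varphi$ from $\K_\agi\varphi\in\Gamma$, and $\varphi \in \Sigma$ by clause 2 of closedness.

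No step is a real obstacle. The only points needing care are the membership side conditions in $\Sigma$ required by deductive closure, and the bookkeeping needed to unfold the finite conjunction $\E$, which should be done by induction on $|\A|$ using the standard conjunction tautologies.
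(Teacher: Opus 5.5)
Your proof is correct and follows the paper's argument item by item: projection and introduction tautologies with deductive closure for $\wedge$ and $\vee$, the two halves of $\mathsf{Fix}$ for $\C$, and $\mathsf{T_\agi}$ with $\mathsf{MP}$ for Item 4. Two small corrections to your side remarks. First, Item 3 needs no extra hypothesis and does not fail as stated: its right-hand side gives $\K_\agi \C \varphi \in \Gamma \subseteq \Sigma$ for some $\agi$ (since $\A$ is non-empty), and clause 2 of closedness then yields $\C \varphi \in \Sigma$, so your right-to-left argument applies verbatim. Second, projecting $\E \C \varphi$ needs no induction on $\lvert \A \rvert$, because $\E \C \varphi \rightarrow \K_\agi \C \varphi$ is itself an instance of $\mathsf{Int}$.
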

\begin{proof}
 Let $\Gamma$ be a $\Sigma$-prime theory and let $\varphi \ast \psi \in \Sigma$ for $\ast \in \{\wedge, \vee\}$. First of all, observe that since $\Sigma$ is closed, $\varphi, \psi \in \Sigma$. \smallskip \\
  1. For the direction from left-to-right suppose $\varphi \wedge \psi \in \Gamma$. Then $\Gamma \vdash_\hickp \varphi \wedge \psi$. Since $(\varphi \wedge \psi) \rightarrow \varphi$ and $(\varphi \wedge \psi) \rightarrow \psi$ are intuitionistic tautologies, applying $\mathsf{MP}$ yields $\Gamma \vdash_\hickp \varphi$ and $\Gamma \vdash_\hickp \psi$. As $\Gamma$ is deductively closed with respect to $\Sigma$, $\varphi \in \Gamma$ and $\psi \in \Gamma$. For the other direction suppose $\varphi \in \Gamma$ and $\psi \in \Gamma$. Then $\Gamma \vdash_\hickp \varphi$ and $\Gamma \vdash_\hickp \psi$. Observe that $\varphi \rightarrow (\psi \rightarrow (\varphi \wedge \psi))$ is an intuitionistic tautology. Hence $\Gamma \vdash_\hickp \varphi \wedge \psi$ by two applications of $\mathsf{MP}$. Thus, as $\Gamma$ is deductively closed with respect to $\Sigma$, $\varphi \wedge \psi \in \Gamma$. \smallskip \\
2. The direction from left-to-right follows from $\Gamma$ being a $\Sigma$-prime theory and therefore satisfying the disjunction property. For the other direction suppose without loss of generality that $\varphi \in \Gamma$. Observe that $\varphi \rightarrow (\varphi \vee \psi)$ is an intuitionistic tautology. Therefore $\Gamma \vdash_\hickp \varphi \rightarrow (\varphi \vee \psi)$. Applying $\mathsf{MP}$ yields $\Gamma \vdash_\hickp \varphi \vee \psi$ and thus, since $\Gamma$ is deductively closed with respect to $\Sigma$, $\varphi \vee \psi \in \Gamma$. \smallskip \\
3. For the direction from left-to-right suppose $\C \varphi \in \Gamma$. Then $\Gamma \vdash_\hickp \C \varphi$ and so by axiom $\mathsf{Fix}$, $\Gamma \vdash_\hickp \varphi \wedge \E \C \varphi$. Thus $\Gamma \vdash_\hickp \varphi$, implying that $\varphi \in \Gamma$, and $\Gamma \vdash \E \C \varphi$. Since  $\E \C \varphi \to \K_\agi \C \varphi$ is an intuitionistic tautology, we obtain $\Gamma \vdash_\hickp \K_\agi \varphi$. Since $\K_\agi \varphi \in \Sigma$, we conclude that $\K_\agi \varphi \in \Gamma$ for all $\agi \in \A$. The other direction is similar.\smallskip \\
4. Suppose $\mathsf{T_\agi} \in \mathrm{P}$ and $\K_\agi \varphi \in \Gamma$. Then $\Gamma \vdash_\hickp \K_\agi \varphi \rightarrow \varphi$ and hence $\Gamma \vdash_\hickp \varphi$, implying that $\varphi \in \Gamma$. \qedhere

\end{proof}

For a set of formulae $\Gamma$ and $\agi \in \A$, define the following notation.
\begin{align*}
        \K_\agi \Gamma & := \{ \K_\agi \varphi \mid \varphi \in \Gamma \} &
         \neg \K_\agi \Gamma & \coloneqq \{\neg \K_\agi \varphi \mid \varphi \in \Gamma\} \\
        \K_\agi^{-1} \Gamma & := \{\varphi \mid \K_\agi \varphi \in \Gamma\} &
         \neg \K_\agi^{-1} \Gamma & \coloneqq \{\varphi \mid \neg \K_\agi \varphi \in \Gamma\}
\end{align*}
We denote by $\mathcal{T}_\Sigma^\mathrm{P}$ the set of $\Sigma$-prime theories relative to $\mathrm{P}$ for $\hickp \in \{\hick, \hickt, \hicks\}$ and by $\mathcal{T}_\Sigma^\hickss$ the set of $\K$-maximally consistent $\Sigma$-prime theories relative to $\hickss$.

\begin{definition}
    Let $\hickp \in \{\hick, \hickt, \hicks, \hickss\}$ and let $\Sigma$ be a non-empty, finite and closed (negation closed in case $\hickp = \hickss$) set of formulae. The \emph{canonical model} relative to $\Sigma$ and $\hickp$ is given by $\fw^{(\Sigma, \mathrm{P})} = (\W^{(\Sigma,\mathrm{P})}, \leq^{(\Sigma,\mathrm{P})}, \rel^{\Sigma, \hickp}, \V^{(\Sigma,\mathrm{P})})$ where
    \begin{itemize}
        \item $\W^{(\Sigma, \mathrm{P})} \coloneqq \mathcal{T}_\Sigma^\hickp$;
        \item $\Gamma \leq^{(\Sigma, \mathrm{P})} \Delta$ if and only if $\Gamma \subseteq \Delta$ for $\Gamma, \Delta \in \W^{(\Sigma, \mathrm{P})}$;
        \item $\rel^{(\Sigma, \hickp)} = \{R_\agi^{(\Sigma, \hickp)} \mid \agi \in \A\}$ where for each $\agi \in \A$, $R_\agi^{(\Sigma, \mathrm{P})} \subseteq \W^{(\Sigma, \mathrm{P})} \times W^{(\Sigma, \mathrm{P})}$ given by
        \begin{itemize}
            \item if $\hickp = \hick$ or $\hickp = \hickt$, then $\Gamma \Rel_\agi^{(\Sigma, \mathrm{P})} \Delta$ iff $\K_\agi^{-1} \Gamma \subseteq \Delta$;
            \item if $\hickp = \hicks$, then $\Gamma \Rel_\agi^{(\Sigma, \mathrm{P})} \Delta$ iff $\K_\agi \K_\agi^{-1} \Gamma \subseteq \Delta$;
            \item if $\hickp = \hickss$, then $\Gamma \Rel_\agi^{(\Sigma, \mathrm{P})} \Delta$ iff $\K_\agi \K_\agi^{-1} \Gamma = \K_\agi \K_\agi^{-1} \Delta$;
        \end{itemize}
        \item  $\V^{(\Sigma, \mathrm{P})}(\Gamma) := \Gamma \cap \Prop$ for $\Gamma \in \W^{(\Sigma, \mathrm{P})}$.
    \end{itemize}
\end{definition}

\begin{lemma}\label{l: canonical model}
    Let $\Sigma$ be a non-empty, finite and closed set of $\LICK$-formulae.
    \begin{itemize}
        \item $\fw^{(\Sigma, \hick)}$ is a finite epistemic model.
        \item  $\fw^{(\Sigma, \hickt)}$ is a finite reflexive model.
        \item  $\fw^{(\Sigma, \hicks)}$ is a finite S4 model.
        \item  If $\Sigma$ is negation closed, then $\fw^{(\Sigma, \hickss)}$ is a finite S5 model.
    \end{itemize}
\end{lemma}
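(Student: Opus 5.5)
The plan is to verify, for each of the four systems, the defining conditions one by one: $\W^{(\Sigma,\hickp)}$ is non-empty and finite, $\leq^{(\Sigma,\hickp)}$ is a partial order, $\V^{(\Sigma,\hickp)}$ is monotone, triangle confluence holds, and the relevant frame conditions (reflexivity, transitivity, symmetry) hold.

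\textbf{Generic conditions.} Every world is a subset of the finite set $\Sigma$, so there are at most $2^{\lvert\Sigma\rvert}$ worlds and the model is finite. For non-emptiness, note that $\emptyset \not\vdash_\hickp \bot$ by Theorem~\ref{t: soundness}, since $\bot$ is not valid over any of the frame classes. Lemma~\ref{l: Lindenbaum for IM} applied to $\Gamma=\emptyset$ and $\varphi=\bot$ then yields a $\Sigma$-prime theory. In the S5 case, where $\Sigma$ is negation closed, this theory is moreover $\K$-maximally consistent. Set inclusion is a partial order. The valuation is monotone because $\Gamma\subseteq\Delta$ implies $\Gamma\cap\Prop\subseteq\Delta\cap\Prop$.

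\textbf{Triangle confluence.} Let $\Gamma\subseteq\Delta$ and $\Delta \Rel_\agi \Theta$.
\begin{itemize}
\item For $\hick$ and $\hickt$: we have $\K_\agi^{-1}\Gamma\subseteq\K_\agi^{-1}\Delta\subseteq\Theta$.
\item For $\hicks$: likewise $\K_\agi\K_\agi^{-1}\Gamma\subseteq\K_\agi\K_\agi^{-1}\Delta\subseteq\Theta$.
\item For $\hickss$: we must show $\K_\agi\K_\agi^{-1}\Gamma=\K_\agi\K_\agi^{-1}\Delta$; the claim then follows by transitivity of equality. The inclusion $\subseteq$ is immediate from $\Gamma\subseteq\Delta$. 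For $\supseteq$, suppose $\K_\agi\varphi\in\Delta$ but $\K_\agi\varphi\notin\Gamma$. Since $\K_\agi\varphi\in\Sigma$, $\K$-maximal consistency of $\Gamma$ gives $\neg\K_\agi\varphi\in\Gamma\subseteq\Delta$. Then $\Delta\vdash_\hickss\bot$, contradicting consistency.
\end{itemize}
I expect the S5 case to be the main obstacle, because it is the only step that uses the $\K$-maximality built into Lemma~\ref{l: Lindenbaum for IM}.

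\textbf{Frame conditions.}
\begin{itemize}
\item Reflexivity for $\hickt$: $\K_\agi^{-1}\Gamma\subseteq\Gamma$ by Lemma~\ref{l: properties of prime theories}, Item~\ref{l: properties of prime theories, Item 4}.
\item Reflexivity for $\hicks$: $\K_\agi\K_\agi^{-1}\Gamma\subseteq\Gamma$ holds trivially.
\item Transitivity for $\hicks$: suppose $\Gamma\Rel_\agi\Delta\Rel_\agi\Theta$ and $\K_\agi\varphi\in\Gamma$. Then $\K_\agi\varphi\in\Delta$, so $\K_\agi\varphi\in\K_\agi\K_\agi^{-1}\Delta\subseteq\Theta$.
\item For $\hickss$: $R_\agi$ is defined as the kernel of the map $\Gamma\mapsto\K_\agi\K_\agi^{-1}\Gamma$, so it is automatically an equivalence relation.
\end{itemize}

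Together these verifications establish all four claims.
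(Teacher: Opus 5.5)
Your proposal is correct and follows the paper's proof essentially step by step. Finiteness and non-emptiness come from the Lindenbaum Lemma together with soundness, inclusion serves as the partial order, and the frame conditions are read off from the definitions of $R_\agi^{(\Sigma,\hickp)}$. Triangle confluence follows because $\K_\agi^{-1}$ is monotone under inclusion, and the S5 case uses the same $\K$-maximal-consistency argument as the paper. You are slightly more explicit than the paper in places, for instance in the transitivity check for $\hicks$ and in choosing $\Gamma=\emptyset$, $\varphi=\bot$ for non-emptiness.
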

\begin{proof}
    For any $\hickp \in \{\hick, \hickt, \hicks, \hickss\}$, note that since $\Sigma$ is finite, $\mathcal{T}_\Sigma^\hickp$ is finite as well, implying that $\fw^{(\Sigma, \hickp)}$ is finite. Furthermore, $\W^{(\Sigma, \hickp)}$ is non-empty by the Lindenbaum Lemma and Theorem \ref{t: soundness}. That $(\W^{(\Sigma, \hickp)}, \leq^{(\Sigma, \hickp)}$) is a poset follows immediately from $\subseteq$ being reflexive, transitive and antisymmetric. Moreover, the monotonicity of $\V^{(\Sigma, \hickp)}$ follows directly from construction. It remains to show that each $R_\agi^{(\Sigma, \hickp)}$ is triangle confluent and satisfies the correct frame conditions. For the frame conditions note that for each $\agi \in \A$ the relation $R_\agi^{(\Sigma, \hickt)}$ is reflexive: since $\mathsf{T_\agi} \in \hickt$ we have for each $\Gamma \in \W^{(\Sigma, \hickt)}$ that $\Gamma \vdash_{\hickt} \K_\agi \varphi \rightarrow \varphi$. Thus since $\Gamma$ is deductively closed with respect to $\Sigma$ we have $\K_\agi^{-1} \Gamma \subseteq \Gamma$ and so $\Gamma \in R_\agi^{(\Sigma, \hickt)}(\Gamma)$. For $\hicks$ clearly $\K_\agi \K_\agi^{-1} \Gamma \subseteq \Gamma$ for each $\agi \in \A$ and each $\Gamma \in \W^{(\Sigma, \hicks)}$ and so each $R_\agi^{(\Sigma, \hicks)}$ is reflexive. Transitivity follows from construction. Finally, for $\hickss$ it follows directly that each $R_\agi^{(\Sigma, \hickss)}$ is an equivalence relation since $=$ is one. For triangle confluence suppose $\Gamma, \Delta, \Omega \in \W^{(\Sigma, \hickp)}$ for $\hickp \in \{\hick, \hickt\}$ with $\Gamma \leq^{(\Sigma, \hickp)} \Delta$ and $\Delta \Rel_\agi^{(\Sigma, \hickp)} \Omega$. Then $\Gamma \subseteq \Delta$ and $\K_\agi^{-1} \Delta \subseteq \Omega$, which implies that $\K_\agi^{-1} \Gamma \subseteq \Omega$ as well. Hence $\Gamma \Rel_\agi^{(\Sigma, \hickp)} \Omega$. The case for $\hicks$ follows by a similar argument. Finally suppose $\Gamma, \Delta, \Omega \in \W^{(\Sigma, \hickss)}$ and $\Gamma \leq^{(\Sigma, \hickss)} \Delta$ and $\Delta \Rel_\agi^{(\Sigma, \hickss)} \Omega$. Then $\Gamma \subseteq \Delta$ and $\K_\agi \K_\agi^{-1} \Delta = \K_\agi \K_\agi^{-1} \Omega$. From $\Gamma \subseteq \Delta$ we obtain that $\K_\agi \K_\agi^{-1} \Gamma \subseteq \K_\agi \K_\agi^{-1} \Delta$. Suppose towards contradiction that there exists $\K_\agi \varphi \in \Delta$ such that $\K_\agi \varphi \not \in \Gamma$. Since $\Gamma$ is $\K$-maximally consistent we have $\neg \K_\agi \varphi \in \Gamma$ and so $\neg \K_\agi \varphi \in \Delta$. Therefore $\Delta \vdash_\hickss \bot$, a contradiction. Hence $\K_\agi \K_\agi^{-1} \Gamma = \K_\agi \K_\agi^{-1} \Delta = \K_\agi \K_\agi^{-1} \Omega$, implying that $\Gamma \Rel_\agi^{(\Sigma, \hickss)} \Omega$.
\end{proof}

The last step to obtain completeness is to establish the Truth Lemma. For the common knowledge case, we will need to use formal induction over the set of prime theories reachable from the current world. The following formulae will help us achieve this. For $\Gamma \in \W^{(\Sigma, \hickp)}$, let the \emph{reachable component} of $\Gamma$ be the set $R^{(\Sigma, \hickp)^*}(\Gamma)$ where $R^{(\Sigma, \hickp)} = \bigcup_{\agi \in \A} R_\agi^{(\Sigma, \hickp)}$. Note that $R^{(\Sigma, \hickp)^*}(\Gamma)$ is finite.

\begin{definition}\label{d: characteristic formula}
    Let $\Gamma \in \W^{(\Sigma, \hickp)}$. The \emph{characteristic formula} of $\Gamma$ is the formula
    \begin{equation*}
        \chi(\Gamma) \coloneqq \bigwedge \Gamma.
    \end{equation*}
    Furthermore, the \emph{reachable component formula} is the formula
    \begin{equation*}
        \gamma(\Gamma) \coloneqq \bigvee_{\Delta \in R^{(\Sigma, \hickp)^*}(\Gamma)} \chi(\Delta).
    \end{equation*}
\end{definition}

The characteristic formula $\chi(\Gamma)$ characterizes the prime theory $\Gamma$. Note that due to the lack of the duals of $\K_\agi$ and $\C$ in the language of $\ICK$ we do not require to encode any of the \emph{negative} information in the characteristic formula of a prime theory. The reachable component formula $\gamma(\Gamma)$ characterizes the reachable component of $\Gamma$. Together these formulae will help us to encode induction over the worlds in the reachable component of $\Gamma$. The following lemma establishes some useful properties of the characteristic formula and the reachable component formula which will be readily used in the proof of the Truth Lemma. 

\begin{lemma}\label{l: properties characteristic formula IM}
    Let $\Gamma \in \W^{(\Sigma, \hickp)}$.  The following hold.
    \begin{enumerate}
        \item $\Gamma \vdash_\mathrm{P} \chi(\Gamma)$.
        \item For any $\Delta \in (R^{(\Sigma,\hickp)})^*(\Gamma)$, $\Delta \vdash_\mathrm{P} \gamma(\Gamma)$.
        \item If $\psi \in \LICK$ such that $\psi \in \Delta$ for every $\Delta \in R^{(\Sigma, \hickp)^*}(\Gamma)$, then $\vdash_\mathrm{P} \gamma(\Gamma) \rightarrow \psi$.
    \end{enumerate}
\end{lemma}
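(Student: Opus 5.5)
The three items are handled in order. Items 1 and 2 are routine bookkeeping with intuitionistic tautologies. Item 3 carries the real content, and its key ingredient is that $\Gamma$ and its reachable component are finite sets of formulae.

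For item 1, $\Gamma \subseteq \Sigma$ is finite, so $\chi(\Gamma)=\bigwedge\Gamma$ is a genuine formula. Each conjunct $\varphi\in\Gamma$ satisfies $\Gamma\vdash_\hickp\varphi$ trivially. Repeatedly applying the tautology $p\rightarrow(q\rightarrow(p\wedge q))$ with $\mathsf{MP}$ then gives $\Gamma\vdash_\hickp\chi(\Gamma)$. If $\Gamma=\emptyset$, then $\chi(\Gamma)=\top$, which is an intuitionistic tautology.

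For item 2, let $\Delta\in R^{(\Sigma,\hickp)^*}(\Gamma)$. By item 1, $\Delta\vdash_\hickp\chi(\Delta)$. The reachable component is finite, because $\W^{(\Sigma,\hickp)}$ is finite by Lemma~\ref{l: canonical model}, so $\gamma(\Gamma)$ is a finite disjunction with $\chi(\Delta)$ as one of its disjuncts. The tautology $\chi(\Delta)\rightarrow\gamma(\Gamma)$, an iterated instance of $p\rightarrow(p\vee q)$ up to reassociation, together with $\mathsf{MP}$ yields $\Delta\vdash_\hickp\gamma(\Gamma)$.

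For item 3, suppose $\psi\in\Delta$ for every $\Delta$ in the reachable component. For each such $\Delta$, the formula $\psi$ is one of the conjuncts of $\chi(\Delta)$. Hence $\chi(\Delta)\rightarrow\psi$ is an instance of the intuitionistic tautology $(p_1\wedge\dots\wedge p_n)\rightarrow p_i$, and so $\vdash_\hickp\chi(\Delta)\rightarrow\psi$. This step uses no assumptions, so there is no conflict with the restriction on $\mathsf{Nec}$, $\mathsf{Mon}$ and $\mathsf{Ind}$. We then combine these finitely many implications by induction on the number of disjuncts, using the tautology $(p\rightarrow r)\rightarrow((q\rightarrow r)\rightarrow((p\vee q)\rightarrow r))$ and $\mathsf{MP}$. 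This gives $\vdash_\hickp\gamma(\Gamma)\rightarrow\psi$. Since $\Gamma\in R^{(\Sigma,\hickp)^*}(\Gamma)$ by reflexivity of ${}^*$, the disjunction is nonempty. Even if it were empty, $\bot\rightarrow\psi$ would still be derivable.

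The only point needing care, and so the main obstacle, is making sure both the conjunction and the disjunction are finite. For the conjunction this comes from the finiteness of $\Sigma$. For the disjunction it comes from the finiteness of the canonical model, which follows because there are only finitely many subsets of $\Sigma$.
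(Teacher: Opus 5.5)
Your proposal is correct and takes the paper's route: item 1 is immediate, item 2 uses that $\chi(\Delta)\rightarrow\gamma(\Gamma)$ is an intuitionistic tautology, and item 3 rests on $\psi$ being a conjunct of every disjunct $\chi(\Delta)$ of $\gamma(\Gamma)$. The only difference is cosmetic: the paper declares $\gamma(\Gamma)\rightarrow\psi$ an intuitionistic tautology in one step, whereas you assemble it disjunct by disjunct via the $\vee$-elimination tautology, which amounts to the same thing.
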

\begin{proof}
    1. is trivial. For 2. note that $\chi(\Delta)$ is one of the disjuncts of $\gamma(\Gamma)$, implying that $\chi(\Delta) \rightarrow \gamma(\Gamma)$ is an intuitionistic tautology. Hence, 1. implies that $\Delta \vdash \gamma(\Gamma)$. For 3. by assumption $\psi$ is a conjunct of each $\chi(\Delta)$ occurring as a disjunct of $\gamma(\Gamma)$, which immediately implies that $\gamma(\Gamma) \rightarrow \psi$ is an intuitionistic tautology and hence derivable.
\end{proof}

\begin{lemma}[Truth Lemma] \label{l: truth lemma}
Let $\hickp \in \{\hick, \hickt, \hicks, \hickss\}$, $\Sigma$ a finite, non-empty and closed set of formulae (negation closed if $\hickp = \hickss$), and $\fw^{(\Sigma, \hickp)}$ the canonical model relative to $\Sigma$ and $\hickp$. Then for any $\Gamma \in \W^{(\Sigma, \hickp)}$ and any formula $\varphi \in \Sigma$ the following holds.
\begin{center}
    $\varphi \in \Gamma$ if and only if $\fw^{(\Sigma, \hickp)}, \Gamma \models \varphi$.
\end{center}
\end{lemma}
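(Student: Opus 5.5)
We prove the statement by induction on the complexity $c(\varphi)$ of $\varphi \in \Sigma$, simultaneously for all $\Gamma \in \W^{(\Sigma,\hickp)}$. Since $\Sigma$ is closed, all immediate subformulae of $\varphi$ (and $\K_\agi \C\psi$ when $\varphi = \C\psi$) lie in $\Sigma$. For the $\C$ case we instead use an inner induction, so the induction hypothesis is only applied to $\psi$.

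\textbf{Propositional cases.} The cases $\bot$ and $p$ follow from consistency and from the definition of $\V^{(\Sigma,\hickp)}$. The cases $\wedge$ and $\vee$ follow from Lemma~\ref{l: properties of prime theories}, Items 1 and 2.

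For $\psi\to\chi$, left to right: if $\psi\to\chi\in\Gamma$, $\Gamma\subseteq\Delta$ and $\psi\in\Delta$, then $\mathsf{MP}$ and deductive closure give $\chi\in\Delta$. Right to left: if $\psi\to\chi\notin\Gamma$, then $\Gamma\not\vdash\psi\to\chi$ by deductive closure. The Deduction Theorem gives $\Gamma,\psi\not\vdash\chi$. The Lindenbaum Lemma then yields $\Delta\supseteq\Gamma\cup\{\psi\}$ with $\chi\notin\Delta$, and $\Delta$ is $\K$-maximal in the S5 case. The induction hypothesis finishes the case.

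\textbf{Case $\K_\agi\psi$.} Left to right: if $\K_\agi\psi\in\Gamma$ and $\Gamma \Rel_\agi \Delta$, then $\psi\in\Delta$. For $\hick,\hickt$ this is immediate from the definition of $R_\agi$. For $\hicks,\hickss$ we have $\K_\agi\psi\in\Delta$, and Lemma~\ref{l: properties of prime theories}, Item 4 gives $\psi\in\Delta$.

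Right to left: assume $\K_\agi\psi\notin\Gamma$. We pick a set $\Theta\subseteq\Sigma$ as follows:
\begin{itemize}
\item for $\hick,\hickt$, let $\Theta \coloneqq \K_\agi^{-1}\Gamma$;
\item for $\hicks$, let $\Theta \coloneqq \K_\agi\K_\agi^{-1}\Gamma$;
\item for $\hickss$, let $\Theta \coloneqq \K_\agi\K_\agi^{-1}\Gamma\cup\{\neg\K_\agi\theta \mid \K_\agi\theta\in\Sigma\setminus\Gamma\}$. Here $\neg \K_\agi \theta \in \Gamma$ by $\K$-maximality.
\end{itemize}
We claim $\Theta\not\vdash\psi$. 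Otherwise Corollary~\ref{c: deduction theorem}, $\mathsf{Nec_\agi}$, $\mathsf{K_\agi}$ and Lemma~\ref{l: derivable formulae 2}, Item 1 give $\Gamma\vdash\K_\agi\psi$. For this we need $\Gamma\vdash\K_\agi\vartheta$ for each $\vartheta\in\Theta$, which follows from $\mathsf{S4_\agi}$ and, in the S5 case, $\mathsf{S5_\agi}$. Then deductive closure gives $\K_\agi\psi\in\Gamma$, a contradiction.

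The Lindenbaum Lemma now gives $\Delta\supseteq\Theta$ with $\psi\notin\Delta$. We check $\Gamma \Rel_\agi \Delta$. In the S5 case this needs the equality $\K_\agi\K_\agi^{-1}\Gamma=\K_\agi\K_\agi^{-1}\Delta$. The inclusion $\subseteq$ holds because $\Theta \subseteq \Delta$. For $\supseteq$: if $\K_\agi\theta\in\Delta\setminus\Gamma$, then also $\neg\K_\agi\theta\in\Delta$, contradicting consistency of $\Delta$.

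\textbf{Case $\C\psi$, left to right.} We show by induction on path length that $\C\psi\in\Delta$ for all $\Delta\in R^*(\Gamma)$. Lemma~\ref{l: properties of prime theories}, Item 3 gives $\K_\agi\C\psi\in\Delta$. The left-to-right argument of the $\K_\agi$ case passes $\C\psi$ to every $R_\agi$-successor. Hence $\psi\in\Delta$ for every reachable $\Delta$, and the induction hypothesis gives $\fw,\Delta\models\psi$.

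\textbf{Case $\C\psi$, right to left.} This is the main obstacle. Assume $\psi$ holds at every reachable $\Delta$. By the induction hypothesis $\psi\in\Delta$ for all of them, so Lemma~\ref{l: properties characteristic formula IM}, Item 3 gives $\vdash\gamma(\Gamma)\to\psi$.

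The key step is $\vdash\gamma(\Gamma)\to\E\gamma(\Gamma)$. It suffices to show $\Delta\vdash\K_\agi\gamma(\Gamma)$ for every reachable $\Delta$ and every $\agi$; Corollary~\ref{c: deduction theorem} then gives $\vdash \chi(\Delta) \to \K_\agi\gamma(\Gamma)$, and we combine the disjuncts intuitionistically.

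To prove $\Delta\vdash\K_\agi\gamma(\Gamma)$, let $\Theta$ be the set built from $\Delta$ as in the $\K_\agi$ case. Suppose $\Theta\not\vdash\gamma(\Gamma)$. The Lindenbaum Lemma applies even though $\gamma(\Gamma)\notin\Sigma$, since only $\Theta\subseteq\Sigma$ is required. It gives a prime theory $\Pi\supseteq\Theta$ with $\Pi\not\vdash\gamma(\Gamma)$. As in the $\K_\agi$ case, $\Delta \Rel_\agi \Pi$, so $\Pi$ is reachable from $\Gamma$. This contradicts Lemma~\ref{l: properties characteristic formula IM}, Item 2. Hence $\Theta\vdash\gamma(\Gamma)$, and the $\mathsf{Nec_\agi}$/$\mathsf{K_\agi}$ (and $\mathsf{S4_\agi}$/$\mathsf{S5_\agi}$) argument of the $\K_\agi$ case gives $\Delta\vdash\K_\agi\gamma(\Gamma)$.

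Finally, $\mathsf{Ind}$ gives $\vdash\gamma(\Gamma)\to\C\gamma(\Gamma)$ and $\mathsf{Mon}$ gives $\vdash\C\gamma(\Gamma)\to\C\psi$. Since $\Gamma\vdash\gamma(\Gamma)$, we get $\Gamma\vdash\C\psi$, and deductive closure gives $\C\psi\in\Gamma$.
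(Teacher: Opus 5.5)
Your proof is correct and follows the paper's argument essentially step for step. You use the same prime-theory cases, the same sets $\K_\agi^{-1}\Gamma$ and $\K_\agi\K_\agi^{-1}\Gamma$, and for S5 a set that equals the paper's $\K_\agi\K_\agi^{-1}\Gamma\cup\neg\K_\agi\neg\K_\agi^{-1}\Gamma$ by $\K$-maximality; the $\C$ case uses the same $\gamma(\Gamma)$ together with $\mathsf{Ind}$ and $\mathsf{Mon}$. Routing all modal cases through a single $\Theta$ and noting explicitly that the Lindenbaum Lemma does not need $\gamma(\Gamma)\in\Sigma$ are useful clarifications of points the paper leaves implicit.
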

\begin{proof}
    Let $\hickp \in \{\hick, \hickt, \hicks, \hickss\}$. We proceed by induction on the structure of $\varphi$. The case where $\varphi = \bot$ is trivial, and the case where $\varphi = p$ for $p \in \Prop$ follows immediately from the definition. For the induction step, the cases where $\varphi = \psi \wedge \chi$ or $\varphi = \psi \vee \chi$ follow immediately from Lemma \ref{l: properties of prime theories}, the induction hypothesis and $\Sigma$ being closed. \smallskip
    
    \noindent \textsc{Case for $\rightarrow$.} Suppose $\varphi = \psi \rightarrow \chi$. For the left-to-right direction suppose $\psi \rightarrow \chi \in \Gamma$. Let $\Delta \in \W^{(\Sigma, \hickp)}$ with $\Gamma \subseteq \Delta$ and suppose $\fw^{(\Sigma, \hickp)}, \Delta \models \psi$. By induction hypothesis $\psi \in \Delta$ implying that $\Delta \vdash_\mathrm{P} \psi$. Since $\Delta$ extends $\Gamma$, we have $\psi \rightarrow \chi \in \Delta$ and thus $\Delta \vdash_\mathrm{P} \psi \rightarrow \chi$. Applying $\mathsf{MP}$ yields $\Delta \vdash_\mathrm{P} \chi$. As $\Delta$ is deductively closed with respect to $\Sigma$, $\chi \in \Delta$. The induction hypothesis yields $\fw^{(\Sigma, \hickp)}, \Delta \models \chi$. $\Delta$ was an arbitrary $\Sigma$-prime theory extending $\Gamma$, therefore $\fw^{(\Sigma, \hickp)}, \Gamma \models \psi \rightarrow \chi$. 
    For the right-to-left direction we proceed by contraposition. Suppose $\psi \rightarrow \chi \not \in \Gamma$. Then $\Gamma \not \vdash_\mathrm{P} \psi \rightarrow \chi$. The Deduction Theorem yields $\Gamma, \psi \not \vdash_\mathrm{P} \chi$. By the Lindenbaum Lemma there exists $\Delta \in \W^{(\Sigma, \hickp)}$ with $\Gamma \cup \{\psi\} \subseteq \Delta$ and $\Delta \not \vdash_\mathrm{P} \chi$. Hence, $\psi \in \Delta$ and $\chi \not \in \Delta$. The induction hypothesis yields $\fw^{(\Sigma, \hickp)}, \Delta \models \psi$ and $\fw^{(\Sigma, \hickp)}, \Delta \not \models \chi$. As $\Gamma \leq^{(\Sigma, \hickp)} \Delta$, we have $\fw^{(\Sigma, \hickp)}, \Gamma \not \models \psi \rightarrow \chi$. \smallskip
    
    \noindent \textsc{Case for $\K_\agi$.} Suppose $\varphi = \K_\agi \psi$. We distinguish the following cases depending on $\hickp$. \smallskip
   
    \setlength{\leftskip}{0.5cm} \noindent \textsc{Case for $\hickp \in \{\hick, \hickt\}$}.  For the left-to-right direction suppose $\K_\agi \psi \in \Gamma$. Let $\Delta \in \W^{(\Sigma, \hickp)}$ be any $\Sigma$-prime theory such that $\Gamma \Rel_\agi^{(\Sigma, \hickp)} \Delta$ holds. By definition $\K_\agi^{-1} \Gamma \subseteq \Delta$. Hence, $\psi \in \Delta$. By induction hypothesis $\fw^{(\Sigma, \hickp)}, \Delta \models \psi$. As $\Delta$ was arbitrary, we conclude that $\fw^{(\Sigma, \hickp)}, \Gamma \models \K_\agi \psi$. For the right-to-left direction suppose $\fw^{(\Sigma, \hickp)}, \Gamma \models \K_\agi \psi$. Note that
    \begin{equation}\label{e: truth lemma1}
        \K_\agi^{-1} \Gamma \vdash_\mathrm{P} \psi
    \end{equation}
     as otherwise, by the Lindenbaum Lemma, there would exist a $\Sigma$-prime theory $\Delta$ with $\K_\agi^{-1} \Gamma \subseteq \Delta$ and $\Delta \not \vdash_\mathrm{P} \psi$. Hence $\psi \not \in \Delta$, and so by induction hypothesis $\fw^{(\Sigma, \hickp)}, \Delta \not \models \psi$, contradicting that $\fw^{(\Sigma, \hickp)}, \Gamma \models \K_\agi \psi$. Therefore (\ref{e: truth lemma1}) holds. Since $\K_\agi^{-1}\Gamma$ is a finite set, from (\ref{e: truth lemma1}) and Corollary \ref{c: deduction theorem} we obtain
    \begin{equation}\label{e: truthlemma2}
        \vdash_\mathrm{P} \bigwedge \K_\agi^{-1} \Gamma \rightarrow \psi.
    \end{equation}
    By $\mathsf{Nec_\agi}$ we obtain $\vdash_\mathrm{P} \K_\agi (\bigwedge \K_\agi^{-1} \Gamma \rightarrow \psi) $ and therefore $\vdash_\mathrm{P} \K_\agi (\bigwedge \K_\agi^{-1} \Gamma) \rightarrow \K_\agi \psi$ by the axiom $\mathsf{K_\agi}$ and $\mathsf{MP}$. We claim that
    \begin{equation}\label{e: truthlemma3}
         \K_\agi \K_\agi^{-1} \Gamma \vdash_\mathrm{P} \K_\agi (\bigwedge \K_\agi^{-1} \Gamma).
    \end{equation}
    If true, then applying $\mathsf{MP}$ to $\vdash_\mathrm{P} \K_\agi (\bigwedge \K_\agi^{-1} \Gamma) \rightarrow \K_\agi \psi$ and (\ref{e: truthlemma3}) gives $\K_\agi \K_\agi^{-1} \Gamma \vdash \K_\agi \psi$. Lemma \ref{l: assumption weakening} then yields $\Gamma \vdash \K_\agi \psi$ implying that $\K_\agi \psi \in \Gamma$. We prove (\ref{e: truthlemma3}) by induction on the size of $\K_\agi^{-1} \Gamma$. For $\lvert \K_\agi^{-1} \Gamma \rvert = 0$, the statement reduces to $\emptyset \vdash \K_\agi \top$ which is trivially true. For $\lvert \K_\agi^{-1} \Gamma \rvert = k+1$ let $\K_\agi^{-1}\Gamma_0 = \{ \varphi_1, \ldots, \varphi_k\}$ and let $\K_\agi^{-1}\Gamma = \K_\agi^{-1} \Gamma_0 \cup \{ \varphi_{k+1}\}$. By induction hypothesis
    \begin{equation*}
        \K_\agi \K_\agi^{-1} \Gamma_0 \vdash_\mathrm{P} \K_\agi (\bigwedge \K_\agi^{-1} \Gamma_0)
    \end{equation*}
    Hence also $\K_\agi \K_\agi^{-1} \Gamma \vdash_\mathrm{P} \K_\agi (\bigwedge \K_\agi^{-1} \Gamma_0)$. Since $\K_\agi \K_\agi^{-1} \Gamma \vdash_\mathrm{P} \K_\agi \varphi_{k+1}$ we also obtain
    \begin{equation*}
        \K_\agi \K_\agi^{-1} \Gamma \vdash_\mathrm{P} \K_\agi (\bigwedge \K_\agi^{-1} \Gamma_0) \wedge \K_\agi \varphi_{k+1}.
    \end{equation*}
     Lemma \ref{l: derivable formulae}, Item \ref{l: derivable formulae item 2}. yields
     \begin{equation*}
         \K_\agi \K_\agi^{-1} \Gamma \vdash_\mathrm{P} \K_\agi (\bigwedge \K_\agi^{-1} \Gamma)
     \end{equation*}
     which finishes the proof of the claim. \smallskip

     \noindent \textsc{Case for $\hickp = \hicks$.} For the left-to-right direction suppose $\K_\agi \psi \in \Gamma$. Let $\Delta \in \W^{(\Sigma, \hicks)}$ be any $\Sigma$-prime theory such that $\Gamma \Rel_\agi^{(\Sigma, \hicks)} \Delta$ holds. By definition $\K_\agi \K_\agi^{-1} \Gamma \subseteq \Delta$. Hence $\K_\agi \psi \in \Delta$. Since $\Delta \vdash_\hicks \K_\agi \psi \rightarrow \psi$, applying $\mathsf{MP}$ gives $\Delta \vdash_\hicks \psi$ and so $\psi \in \Delta$. The induction hypothesis yields $\fw^{(\Sigma, \hicks)}, \Delta \models \psi$. As $\Delta$ was arbitrary $\fw^{(\Sigma, \hicks)}, \Gamma \models \K_\agi \psi$. For the right-to-left direction suppose $\fw^{(\Sigma, \hicks)}, \Gamma \models \K_\agi \psi$. Note that
    \begin{equation}\label{e: completeness S4 K1}
       \K_\agi \K_\agi^{-1} \Gamma \vdash_\hicks \psi
    \end{equation}
    as otherwise there would exist, by the Lindenbaum Lemma, a prime theory $\Delta \supseteq \K_\agi \K_\agi^{-1} \Gamma$ with $\Delta \not \vdash_\hicks \psi$. Hence $\psi \not \in \Delta$ and so by induction hypothesis $\fw^{(\Sigma, \hicks)}, \Delta \not \models \psi$. By construction $\Gamma \Rel_\agi^{(\Sigma, \hicks)} \Delta$, implying that $\fw^{(\Sigma, \hicks)}, \Gamma \not \models \K_\agi \psi$; contradiction. Hence (\ref{e: completeness S4 K1}) holds. By Corollary \ref{c: deduction theorem} we have
    \begin{equation}\label{e: completeness S4 K2}
        \vdash_\hicks \bigwedge \K_\agi \K_\agi^{-1} \Gamma \rightarrow \psi.
    \end{equation}
    By $\mathsf{Nec_\agi}$, $\mathsf{K_\agi}$ and $\mathsf{MP}$ we obtain
    \begin{equation}\label{e: completeness S4 K3}
        \vdash_\hicks \K_\agi \bigwedge \K_\agi \K_\agi^{-1} \Gamma \rightarrow \K_\agi \psi.
    \end{equation}
   We claim that $\vdash_\hicks \bigwedge \K_\agi \K_\agi \K_\agi^{-1} \Gamma \rightarrow \K_\agi \bigwedge \K_\agi \K_\agi^{-1} \Gamma$ and we prove the claim by induction on $\lvert \K_\agi^{-1} \Gamma \rvert$. For ${\lvert \K_\agi^{-1} \Gamma \rvert = 0}$ the statement reduces to $\vdash_\hicks  \top \rightarrow \K_\agi \top$, which is easy to prove. For ${\lvert \K_\agi^{-1} \Gamma \rvert = k+1}$, let $\K_\agi^{-1} \Gamma = \K_\agi^{-1}\Gamma_0 \cup \{\varphi_{k+1}\}$ where $ \K_\agi^{-1} \Gamma_0= \{\varphi_1, \ldots,\varphi_k\}$. By induction hypothesis
   \begin{equation}\label{e: completeness S4 K4}
       \vdash_\hicks \bigwedge \K_\agi \K_\agi \K_\agi^{-1}\Gamma_0 \rightarrow \K_\agi \bigwedge \K_\agi \K_\agi^{-1} \Gamma_0.
   \end{equation}
   Since $(p_1 \rightarrow q_1) \rightarrow ((p_2 \rightarrow q_2) \rightarrow ((p_1 \wedge p_2) \rightarrow (q_1 \wedge q_2)))$ is an intuitionistic tautology and $\vdash_\hicks \K_\agi \K_\agi \varphi_{k+1} \rightarrow \K_\agi \K_\agi \varphi_{k+1}$, we obtain
   \begin{equation}\label{e: completeness S4 K5}
       \vdash_\hicks \bigwedge \K_\agi \K_\agi \K_\agi^{-1}\Gamma \rightarrow ((\K_\agi \bigwedge \K_\agi \K_\agi^{-1} \Gamma_0) \wedge \K_\agi \K_\agi \varphi_{k+1})
   \end{equation}
 By Lemma \ref{l: derivable formulae 2}, Item \ref{l: derivable formulae 2 item 1} we have
 \begin{equation}\label{e: completeness S4 K6}
     \vdash_\hicks ((\K_\agi \bigwedge \K_\agi \K_\agi^{-1} \Gamma_0) \wedge \K_\agi \K_\agi \varphi_{k+1}) \rightarrow \K_\agi \bigwedge \K_\agi \K_\agi^{-1} \Gamma.
 \end{equation}
 Hence we we obtain $\vdash_\hicks \bigwedge \K_\agi \K_\agi \K_\agi^{-1} \Gamma \rightarrow \K_\agi \bigwedge \K_\agi \K_\agi^{-1} \Gamma$ by Lemma \ref{l: derivable formulae}, Item \ref{l: derivable formulae item 2}  as claimed. Now since $\Gamma \vdash_\hicks \K_\agi \gamma$ for each $\K_\agi \gamma \in \Gamma$, axiom $\mathsf{S4_\agi}$ implies that $\Gamma \vdash_\hicks \K_\agi \K_\agi \gamma$ for each $\K_\agi \gamma \in \Gamma$ and so, using standard arguments, $\Gamma \vdash_\hicks \bigwedge \K_\agi \K_\agi \K_\agi^{-1} \Gamma$. Hence $\Gamma \vdash_\hicks \K_\agi \bigwedge \K_\agi \K_\agi^{-1} \Gamma$ and so we obtain $\Gamma \vdash_\hicks \K_\agi \psi$ from (\ref{e: completeness S4 K3}). Therefore $\K_\agi \psi \in \Gamma$. \smallskip

 \noindent \textsc{Case for $\hickss$.} For the direction from left-to-right suppose $\K_\agi \psi \in \Gamma$ and let $\Delta \in \W^{(\Sigma, \hickss)}$ be any world such that $\Gamma \Rel_\agi^{(\Sigma, \hickss)} \Delta$. By definition $\K_\agi \K_\agi^{-1} \Gamma = \K_\agi \K_\agi^{-1} \Delta$, implying that $\K_\agi \psi \in \Delta$. Thus $\psi \in \Delta$ by axiom $\mathsf{T_\agi}$ and so $\fw^{(\Sigma, \hickss)}, \Delta \models \psi$ by induction hypothesis. As $\Delta$ was arbitrary we conclude $\fw^{(\Sigma, \hickss)}, \Gamma \models \K_\agi \psi$. For the right-to-left direction suppose $\fw^{(\Sigma, \hickss)}, \Gamma \models \K_\agi \psi$. We claim that
 \begin{equation}\label{e: completeness S5 K1}
     \K_\agi \K_\agi^{-1} \Gamma, \neg \K_\agi \neg \K_\agi^{-1} \Gamma \vdash_\hickss \psi.
 \end{equation}
 In fact if (\ref{e: completeness S5 K1}) would not hold, then by the Lindenbaum Lemma there would exist a $\K$-maximally consistent prime theory $\Delta$ such that (1) $ \K_\agi \K_\agi^{-1} \Gamma \cup \neg \K_\agi \neg \K_\agi^{-1} \Gamma \subseteq \Delta$ and (2) $\Delta \not \vdash_\hickss \psi$. From (2) follows by the induction hypothesis that $\fw^{(\Sigma, \hickss)} \not \models \psi$. From (1) follows that $\Gamma \Rel_\agi^{(\Sigma, \hickss)} \Delta$: By construction $\K_\agi \K_\agi^{-1} \Gamma \subseteq \Delta$. Suppose $\K_\agi \chi \in \Delta$. Since $\Gamma$ is $\K$-maximally consistent we have either (i) $\K_\agi \chi \in \Gamma$ or (ii) $\neg \K_\agi \chi \in \Gamma$. Note that (ii) implies that $\neg \K_\agi \chi \in \Delta$ which implies that $\Delta \vdash_\hickss \bot$. Hence (i) holds, i.e. $\K_\agi \K_\agi^{-1} \Gamma = \K_\agi \K_\agi^{-1} \Delta$. Thus (1) and (2) imply that $\fw^{(\Sigma, \hickss)}, \Gamma \not \models \K_\agi \psi$; a contradiction. We conclude that (\ref{e: completeness S5 K1}) holds. By Corollary \ref{c: deduction theorem} we have
 \begin{equation}\label{e: completeness S5 K2}
     \vdash_\hickss \bigwedge \Omega \rightarrow \psi
 \end{equation}
 where $\Omega = \K_\agi \K_\agi^{-1} \Gamma \cup \neg \K_\agi \neg \K_\agi^{-1} \Gamma$. By $\mathsf{Nec_\agi}$, $\mathsf{K_\agi}$ and $\mathsf{MP}$ we obtain
 \begin{equation}\label{e: completeness S5 K3}
     \vdash_\hickss \K_\agi \bigwedge \Omega \rightarrow \K_\agi \psi.
 \end{equation}
We can then show that 
 \begin{equation}\label{e: completeness S5 K4}
     \vdash_\hickss \bigwedge \K_\agi \Omega \rightarrow \K_\agi \bigwedge \Omega
 \end{equation}
 by induction on $\lvert \Omega \rvert$. The proof is similar to the corresponding case for $\hicks$ and omitted. Now since $\Gamma \vdash_\hickss \K_\agi \gamma$ for each $\K_\agi \gamma \in \Gamma$, axiom $\mathsf{S4_\agi}$ implies $\Gamma \vdash_\hickss \K_\agi \K_\agi \gamma$ for each $\K_\agi \gamma \in \Gamma$. Similarly since $\Gamma \vdash_\hickss \neg \K_\agi \gamma$ for each $\neg \K_\agi \gamma \in \Gamma$, axiom $\mathsf{S5_\agi}$ implies $\Gamma \vdash_\hickss \K_\agi \neg \K_\agi \gamma$ for each $\neg \K_\agi \gamma \in \Gamma$. Thus $\Gamma \vdash_\hickss \bigwedge \K_\agi \Omega$ and so $\Gamma \vdash_\hickss \K_\agi \bigwedge \Omega$. Therefore $\Gamma \vdash_\hickss \K_\agi \psi$ which implies that $\K_\agi \psi \in \Gamma$.
 \smallskip
     
   \setlength{\leftskip}{0cm}  \noindent \textsc{Case for $\C$.} Suppose $\varphi = \C \psi$. We distinguish the following cases depending on $\hickp$. \smallskip

   \setlength{\leftskip}{0.5cm} \noindent \textsc{Case for $\hickp \in \{\hick, \hickt\}$.} For the left-to-right direction suppose $\C \psi \in \Gamma$. Let $\Delta \in R^{(\Sigma, \hickp)^*}(\Gamma)$. Then there are prime theories $\Delta_0, \ldots, \Delta_n$ with $\Delta_0 = \Gamma$, $ \Delta_n = \Delta$ and for each $k \in [n]$ there exists $\agi_k \in \A$ with $\Delta_k \Rel_\agi^{(\Sigma, \hickp)} \Delta_{k+1}$. We prove that $\psi \in \Delta_n$ and $\C \psi \in \Delta_n$ by induction on $n$. For $n=0$, $\C \psi \in \Delta_0 = \Gamma$ by assumption. Moreover, Lemma~\ref{l: properties of prime theories}, Item~\ref{l: properties of prime theories, Item 3}. implies that $\varphi \in \Gamma$. For $n = k+1$ we have that $\psi \in \Delta_k$ and $\C \psi \in \Delta_k$ by induction hypothesis. By Lemma \ref{l: properties of prime theories}, Item \ref{l: properties of prime theories, Item 3}. $\K_\agi \C \psi \in \Delta_k$ for all $\agi \in \A$ and therefore $\K_{\agi_k} \C \psi \in \Delta_k$. Hence, $\C \psi \in \Delta_{k+1}$, implying that also $\psi \in \Delta_{k+1}$. Therefore $\psi \in \Delta$ for any $\Delta \in R^{(\Sigma, \hickp)^*}(\Gamma)$. By induction hypothesis $\fw^{(\Sigma, \hickp)}, \Delta \models \psi$ for any such $\Delta$ and thus $\fw^{(\Sigma, \hickp)}, \Gamma \models \C \psi$. 
     
     For the right-to-left direction suppose that $\fw^{(\Sigma, \hickp)}, \Gamma \models \C \psi$. For any $\Delta \in R^{(\Sigma, \hickp)^*}(\Gamma)$ thus holds that $\fw^{(\Sigma, \hickp)}, \Delta \models \psi$. By induction hypothesis $\psi \in \Delta$. In order to show that $\Gamma \vdash_\mathrm{P} \C \psi$, we claim that $\vdash_\mathrm{P} \gamma(\Gamma) \rightarrow \C \gamma(\Gamma)$. By the presence of the rule $\mathsf{Ind}$, it suffices to prove that $\vdash_\mathrm{P} \gamma(\Gamma) \rightarrow \E \gamma(\Gamma)$. We first show that $\vdash_\mathrm{P} \gamma(\Gamma) \rightarrow \K_\agi \gamma(\Gamma)$ for $\agi \in \A$. To that end note that for any $\Delta \in R^{(\Sigma, \hickp)^*}(\Gamma)$,
        \begin{equation*}
            \K_\agi^{-1} \Delta \vdash_\mathrm{P} \gamma(\Gamma).
        \end{equation*}
        In fact if $ \K_\agi^{-1} \Delta \not \vdash_\mathrm{P} \gamma(\Gamma)$ for some $\Delta \in R^{(\Sigma, \hickp)^*}(\Gamma)$, then by the Lindenbaum Lemma there exists a prime theory $\Omega$ such that $\K_\agi^{-1}\Delta \subseteq \Omega$ and $\Omega \not \vdash_\mathrm{P} \gamma(\Gamma)$. By construction $\Delta \Rel_\agi^{(\Sigma, \hickp)} \Omega$, implying that $\Omega \in R^{(\Sigma, \hickp)^*}(\Gamma)$. By Lemma \ref{l: properties characteristic formula IM}, $\Omega \vdash_\mathrm{P} \gamma(\Gamma)$ which is a contradiction. Thus $\K_\agi^{-1} \Delta \vdash_\mathrm{P} \gamma(\Gamma)$. By Corollary \ref{c: deduction theorem} and $\mathsf{Nec_\agi}$ we obtain
        \begin{equation*}
            \vdash_\mathrm{P} \K_\agi (\bigwedge \K_\agi^{-1} \Delta \rightarrow \gamma(\Gamma)).
        \end{equation*}
    Using the $\mathsf{K_\agi}$-axiom and $\mathsf{MP}$ yields
    \begin{equation*}
        \vdash_\mathrm{P} \K_\agi \bigwedge \K_\agi^{-1} \Delta \rightarrow \K_\agi \gamma(\Gamma).
    \end{equation*}
    By (\ref{e: truthlemma3}),
    \begin{equation*}
        \K_\agi \K_\agi^{-1} \Delta \vdash_\mathrm{P} \K_\agi \gamma(\Gamma).
    \end{equation*}
    Hence, by Lemma \ref{l: assumption weakening} we obtain $\Delta \vdash_\mathrm{P} \K_\agi \gamma(\Gamma)$. By Corollary \ref{c: deduction theorem} $\vdash_\mathrm{P} \chi(\Delta) \rightarrow \K_\agi \gamma(\Gamma)$. As $\Delta$ was arbitrary, we obtain $\vdash_\mathrm{P} \gamma(\Gamma) \rightarrow \K_\agi \gamma(\Gamma)$ by repeatedly using the (correct substitution instance of the) intuitionistic tautology $(p \rightarrow r) \rightarrow ((q \rightarrow r) \rightarrow ((p \vee q) \rightarrow r)$. Now since $\agi \in \A$ was arbitrary we obtain $\vdash_\mathrm{P} \gamma(\Gamma) \rightarrow \E \gamma(\Gamma)$ by repeatedly using the (correct substitution instance of the) intuitionistic tautology $(p \rightarrow q) \rightarrow ((p \rightarrow r) \rightarrow (p \rightarrow (q \wedge r)))$. Thus applying $\mathsf{Ind}$ to $\gamma(\Gamma) \rightarrow \E \gamma(\Gamma)$ yields $\vdash_\mathrm{P} \gamma(\Gamma) \rightarrow \C \gamma(\Gamma)$. \smallskip
    
    \noindent By Lemma \ref{l: properties characteristic formula IM}, $\Gamma \vdash_\mathrm{P} \gamma(\Gamma)$ and so we obtain
    \begin{equation*}
        \Gamma \vdash_\mathrm{P} \C \gamma(\Gamma).
    \end{equation*}
   Since $\psi \in \Delta$ for every $\Delta \in R^{(\Sigma, \hickp)^*}(\Gamma)$, Lemma \ref{l: properties characteristic formula IM} implies $\vdash_\mathrm{P} \gamma(\Gamma) \rightarrow \psi$. Applying $\mathsf{Mon}$ gives
   \begin{equation*}
       \vdash_\mathrm{P} \C \gamma(\Gamma) \rightarrow \C \psi.
   \end{equation*}
   Thus applying $\mathsf{MP}$ yields
   \begin{equation*}
       \Gamma \vdash_\mathrm{P} \C \psi
   \end{equation*}
   and hence $\C \psi \in \Gamma$. \smallskip

   \noindent \textsc{Case for $\hicks$.} The left-to-right direction is similar to the case for $\hickp \in \{\hick, \hickt\}$. The right-to-left direction is very similar, but uses the same construction as used in the case for $\K_\agi$, subcase $\hicks$, in this lemma. Suppose $\fw^{(\Sigma, \hicks)}, \Gamma \models \C \psi$. For any $\Delta \in R^{(\Sigma, \hicks)^*}(\Gamma)$ thus holds that $\fw^{(\Sigma, \hicks)}, \Delta \models \psi$. By induction hypothesis $\psi \in \Delta$. In order to show that $\Gamma \vdash_\hicks \C \psi$ we claim that $\vdash_\hicks \gamma(\Gamma) \rightarrow \C \gamma(\Gamma)$. By the presence of the rule $\mathsf{Ind}$ it suffices to prove that $\vdash_\hicks \gamma(\Gamma) \rightarrow \E \gamma(\Gamma)$. We first show that $\vdash_\hicks \gamma(\Gamma) \rightarrow \K_\agi \gamma(\Gamma)$ for $\agi \in \A$. To that end note that for any $\Delta \in R^{(\Sigma, \hicks)^*}(\Gamma)$,
 \begin{equation*}
     \K_\agi \K_\agi^{-1} \Delta \vdash_\hicks \gamma(\Gamma).
 \end{equation*}
 In fact if $\K_\agi \K_\agi^{-1} \Delta \not \vdash_\hicks \gamma(\Gamma)$ for some $\Delta \in R^{(\Sigma, \hicks)^*}(\Gamma)$, then by the Lindenbaum Lemma there exists a prime theory $\Omega$ such that $\K_\agi \K_\agi^{-1} \Delta \subseteq \Omega$ and $\Omega \not \vdash_\hicks \gamma(\Gamma)$. By construction $\Omega \in R^{(\Sigma, \hicks)^*}(\Gamma)$, contradicting Lemma \ref{l: properties characteristic formula IM}. Hence $ \K_\agi \K_\agi^{-1} \Delta \vdash_\hicks \gamma(\Gamma)$. Using the same argument as in the case for $\K_\agi$ starting from (\ref{e: completeness S4 K1}) we obtain
 \begin{equation*}\label{e: completeness S4 C1}
     \Delta \vdash_\hicks \K_\agi \gamma(\Gamma).
 \end{equation*}
 Corollary \ref{c: deduction theorem} yields $\vdash_\hicks \chi(\Delta) \rightarrow \K_\agi \gamma(\Gamma)$ and so $\vdash_\hicks \gamma(\Gamma) \rightarrow \K_\agi \gamma(\Gamma)$. We obtain $\vdash_\hicks \gamma(\Gamma) \rightarrow \E \gamma(\Gamma)$ by the same argument as in the previous case and so $\vdash_\hicks \gamma(\Gamma) \rightarrow \C \gamma(\Gamma)$ by $\mathsf{Ind}$. From this we deduce $\Gamma \vdash_\hicks \C \psi$ by the same argument as before. \smallskip

 \noindent \textsc{Case for $\hickss$.} The left-to-right direction is identical to the previous cases. For the right-to-left direction suppose $\fw^{(\Sigma, \hickss)}, \Gamma \models \C \psi$. For any $\Delta \in R^{(\Sigma, \hickss)^*}(\Gamma)$ thus holds that $\fw^{(\Sigma, \hickss)}, \Delta \models \psi$. By induction hypothesis $\psi \in \Delta$. In order to show that $\Gamma \vdash_\hickss \C \psi$ we claim that $\vdash_\hickss \gamma(\Gamma) \rightarrow \C \gamma(\Gamma)$. By the presence of the rule $\mathsf{Ind}$ it suffices to prove that $\vdash_\hickss \gamma(\Gamma) \rightarrow \E \gamma(\Gamma)$. We first show that $\vdash_\hickss \gamma(\Gamma) \rightarrow \K_\agi \gamma(\Gamma)$ for $\agi \in \A$. To that end note that for any $\Delta \in R^{(\Sigma, \hickss)^*}(\Gamma)$,
 \begin{equation*}
     \K_\agi \K_\agi^{-1} \Delta, \neg \K_\agi \neg \K_\agi^{-1} \vdash \gamma(\Gamma)
 \end{equation*}
 as otherwise there would exists a $\K$-maximally consistent prime theory $\Omega$ with $\K_\agi \K_\agi^{-1} \Delta \cup \neg \K_\agi \neg \K_\agi^{-1} \Delta \subseteq \Omega$ and $\Omega \not \vdash_\hickss \gamma(\Gamma)$. Note that $\Delta \Rel_\agi^{(\Sigma, \hickss)} \Omega$ and so we obtain that $\Omega \in R^{(\Sigma, \hickss)^*}(\Gamma)$, contradicting Lemma \ref{l: properties characteristic formula IM}. We obtain
 \begin{equation*}
     \Delta \vdash \K_\agi \gamma(\Gamma)
 \end{equation*}
 by the same arguments as used for the corresponding case starting from (\ref{e: completeness S5 K1}). From this we deduce $\Gamma \vdash_\hickss \C \psi$ as before.
\end{proof}


Observe that all axioms and rules of our axiomatizations are actively used in the previous proofs. In particular for the S5 case, axiom $\mathsf{K_\agi^c}$ is used in the proof of the Lindenbaum Lemma to show that the constructed prime theory is $K$-maximally consistent and the axiom $\mathsf{S5_\agi}$ is used in the proof of the Truth Lemma. Completeness now readily follows.

\begin{theorem}[Completeness]\label{t: completeness of Hilbert style system} The following statements hold.
    \begin{enumerate}
        \item $\hick$ is complete with respect to the class of epistemic frames: if $\varphi$ is valid over the class of epistemic frames, then $\vdash_\hick \varphi$.
        \item $\hickt$ is complete with respect to the class of reflexive frames: if $\varphi$ is valid over the class of reflexive frames, then $\vdash_\hickt \varphi$.
        \item $\hicks$ is complete with respect to the class of S4 frames: if $\varphi$ is valid over the class of S4 frames, then $\vdash_\hicks \varphi$.
        \item$\hickss$ is complete with respect to the class of S5 frames: if $\varphi$ is valid over the class of S5 frames, then $\vdash_\hickss \varphi$.
    \end{enumerate}
\end{theorem}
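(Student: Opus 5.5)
We argue by contraposition, uniformly for all four systems, using the canonical model machinery already developed. Fix $\hickp \in \{\hick, \hickt, \hicks, \hickss\}$ and suppose $\not\vdash_\hickp \varphi$. We take $\Sigma \coloneqq \Cl(\varphi)$ in the first three cases and $\Sigma \coloneqq \Cl^\neg(\varphi)$ when $\hickp = \hickss$. By Lemma~\ref{l: negation closure is finite}, $\Sigma$ is finite. It is non-empty since it contains $\varphi$, and it is closed (respectively negation closed) by definition. Hence all hypotheses of Lemma~\ref{l: canonical model} and of the Truth Lemma~\ref{l: truth lemma} are met.

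Next we produce a world refuting $\varphi$. Since $\not\vdash_\hickp \varphi$, the empty set $\emptyset \subseteq \Sigma$ satisfies $\emptyset \not\vdash_\hickp \varphi$. The Lindenbaum Lemma~\ref{l: Lindenbaum for IM} then yields a $\Sigma$-prime theory $\Delta$ with $\Delta \not\vdash_\hickp \varphi$. In the S5 case, where $\Sigma$ is negation closed, this $\Delta$ is moreover $\K$-maximally consistent. Thus $\Delta \in \W^{(\Sigma,\hickp)} = \mathcal{T}^\hickp_\Sigma$ in every case. Since $\Delta \not\vdash_\hickp \varphi$ we have $\varphi \notin \Delta$, and because $\varphi \in \Sigma$ the Truth Lemma gives $\fw^{(\Sigma,\hickp)}, \Delta \not\models \varphi$.

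By Lemma~\ref{l: canonical model}, $\fw^{(\Sigma,\hickp)}$ is a (finite) model of the right kind: epistemic, reflexive, S4 or S5 respectively. Therefore $\varphi$ is not valid over the corresponding frame class, which establishes each of the four items by contraposition.

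The main point requiring care is the S5 case. There one must check that the Lindenbaum extension lands in the restricted set of $\K$-maximally consistent theories used as worlds, which is exactly why $\Sigma$ is chosen negation closed. Beyond that, the argument is bookkeeping on top of the Truth Lemma. As a byproduct, the countermodel is finite, which gives the finite model property.
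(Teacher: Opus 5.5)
Your proof is correct and follows the paper's argument essentially verbatim. You argue by contraposition with $\Sigma = \Cl(\varphi)$ (respectively $\Cl^\neg(\varphi)$ for $\hickss$), take a Lindenbaum extension of $\emptyset$ that avoids $\varphi$ (which is $\K$-maximally consistent in the S5 case), and then combine the Truth Lemma with Lemma~\ref{l: canonical model} to obtain a countermodel of the right kind; your explicit checks that $\Sigma$ is finite, non-empty and (negation) closed, and that $\varphi \notin \Delta$ follows from $\Delta \not\vdash_\hickp \varphi$, just spell out steps the paper leaves implicit.
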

\begin{proof}
    Let $\hickp \in \{\hick, \hickt, \hicks, \hickss \}$ and $\Sigma = \Cl(\varphi)$ ($\Sigma = \Cl^\neg(\varphi)$ if $\hickp = \hickss$). We proceed by contraposition. Suppose $\not \vdash_\mathrm{P} \varphi$. By the Lindenbaum Lemma there exists a $\Sigma$-prime theory $\Gamma$ relative to $\hickp$ (where $\Gamma$ is additionaly $\K$-maximally consistent if $\hickp = \hickss$), such that $\Gamma \not \vdash_\mathrm{P} \varphi$. By the Truth Lemma, $\varphi$ is falsified at world $\Gamma$ of the canonical model relative to $\Sigma$ and $\mathrm{P}$. Since the canonical model relative to $\Sigma$ and $\hickp$ is an epistemic/reflexive/S4/S5 model depending on $\hickp$, the statements of the theorem hold.
\end{proof}

As a corollary we obtain that all four logics have the finite model property, since the canonical models are finite. As usual, the finite model property combined with a finite sound and complete axiomatization yields that the validity problem of the logic (see Table \ref{tab:validity problem}) is decidable.

\begin{table}[t]
    \centering
    \begin{tabular}{|l l|}
    \hline
        \textbf{Input:} &  A formula $\varphi \in \LICK$ and a logic $\mathbf{L} \in \{\mathbf{ICK}, \mathbf{ICKT}, \mathbf{ICKS4}, \mathbf{ICKS5}\}$. \\
         \textbf{Question:} &  Is $\varphi \in \mathbf{L}$? \\
         \hline
    \end{tabular}
    \caption{The validity problem}
    \label{tab:validity problem}
\end{table}

\begin{theorem}[Finite model property and decidability]\label{t: fmp and decidability for K and T}
    The logics $\mathbf{ICK}$, $\mathbf{ICKT}$, $\mathbf{ICKS4}$ and $\mathbf{ICKS5}$ have the finite model property. Their validity problems are therefore decidable.
\end{theorem}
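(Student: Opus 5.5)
The plan is to read both claims off the completeness proof, using the fact that the canonical model constructed there is finite. For the finite model property, fix $\hickp \in \{\hick, \hickt, \hicks, \hickss\}$ with its logic $\mathbf{P}$, and let $\varphi \notin \mathbf{P}$. By Soundness (Theorem~\ref{t: soundness}), or directly by the contrapositive of Completeness (Theorem~\ref{t: completeness of Hilbert style system}), we get $\not\vdash_\hickp \varphi$. Take $\Sigma = \Cl(\varphi)$, or $\Sigma = \Cl^\neg(\varphi)$ in the S5 case. This set is finite by Lemma~\ref{l: negation closure is finite}, closed (respectively negation closed), and non-empty. The Lindenbaum Lemma~\ref{l: Lindenbaum for IM} then gives a $\Sigma$-prime theory $\Gamma$ (which is $\K$-maximally consistent for $\hickss$) with $\varphi \notin \Gamma$. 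By the Truth Lemma~\ref{l: truth lemma}, $\fw^{(\Sigma,\hickp)}, \Gamma \not\models \varphi$. By Lemma~\ref{l: canonical model}, this canonical model is a finite model of the right frame class. So every non-theorem is refuted on a finite model of the appropriate class, which is the finite model property. We also get a size bound: there are at most $2^{\lvert \Sigma\rvert}$ worlds, since worlds are subsets of $\Sigma$.

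For decidability, I would use the bound rather than Post's theorem, since it avoids the extra work of showing the frame classes are recursively enumerable. Given $\varphi$, list all models up to isomorphism with at most $2^{\lvert\Cl^\neg(\varphi)\rvert}$ worlds. Each world needs a valuation on the finitely many atoms of $\varphi$, and each model must satisfy the partial order axioms, monotonicity of the valuation, triangle confluence, and the frame conditions for the chosen class. All of these are decidable on finite structures. Then evaluate $\varphi$ at every world of every such model. The truth clauses only quantify over finite sets $w^\uparrow$, $R_\agi(w)$ and $R^*(w)$, and $R^*$ can be computed by a finite transitive closure, so evaluation terminates. The formula $\varphi$ is valid over the class if and only if it holds everywhere in all these models. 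The "only if" direction is immediate. For the "if" direction, any countermodel yields the canonical countermodel within the bound, as shown above.

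An alternative argument is that the theorems of $\hickp$ are recursively enumerable, because the axioms (including $\mathsf{Int}$, since intuitionistic propositional validity is decidable) and the rules are effective. Non-validities are also recursively enumerable, by enumerating finite models of the class. The two enumerations together decide the problem.

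The main subtlety is not in this argument itself but in checking that the bound is uniform. The S5 case must use $\Cl^\neg(\varphi)$, and we need the restriction of the valuation to atoms of $\varphi$ to preserve truth of $\varphi$. That holds by a routine induction on subformulas.
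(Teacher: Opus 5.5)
Your proof is correct. The finite model property part is exactly the paper's argument. The canonical model $\fw^{(\Sigma,\hickp)}$ for $\Sigma=\Cl(\varphi)$ (resp.\ $\Cl^\neg(\varphi)$ for S5) is finite and belongs to the right frame class by Lemma~\ref{l: canonical model}. By the Lindenbaum and Truth Lemmas it refutes every non-theorem.

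There is one small slip. To pass from $\varphi\notin\mathbf{P}$ to $\not\vdash_\hickp\varphi$ you need Soundness alone. The contrapositive of Completeness gives the converse implication ($\not\vdash_\hickp\varphi$ implies $\varphi\notin\mathbf{P}$); that is what your next step uses, not this one.

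For decidability, the paper only invokes the standard argument: the finite model property plus a sound and complete recursive axiomatization. Theorems are enumerated by derivations, and non-validities are enumerated via finite countermodels. This is your second, alternative argument.

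Your primary route is different. You extract the explicit bound $2^{\lvert\Sigma\rvert}$ on the number of worlds of the canonical countermodel. You then decide validity by enumerating all models of the class up to that size and model checking each.

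This route is self-contained. It needs neither the recursiveness of the infinite scheme $\mathsf{Int}$ nor the two-sided enumeration. It also yields an explicit, if very poor, roughly doubly exponential running-time bound. The paper's route is more generic but gives bare decidability; its \textsc{ExpTime} bound comes only later, via parity games.

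The checks you list are what the enumeration route needs: restricting the valuation to the atoms of $\varphi$ (which preserves monotonicity and the truth of $\varphi$), and decidability of the order, confluence and frame conditions on finite structures. Add to these the finiteness of $\A$, which makes the set of candidate models finite.
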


Moreover, from soundness, completeness and the semantic disjunction property (c.f. Lemma~\ref{l: semantic disjunction property}) we obtain the \emph{syntactic disjunction property}.

\begin{theorem}[Syntactic Disjunction Property]
    Let $\hickp \in \{\hick, \hickt, \hicks\}$ and $\varphi,\psi$ be formulae. If $\vdash_\hickp \varphi \vee \psi$, then $\vdash_\hickp \varphi$ or $\vdash_\hickp \psi$.
\end{theorem}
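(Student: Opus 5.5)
The plan is to move from syntax to semantics and back, as the text preceding the statement suggests. Fix $\hickp \in \{\hick, \hickt, \hicks\}$ and write $\mathbf{P}$ for the matching logic, so $\mathbf{ICK}$, $\mathbf{ICKT}$ or $\mathbf{ICKS4}$, the set of formulae valid over epistemic, reflexive or S4 frames respectively. Suppose $\vdash_\hickp \varphi \vee \psi$.

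First, soundness (Theorem~\ref{t: soundness}) gives that $\varphi \vee \psi$ is valid over the class of frames corresponding to $\hickp$. Second, apply the Semantic Disjunction Property (Lemma~\ref{l: semantic disjunction property}), which holds exactly for these three classes. It yields that $\varphi$ or $\psi$ is valid over that class. Third, apply completeness (Theorem~\ref{t: completeness of Hilbert style system}, items 1--3) to the valid disjunct, obtaining $\vdash_\hickp \varphi$ or $\vdash_\hickp \psi$.

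There is no real obstacle. The only point to check is that the frame class fixed by soundness is the same one that Lemma~\ref{l: semantic disjunction property} and completeness refer to. This holds because all three results are stated for the same pairing of axiom systems and frame classes. In particular, the glued model $\fw_G$ in the proof of Lemma~\ref{l: semantic disjunction property} is closed under reflexivity and transitivity as required, so it stays within the class.

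It is worth noting why $\hickss$ is excluded. By Lemma~\ref{l: derivable formulae 2}, Item~\ref{l: derivable formulae 2 item 4}, we have $\vdash_\hickss \K_\agi p \vee \neg \K_\agi p$. Yet by Lemma~\ref{l: disjunction property S5 fails} and soundness, neither $\K_\agi p$ nor $\neg \K_\agi p$ is derivable. So the restriction on $\hickp$ in the statement is necessary.
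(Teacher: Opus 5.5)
Your proposal is correct and matches the paper's argument: the paper obtains the syntactic disjunction property by chaining soundness, the semantic disjunction property (Lemma~\ref{l: semantic disjunction property}) and completeness for the corresponding frame class. Your remark explaining why the S5 case must be excluded, using $K_a p \vee \neg K_a p$, is also correct.
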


\section{A Modal Variant of Kuroda's Translation}\label{s: translation}

 Glivenko's Theorem~\cite{Glivenko_1929} shows that classical propositional logic ($\mathsf{CPC}$) can be embedded into $\mathsf{IPC}$ via a \emph{double-negation translation}, which maps every propositional formula $\varphi$ onto $\neg \neg \varphi$. The formula $\neg \neg \varphi$ is classically equivalent to $\varphi$ (but not intuitionistically) and furthermore satisfies the property that $\varphi$ is classically valid if and only if $\neg \neg \varphi$ is intuitionistically valid. A similar translation embedding classical first-order predicate logic into intuitionistic first-order predicate logic is Kuroda's translation~\cite{kuroda_1951}, which prefixes every formula with $\neg \neg$ and additional adds $\neg \neg$ after every universal quantifier. This section shows that classical common knowledge logic ($\mathsf{CK}$) over S5 can be embedded into $\mathsf{ICK}$ over S5. This will be achieved by constructing a modal variant of Kuroda's translation, which assigns to each formula $\varphi$ of $\LICK$ a formula $tr(\varphi)$ by adding $\neg \neg$ in front of $\varphi$ and after every knowledge and common knowledge operator. We will then show that for any formula $\varphi$,
    \begin{enumerate}
        \item $\varphi$ is classically equivalent to its translation $tr(\varphi)$ and
        \item  $\varphi$ is classically valid if and only $tr(\varphi)$ is intuitionistically valid,
    \end{enumerate}
    which entails that $\mathsf{CK}$ over S5 can be regarded as a fragment of $\mathsf{ICK}$ over S5. Intuitively, our translation preserves validity due to the following reasons. First of all, since $\mathbf{ICKS5}$ has the finite model property, formulae are valid over the class of S5 frames if and only if they are valid over the class of finite S5 frames. In every finite S5 frame, every world $w$ lies below a \emph{maximal world} $v$ with respect to $\leq$, meaning that $w \leq v$ and there exists no world $u$ with $v < u$. Observe that in maximal worlds, implications are evaluated classically and so such worlds (when ignoring the modalities) are classical worlds. The double negation $\neg \neg$ which our translation places in front of every formula then `pushes' formulae to the maximal worlds of the model. More precisely $w \models \neg \neg \varphi$ if and only if $v \models \varphi$ for any maximal world above $w$. The formula $\varphi$ can then be evaluated in the (maximal) classical worlds. However, a maximal world might be $R_\agi$-related to a world `inside' the model. Therefore after every modality we need to place another double negation to `push' the formula back to a maximal world.

    We begin by introducing the logic $\mathbf{CKS5}$. Then we formally introduce our translation and prove Item 1. above. Importantly, note that every double-negated formula $\neg\neg \varphi$ is classically equivalent to $\varphi$, whence Item 1. is easily inferred. For 2. we make the above intuition precise. We first show how to obtain a classical epistemic model from an intuitionistic one and, vice versa, how to obtain an intuitionistic epistemic model from a classical one. In the latter case we show that both models satisfy the same formulae, while in the former we show that the classical model satisfies the translated formula $tr(\varphi)$ in a world $w$ if and only if the intuitionistic model satisfies $\varphi$ in the corresponding world. Together this will yield Item 2. Our proof makes essential use of the S5 frame conditions and can therefore not be directly generalized to the other considered logics. It is unclear to us whether $\mathsf{CK}$ over the classes of all frames, reflexive frames or S4 frames can be embedded into its intuitionistic counterpart and we leave this question as an open problem. 
    
\subsection{Classical Common Knowledge}

The \emph{language} of $\mathsf{CK}$ is $\mathcal{L}_{\ICK}$, where formulae are defined as for $\mathsf{ICK}$. Formulae are evaluated on classical epistemic models. In order to distinguish these models from the epistemic models used for $\mathsf{ICK}$, we will refer to the former as classical models and to the latter as intuitionistic models.
    
\begin{definition}
A \emph{classical S5 frame} is a tuple $\mathcal{F} = (\W, \rel)$ where
        \begin{itemize}
            \item $\W$ is a non-empty set of \emph{worlds};
            \item $\rel = \{ R_\agi \mid \agi \in \A\}$ where $R_\agi \subseteq \W \times \W$ is an equivalence relation for each $\agi \in \A$;
        \end{itemize}
A \emph{classical S5 model} is a tuple $\fw =(\mathcal{F}, \V)$ where $\mathcal{F}=(\W, \rel)$ is a classical frame and $\V: \W \longrightarrow \mathcal{P}(\Prop)$ is a valuation function. The model  $\fw =(\mathcal{F}, \V)$ is said to be \emph{based} on $\mathcal{F}$.
\end{definition}

In difference to intuitionistic models, classical models do not feature the intuitionistic order $\leq$. Formulae are evaluated on classical S5 models as follows. As before we let $R \coloneqq \bigcup_{R_\agi \in \rel} R_\agi$ and $R^*$ be the reflexive transitive closure of $R$.

\begin{definition}
    The truth relation $\models_c$ between worlds $w$ of a classical S5 model $\fw$ and formulae $\varphi, \psi$ is defined inductively as follows.
\begin{center}
    \begin{tabular}{l l l}
        $\fw, w \not \models_c \bot$ &&\\
        $\fw, w \models_c p$ & iff & $p \in \V(w)$ \\
        $\fw, w \models_c \varphi \wedge \psi$ & iff & $\fw, w \models_c \varphi$ and $\fw, w \models_c \psi$\\
        $\fw, w \models_c \varphi \vee \psi$ & iff & $\fw, w \models_c \varphi$ or $\fw, w \models_c \psi$ \\
        $\fw, w \models_c \varphi \rightarrow \psi$ & iff & $\fw, w \not \models_c \varphi$ or $\fw, w \models_c \psi$ \\
        $\fw, w \models_c \K_\agi \varphi$ & iff & for all $v \in R_\agi(w)$, $\fw, v \models_c \varphi$ \\
        $\fw, w \models_c \C \varphi$ & iff & for all $v \in R^*(w)$, $\fw, v \models_c \varphi$.
    \end{tabular}
\end{center}
A formula $\varphi$ is called \emph{true at world $w$} if $\fw, w \models \varphi$ . Furthermore, $\varphi$ is called \emph{true in $\fw$}, written $\fw \models \varphi$, if $\fw, w \models \varphi$ for all $w \in \W$. Let $\fclass$ be the class of classical S5 frames. Then $\varphi$ is called \emph{satisfiable} over $\fclass$ if there exists a classical model $\fw$ based on a frame in $\fclass$ and a world $w$ such that $\fw, w \models \varphi$; otherwise $\varphi$ is called \emph{unsatisfiable} over $\fclass$. A formula $\varphi$ is called \emph{valid} over $\fclass$ if $\fw \models \varphi$ for all classical models $\fw$ based on a $\fclass$-frame; otherwise $\varphi$ is called \emph{invalid} over $\fclass$.
\end{definition}

It is easily checked that $\fw, w \models \neg \varphi$ where $\neg \varphi = \varphi \rightarrow \bot$ if and only if $\fw, w \not \models \varphi$ and therefore that $\varphi$ is equivalent to $\neg \neg \varphi$.

\begin{definition}
    The logic $\mathbf{CKS5}$ is defined to be the set of valid $\LICK$-formulae over the class of classical S5 frames.
\end{definition}

\subsection{The Translation}
    
The following definition introduces our modal variant $tr$ of Kuroda's translation which prefixes every formula with $\neg\neg$ and adds $\neg\neg$ also after every modality.

\begin{definition}
    Define the function $\tau: \mathcal{L}_{\ICK} \longrightarrow \mathcal{L}_{\ICK}$ by induction on $\varphi$ as follows.
    \begin{align*}
        \tau(\bot) & \coloneqq \bot  & \tau(p) & \coloneqq p \text{ for } p \in \Prop\\
        \tau(\varphi \wedge \psi) & \coloneqq \tau (\varphi) \wedge \tau(\psi) & \tau(\varphi \vee \psi) & \coloneqq \tau (\varphi) \vee \tau(\psi)\\
        \tau(\varphi \rightarrow \psi) & \coloneqq \tau (\varphi) \rightarrow \tau(\psi) & \tau(\C \varphi) & \coloneqq \C \neg \neg \tau(\varphi) \\
        \tau(\K_\agi \varphi) & \coloneqq \K_\agi \neg \neg \tau(\varphi) \text{ for } \agi \in \A 
    \end{align*}

Then define $tr: \mathcal{L}_{\ICK} \longrightarrow \mathcal{L}_{\ICK}$ by $tr(\varphi) \coloneqq  \neg \neg \tau(\varphi).$
\end{definition}

In the following we show that $tr$ satisfies Properties 1. and 2. stated above.

\begin{lemma}\label{l: tr is classicaly equivalent}
    For any $\mathcal{L}_\ICK$-formula $\varphi$ the following holds:
    \begin{center}
        $\varphi  \leftrightarrow tr(\varphi) \in \mathbf{CKS5}$.
    \end{center} 
\end{lemma}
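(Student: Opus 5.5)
The plan is to prove the stronger pointwise claim that $\tau$ preserves classical truth. That is, for every classical S5 model $\fw=(\W,\rel,\V)$, every $w\in\W$ and every formula $\varphi$,
\begin{equation*}
\fw, w \models_c \varphi \text{ if and only if } \fw, w \models_c \tau(\varphi).
\end{equation*}
Once this is established, the lemma follows quickly. We already observed that $\fw,w\models_c\neg\chi$ iff $\fw,w\not\models_c\chi$, so $\neg\neg\chi$ and $\chi$ are true at exactly the same worlds of any classical model. Hence $tr(\varphi)=\neg\neg\tau(\varphi)$ is true at $w$ iff $\tau(\varphi)$ is, iff $\varphi$ is. By the classical clauses for $\rightarrow$ and $\wedge$, this makes $\varphi\leftrightarrow tr(\varphi)$ true at every world of every classical S5 model, so it belongs to $\mathbf{CKS5}$.

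The pointwise claim is proved by induction on the structure of $\varphi$, simultaneously for all worlds of the fixed model.
\begin{itemize}
\item For $\bot$ and atoms $p$, $\tau$ is the identity, so there is nothing to show.
\item For $\varphi = \psi\ast\chi$ with $\ast\in\{\wedge,\vee,\rightarrow\}$, the classical truth condition at $w$ depends only on the truth values of $\psi$ and $\chi$ at the same world $w$. The induction hypothesis at $w$ then gives the claim directly.
\item For $\K_\agi\psi$, note that $\fw,w\models_c \K_\agi\neg\neg\tau(\psi)$ iff every $v\in R_\agi(w)$ satisfies $\neg\neg\tau(\psi)$. By the remark on double negation, this holds iff every such $v$ satisfies $\tau(\psi)$. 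By the induction hypothesis applied at each such $v$, this holds iff every such $v$ satisfies $\psi$, i.e. iff $\fw,w\models_c\K_\agi\psi$.
\item The case for $\C\psi$ is identical, with $R^*(w)$ in place of $R_\agi(w)$.
\end{itemize}
This is why the induction hypothesis must be stated for all worlds and not only for $w$.

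There is no real obstacle here. The only points needing care are, first, to quantify the induction hypothesis over all worlds so that the modal cases go through, and second, to use the elementary fact $\neg\neg\chi \equiv \chi$ classically pointwise rather than merely at the level of validity. The S5 conditions on $\rel$ play no role: the argument works for arbitrary classical relational models.
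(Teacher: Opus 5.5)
Your proposal is correct and matches the paper's proof: a structural induction, quantified over all worlds of a fixed classical model, shows $\fw,w\models_c\varphi$ iff $\fw,w\models_c\tau(\varphi)$, with the modal cases using the pointwise classical equivalence of $\neg\neg\chi$ and $\chi$, and the same equivalence then gives the step from $\tau(\varphi)$ to $tr(\varphi)=\neg\neg\tau(\varphi)$. Your side remark is also accurate: the S5 conditions play no role in this argument.
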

\begin{proof}
     We first prove by induction on the structure of $\varphi$ that for any classical S5 model $\fw=(\W, \rel, \V)$ and any world $w \in \W$, $\fw,w \models_c \varphi$ if and only if $\fw,w \models_c \tau(\varphi)$. The base cases for $\varphi = \bot$ and $\varphi = p$ for $p \in \Prop$ are trivial. The cases for $\varphi = \psi \ast \chi$ where $\ast \in \{\wedge, \vee, \rightarrow\}$ follow immediately from the induction hypothesis. Suppose $\varphi = \K_\agi \psi$ for $\agi \in \A$. By definition $\tau(\varphi) = \K_\agi \neg \neg \tau(\psi)$. Then $\fw,w \models_c K_\agi \psi$ if and only if $M,v \models_c \psi$ for all $v \in R_\agi(w)$ if and only if (by induction hypothesis) $\fw,v \models_c \tau(\psi)$ for all $v \in R_\agi(w)$ if and only if $\fw,v \models_c \neg \neg \tau(\psi)$ for all $v \in R_\agi(w)$ if and only if $\fw,w \models_c \K_\agi \neg \neg \tau(\varphi)$. The case for $\varphi = \C \psi$ is similar.

    By definition, $tr(\varphi) = \neg \neg \tau(\varphi)$. We obtain the following: $\fw,w \models_c \varphi$ if and only if $\fw,w \models_c \tau(\varphi)$ if and only if $\fw,w \models_c \neg \neg \tau(\varphi)$. Therefore $\varphi \leftrightarrow tr(\varphi) \in \mathbf{CKS5}$.
\end{proof}

Recall that by Theorem  \ref{t: fmp and decidability for K and T} the logic $\mathbf{ICKS5}$ has the finite model property. Therefore to check whether a formula is valid over the class $\mathscr{F}$ of intuitionistic S5 frames it suffices to consider the subclass of $\mathscr{F}$ which consists of all finite frames in $\mathscr{F}$. We will now introduce two simple model constructions needed to prove that $tr$ is a translation. First, we define \emph{reduced models} which are classical S5 models obtained from restricting finite intuitionistic S5 models to their maximal (classical) worlds. Let ${\mathcal{F}=(\W, \leq, \rel)}$ be a finite intuitionistic S5 frame. Then $w \in \W$ is called a \emph{maximal world} if $w \leq v$ implies $w = v$ for all $v \in \W$. We write $\W^{max}$ for the set of maximal worlds of $\fw$.

\begin{definition}
    Let $\fw=(\W, \leq, \rel, \V)$ be an intuitionistic S5 model based on a finite frame. The \emph{$\fw$-reduced model} $\fw^r =(\W^r, \rel^r, \V^r)$ is defined as follows.
    \begin{itemize}
        \item $\W^r \coloneqq \W^{max}$
        \item $\rel^r \coloneqq \{R_\agi^r \mid R_\agi \in \rel\}$ where for each $\agi \in \A$: $R_\agi^r \coloneqq R_\agi \cap (\W^r \times \W^r)$
       
        \item $\V^r \coloneqq V \cap (\W^r \times \mathcal{P}(\Prop))$
    \end{itemize}
\end{definition}

\begin{lemma}
    Let $\fw=(\W, \leq, \rel, \V)$ be a finite intuitionistic S5 model. The $\fw$-reduced model is a classical S5 model.
\end{lemma}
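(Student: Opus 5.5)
We need to check that $\fw^r = (\W^r, \rel^r, \V^r)$ meets the definition of a classical S5 model. The plan is to verify three things in turn: $\W^r$ is non-empty, each $R^r_\agi$ is an equivalence relation, and $\V^r$ is a valuation function on $\W^r$.

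\emph{Non-emptiness.} Since $\fw$ is based on a frame, $\W$ is non-empty; pick any $w \in \W$. Because $\W$ is finite and $\leq$ is a partial order, the set $w^\uparrow$ is finite and non-empty. It therefore has a $\leq$-maximal element $v$: starting from $w$, keep moving to a strictly larger world while one exists. By antisymmetry and transitivity no world repeats along such a chain, so finiteness forces the chain to stop. The world $v$ where it stops is maximal in $\W$, not just in $w^\uparrow$, since any $u$ with $v \le u$ also lies in $w^\uparrow$. Hence $v \in \W^{max} = \W^r$.

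\emph{Equivalence relations.} Fix $\agi \in \A$. Since $\fw$ is an S5 model, $R_\agi$ is reflexive, symmetric and transitive on $\W$. Restricting a relation to $\W^r \times \W^r$ keeps all three properties.
\begin{itemize}
\item Reflexivity: for $w \in \W^r$ we have $w \Rel_\agi w$, and $(w,w) \in \W^r \times \W^r$.
\item Symmetry: it is immediate, because $\W^r \times \W^r$ is closed under swapping coordinates.
\item Transitivity: if $w \Rel^r_\agi v$ and $v \Rel^r_\agi u$ with $w,v,u \in \W^r$, then $w \Rel_\agi u$, and $(w,u) \in \W^r\times\W^r$.
\end{itemize}
So each $R^r_\agi$ is an equivalence relation on $\W^r$.

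\emph{Valuation.} Reading $\V$ as its graph, $\V^r = \V \cap (\W^r \times \mathcal{P}(\Prop))$ is the restriction $\V|_{\W^r}$. This is a function $\W^r \to \mathcal{P}(\Prop)$, and no monotonicity condition is needed classically.

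Together these give that $(\W^r, \rel^r)$ is a classical S5 frame and $\fw^r$ is a classical S5 model. The only step that needs more than unfolding definitions is the existence of a maximal world, which relies on finiteness. Triangle confluence is not used at all in this argument.
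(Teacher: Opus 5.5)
Your proof is correct and follows essentially the same route as the paper. Non-emptiness of $\W^r$ comes from finiteness (you spell out the chain argument the paper leaves implicit), and restricting the equivalence relations $R_\agi$ and the valuation to the maximal worlds preserves the required properties.
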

\begin{proof}
    Note that since $\fw$ is based on a finite frame, maximal worlds exist which implies that $\W^r \not = \emptyset$. It is then immediate to check that $R_\agi^r$ for $\agi \in \A$ and $\V^r$ are well-defined. Since $R_\agi^r$ is simply the restriction of $R_\agi$ to $\W^r$, it further follows that $R_\agi^r$ is reflexive, transitive and symmetric since $R_\agi$ is.
\end{proof}

\begin{proposition}\label{l: tau_0 preserves truth}
     Let $\fw=(\W, \leq, \rel, \V)$ be a finite intuitionistic S5 model, let $\fw^r =(\W^r, \rel^r, \V^r)$ be the $\fw$-reduced model and let $w \in  \W^r$. For any $\mathcal{L}_\ICK$-formula $\varphi$ the following holds:
     \begin{center}
         $\fw,w \models \tau(\varphi)$ if and only if $\fw^r, w \models_c \varphi$. 
     \end{center} 
\end{proposition}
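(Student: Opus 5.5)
The proof goes by induction on the structure of $\varphi$, simultaneously for all maximal worlds $w \in \W^r$. Three preliminary facts about finite intuitionistic S5 models do most of the work, so I would establish them first.

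\emph{Fact (a).} If $w$ is maximal, then $w^\uparrow = \{w\}$.

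\emph{Fact (b).} For any $v \in \W$ and any formula $\chi$, $\fw, v \models \neg\neg\chi$ if and only if $\fw, u \models \chi$ for every maximal $u$ with $v \leq u$. For the left-to-right direction, take a maximal $u \geq v$. It must satisfy $\neg\chi$ or $\chi$, since $u^\uparrow = \{u\}$; the first option is excluded by $v \models \neg\neg\chi$. For the right-to-left direction, let $v' \geq v$. By finiteness there is a maximal $u \geq v'$, and $u \models \chi$, so $v' \not\models \neg\chi$.

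\emph{Fact (c).} If $v \leq u$, then $v \Rel_\agi u$ and $u \Rel_\agi v$ for every $\agi \in \A$. The first follows from reflexivity of $R_\agi$ at $u$ together with triangle confluence; the second then follows by symmetry. This is the observation already used in the proof of Lemma~\ref{l: classical truth conditions for implication}.

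For the induction itself, the cases $\bot$ and $p$ are immediate because $\V^r$ is the restriction of $\V$. The cases $\wedge$ and $\vee$ follow directly from the induction hypothesis. For $\rightarrow$, Fact (a) shows that $\tau(\psi) \rightarrow \tau(\chi)$ is evaluated classically at the maximal world $w$, so the induction hypothesis applies to both components.

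For $\varphi = \K_\agi \psi$ we have $\tau(\varphi) = \K_\agi\neg\neg\tau(\psi)$. By Fact (b), $\fw, w \models \tau(\varphi)$ holds iff every maximal $u$ lying above some $v \in R_\agi(w)$ satisfies $\tau(\psi)$. I claim that the set of such $u$ is exactly $R_\agi(w) \cap \W^{max} = R^r_\agi(w)$. One inclusion uses that each such $u$ is above itself. For the other, if $w \Rel_\agi v \leq u$, then Fact (c) gives $v \Rel_\agi u$, and transitivity gives $w \Rel_\agi u$. The induction hypothesis at these maximal worlds then yields $\fw^r, w \models_c \K_\agi\psi$, and every step is an equivalence.

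For $\varphi = \C\psi$ the argument is the same, with $R_\agi$ replaced by $R^*$, and I expect this case to be the main obstacle. One must show that
\[
\{u \in \W^{max} \mid \exists v \in R^*(w),\ v \leq u\} = (R^r)^*(w),
\]
where $R^r \coloneqq \bigcup_{\agi \in \A} R^r_\agi$. A witnessing path in $\fw$ may pass through non-maximal worlds, so it has to be re-routed through $\W^r$. Given a path $w = u_0 \Rel_{\agi_0} u_1 \cdots \Rel_{\agi_{n-1}} u_n$, choose maximal $u_i' \geq u_i$, taking $u_0' = w$. Fact (c) then gives
\[
u_i' \Rel_{\agi_i} u_i \Rel_{\agi_i} u_{i+1} \Rel_{\agi_i} u_{i+1}',
\]
and transitivity of $R_{\agi_i}$ yields $u_i' \Rel^r_{\agi_i} u_{i+1}'$. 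If $u \geq u_n$ is itself the chosen maximal endpoint, this shows $u \in (R^r)^*(w)$. The converse inclusion is trivial, and the extension from $R^*(w)$ to worlds above it uses $v \Rel_\agi u$ from Fact (c) for any agent $\agi \in \A$, which exists because $\A \neq \emptyset$. With this identification in hand, Fact (b) and the induction hypothesis finish the case exactly as for $\K_\agi$.
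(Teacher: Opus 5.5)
Your proof is correct and follows the paper's argument. The paper also uses structural induction at maximal worlds. It handles the $\K_\agi$ case by reflexivity, triangle confluence and transitivity, and the $\C$ case by re-routing an $R^*$-path through chosen maximal worlds above each $u_i$, exactly as you do. The difference is only presentational: you state as separate facts the double-negation characterisation (your Fact (b)) and the relations $v \mathrel{R_\agi} u$, $u \mathrel{R_\agi} v$ for $v \leq u$ (Fact (c)), and you phrase each modal case as a set identity, whereas the paper uses these facts implicitly and proves the two directions separately.
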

\begin{proof}
    We proceed by induction on the structure of $\varphi$. The base cases where $\varphi = \bot$ and $\varphi = p$ for $p \in \Prop$ are trivial. The cases where $\varphi = \psi \ast \gamma$ for $\ast \in \{\wedge, \vee\}$ follow immediately from the induction hypothesis. Suppose $\varphi = \psi \rightarrow \gamma$. Then $\tau(\varphi) = \tau(\psi) \rightarrow \tau(\gamma)$. Since $w$ is a maximal world, observe that $\fw,w \models \tau(\psi) \rightarrow \tau(\gamma)$ if and only if $\fw,w \not \models \tau(\psi)$ or $\fw,w \models \tau(\gamma)$. Therefore the claim follows immediately from the induction hypothesis.\smallskip 
    
    \noindent \textsc{Case for $\varphi = \K_\agi \psi$.} By definition, $\tau(\varphi) = \K_\agi \neg \neg \tau(\psi)$. Suppose first that $\fw^r, w \models_c \K_\agi \psi$. Then for all $v \in R_\agi^r(w)$, $\fw^r,v \models_c \psi$. Suppose $w \mathrel{R_\agi} u$. For any maximal world $v \in u^\uparrow$, we have $v \in R_\agi(u)$ by reflexivity and triangle confluence and so $v \in R_\agi(w)$ by transitivity. Therefore $v \in R_\agi^r(w)$, implying that $\fw^r, v \models_c \psi$. By induction hypothesis, $\fw,v \models \tau(\psi)$ and so $\fw,u \models \neg \neg \tau(\psi)$, which implies that $\fw,w \models \K_\agi \neg \neg \tau(\psi)$. 
    
    For the other direction suppose that $\fw, w \models \K_\agi \neg \neg \tau(\psi)$. Then for all $v \in R_\agi(w)$, $\fw,v \models \neg \neg \tau(\psi)$. Let $v \in \R_\agi^r(w)$. Then $v \in R_\agi(w)$ and so ${\fw, v \models \neg \neg \tau(\psi)}$. Since $v$ is maximal it follows that $\fw,v \models \tau(\psi)$. By induction hypothesis $\fw^r, v \models_c \psi$. Hence $\fw^r, w \models_c \K_\agi \psi$.\smallskip

    \noindent \textsc{Case for $\varphi = \C \psi$.}  By definition, $\tau(\varphi) = \C \neg \neg \tau(\psi)$. Let $w \in \W$, $u \in R^*(w)$ and let $u_0, \ldots, u_n \in \W$ with $u_0 = w$, $u_n = u$ and for each $ i \in [n]$ there is $\agi_i \in \A$ with $u_i \mathrel{R_{\agi_i}} u_{i+1}$. We first show by induction on $n$ that for any maximal world $v \in u^\uparrow$ there are $v_0, \ldots, v_n \in \W^r$ such that $v_0 = w$ and for all $i \in [n+1]$, $u_i \leq v_i$ and if $i \in [n]$ and $u_i \mathrel{R_{\agi_i}} u_{i+1}$, then $v_i \mathrel{R_{\agi_i}^r} v_{i+1}$. For $n=0$ let $v_0 = w$. For $n>0$ the induction hypothesis yields that there are $v_0, \ldots, v_{n-1} \in \W^r$ with $v_0 = w$, for all $ i \in [n]$, $u_i \leq v_i$ and if $i \in [n-1]$ and $u_i \mathrel{R_{\agi_i}} u_{i+1}$, then $v_i \mathrel{R_{\agi_i}^r} v_{i+1}$. Let $v_n \in \W$ be a maximal world with $u_n \leq v_n$. By reflexivity $v_n \mathrel{R_{\agi_{n-1}}} v_n$. By triangle confluence $u_n \mathrel{R_{\agi_{n-1}}} v_n$ and so by transitivity $u_{n-1} \mathrel{R_{\agi_{n-1}}} v_n$. Since $u_{n-1} \leq v_{n-1}$, by reflexivity and triangle confluence $u_{n-1} \mathrel{R_{\agi_{n-1}}} v_{n-1}$. By symmetry and transitivity $v_{n-1} \mathrel{R_{\agi_{n-1}}} v_n$. Hence, since $v_{n-1}, v_n \in \W^r$, we have $v_{n-1} \mathrel{R_{\agi_{n-1}}^r} v_n$.

    Now suppose that $\fw^r, w \models_c \C \psi$. Then for all $v \in (R^r)^*(w)$, ${\fw^r,v \models_c \psi}$. Let $u \in R^*(w)$. By the proof above, for any maximal world $v \in u^\uparrow$ we have $v \in (R^r)^*(w)$ and so $\fw^r, v \models_c \psi$ by assumption. By induction hypothesis ${\fw,v \models \tau(\psi)}$. Therefore $\fw,u \models \neg \neg \tau(\psi)$, implying that $\fw,w \models \C \neg \neg \tau(\psi)$. 

    For the other direction suppose that $\fw,w \models \C \neg \neg \tau(\varphi)$. Then for all $u \in R^*(w)$, $\fw,u \models \neg \neg \tau (\psi)$. Let $v \in (R^r)^*(w)$. Then $v \in R^*(w)$ and so $\fw,v \models \neg \neg \tau(\psi)$. Since $v$ is a maximal world, $\fw,v \models \tau(\psi)$. By induction hypothesis $\fw^r, v \models_c \psi$. Therefore $\fw^r, w \models_c \C \psi$.    
\end{proof}

Next, we consider the inverse construction and introduce \emph{induced models}, which are intuitionistic S5  models obtained from classical S5 models by equipping them with an intuitionistic order.

\begin{definition}
    Let $\fw=(\W, \rel, \V)$ be a classical S5 model. The \emph{$\fw$-induced model} $\fw^i =(\W^i, \leq^i, \rel^i, \V^i)$ is defined as follows.
    \begin{itemize}
        \item $\W^i \coloneqq \W$
        \item $\leq^i \coloneqq \{(w,w) \mid w \in \W^i\}$
        \item $\rel^i \coloneqq \{R_\agi^i \mid R_\agi \in \rel\}$ where for each $\agi \in \A$: $R_\agi^i \coloneqq R_\agi$
        \item $\V^i \coloneqq \V$ 
    \end{itemize}
\end{definition}

\begin{lemma}
    Let $\fw$ be a classical S5 model. The $\fw$-induced model is an intuititionistic S5 model.
\end{lemma}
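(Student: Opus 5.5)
The plan is to check the definition of an intuitionistic S5 model directly against the construction of $\fw^i$. This means verifying, in turn, that the underlying birelational structure is well formed, that the valuation is monotone, that triangle confluence holds, and that each accessibility relation is an equivalence relation.

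First, $\W^i = \W$ is non-empty because $\fw$ is a classical S5 model. The relation $\leq^i$ is the identity (diagonal) on $\W^i$, and the diagonal is reflexive, antisymmetric and transitive, so $(\W^i, \leq^i)$ is a poset. Each $R_\agi^i = R_\agi$ is a binary relation on $\W^i$, so $(\W^i, \leq^i, \rel^i)$ is a birelational frame. The valuation $\V^i = \V$ maps $\W^i$ into $\mathcal{P}(\Prop)$. It is monotone in $\leq^i$ because $w \leq^i v$ forces $w = v$, which gives $\V^i(w) = \V^i(v)$.

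Next comes triangle confluence, the only condition that couples the two relations. Suppose $w \leq^i v$ and $v \mathrel{R_\agi^i} u$. Since $\leq^i$ is the identity, $w = v$, so $w \mathrel{R_\agi^i} u$ holds immediately. Hence the frame is epistemic. Finally, each $R_\agi^i$ equals $R_\agi$, which is an equivalence relation by the definition of a classical S5 frame. So every $R_\agi^i$ is reflexive, transitive and symmetric, and the frame is an S5 frame.

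I do not expect any real obstacle here. The one point that needs care is that triangle confluence and monotonicity of the valuation both collapse to trivialities precisely because $\leq^i$ is chosen to be discrete. Any larger order would force additional closure conditions on the $R_\agi$.
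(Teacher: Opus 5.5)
Your proposal is correct and follows the same route as the paper, which simply notes that $\leq^i$ is a partial order, that each $R_\agi^i$ is triangle confluent, that $\V^i$ is monotone, and that each $R_\agi^i$ is an equivalence relation. You spell out why each condition holds, namely that the discrete order makes monotonicity and triangle confluence immediate, which the paper leaves implicit.
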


\begin{proof}
    Note that $\leq^i$ is a partial order on $\W^i$, $R_\agi^i$ is triangle confluent for each $\agi \in \A$ and $\V^i$ is monotone in $\leq^i$. Moreover, clearly each $R_\agi^i$ is an equivalence relation.
\end{proof}

\begin{lemma}\label{l: classical model induces intuitionistic model}
    If $\fw = (\W, \rel, \V) $ is a classical S5 model and $w \in \W$, then for any formula $\varphi \in \LICK$, $\fw, w \models_c \varphi$ if and only if $\fw^i, w \models \varphi$.
\end{lemma}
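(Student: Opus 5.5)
We prove the statement by induction on the structure of $\varphi$, simultaneously for all worlds $w \in \W$. The key observation, used throughout, is that in the induced model $\fw^i$ the intuitionistic order is the identity relation. Hence $w^\uparrow = \{w\}$ for every $w \in \W^i$. Moreover, the modal relations $R_\agi^i$ coincide with $R_\agi$, so $R^{i*}(w) = R^*(w)$ as sets of worlds, and $\V^i = \V$.

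The base cases are immediate. Neither $\fw, w \models_c \bot$ nor $\fw^i, w \models \bot$ ever holds. For an atom $p$, we have $\fw, w \models_c p$ iff $p \in \V(w) = \V^i(w)$ iff $\fw^i, w \models p$. The cases $\varphi = \psi \wedge \chi$ and $\varphi = \psi \vee \chi$ follow directly from the induction hypothesis, because the two truth conditions for $\wedge$ and $\vee$ are literally the same.

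The only case in which the definitions differ is implication, and it is the one place where a small argument is needed. Since $w^\uparrow = \{w\}$, the intuitionistic clause $\fw^i, w \models \psi \rightarrow \chi$ reduces to: if $\fw^i, w \models \psi$ then $\fw^i, w \models \chi$. That is, $\fw^i, w \not\models \psi$ or $\fw^i, w \models \chi$. By the induction hypothesis for $\psi$ and $\chi$ at $w$, this is equivalent to $\fw, w \not\models_c \psi$ or $\fw, w \models_c \chi$, which is exactly the classical clause.

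For $\varphi = \K_\agi \psi$, we have $R_\agi^i(w) = R_\agi(w)$, so both sides state that $\psi$ holds at every $v \in R_\agi(w)$. The induction hypothesis, applied at each such $v$, transfers truth of $\psi$ between the two models. This is why the induction must be stated for all worlds and not just for a fixed $w$. The case $\varphi = \C \psi$ is identical, using $R^{i*}(w) = R^*(w)$, which holds because the union and the reflexive–transitive closure are taken over identical relations. No step is a real obstacle; the implication case is the only one where the collapse of $\leq^i$ to the identity is actually used.
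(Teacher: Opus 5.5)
Your proof is correct and takes the same approach as the paper. The paper records only the key observation that, because $\leq^i$ is the identity, implication in $\fw^i$ is evaluated classically, and leaves the structural induction implicit; you spell that induction out in full.
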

\begin{proof}
    By definition of $\leq$, $\fw^i, w \models \varphi \rightarrow \psi$ if and only if $\fw^i, w \not \models \varphi$ or $\fw^i, w \models \psi$.
\end{proof}

Finally, we are able to prove the main result:

\begin{theorem}[Kuroda's translation]\label{t: negative translation}
    For any $\LICK$-formula $\varphi$ the following hold.
    \begin{enumerate}
        \item $\varphi \leftrightarrow tr(\varphi) \in \mathbf{CKS5}$.
        \item $\varphi \in \mathbf{CKS5}$ if and only if $tr(\varphi) \in \mathbf{ICKS5}$.
    \end{enumerate}
\end{theorem}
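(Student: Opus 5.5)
Item 1 is exactly Lemma~\ref{l: tr is classicaly equivalent}, so nothing further is needed there. For Item 2 I would prove both directions by contraposition, using the two model constructions just introduced. The overall idea: an intuitionistic countermodel for $tr(\varphi)$ is reduced to its maximal worlds to give a classical countermodel for $\varphi$, and a classical countermodel for $\varphi$ is turned into an intuitionistic one by making $\leq$ the identity.

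\emph{Left to right.} Suppose $tr(\varphi) \notin \mathbf{ICKS5}$. By Theorem~\ref{t: fmp and decidability for K and T} there is a finite intuitionistic S5 model $\fw$ and a world $w$ with $\fw, w \not\models \neg\neg\tau(\varphi)$. Unfolding the truth condition for $\neg\neg$, there is some $u \geq w$ with $\fw, u \models \neg\tau(\varphi)$. Since $\fw$ is finite, pick a maximal world $v \geq u$. By Monotonicity (Lemma~\ref{l: monotonicity}), $\fw, v \models \neg\tau(\varphi)$, and hence $\fw, v \not\models \tau(\varphi)$ because $v \leq v$. As $v \in \W^r$, Proposition~\ref{l: tau_0 preserves truth} gives $\fw^r, v \not\models_c \varphi$. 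The $\fw$-reduced model is a classical S5 model, so $\varphi \notin \mathbf{CKS5}$.

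\emph{Right to left.} Suppose $\varphi \notin \mathbf{CKS5}$, witnessed by a classical S5 model $\fw$ and a world $w$ with $\fw, w \not\models_c \varphi$. By Item 1, $\fw, w \not\models_c tr(\varphi)$. By Lemma~\ref{l: classical model induces intuitionistic model}, $\fw^i, w \not\models tr(\varphi)$. The $\fw$-induced model is an intuitionistic S5 model, so $tr(\varphi) \notin \mathbf{ICKS5}$.

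I expect no real obstacle, because the heavy lifting sits in Proposition~\ref{l: tau_0 preserves truth}. The delicate point is in the first direction: one must pass from $w$ to a \emph{maximal} world before applying that proposition, since it only holds at maximal worlds. This is why the outer double negation and finiteness (the finite model property) are essential. One further check is needed: Lemma~\ref{l: classical model induces intuitionistic model} is only sketched, via the implication clause. Its full proof is a straightforward structural induction, noting that the modal clauses coincide verbatim because $R_\agi^i = R_\agi$ and $\leq^i$ is the identity.
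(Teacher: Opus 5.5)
Your proof is correct and follows the paper's argument. Item 1 is the classical-equivalence lemma, one direction of Item 2 goes through the induced model together with Item 1, and the other uses the finite model property, a maximal world above the given one, and the truth-preservation proposition for the reduced model; the only difference is that you argue both directions contrapositively via countermodels, whereas the paper argues directly from validity, and this is immaterial.
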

\begin{proof}
     1. is proven in Lemma \ref{l: tr is classicaly equivalent}. For 2. let $\varphi \in \mathcal{L}_\ICK$ be any formula. First suppose that $tr(\varphi) \in \mathbf{ICKS5}$. Then $tr(\varphi)$ is valid over the class of intuitionistic S5 frames. Therefore, $tr(\varphi)$ is valid over the class of classical S5 frames by Lemma \ref{l: classical model induces intuitionistic model} and so $\varphi \in \mathbf{CKS5}$ by 1. For the other direction suppose that $\varphi \in \mathbf{CKS5}$. Let $\fw$ be an arbitrary finite intuitionistic S5 model and let $w$ be an arbitrary world. Let $\fw^r$ be the $\fw$-reduced model. For any maximal world $u \in w^\uparrow$ we have by assumption that $\fw^r, u \models_c \varphi$. By Lemma~\ref{l: tau_0 preserves truth}, $\fw, u \models \tau(\varphi)$. Therefore $\fw,w \models \neg \neg \tau(\varphi)$, i.e. $\fw,w \models tr(\varphi)$. Therefore $tr(\varphi)$ is valid over the class of finite intuitionistic S5 frames. Theorem~\ref{t: fmp and decidability for K and T} yields that $tr(\varphi) \in  \mathbf{ICKS5}$.
\end{proof}

\section{Cyclic Proof Systems}\label{s: cyclic proof systems}

This section introduces four sequent calculi for $\ICK$ with the aim to capture the validities of $\ICK$ over the classes of all epistemic frames, reflexive frames, S4 frames and S5 frames. In difference to the calculus presented in~\cite{jager-intuitionistic_2016} we use non-wellfounded and cyclic sequent calculi to capture common knowledge as the greatest fixed point of the function $\mathsf{x} \mapsto \varphi \wedge \E \mathsf{x}$. \emph{Non-wellfounded proofs} deviate from standard sequent calculus proofs by allowing proof trees to contain infinitely long branches. As such they can be regarded as a formalizations of proofs by infinite descent, which are generally accepted to be equally strong as inductive proofs~\cite{brotherston}. To distinguish valid from invalid infinitary reasoning, infinite branches are required to satisfy a \emph{global correctness criterion}, which captures \emph{good formula traces} and roughly states that infinite branches are generated by infinitely often unfolding a given common knowledge formula on the right side of the sequent. A major advantage of non-wellfounded proofs over finite inductive proofs is that the former allows to retain analyticity of the rules: every rule satisfies the property that all formulae occurring in the premises belong to the closure of the formulae in the conclusion. This property is typically lost when using induction rules, due to the need to guess the right inductive hypothesis. Thus sequent calculi with induction rules often contain either the cut rule or the induction rule itself is non-analytic. For example, the calculus presented in~\cite{jager-intuitionistic_2016} uses unrestricted applications of cut in the completeness proof. 

In order to retain finite proofs, we will also consider \emph{cyclic proofs}, which are essentially finite representations of non-wellfounded proofs. Cyclic proofs are finite trees containing back-edges or cycles that point from some of their leaves to internal nodes. The basic idea is that such cyclic proofs can be unfolded over its cycles back into non-wellfounded proofs. To guarantee that the correctness condition on infinite branches is retained, cyclic proofs use a finite analogue for their cyclic branches: every branch leading to a cyclic leaf must satisfy a \emph{local correctness criterion}. Cyclic proofs thus combine the advantages of both approaches: finite proofs and analytic rules. As such they are amenable for automated proof-search~\cite{rooduijn_analytic_2022} and computing interpolants~\cite{Afshari_lyndon_2022, Shamkanov_circular_2014}.

We begin by introducing the four sequent calculi for $\ICK$ and its variations. Each calculus shares the same set of basic rules for the propositional connectives and axioms, as well as \emph{unfolding} rules for common knowledge. Additionally, each calculus contains one or several rules for the knowledge operators, which differ depending on which frame condition ought to be captured. We then define the notion of non-wellfounded and of cyclic proof for each calculus. The cyclic calculus for $\ICK$ over the class of epistemic models is denoted $\cick$ and is the extension of a standard multi-conclusion sequent calculus for $\mathsf{IPL}$ with rules for the modalities and common knowledge. The cyclic calculi $\cickt$, $\cicks$ and $\cickss$ for $\ICK$ over the classes of reflexive, S4 and S5 frames, respectively, are modular extensions of $\cick$. Soundness is established via an indirect argument. Completeness for the non-wellfounded calculi is proven via proof-search: given a sequent $\sigma$, we show how to define a \emph{proof-search tree} which represents a systematic search for a proof of $\sigma$. This proof-search tree forms the arena of a two-player game, played between \emph{Prover} and \emph{Refuter}. Intuitively, Prover tries to show that $\sigma$ has a proof and Refuter tries to show that $\sigma$ has a countermodel. We will make this intuition precise by showing that from winning strategies of Prover we can find a subtree of the proof-search tree that is a proof of $\sigma$ and from winning strategies of Refuter we can find a subtree from which a countermodel of $\sigma$ can be constructed. Since the game is \emph{determined}, meaning that exactly one of the two players has a winning strategy, completeness follows. Our construction is robust in the sense that it suffices to adapt Prover's turns to obtain countermodels satisfying different frame conditions and so a uniform argument for completeness can be given. We then also obtain completeness of the cyclic calculi by showing how to translate non-wellfounded into cyclic proofs. Our calculi and soundness and completeness proofs are based on the joint work with Afshari, Grotenhuis and Leigh~\cite{afshari_intuitionistic_2024} about intuitionistic master modality, which is a unimodal version of $\ICK$. There we developed our modular approach towards proof-search and used it to establish Lemma~\ref{l: functional and triangle frames} by showing how Prover's turn can be adapted to obtain functional and triangle confluent countermodels. Here, we illustrate how to obtain countermodels satisfying different frame conditions.

The only case for which our method does not work is when dealing with S5 frame conditions. To handle those the cyclic calculus $\cickss$ requires the cut rule, which obstructs our proof-search argument. However, we can instead prove completeness for the cyclic calculus directly via a canonical model construction which restricts applications of cut to \emph{analytic cuts} (meaning that only formulae of the negation closure of the root sequent can function as cut formulae). Thus we retain analyticity. The combination of cyclic proofs with analytic cuts was first explored in a joint publication with Rooduijn~\cite{rooduijn_analytic_2022} for classical S5 common knowledge logic, from which many ideas here are drawn.

\subsection{Basic Definitions}

In order to capture \emph{good formula traces} in cyclic proofs our calculi employ a simple type of \emph{formula annotation}. An \emph{annotated formula} is a tuple $(\varphi, \an)$, written $\varphi^\an$, where $\varphi \in \LICK$ and $\an \in \{\u, \f\}$. The annotation $\u$ designates that the formula is \emph{unfocused} and $\f$ that the formula is \emph{in focus}. When depicting annotated formulae we will usually omit brackets, e.g. the formula $\varphi \rightarrow \psi^\u$ should be read as $(\varphi \rightarrow \psi)^\u$. Formulae without annotation will be called \emph{plain}. Finite sets of annotated formulae are denoted by $\Gamma, \Delta, \Sigma, \Pi$ and $\Omega$, using subscripts or superscripts when necessary. For a set of annotated formulae $\Gamma$ define
\[
    \Gamma^- \coloneqq \{\varphi \mid \varphi^\an \in \Gamma\}
    \text{ and }
    \Gamma^\u \coloneqq \{\varphi^\u \mid \varphi^\an \in \Gamma\}.
\]
The use of annotations will become clear in the soundness proof below. 

\begin{definition}
    A \emph{sequent}\index{sequent!for $\ICK$} is an ordered pair $\langle \Gamma, \Delta \rangle$ of finite sets of annotated formulae, written as $\Gamma \Rightarrow \Delta$, such that
    \begin{enumerate}
    \item Every formula in $\Gamma$ is unfocused.
    \item At most one formula in $\Delta$ is in focus.
    \item If a formula $\varphi$ is in focus, then $\varphi = \C \psi$ or $\varphi = \K_\agi \C \psi$ for some formula $\psi$ and $\agi \in \A$.
\end{enumerate}
\end{definition}

We denote sequents by $\sigma$ and write $\Gamma_\sigma$ and $\Delta_\sigma$ for the left and right side of $\sigma$, respectively. The \emph{interpretaion} of a sequent $\sigma$ is the formula $\sigma^I \coloneqq \bigwedge \Gamma_\sigma^- \rightarrow \bigvee \Delta_\sigma^-$. Given an epistemic model $\fw$ and a world $w$, we write $\fw, w \models \sigma$ if $\fw, w \models \sigma^I$. A sequent $\sigma$ is valid over a class $\mathscr{F}$ of frames if $\sigma^I$ is valid and falsifiable otherwise. The notions of satisfiability and unsatisfiability are defined similarly. Observe that the annotations convey no semantic meaning. The \emph{closure} $\Cl(\sigma)$ of a sequent $\sigma$ is defined as $\Cl(\sigma) \coloneqq \Cl(\Gamma_\sigma) \cup \Cl(\Delta_\sigma)$ and the \emph{negation closure} as $\Cl^\neg (\sigma) \coloneqq \Cl^\neg(\Gamma_\sigma) \cup \Cl^\neg (\Delta_\sigma)$.

\begin{table}[t]
    \centering
        \begin{tabular}{|c c|}
        \hline
        & \\
         $\infer[\mathsf{id}]{\Gamma, \varphi^\u \Rightarrow \varphi^\an, \Delta}{}$ 
         & $\infer[\bot]{\Gamma, \bot^\u \Rightarrow \Delta}{}$\\
         & \\
         $\infer[\wedge \mathsf{L}]{\Gamma, \varphi \wedge \psi^\u \Rightarrow \Delta}{\Gamma, \varphi^\u, \psi^\u \Rightarrow \Delta}$ 
         & $\infer[\wedge \mathsf{R}]{\Gamma\Rightarrow \varphi \wedge \psi^\u ,\Delta}{\Gamma \Rightarrow \varphi^\u, \Delta & \Gamma \Rightarrow \psi^\u, \Delta}$\\
         & \\
         $\infer[\vee \mathsf{L}]{\Gamma, \varphi \vee \psi^\u\Rightarrow \Delta}{\Gamma, \varphi^\u \Rightarrow \Delta & \Gamma, \psi^\u \Rightarrow \Delta}$ & $\infer[\vee \mathsf{R}]{\Gamma \Rightarrow \varphi \vee \psi^\u, \Delta}{\Gamma \Rightarrow \varphi^\u, \psi^\u, \Delta}$\\
         & \\
         $\infer[{\to} \mathsf{L}]{\Gamma, \varphi \rightarrow \psi^\u \Rightarrow \Delta}{\Gamma \Rightarrow \varphi^\u, \Delta & \Gamma, \psi^\u \Rightarrow \Delta}$ 
         & $\infer[{\to} \mathsf{R}]{\Gamma \Rightarrow \varphi \rightarrow \psi^\u, \Delta}{\Gamma, \varphi^\u \Rightarrow \psi^\u}$\\
         & \\
         $\infer[\mathsf{u}]{\Gamma \Rightarrow \varphi^\f, \Delta}{\Gamma \Rightarrow \varphi^\u, \Delta}$ & $\infer[\mathsf{f}]{\Gamma \Rightarrow \varphi^\u, \Delta}{\Gamma \Rightarrow \varphi^\f, \Delta}$\\
         & \\
         \hline
        \end{tabular}
    \caption{The basic rules. The symbols $\Gamma$ and $\Delta$ range over finite sets of annotated formulae which may be empty and $\an \in \{\u, \f\}$.}
    \label{d: basic rules}
\end{table}

The basic rules our calculi use are depicted in Table~\ref{d: basic rules}. The rules for $\wedge, \vee, \rightarrow$ as well as $\mathsf{id}$ and $\bot$ form an annotated version of a standard multi-conclusion sequent calculus for intuitionistic propositional logic taken from~\cite{Negri_2001}. The annotations do not play a role for these rules, since only formulae $\C \psi$ or $\K_\agi \C \psi$ can be in focus. The rules $\mathsf{u}$ and $\mathsf{f}$ are called the \emph{focus rules} and are used to change the focus annotation of a formula. Table \ref{d: additional rules for ICK} contains rules for the modalities. The rule $\mathsf{\K_\agi}$ is a standard modal rule, while the rules $\mathsf{T_\agi}$, $\mathsf{S4_\agi}$ and $\mathsf{S5_\agi}$ capture the frame conditions reflexivity, transitivity and symmetry, respectively (see e.g.~\cite{rooduijn_analytic_2022}). The rule $\mathsf{\K_\agi {\rightarrow}}$ expresses that implications with a boxed antecedent behave classically and is a special case of the classical right-implication rule. The rules $ \mathsf{\C L}$ and $ \mathsf{\C R}$ are \emph{unfolding rules} and replace a common knowledge formula $\C \varphi$ - when read bottom-up - with its unfolding $\varphi \wedge \E \C \varphi$. Note that the annotations play a role in the rules $ \mathsf{\C R}$, $\mathsf{\K_\agi}$, $\mathsf{S4_\agi}$ and $\mathsf{S5_\agi}$. Finally there is the cut rule. All rules apart from $\mathsf{u}$ and $\mathsf{f}$ are called \emph{logical rules}.

For every rule in Table \ref{d: basic rules} as well the rules $\mathsf{CL}$, $\mathsf{CR}$ and $\mathsf{\K_\agi{\rightarrow}}$, the distinguished formula in the conclusion is called \emph{principal} and the distinguished formulae in the premises are its \emph{residuals}. The rule $\mathsf{cut}$ has no principal formulae, while both distinguished formulae in the premises are residual. These distinguished formulae are called the \emph{cut formulae}. For the knowledge rules for agent $\agi$ (i.e. $\mathsf{\K_\agi}$, $\mathsf{T_\agi}$, $\mathsf{S4_\agi}$ and $\mathsf{S5_\agi}$) every formula in the conclusion is principal and every formula in the premise is the residual of the corresponding principal formula. Formulae in $\Sigma, \Pi$ have no residuals. In every rule instance, any formula that is neither principal nor residual is called a \emph{side formula}.

\begin{example}
    For an instance of ${\to}\mathsf{L}$ of the form
    \begin{equation*}
        \infer[{\to} \mathsf{L}]{\Gamma, \varphi \rightarrow \psi^\u\Rightarrow \Delta}{\Gamma \Rightarrow \varphi^\u, \Delta & \Gamma, \psi^\u \Rightarrow \Delta} 
    \end{equation*}
    the principal formula is $\varphi\to \psi^\u$ in the conclusion, its residuals are  $\varphi^\u$ in the left premise and $ \psi^\u$ in the right premise. The formulae in $\Gamma$ and $\Delta$ (both in the conclusion and the premises) are side formulae. For an instance of $\mathsf{\K_\agi}$ of the form
    \begin{equation*}
        \infer[\mathsf{\K_\agi}]{\Pi, \K_\agi \psi_1^\u, \ldots, \K_\agi \psi_k^\u \Rightarrow \K_\agi \varphi^\f, \Sigma}{\psi_1^\u, \ldots , \psi_k^\u \Rightarrow \varphi^\f}
    \end{equation*}
    all formulae in the conclusion are principal. The formulae in $\Pi$ and $\Sigma$ have no residuals. The residual of $\K_\agi \psi_\agi^\u$ for $1 \leq i \leq k$ is the formula $\psi_\agi^\u$ in the premise and the residual of $\K_\agi \varphi^\f$ is the formula $\varphi^\f$ in the premise. There are no side formulae.
\end{example}

\begin{table}[t]
    \centering
        \begin{tabular}{|c c|}
        \hline
        & \\
        $\infer[\mathsf{\K_\agi}]{\Pi, \K_\agi \Gamma \Rightarrow \K_\agi \varphi^\an, \Sigma}{\Gamma \Rightarrow \varphi^\an}$ & $\infer[\mathsf{cut}]{\Gamma \Rightarrow \Delta}{\Gamma, \varphi^\u \Rightarrow \Delta & \Gamma \Rightarrow \varphi^\u, \Delta}$\\
         & \\
        $\infer[\mathsf{T_\agi}]{\Gamma, \K_\agi \varphi^\u \Rightarrow \Delta}{\Gamma, \varphi^\u \Rightarrow \Delta}$ &  $\infer[\mathsf{S4_\agi}]{\Pi, \K_\agi \Gamma \Rightarrow \K_\agi \varphi^\an, \Sigma}{ \K_\agi \Gamma \Rightarrow \varphi^\an}$\\
        & \\
        $\infer[\mathsf{S5_\agi}]{\Pi, \K_\agi \Gamma \Rightarrow \K_\agi \varphi^\an, \K_\agi \Delta, \Sigma}{ \K_\agi \Gamma \Rightarrow \varphi^\an, \K_\agi \Delta}$ &  $\infer[\mathsf{\K_\agi{\rightarrow}}]{\Gamma \Rightarrow \K_\agi \varphi \rightarrow \psi^\u, \Delta}{\Gamma, \K_\agi \varphi^\u \Rightarrow \psi^\u, \Delta}$\\
        & \\
        $\infer[ \mathsf{\C L}]{\Gamma, \C \varphi^\u \Rightarrow \Delta}{\Gamma, \varphi^\u, \{\K_\agi \C \varphi^\u\}_{\agi \in \A} \Rightarrow \Delta}$ & $\infer[ \mathsf{\C R}]{\Gamma \Rightarrow \C \varphi^\an,\Delta}{\Gamma \Rightarrow \varphi^\u, \Delta & \{\Gamma \Rightarrow \K_\agi \C \varphi^\an, \Delta\}_{\agi \in \A}}$\\
        & \\
        \hline
        \end{tabular}
    \caption{The additional rules. The symbols $\Gamma, \Delta, \Sigma$ and $\Pi$ range over finite sets of annotated formulae which may be empty, $\agi \in \A$ and $\an \in \{\u,\f\}$.}
    \label{d: additional rules for ICK}
\end{table}

We call a rule $\mathsf{r}$ of the form
 \begin{equation*}
     \infer[\mathsf{r}]{\sigma}{\sigma_1 & \ldots & \sigma_n}
 \end{equation*}
\emph{analytic} if for all $i \in [n+1]$, $\Cl(\sigma_i) \subseteq \Cl(\sigma)$. For the S5 case we instead use the negation closure of the sequents for analyticity. Note that all rules in Table~\ref{d: basic rules} and Table~\ref{d: additional rules for ICK} with the exception of $\mathsf{cut}$ are analytic. 

The condition that sequents have at most one formula in focus imposes restrictions on rule applications. For example the focus rule $\f$ can only be applied (bottom-up) to a sequent $\Gamma \Rightarrow \varphi^\u, \Delta$ if $\Delta = \Delta^\u$. Additionally $\varphi$ must be of the right shape, i.e. $\varphi = \C \psi$ or $\varphi = \K_\agi \C \psi$ for $\agi \in \A$ and $\psi \in \LICK$. The following lemma illustrates another restriction.

\begin{lemma}\label{l: proper application of rules}
    If in an instance of $\mathsf{\C R}$ the principal formula is in focus, then the left premise has no formula in focus.
\end{lemma}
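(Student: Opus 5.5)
The argument only needs the definition of a sequent together with the shape of the rule $\mathsf{\C R}$. Take an instance of $\mathsf{\C R}$ with conclusion $\Gamma \Rightarrow \C \varphi^\f, \Delta$. Its left premise is $\Gamma \Rightarrow \varphi^\u, \Delta$. I will show that no annotated formula in this left premise is in focus, by looking separately at the three parts $\Gamma$, $\varphi^\u$ and $\Delta$.

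First, every formula in $\Gamma$ is unfocused by clause 1 of the definition of a sequent, since $\Gamma$ is the left side of the conclusion.

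Second, the residual $\varphi^\u$ is unfocused, because the rule writes it with the annotation $\u$.

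Third, $\Delta$ contains no focused formula. The conclusion is a sequent, so by clause 2 its right side $\C \varphi^\f, \Delta$ has at most one formula in focus. That formula is $\C \varphi^\f$. Because sequents are pairs of \emph{sets} of annotated formulae, $\C \varphi^\f$ is an element distinct from the elements of $\Delta$. Hence every element of $\Delta$ carries the annotation $\u$.

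One subtlety is whether $\Delta$ could contain $\C \varphi^\f$ itself. Even if it did, that would be the same annotated formula already counted, and in the premise it is replaced by the residual $\varphi^\u$. I will read the rule, as is standard, with $\Delta$ as the context not containing the principal formula.

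Putting the three parts together, the left premise has no formula in focus, which proves the lemma. There is no real obstacle here; the only care needed is the bookkeeping convention about $\Delta$ just described.
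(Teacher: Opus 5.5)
Your proof has a gap at exactly the point you flag as a subtlety.

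In this paper sequents are pairs of sets, and the context of a rule is explicitly allowed to contain the principal formula; this is what the paper calls a \emph{preserving} application. So you cannot stipulate that $\C\varphi^\f \notin \Delta$. Your fallback claim is also false. If $\C\varphi^\f \in \Delta$, it is not ``replaced by the residual'' in the left premise $\Gamma \Rightarrow \varphi^\u, \Delta$. It survives there as a side formula, and the left premise then has a formula in focus.

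The conclusion alone cannot rule this case out. Take $\Delta = \{\C\varphi^\f\}$: both $\Gamma \Rightarrow \C\varphi^\f, \Delta$ and $\Gamma \Rightarrow \varphi^\u, \Delta$ are legitimate sequents, and the latter has $\C\varphi^\f$ in focus.

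What excludes the case is the right premises, which you never use. Because the principal formula is focused, each right premise is $\Gamma \Rightarrow \K_\agi \C\varphi^\f, \Delta$, and there is at least one since $\A \neq \emptyset$. That premise must be a sequent, so every focused element of $\Delta$ equals $\K_\agi\C\varphi^\f$. Your argument from the conclusion shows every focused element of $\Delta$ equals $\C\varphi^\f$. Since $\C\varphi^\f \neq \K_\agi\C\varphi^\f$, $\Delta$ is entirely unfocused. Your observations about $\Gamma$ and $\varphi^\u$ then finish the proof.

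This is precisely the paper's argument, run by contradiction. A focused formula in the left premise must lie in $\Delta$; the right premise forces it to be $\K_\agi\C\varphi^\f$; then the conclusion would contain two focused formulae. It is also why the paper remarks that applications of $\mathsf{\C R}$ with focused principal formula can never be preserving.
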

\begin{proof}
    Suppose towards contradiction that there is an application of $\mathsf{\C R}$ in which the principal formula is in focus and the left premise has a formula in focus, too.
    \begin{prooftree}
        \AxiomC{$\Gamma \Rightarrow \varphi^\u, \Delta$}
        \AxiomC{$\Gamma \Rightarrow \K_\agi \C \varphi^\f, \Delta$}
        \RightLabel{$\mathsf{\C R}$}
        \BinaryInfC{$\Gamma \Rightarrow \C \varphi^\f, \Delta$}
    \end{prooftree}
    Since $\varphi^\u$ is by definition not in focus, the formula in focus must occur in $\Delta$. Since the right premise cannot contain two formulae in focus, the formula in focus in $\Delta$ must be $\K_\agi \C \varphi^\f$. But then the conclusion sequent contains two formulae in focus, a contradiction.
\end{proof}

We will usually construct proofs bottom-up. That is if $\sigma$ is a sequent and $\mathsf{r}$ a rule, then `$\mathsf{r}$ is applied to $\sigma$' means that we consider a rule instance of $\mathsf{r}$ with conclusion $\sigma$.

\subsection{Non-wellfounded proofs}

In order to obtain completeness for our cyclic calculi, we will make a detour to non-wellfounded calculi, which are introduced in this section. The completeness of the cyclic calculus for S5 will be proven directly, whence we do not introduce a non-wellfounded calculus for that case.

\begin{definition}
    Consider the rules depicted in Table \ref{d: basic rules} and Table \ref{d: additional rules for ICK}. 
    \begin{enumerate}
        \item The calculus $\nick$ consists of the basic rules from Table \ref{d: basic rules} as well as the rules $\mathsf{\K_\agi}$ for $\agi \in \A$, $\mathsf{\C L}$ and $ \mathsf{\C R}$.
        \item The calculus $\nickt$ is the extension of $\nick$ with the rules $\mathsf{T_\agi}$ for $\agi \in \A$.
        \item The calculus $\nicks$ is the calculus  $\nickt$ with the rules $\mathsf{\K_\agi}$ replaced by the rules $\mathsf{S4_\agi}$ for $\agi \in \A$.
    \end{enumerate}
\end{definition}

Let $\hickp \in \{\nick, \nickt, \nicks\}$. A \emph{non-wellfounded pre-proof} of a sequent $\sigma$ in $\hickp$ is a finite or countably infinite tree $\pi$ whose nodes are labeled by sequents according to the rules of $\hickp$\footnote{By this we mean that whenever $u$ is a parent node in $\pi$ with $v_1, \ldots, v_k$ its children, then the sequents labeling $u, v_1, \ldots, v_k$ form an instance of a rule of $\mathsf{P}$.} with the root labeled by $\sigma$. We call $\pi$ a $\hickp$-pre-proof. Note that pre-proofs are finite branching, since each rule has finitely many premises. Therefore, by K\H{o}nig's Lemma, infinite pre-proofs contain infinite branches. 

A \emph{path} through $\pi$ is a finite or infinite sequence $\rho = \rho(0), \rho(1), \ldots$ of nodes of $\pi$ such that for every $i < \omega$ the node $\rho(i+1)$ is a child of $\rho(i)$ (if $\rho(i), \rho(i+1)$ exist). A \emph{branch} is a maximal path, i.e. a branch is a path starting at the root which either ends in a leaf or is infinite. If there is no danger of confusion, we will tacitly identify a node in a pre-proof with the sequent labeling it and thereby paths and branches of a pre-proof with sequences of sequents. In order for a pre-proof to be a \emph{proof} it must satisfy a \emph{global correctness criterion} on its infinite branches: every infinite branch must contain a \emph{good suffix}. Given a branch $\rho$ a suffix $\rho'$ of $\rho$ is called \emph{good} if every sequent in $\rho'$ has a formula in focus and $\rho'$ passes through infinitely many applications of $\mathsf{\C R}$ where the principal formula is in focus.

\begin{definition}
    An \emph{non-wellfounded proof} in $\hickp$ of a sequent $\sigma$ is a non-wellfounded $\hickp$-pre-proof $\pi$ of $\sigma$, such that every leaf of $\pi$ is labeled by an axiom and every infinite branch $\rho$ of $\pi$ has a good suffix $\rho'$. In this case we write $\vdash_\hickp \sigma$.
\end{definition}

Readers familiar with non-wellfounded proof theory might notice that this is a rather unusual definition for a non-wellfounded proof. The definition does not refer to \emph{formula traces} and instead uses the formula annotations to capture good $\nu$-traces. It should be noted that using the annotations is not necessary for obtaining a sound notion of non-wellfounded proof. One could instead follow the more traditional way of using formula traces to characterize the good branches in a non-wellfounded pre-proof. The reason behind these design choices is that the non-wellfounded proofs will only serve as a tool for proving completeness for cyclic proofs, and our definition simplifies the argument showing that non-wellfounded proofs can be translated into cyclic proofs.

An illustration of a non-wellfounded proof is provided in Example \ref{e: non-wellfounded and cyclic proof for IM} below. The following lemma establishes a straightforward yet crucial property of good suffixes.

\begin{lemma}\label{lemma good suffix contains inf many box appl}
    Let $\pi$ be a $\hickp$-pre-proof, $\rho$ an infinite branch of $\pi$ and $\rho'$ a suffix of $\rho$. If $\rho'$ is good, then it contains infinitely many applications of the rule $\mathsf{\K_\agi}$ (or, $\mathsf{S4_\agi}$, if $\hickp = \nicks$) for some $\agi \in \A$.
\end{lemma}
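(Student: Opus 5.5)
Along a good suffix $\rho'$, every sequent has exactly one formula in focus, and infinitely many applications of $\mathsf{\C R}$ have their principal formula in focus. I will track the focused formula and show that it cannot stay inside a modality-free stretch for ever. The plan is to consider a segment of $\rho'$ between two consecutive focused $\mathsf{\C R}$ applications. Suppose the first application has principal formula $\C \psi^\f$, and $\rho'$ continues through one of its premises. By Lemma~\ref{l: proper application of rules} the left premise contains no focused formula. Since $\rho'$ is good, it cannot pass through the left premise, so it passes through a right premise $\Gamma \Rightarrow \K_\agi \C \psi^\f, \Delta$ for some $\agi$. Hence immediately after any focused $\mathsf{\C R}$ step the formula in focus is of the form $\K_\agi \C \psi$.

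Next, I go through the rules that can be applied while the focused formula is $\K_\agi \C \psi^\f$ on the right and that keep some formula in focus in the premise along $\rho'$. The focus rule $\mathsf{u}$ would remove the focus, and no sequent of $\rho'$ may lack a focused formula, so $\mathsf{u}$ cannot be applied to the focused formula. Moreover, applying $\mathsf{f}$ to another formula is impossible, since sequents have at most one focused formula. The propositional rules, $\mathsf{\C L}$, $\mathsf{\C R}$ with a different principal formula, $\mathsf{T_\agi}$ and $\mathsf{id}$/$\bot$ (these last are leaves, so they do not occur on infinite branches) all have the focused formula as a side formula, which is copied unchanged to the premise. The one delicate exception is ${\to}\mathsf{R}$, which deletes the right context $\Delta$. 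So the focused formula, whichever formula it is, is lost in the premise unless it is the principal formula itself; but the principal formula $\varphi \to \psi^\u$ is unfocused by the sequent conditions. Therefore an application of ${\to}\mathsf{R}$ would yield a premise with no focus, contradicting goodness, and ${\to}\mathsf{R}$ cannot occur in $\rho'$.

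The only remaining way to make the focused formula $\K_\agi \C \psi^\f$ change is therefore a modal rule ($\mathsf{\K_\agi}$, or $\mathsf{S4_\agi}$ for $\nicks$) with it as principal formula. A modal rule with another principal right formula $\K_\agi \varphi$ would put our focused formula in $\Sigma$, where it has no residual; the premise $\Gamma \Rightarrow \varphi^\an$ then contains a focus only if $\an = \f$, and that is impossible because the conclusion already had $\K_\agi\C\psi^\f$ in focus. Such an application is still a modal application, which is all we need to count. Between two consecutive focused $\mathsf{\C R}$ steps, the focused formula must become $\C \psi^\f$ again. Until a modal rule fires, the formula remains $\K_\agi \C \psi^\f$, and from the analysis above it can only be altered by a modal rule. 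Hence every such segment contains at least one application of $\mathsf{\K_\agi}$ (resp.\ $\mathsf{S4_\agi}$). Since there are infinitely many focused $\mathsf{\C R}$ steps, there are infinitely many modal applications. Because $\A$ is finite, the pigeonhole principle yields one agent $\agi$ whose rule is applied infinitely often.

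The main obstacle is the exhaustive case check that no non-modal rule can transform $\K_\agi\C\psi^\f$ into $\C\psi^\f$, in particular the treatment of ${\to}\mathsf{R}$ and of the focus rules. I would handle it by the invariant that, between a focused $\mathsf{\C R}$ and the next modal step, the unique focused formula is syntactically $\K_\agi\C\psi$ and lies in the right context.
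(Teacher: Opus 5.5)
Your proof is correct and follows the paper's argument. After a focused $\mathsf{\C R}$ the path must enter a right premise with $\K_\agi\C\psi^\f$ in focus (Lemma~\ref{l: proper application of rules}), and only a modal rule can change that focused formula, since $\mathsf{u}$ and ${\to}\mathsf{R}$ would destroy the focus. The only difference is presentation: the paper argues by contradiction on the suffix after the last modal step, whereas you argue segment by segment and make explicit the ${\to}\mathsf{R}$ case and the pigeonhole over the finite set $\A$, both of which the paper leaves implicit.
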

\begin{proof}
    We only consider the case for $\mathsf{K_\agi}$, as the case for $\mathsf{S4_\agi}$ is argued in the same way. Let $\rho$ be an infinite branch of a $\hickp$-pre-proof $\pi$ and let $\rho'$ be a good suffix of $\rho$. Suppose towards contradiction that $\rho'$ contains only finitely many applications of $\mathsf{\K_\agi}$ for any $\agi \in \A$. Let $\rho''$ be the suffix of $\rho'$ after the last application of $\mathsf{\K_\agi}$ for any $\agi \in \A$. Since $\rho'$ is good, every sequent in $\rho''$ has a formula in focus. This implies that the focus rules cannot be applied in $\rho''$. Furthermore, $\rho''$ contains infinitely many applications of $\mathsf{\C R}$ where the principal formula is in focus. Let $n$ be the least natural number such that $\rho''(n)$ is the conclusion of an application of $\mathsf{\C R}$ with the principal formula in focus. By Lemma \ref{l: proper application of rules}, $\rho''(n+1)$ must be the right premise. Thus the formula in focus is of the form $\K_\agi \C \varphi^\f$ for some formula $\varphi$ and some $\agi \in \A$. By inspection of the rules of $\hickp$, the only rules that can be applied to $\K_\agi \C \varphi^\f$ are $\mathsf{\K_\agi}$ and $\mathsf{u}$. Since there are no applications of $\mathsf{\K_\agi}$ or $\mathsf{u}$ in $\rho''$, the formula in focus must be a side formula in any rule application after $\rho''(n+1)$, implying that $\rho''$ passes only through finitely many applications of $\mathsf{\C R}$ where the principal formula is in focus; a contradiction.
\end{proof}
%
\subsection{Cyclic proofs}

This subsection introduces cyclic sequent calculi. Roughly, a cyclic proof is a finite representation of a non-wellfounded proof, where each infinite branch is pruned after encountering a suitable repetition, such that a non-wellfounded proof can be obtained by unravelling the cyclic proof over its repetitions. We introduce the following calculi.

\begin{definition}
    Consider the rules in Table \ref{d: basic rules} and Table \ref{d: additional rules for ICK}.
    \begin{enumerate}
        \item The calculi $\cick$, $\cickt$ and $\cicks$ consist of the same rules as their corresponding calculi $\nick$, $\nickt$ and $\nicks$.
        \item The calculus $\cickss$ is the calculus $\cicks$ with the rules $\mathsf{S4_\agi}$ replaced by the rules  $\mathsf{S5_\agi}$ for $\agi \in \A$ and, furthermore, extended by the rules ${\rightarrow}\mathsf{\K_\agi}$ for $\agi \in \A$ and by $\mathsf{cut}$
    \end{enumerate}
\end{definition}

 Let $\hickp \in \{\cick, \cickt, \cicks, \cickss\}$. A \emph{cyclic pre-proof} of a sequent $\sigma$ in $\hickp$ is a finite tree $\pi$ whose nodes are labeled by sequents according to the rules of $\hickp$ with the root labeled by $\sigma$. We call $\pi$ a cyclic $\hickp$-pre-proof, where we drop `$\hickp$' in case the specific calculus is not of relevance. Note that cyclic pre-proofs are simply non-wellfounded pre-proofs which are finite.

In order to distinguish cyclic pre-proofs from cyclic proofs, we require to formulate a \emph{local correctness criterion} for those branches in a cyclic proof that do not end in an axiom. This criterion is essentially a finite version of the global correctness criterion for non-wellfounded proofs.

\begin{definition}
    A path $\rho$ in a cyclic pre-proof is \emph{successful} if the following hold.
    \begin{enumerate}
        \item Every sequent in $\rho$ has a formula in focus.
        \item The path $\rho$ passes through at least one instance of $\mathsf{\C R}$ where the principal formula is in focus.
    \end{enumerate}
\end{definition}

Given a cyclic pre-proof $\pi$, a pair of nodes $(u,v)$ of $\pi$ is called a \emph{repetition} if $u \not = v$, there exists a path from $u$ to $v$ and both nodes are labeled by the same sequent. A repetition $(u,v)$ is \emph{successful}\index{successful repetition} if the path from $u$ to $v$ is successful.

\begin{definition}
   Let $\hickp \in \{\cick, \cickt, \cicks, \cickss\}$. A \emph{cyclic proof} in $\hickp$ of a sequent $\sigma$ is a cyclic $\hickp$-pre-proof $\pi$ of $\sigma$ such that every leaf $l$ of $\pi$ is either labeled by an axiom or there exists a node $c(l)$ in $\pi$ such that $(c(l),l)$ is a successful repetition. We write $\vdash_\hickp \sigma$ if $\sigma$ has a cyclic $\hickp$-proof.
\end{definition}

Given a cyclic proof $\pi$, leafs labeled by axioms are called \emph{axiomatic leafs} and all other leafs are called \emph{non-axiomatic leafs}. If $v$ is a non-axiomatic leaf and $(u,v)$ form a successful repetition, then $u$ is called the \emph{companion} of $v$. Note that $v$ may have multiple companions.

\begin{example}\label{e: non-wellfounded and cyclic proof for IM}
    Figure \ref{fig:example-proof} shows a cyclic proof (in any considered calculus) for the induction axiom $(\varphi \wedge \C(\varphi \rightarrow \K_\agi \varphi)) \rightarrow \C \varphi$ for common knowledge for the case where $\LICK$ contains only one agent. The case for multiple agents is a straightforward generalization. We write $\gamma$ for the formula $\varphi \rightarrow \K_\agi \varphi$. The left and middle leaves are axioms. The right leaf is non-axiomatic and forms a repetition with the node `below' indicated by the arrow. It is immediate to check that the path from the lower node to the right leaf is successful. A non-wellfounded proof may be obtained from the cyclic proof by `unfolding' it infinitely often over the successful repetition. Intuitively the unfolding is obtained by `gluing' the subtree rooted at the companion on top of the non-axiomatic leaf co-recursively. Note that the resulting tree contains one infinite branch and infinitely many finite branches. Since the repetition is successful, the suffix of the infinite branch which starts at the lowermost companion is good. 
\end{example}

\begin{figure}[t]

\fbox{
\begin{tikzpicture}
\node[] (a) [] {
\AxiomC{}
    \RightLabel{$\mathsf{id}$}
    \UnaryInfC{$\varphi^\u, \C \gamma^\u \Rightarrow \varphi^\u$}
    \AxiomC{}
    \RightLabel{$\mathsf{id}$}
    \UnaryInfC{$\varphi^\u, \K_\agi \C \gamma^\u \Rightarrow \varphi^\u, \K_\agi \C \varphi^\f$}
    \AxiomC{$\varphi^\u, \C \gamma^\u \Rightarrow \C \varphi^\f$}
    \RightLabel{$\mathsf{\K_\agi}$}
    \UnaryInfC{$\varphi^\u, \K_\agi \varphi^\u, \K_\agi \C \gamma^\u \Rightarrow \K_\agi \C \varphi^\f$}
    \RightLabel{${\to} \mathsf{L}$}
    \BinaryInfC{$\varphi^\u,\gamma^\u, \K_\agi \C \gamma^\u \Rightarrow \K_\agi \C \varphi^\f$}
    \RightLabel{$ \mathsf{\C L}$}
    \UnaryInfC{$\varphi^\u, \C \gamma^\u \Rightarrow \K_\agi \C \varphi^\f$}
    \RightLabel{$ \mathsf{\C R}$}
    \insertBetweenHyps{\hskip -24pt}
    \BinaryInfC{$\varphi^\u, \C\gamma^\u \Rightarrow \C \varphi^\f$}
    \RightLabel{$\mathsf{f}$}
    \UnaryInfC{$\varphi^\u, \C\gamma^\u \Rightarrow \C \varphi^\u$}
    \RightLabel{${\land}\mathsf{L}$}
    \UnaryInfC{$\varphi \wedge \C\gamma^\u \Rightarrow  \C \varphi^\u$}
    \RightLabel{${\to}\mathsf{R}$}
    \UnaryInfC{$\Rightarrow (\varphi \wedge \C\gamma) \rightarrow \C \varphi^\u$}
      \DisplayProof};

\draw[->,rounded corners=.25cm,dashed] (3,2.3) -- (3,2.6) -- (7.3,2.6) -- (7.3,-0.4) -- (-0.8,-0.4);
\end{tikzpicture}
}
\caption{A cyclic proof for the induction axiom.\label{fig:example-proof}}
\end{figure}


    

The defining feature of $\cickss$ is the rule $\mathsf{{\rightarrow}\K_\agi}$. It formalizes the observation from Lemma~\ref{l: classical truth conditions for implication} that implications of the form $\K_\agi \varphi \rightarrow \psi$ behave classically. Observe that the rule is only applicable if the principal formula is of the form $\K_\agi \varphi \rightarrow \psi$. It is therefore a special case of the classical right implication rule.

\begin{example}
    Recall that $\neg \varphi$ is a shortcut for $\varphi \rightarrow \bot$.  The sequent $\Rightarrow \K_\agi p \vee \neg \K_\agi p^\u$ has a proof in $\cickss$, depicted below.
    \begin{prooftree}
        \AxiomC{}
        \RightLabel{$\mathsf{id}$}
        \UnaryInfC{$\K_\agi p^\u \Rightarrow \K_\agi p^\u, \bot^\u$}
        \RightLabel{$\mathsf{{\rightarrow}\K_\agi}$}
        \UnaryInfC{$\Rightarrow \K_\agi p^\u, \K_\agi p \rightarrow \bot^\u$}
        \RightLabel{$\vee \mathsf{R}$}
        \UnaryInfC{$\Rightarrow \K_\agi p \vee (\K_\agi p \rightarrow \bot)^\u$}
    \end{prooftree}
    Note that with the standard intuitionistic right implication rule, this proof is not possible: if ${\rightarrow}\mathsf{R}$ is applied (bottom-up) to the sequent $\Rightarrow \K_\agi p^\u, \K_\agi p \rightarrow \bot^\u$, the premise is $\K_\agi p^\u \Rightarrow \bot^\u$, which is not provable.
\end{example}

 Recall that, like $ \K_\agi p \vee \neg \K_\agi p$, also $\C p  \vee \neg \C p$ is valid over S5 frames. However, the rule $\mathsf{{\rightarrow}\K_\agi}$ cannot be applied to formulae of the form $\C \varphi \rightarrow \psi$. The following example shows how to obtain a proof of $\Rightarrow \C p  \vee \neg \C p^\u$ using the cut rule.

\begin{example}\label{e: law of excluded middle for C}
   The sequent  $\Rightarrow \C p  \vee \neg \C p^\u$ has a proof in $\cickss$. For simplicity we assume that there is only one agent in the language, whose knowledge operator is $\K_\agi$. The proof is generalized in a straightforward way for multiple agents. The following is a cyclic proof of $\Rightarrow \C p  \vee \neg \C p^\u$,
    \begin{prooftree}
        \AxiomC{$\pi$}
        \AxiomC{$\tau$}
        \RightLabel{$ \mathsf{\C R}$}
        \BinaryInfC{$\Rightarrow \C p^\u, \neg \C p^\u$}
        \RightLabel{$\vee \mathsf{R}$}
        \UnaryInfC{$\Rightarrow \C p \vee \neg \C p^\u$}
    \end{prooftree}
    
    where $\pi$ is the following proof,
 
    \begin{prooftree}
    \small
        \AxiomC{}
        \RightLabel{$\mathsf{id}$}
        \UnaryInfC{$ p^\u, \K_\agi \C p^\u \Rightarrow \K_\agi \C p^\u, \bot^\u$}
        \RightLabel{$ \mathsf{\C L}$}
        \UnaryInfC{$\C p^\u \Rightarrow \K_\agi \C p^\u, \bot^\u$}
        \AxiomC{}
        \RightLabel{$\mathsf{id}$}
        \UnaryInfC{$\bot^\u, \C p^\u \Rightarrow \bot^\u$}
        \RightLabel{$\rightarrow \mathsf{L}$}
        \BinaryInfC{$\neg \K_\agi \C p^\u, \C p^\u \Rightarrow \bot^\u$}
        \RightLabel{$\rightarrow \mathsf{R}$}
        \UnaryInfC{$\neg \K_\agi \C p^\u \Rightarrow p^\u, \neg \C p^\u$}
        \AxiomC{}
        \RightLabel{$\mathsf{id}$}
        \UnaryInfC{$p^\u, \K_\agi \C p^\u \Rightarrow p^\u, \neg \C p^\u, \bot^\u$}
        \RightLabel{$\mathsf{\C L}$}
        \UnaryInfC{$\C p^\u \Rightarrow p^\u, \neg \C p^\u, \bot^\u$}
        \RightLabel{$\mathsf{T_\agi}$}
        \UnaryInfC{$\K_\agi \C p^\u \Rightarrow p^\u, \neg \C p^\u, \bot^\u$}
        \RightLabel{$\mathsf{{\rightarrow}\K_\agi}$}
        \UnaryInfC{$\Rightarrow p^\u, \neg \C p^\u, \neg \K_\agi \C p^\u $}
        \RightLabel{$\mathsf{cut}$}
        \BinaryInfC{$\Rightarrow p^\u, \neg \C p^\u$}
    \end{prooftree}

and $\tau$ is the following proof,

    \begin{prooftree}
    \small
        \AxiomC{}
        \RightLabel{$\mathsf{id}$}
        \UnaryInfC{$p^\u, \K_\agi \C p^\u \Rightarrow \K_\agi \C p^\u, \bot^\u$}
        \RightLabel{$\mathsf{\C L}$}
        \UnaryInfC{$\C p^\u \Rightarrow \K_\agi \C p^\u, \bot^\u$}
        \AxiomC{}
        \RightLabel{$\mathsf{id}$}
        \UnaryInfC{$\bot^\u, \C p^\u \Rightarrow \bot^\u$}
        \RightLabel{$\rightarrow \mathsf{L}$}
        \BinaryInfC{$\neg \K_\agi \C p^\u, \C p^\u \Rightarrow \bot^\u$}
        \RightLabel{$\rightarrow \mathsf{R}$}
        \UnaryInfC{$\neg \K_\agi \C p^\u \Rightarrow \K_\agi \C p^\u, \neg \C p^\u$}
        \AxiomC{}
        \RightLabel{$\mathsf{id}$}
        \UnaryInfC{$\K_\agi \C p^\u \Rightarrow \K_\agi \C p^\u, \neg \K_\agi p^\u$}
        \RightLabel{$\mathsf{{\rightarrow}\K_\agi}$}
        \UnaryInfC{$\Rightarrow \neg \K_\agi \C p^\u, \K_\agi \C p^\u, \neg \C p^\u$}
        \RightLabel{$\mathsf{cut}$}
        \BinaryInfC{$\Rightarrow\K_\agi \C p^\u, \neg \C p^\u$.}
    \end{prooftree}
\end{example}

Example \ref{e: law of excluded middle for C} hints towards the fact that $\cickss$ is not cut-free complete. This does not come as a surprise, given that it is well-known that classical S5 modal logic does not have a cut-free (plain) sequent calculus. A famous counterexample in the classical realm is the formula $p \rightarrow \Box \Diamond p$. In the intuitionistic setting, $\Box$ and $\Diamond$ are not interdefinable, therefore our language cannot express $\Diamond$. However, we may still use the formula as a counterexample, by identifying $\Box$ with $\K_\agi$ and $\Diamond$ with $\neg \K_\agi \neg$. First of all, an easy computation shows that $\neg \K_\agi \neg$ is not evaluated as a $\Diamond$, but instead its truth conditions are as follows:
\begin{center}
    \begin{tabular}{l l l}
    $\fw, w \models \neg \K_\agi \neg \varphi$ & iff & there exist $u,u' \in W$ with $u \in R_\agi(w)$, $u' \in u^\uparrow$\\
    & & and $\fw, u' \models \varphi$.\\
    \end{tabular}
\end{center}
Therefore the following holds.

\begin{lemma}
    The formula $p \rightarrow \K_\agi \neg \K_\agi \neg p$ is valid over the class of S5 frames.
\end{lemma}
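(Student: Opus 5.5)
The plan is a direct semantic verification using the truth condition for $\neg \K_\agi \neg$ stated just before the lemma, together with the S5 frame conditions and triangle confluence, in the style of Lemma~\ref{c: classical truth conditions for implications}. Fix an S5 model $\fw=(\W,\leq,\rel,\V)$ and a world $w$. To show $\fw,w \models p \rightarrow \K_\agi \neg \K_\agi \neg p$, take an arbitrary $v \in w^\uparrow$ with $\fw,v \models p$. We must show $\fw,v \models \K_\agi \neg \K_\agi \neg p$. So let $u \in R_\agi(v)$ be arbitrary; the goal is $\fw,u \models \neg \K_\agi \neg p$.

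For this I will first confirm the displayed truth condition briefly, since the paper calls it an easy computation. Unfolding $\neg \K_\agi \neg p$ at $u$ gives the following: for every $u' \in u^\uparrow$, $\fw,u' \not\models \K_\agi \neg p$. In other words, for every $u' \geq u$ there are $s \in R_\agi(u')$ and $s' \geq s$ with $\fw,s' \models p$. I will verify this form directly rather than relying on the compressed version.

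So let $u' \in u^\uparrow$. Reflexivity of $R_\agi$ gives $u' \Rel_\agi u'$, and triangle confluence with $u \leq u'$ then gives $u \Rel_\agi u'$. By symmetry, $u' \Rel_\agi u$. Since also $u \Rel_\agi v$ by symmetry from $v \Rel_\agi u$, transitivity yields $u' \Rel_\agi v$. Now take $s = s' = v$. Then $v \in R_\agi(u')$ and $\fw,v \models p$, so $v \not\models \neg p$ and hence $u' \not\models \K_\agi \neg p$. As $u'$ was arbitrary, $\fw,u \models \neg\K_\agi\neg p$. Since $u$ and then $v$ were arbitrary, this gives the claim.

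I expect no real obstacle here. The only step requiring care is the relational chase showing that $v$ remains $R_\agi$-accessible from every intuitionistic successor $u'$ of $u$, which uses all three S5 conditions plus triangle confluence. I will also note that monotonicity (Lemma~\ref{l: monotonicity}) is not even needed, because $v$ itself serves as the witness.
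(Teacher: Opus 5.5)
Your proof is correct and follows the paper's argument: in both, the witness is $v$ itself, made accessible via symmetry of $R_\agi$. The only difference is that you unfold $\neg\K_\agi\neg p$ over all $u' \in u^\uparrow$ and derive $u' \Rel_\agi v$ from reflexivity, triangle confluence, symmetry and transitivity, whereas the paper cites its displayed truth condition for $\neg\K_\agi\neg$ and so only needs $v \in R_\agi(u)$; that condition holds only because in S5 frames $u \leq u'$ forces $R_\agi(u) = R_\agi(u')$, so your version supplies the step the paper leaves implicit.
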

\begin{proof}
    Let $\fw=(\W, \leq, \rel, \V)$ be an arbitrary model based on an S5 frame and $w \in \W$ an arbitrary world. Let $w \leq v$ and suppose that $\fw,v \models p$. Let $u \in R_\agi(v)$. We want to show that $\fw, u \models \neg \K_\agi \neg p$. By the previous observation it suffices to find worlds $s,t$ such that $s \in R_\agi(u)$, $t \in s^\uparrow$ and $\fw,t \models p$. By symmetry $v \in R_\agi(u)$. Since $v \in v^\uparrow$ and $\fw,v \models p$, choosing $s = t = v$ shows that $\fw, u \models \neg \K_\agi p$ and hence that $\fw,v \models \K_\agi \neg \K_\agi \neg p$. Hence $\fw,w \models p \rightarrow \K_\agi \neg \K_\agi \neg p$, implying that $p \rightarrow \K_\agi \neg \K_\agi \neg p$ is valid.
\end{proof}

\begin{proposition}\label{p: cicks5 not cutfree complete}
    $\CICK_\mathsf{S5}$ is not cut-free complete.
\end{proposition}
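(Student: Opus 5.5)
The witness will be the sequent $\Rightarrow p \rightarrow \K_\agi \neg \K_\agi \neg p^\u$. By the preceding lemma its interpretation is valid over S5 frames. It therefore suffices to show that this sequent has no cut-free cyclic proof in $\cickss$, since that contradicts cut-free completeness. Every cut-free rule of $\cickss$ is sound over S5 frames, but soundness is not what we need; we need a syntactic impossibility argument. We will argue by exhausting bottom-up proof search, with all sequents restricted to the finitely many formulae in $\Cl(p \rightarrow \K_\agi \neg \K_\agi \neg p)$. This is legitimate because every cut-free rule is analytic.

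First, we may assume there is only one agent, or else note that rules for other agents cannot apply usefully, since no other modality occurs. No formula of the form $\C\psi$ occurs, so $\mathsf{\C L}$, $\mathsf{\C R}$ and the focus rules never apply. Consequently no successful repetition exists, and any cut-free cyclic proof is in fact a finite well-founded derivation. We may then reason by induction on derivations, using a "not derivable" invariant on sequents.

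Next we trace the possible bottom-up steps. From $\Rightarrow p\to\K_\agi\neg\K_\agi\neg p$, the only options are ${\to}\mathsf{R}$, giving $p \Rightarrow \K_\agi\neg\K_\agi\neg p$, or ${\to}\mathsf{K_\agi}$, which is inapplicable because the antecedent $p$ is not boxed. At $p \Rightarrow \K_\agi\neg\K_\agi\neg p$ the only applicable rule is $\mathsf{S5_\agi}$, since $\mathsf{T_\agi}$ needs a boxed formula on the left. $\mathsf{S5_\agi}$ discards $p$ and yields $\Rightarrow \neg\K_\agi\neg p$. From there, ${\to}\mathsf{R}$ gives $\K_\agi\neg p\Rightarrow\bot$, and ${\to}\mathsf{K_\agi}$ gives the same premise. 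The key loss happens here, when $p$ is discarded by the modal rule.

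To make the argument robust rather than a fragile case tree, I would define a classical-style semantic invariant that all cut-free rules preserve upward, and which the endsequents fail. A cleaner alternative: show that every sequent reachable above $\Rightarrow\neg\K_\agi\neg p$ without cut is falsifiable. Falsifiable sequents cannot be derived, because cut-free rules are sound for S5, and soundness transfers a valid conclusion only through valid premises. So the plan reduces to two steps. First, list the reachable sequents: $\K_\agi\neg p\Rightarrow\bot$, then via $\mathsf{T_\agi}$ the sequent $\K_\agi\neg p,\neg p\Rightarrow \bot$, then via ${\to}\mathsf{L}$ the sequent $\K_\agi\neg p\Rightarrow p,\bot$, and via $\mathsf{S5_\agi}$ sequents such as $\K_\agi\neg p\Rightarrow \bot$ again. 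Second, observe that the base sequent $\Rightarrow\neg\K_\agi\neg p$ is itself falsifiable: take a single-world S5 model where $p$ is false, so that $\K_\agi\neg p$ holds there. Every derivation of the root must pass through this falsifiable sequent, because the root's only bottom-up path forces it, and all leaves above it are non-axiomatic. Hence no cut-free proof exists.

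The main obstacle is ensuring the case analysis at the root is genuinely exhaustive. In particular, weakening-like behaviour of $\mathsf{S5_\agi}$, through the contexts $\Pi,\Sigma$ and the boxed $\K_\agi\Delta$ kept on the right, must never allow $p$ to survive alongside a boxed formula on the left. I would check this carefully by noting that the only boxed formulae in the closure are $\K_\agi\neg\K_\agi\neg p$ and $\K_\agi\neg p$, so the premise of any $\mathsf{S5_\agi}$ application contains no bare atom $p$.
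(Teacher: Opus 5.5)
Your strategy is essentially the paper's. The witness $\Rightarrow p \rightarrow \K_\agi\neg\K_\agi\neg p^\u$ is forced by ${\to}\mathsf{R}$ onto the paper's sequent $p^\u \Rightarrow \K_\agi\neg\K_\agi\neg p^\u$. Because no $\C$-formula occurs, any cut-free cyclic proof is a finite derivation with only axiomatic leaves. The only applicable rule there is $\mathsf{S5_\agi}$, and soundness rules out its premise, which is the route the paper's footnote indicates.

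There is a gap, however: your claim that this $\mathsf{S5_\agi}$ step necessarily yields $\Rightarrow \neg\K_\agi\neg p^\u$ is not exhaustive. Sequents are sets, so in the conclusion $\Pi,\K_\agi\Gamma \Rightarrow \K_\agi\varphi^\an,\K_\agi\Delta,\Sigma$ the principal formula $\K_\agi\neg\K_\agi\neg p^\u$ may also be taken to belong to $\K_\agi\Delta$. It is then retained in the premise, giving a second candidate $\Rightarrow \neg\K_\agi\neg p^\u, \K_\agi\neg\K_\agi\neg p^\u$. The paper lists both applications explicitly.

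Your last paragraph only checks that the atom $p$ cannot survive through $\K_\agi\Delta$. It never notices this second premise, so the sentence ``every derivation of the root must pass through this falsifiable sequent'' is false as stated. The repair is immediate. Your one-world model with $p$ false is S5, and it falsifies the second candidate too. There $\neg\K_\agi\neg p$ fails at the unique world, hence so does $\K_\agi\neg\K_\agi\neg p$. With both possible premises invalid, soundness applied to the (finite, axiom-closed) sub-derivation gives the contradiction.

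Two smaller corrections. First, drop the claim that all leaves above $\Rightarrow\neg\K_\agi\neg p^\u$ are non-axiomatic. It is false: ${\to}\mathsf{L}$ on $\neg p^\u$ in $\K_\agi\neg p^\u,\neg p^\u\Rightarrow\bot^\u$ produces the axiom $\K_\agi\neg p^\u,\bot^\u\Rightarrow\bot^\u$. It is also unnecessary, since soundness alone shows a falsifiable sequent has no proof; the same goes for your list of ``reachable sequents''. Second, rules for other agents $\agii$ are not merely useless here but inapplicable, since no $\K_\agii$-formula occurs.
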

\begin{proof}[Proof sketch]
    Observe that the sequent  $\sigma = (p^\u \Rightarrow \K_\agi \neg \K_\agi \neg p^\u)$ contains no formula with a common knowledge operator. Therefore, the focus rules cannot be applied, implying that any (cut-free) proof of $\sigma$ must be a proof where every branch ends in an axiom. Applying rules bottom-up, by inspection of the rules, the only rule applicable to $\sigma$ is $\mathsf{S5_\agi}$. There are two possible such applications. The first application has the sequent $\Rightarrow \neg \K_\agi \neg p^\u$ as premise and the second has the sequent $\Rightarrow \neg \K_\agi \neg p^\u, \K_\agi \neg \K_\agi \neg p^\u$ as premise (this case happens if $\K_\agi \neg \K_\agi \neg p^\u$ in the conclusion is assumed to be contained in $\K_\agi \Delta$; see the definition of the rule $\mathsf{S5_\agi}$). It is routine to check that both sequents cannot be proven.\footnote{For the interested reader who is unwilling to try out the many possible and sensible candidates for a proof, let us remark here that both possible sequents in the premise are invalid. Thus the soundness result presented in the next section will provide an alternative and simpler proof.}\qedhere
\end{proof}

Note that the cut formulae used in the cyclic proof from Example \ref{e: law of excluded middle for C} belong to the negation closure of the root sequent. Similarly, the sequent $p^\u \Rightarrow K_\agi \neg K_\agi \neg p^\u$ is provable by using $\neg K_\agi \neg p^\u$ as cut formula, which also belongs to the negation closure of the root sequent. Following this observation we will show that $\cickss$ is complete when the cut-rule is restricted to \emph{analytic cuts}, i.e. instances of $\mathsf{cut}$ where the cut formula belongs to the negation closure of the root sequent. Therefore we will still obtain analytic completeness for $\cickss$.

\subsection{Soundness}\label{c: IM, section soundness of cim}

We now turn towards proving our cyclic calculi sound.\footnote{The non-wellfounded calculi are only used as a tool to establish completeness of the cyclic calculi. Still, let us remark here that they are also sound, which follows from soundness of the cyclic calculi and {Lemma~\ref{l: non-wellfounded proof implies cyclic proof}}.} Since cyclic proofs may contain non-axiomatic leafs, a more complex argument than an induction on the height of a proof is required to establish soundness.
Intuitively, the cyclic calculus is sound because success of repetitions ensures that along every repetition, \emph{progress} is made in the form of a modal step: as a result, when proving $\C\varphi$ we are essentially proving $\E^n\varphi$ for every $n<\omega$. To prove this formally, we argue by contradiction. We first assign to each invalid sequent with a focused formula a \emph{measure} in the form of a natural number. Then, assuming there is a cyclic proof $\pi$ of an invalid sequent, we show that there must be a successful repetition $(u,v)$ in $\pi$ of an invalid sequent whose measure strictly decreases along the path $(u,v)$ which contradicts the fact that measures are well-defined. The following proof is routine (c.f. Lemma \ref{l: axioms of ICK are valid}).

\begin{lemma}\label{l: local soundness}
 All rules $\mathsf{r}$ of the cyclic calculus $\hickp \in \{\cick, \cickt, \cicks, \cickss\}$ preserve validity (over the respective class of frames $\mathscr{F}$): if $\mathsf{r}$ is of the form
    \begin{equation*}
        \infer[\mathsf{r}]{\sigma}{\sigma_1 & \dotsm & \sigma_n}
    \end{equation*}
    and $\sigma$ is invalid over $\mathscr{F}$, then there is an $i \in [n+1]$ such that $\sigma_i$ is invalid over $\mathscr{F}$.
\end{lemma}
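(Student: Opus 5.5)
We argue rule by rule, in contrapositive form: for each rule we assume a model $\fw$ on a frame from $\mathscr{F}$ and a world $w$ with $\fw, w \not\models \sigma^I$, and we find a premise that is also falsified. First we unpack what it means to falsify a sequent. Since $\sigma^I = \bigwedge \Gamma^- \to \bigvee \Delta^-$, falsity at $w$ gives a world $v \geq w$ at which every formula of $\Gamma^-$ holds and every formula of $\Delta^-$ fails. By Monotonicity (Lemma~\ref{l: monotonicity}) we may therefore always work with a world $v$ that \emph{refutes} $\sigma$ in this pointwise sense. Annotations carry no semantic content, so the focus rules $\mathsf{u}$ and $\mathsf{f}$ are trivial. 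The same holds for the annotation side conditions of the remaining rules.

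The propositional rules are handled exactly as for the multi-conclusion calculus for $\mathsf{IPL}$.
\begin{itemize}
\item $\mathsf{id}$ and $\bot$ cannot be refuted.
\item $\wedge\mathsf{L}$, $\vee\mathsf{R}$ and $\wedge\mathsf{R}$ transfer the refuting world directly to a premise.
\item For $\vee\mathsf{L}$ and ${\to}\mathsf{L}$, the refuting world $v$ refutes one of the premises, chosen by which disjunct holds, resp.\ whether $\varphi$ holds at $v$. For ${\to}\mathsf{L}$, if $v\models\varphi$ then $v\models\psi$ because $v \leq v$.
\item For ${\to}\mathsf{R}$, failure of $\varphi\to\psi$ at $v$ yields $v' \geq v$ with $v'\models\varphi$ and $v'\not\models\psi$. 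By Monotonicity $v'$ still satisfies $\Gamma$, so $v'$ refutes $\Gamma,\varphi\Rightarrow\psi$. Dropping $\Delta$ is harmless here, since we only need the premise to fail somewhere.
\item $\mathsf{cut}$: either $v\models\varphi$, and then $v$ refutes the left premise, or not, and then $v$ refutes the right premise.
\item $\mathsf{\C L}$ and $\mathsf{\C R}$ follow from the fixed-point equivalence, Lemma~\ref{l: valid formulae}(3). If $v\not\models\C\varphi$, then either $v\not\models\varphi$ or $v\not\models\K_\agi\C\varphi$ for some $\agi$, and this picks the premise to refute.
\end{itemize}

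The modal rules need the frame conditions.
\begin{itemize}
\item $\mathsf{\K_\agi}$: from $v\not\models\K_\agi\varphi$ pick $u$ with $v \Rel_\agi u$ and $u\not\models\varphi$. Since $v\models\K_\agi\Gamma$, the world $u$ refutes $\Gamma\Rightarrow\varphi$. The discarded contexts $\Pi,\Sigma$ are irrelevant.
\item $\mathsf{T_\agi}$: reflexivity gives $v\models\varphi$ from $v\models\K_\agi\varphi$.
\item $\mathsf{S4_\agi}$: as for $\mathsf{\K_\agi}$, but transitivity of $R_\agi$ yields $u\models\K_\agi\Gamma$. This is Lemma~\ref{l: valid formulae}(5) applied pointwise.
\item $\mathsf{S5_\agi}$: also check $u\not\models\K_\agi\delta$ for each $\K_\agi\delta\in\K_\agi\Delta$. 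Since $v\not\models\K_\agi\delta$ there is $t$ with $v \Rel_\agi t$ and $t\not\models\delta$. Symmetry and transitivity give $u \Rel_\agi t$, hence $u\not\models\K_\agi\delta$.
\item $\mathsf{\K_\agi{\to}}$: Lemma~\ref{l: classical truth conditions for implication} states that in S5 models $v\not\models\K_\agi\varphi\to\psi$ iff $v\models\K_\agi\varphi$ and $v\not\models\psi$. So the same $v$ refutes $\Gamma,\K_\agi\varphi\Rightarrow\psi,\Delta$ without moving up the order.
\end{itemize}

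The only genuinely delicate points are the $\mathsf{S5_\agi}$ and $\mathsf{\K_\agi{\to}}$ cases. In these the right context is kept in the premise, which would fail intuitionistically. We therefore expect the main care to lie in invoking the S5 conditions together with Lemma~\ref{l: classical truth conditions for implication}, so that no upward move along $\leq$ is needed. Every other case is a routine check.
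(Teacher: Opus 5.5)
Your proposal is correct. It is the routine rule-by-rule semantic check that the paper has in mind: the paper omits the proof as routine, and its later proof of Lemma~\ref{l: global soundness} uses the same transitivity and symmetry arguments for $\mathsf{S4_\agi}$ and $\mathsf{S5_\agi}$. One small correction: getting a pointwise refuting world $v \geq w$ uses only the truth condition for $\rightarrow$ and reflexivity of $\leq$, not Monotonicity, which you actually need only in the ${\to}\mathsf{R}$ case.
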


Let $\sigma$ be a sequent that has a formula in focus, i.e., $\Delta_\sigma$ contains a formula of the form $\K_\agi^j \C \varphi^\f$ for $j \in \{0,1\}$. Denote by $\sigma(n)$ the sequent $\Gamma_\sigma \Rightarrow \Delta_\sigma, \K_\agi^j \E^n \varphi^\u$, i.e., the sequent expanding the right side of $\sigma$ by the formula $\K_\agi^j \E^n \varphi^\u$. 
\begin{lemma}\label{l: invalid sequent with formula in focus}
    If $\sigma$ has a formula in focus and is invalid (over a class of frames $\mathscr{F}$), then there exists a natural number $n$, such that $\sigma(n)$ is invalid over $\mathscr{F}$.
\end{lemma}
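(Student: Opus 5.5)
Fix an invalid sequent $\sigma$ whose focused formula is $\K_\agi^j \C \varphi^\f$ with $j \in \{0,1\}$. Since $\sigma$ is invalid over $\mathscr{F}$, there are a model $\fw$ based on a frame in $\mathscr{F}$ and a world $w$ with $\fw, w \not\models \sigma^I$. By the truth condition for implication, there is $v \in w^\uparrow$ such that $\fw, v \models \bigwedge \Gamma_\sigma^-$ and $\fw, v \not\models \delta$ for every $\delta \in \Delta_\sigma^-$. In particular $\fw, v \not\models \K_\agi^j \C\varphi$. The plan is to extract from this failure a finite bound $n$ with $\fw, v \not\models \K_\agi^j \E^n \varphi$. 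Then $v$ still falsifies every formula on the right of $\sigma(n)$ while satisfying its left side, so $\fw, w \not\models \sigma(n)^I$ and $\sigma(n)$ is invalid over the same class $\mathscr{F}$.

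The key auxiliary fact is semantic: $\fw, u \models \E^n \varphi$ if and only if $\fw, s \models \varphi$ for every $s$ reachable from $u$ by an $R$-path of length exactly $n$. Here $\E^0\varphi = \varphi$ and $\E^{n+1}\varphi = \E\E^n\varphi$. This follows by induction on $n$ directly from the truth conditions of $\K_\agi$ and $\wedge$, since $\E$ is the finite conjunction $\bigwedge_{\agi\in\A}\K_\agi$. Only the forward-looking modal clauses are involved, so no intuitionistic quantification over $u^\uparrow$ enters and the argument is uniform across the frame classes.

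For $j=0$: from $\fw, v \not\models \C\varphi$ there is $s \in R^*(v)$ with $\fw, s\not\models\varphi$. Choose an $R$-path $v = u_0 R u_1 \cdots R u_n = s$ and take $n$ to be its length. The auxiliary fact gives $\fw, v \not\models \E^n\varphi$. For $j=1$: from $\fw, v \not\models \K_\agi \C\varphi$ there is $t$ with $v \Rel_\agi t$ and $\fw, t\not\models\C\varphi$. Applying the previous case at $t$ gives $n$ with $\fw, t\not\models\E^n\varphi$, and hence $\fw, v\not\models \K_\agi\E^n\varphi$.

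The only real subtlety is making sure the falsifying world for the whole sequent is the same world $v$ used for the right-hand formula. Working at the witness $v \in w^\uparrow$ throughout handles this, because $\sigma(n)$ differs from $\sigma$ only by adding a right-hand formula that $v$ also falsifies. The auxiliary induction is routine; I expect no real obstacle beyond stating the path-length characterisation of $\E^n$ cleanly.
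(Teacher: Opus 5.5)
Your proof is correct and follows the paper's argument. You pick a falsifying world, turn the failure of $\K_\agi^j \C \varphi$ into an $R$-path of some finite length $n$, and observe that the same world falsifies $\sigma(n)$. You are a bit more careful than the paper in two places: you make the passage to the witness $v \in w^\uparrow$ explicit, and you state the path-length characterisation of $\E^n$, both of which the paper leaves implicit.
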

\begin{proof}
Let $\mathscr{F}$ be one of the considered classes of frames and suppose $\sigma$ is an invalid sequent with a formula in focus. Then there exists a formula $\K_\agi^j \C \varphi^\f \in \Delta_\sigma$ for $j \in \{0,1\}$ and a pointed $\mathscr{F}$-model $(\fw,w)$ with $\fw,w  \models \bigwedge \Gamma_\sigma^-$ and $\fw,w \not \models \bigvee \Delta_\sigma^-$. So in particular $\fw,w \not \models \K_\agi^j \C \varphi$, implying that there exist worlds $u,v \in \W$ with $w \Rel_\agi^j v$ and $v \Rel^\ast u$ such that $\fw, u \not \models \varphi$. By definition there exists a natural number $n$ such that $v \Rel^n u$ and hence $\fw, w \not \models \K_\agi^j \E^n \varphi$. Thus $\sigma(n)$ is invalid over $\mathscr{F}$.
\end{proof}

As a consequence, every invalid sequent $\sigma$ (over a fixed class of frames $\mathscr{F}$) with a formula in focus can be associated a measure:
\begin{equation*}
    \mu(\sigma) \coloneqq \min \{n < \omega \mid \sigma(n) \text{ is invalid}\}.
\end{equation*}
We do not explicitly display the class of frames $\mathscr{F}$ in the notation of the measure. This will not cause any confusion, as the class of frames we work with will always be clear. We may now prove a strenghtening of Lemma \ref{l: local soundness}.

\begin{lemma}\label{l: global soundness}
    Suppose
    \begin{equation*}
        \infer[\mathsf{r}]{\sigma}{\sigma_1 & \dotsm & \sigma_n}
    \end{equation*}
    is an instance of a rule $\mathsf{r} \in \hickp$ for $\hickp \in \{\cick, \cickt, \cicks, \cickss\}$. If $\sigma$ is invalid (over the respective class of frames $\mathscr{F}$), then there is an $i \in [n+1]$ such that $\sigma_i$ is invalid. 
    If both $\sigma$ and $\sigma_i$ have a formula in focus, then, moreover,
    \[
        \mu(\sigma_i) \leq \mu(\sigma),
    \]
    where the inequality is strict if $\mathsf{r} = \mathsf{\C R}$ and the principal formula is in focus.
\end{lemma}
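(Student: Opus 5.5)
The plan is a case analysis on the rule $\mathsf{r}$, carried out on the augmented sequents $\sigma(n)$ rather than on $\sigma$ itself. The key observation is this: if both $\sigma$ and a premise $\sigma_i$ have a formula in focus, then $\mathsf{r}$ applied to $\sigma(n)$ is again (essentially) an instance of $\mathsf{r}$. Its premises are the $\sigma_j$ with the extra formula $\K_\agi^j\E^n\varphi^\u$ added on the right, possibly transformed. So Lemma~\ref{l: local soundness}, applied to the instance with conclusion $\sigma(\mu(\sigma))$, yields an invalid premise, and that premise is of the form $\sigma_i(m)$ with $m\le\mu(\sigma)$, or $m<\mu(\sigma)$ in the progressing case. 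By minimality of $\mu$ we then get $\mu(\sigma_i)\le m$.

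Concretely, I fix $n=\mu(\sigma)$ and choose a countermodel $(\fw,w)$ of $\sigma(n)$ over $\mathscr{F}$. The case split is as follows.

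\textbf{Rules that leave the focused formula a side formula.} These are the propositional rules, $\mathsf{\C L}$, $\mathsf{T_\agi}$, $\mathsf{\K_\agi{\to}}$, $\mathsf{cut}$, and $\mathsf{\C R}$ or $\mathsf{u}$/$\mathsf{f}$ on an unfocused formula. Here the added formula $\K_\agi^j\E^n\varphi^\u$ is simply another side formula on the right. The only wrinkle is ${\to}\mathsf{R}$, which deletes the right context. However, ${\to}\mathsf{R}$ leaves no focused formula in its premise, so it is excluded from the measure clause and is covered by Lemma~\ref{l: local soundness}. In all other cases the same world $w$ falsifies some premise augmented by the same formula, so that premise $\sigma_i(n)$ is invalid, $\sigma_i$ is invalid, and $\mu(\sigma_i)\le n$. (The focus rules $\mathsf{u}$/$\mathsf{f}$ change the focus; when both sides are focused they do not touch the focused formula.)

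\textbf{Modal rules $\mathsf{\K_\agi}$, $\mathsf{S4_\agi}$, $\mathsf{S5_\agi}$ whose principal $\K_\agi\chi^\an$ is the focused formula.} Here $\chi=\C\varphi$ and $j=1$. Falsity of $\K_\agi\E^n\varphi$ at $w$ gives an $R_\agi$-successor $v$ at which $\E^n\varphi$ fails. I would pick $v$ so that it falsifies both $\C\varphi$ and $\E^n\varphi$, which is possible because $\C\varphi\to\E^n\varphi$ is valid. The boxed antecedents hold at $v$ (and so do $\K_\agi\Gamma$ in the S4/S5 cases, by transitivity; the $\K_\agi\Delta$ formulae are false at $v$ by the S5 conditions, as in Lemma~\ref{c: classical truth conditions for implications}). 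Thus $v$ falsifies $\sigma_1(n)$, where the added formula $\E^n\varphi$ corresponds to $j=0$, and $\mu(\sigma_1)\le n$.

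\textbf{$\mathsf{\C R}$ with principal $\C\varphi^\f$.} By Lemma~\ref{l: proper application of rules}, only the right premises $\Gamma\Rightarrow\K_\agiii\C\varphi^\f,\Delta$ carry focus. If $n=0$, then $\varphi$ fails at $w$, so the left premise is invalid, and the measure clause does not apply to it. If $n>0$, I write $\E^n\varphi=\bigwedge_\agiii\K_\agiii\E^{n-1}\varphi$, so some $\K_\agiii\E^{n-1}\varphi$ fails at $w$. Moreover, some $\K_\agiii\C\varphi$ fails at $w$, and I will choose the agent $\agiii$ to be the same one for both. Then the right premise for $\agiii$, augmented by $\K_\agiii\E^{n-1}\varphi^\u$, is falsified at $w$. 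That augmented premise is exactly $\sigma_i(n-1)$, so $\mu(\sigma_i)\le n-1<n$.

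I expect the main obstacle to be the $\mathsf{\C R}$ case with $n=0$. In that case the argument above only gives invalidity of the unfocused left premise, so I must check that it is legitimate to choose that premise, since the lemma only asks for some invalid premise and the measure clause applies only when both sequents are focused. The other delicate point is ensuring that the chosen premise is the one that carries the focus. This needs a careful matching of the agent and the successor world, which I handle by choosing witnesses for $\E^n\varphi$ rather than for $\C\varphi$.
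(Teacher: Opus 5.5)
Your proposal is correct and follows essentially the same route as the paper: rules where the focused formula is a side formula are handled by applying Lemma~\ref{l: local soundness} to the instance with conclusion $\sigma(\mu(\sigma))$, while the principal cases $\mathsf{\K_\agi}$, $\mathsf{S4_\agi}$, $\mathsf{S5_\agi}$ and $\mathsf{\C R}$ are treated semantically, including the fallback to the unfocused left premise of $\mathsf{\C R}$ when $\mu(\sigma)=0$. Your explicit case list skips one configuration, namely $\mathsf{S5_\agi}$ with the focused formula $\K_\agi\C\psi^\f$ inside $\K_\agi\Delta$; your opening observation covers it, because adding $\K_\agi\E^n\psi^\u$ to $\K_\agi\Delta$ yields an instance of $\mathsf{S5_\agi}$ whose premise is exactly $\sigma_1(n)$.
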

\begin{proof}
    By Lemma \ref{l: local soundness} it suffices to only consider the case where both the conclusion and at least one premise have a formula in focus. We first treat the case that the formula in focus is not principal. Then $\mathsf{r}\notin \{{\to}\R, \mathsf{\K_\agi}, \mathsf{S4_\agi}\}$, as this would contradict the existence of a premise with a focused formula. By inspection of the rules, note that then \emph{every} premise must have a formula in focus, and so the following is a correct rule instance of $\mathsf{r}$.
    \begin{equation*}
        \infer[\mathsf{r}]{\sigma(\mu(\sigma))}{\sigma_1(\mu(\sigma)) & \dotsm & \sigma_n(\mu(\sigma))}
    \end{equation*}
    By Lemma~\ref{l: local soundness}, since $\sigma(\mu(\sigma))$ is invalid, there exists a premise $\sigma_i(\mu(\sigma))$ that is invalid. Hence $\sigma_i$ is invalid and $\mu(\sigma_i) \leq \mu(\sigma)$. \smallskip
    
    Now suppose that the formula in focus is principal in $\mathsf{r}$. Then $\mathsf{r} \in \{\mathsf{\K_\agi}, \mathsf{S4_\agi}, \mathsf{S5_\agi}, \mathsf{\C\mathsf{R}}\}$. We check all cases. \smallskip

    \noindent \textsc{Case for $\mathsf{\K_\agi}$.} Suppose $\mathscr{F}$ is the class of epistemic frames and $\mathsf{r} = \mathsf{\K_\agi}$. Then the conclusion $\sigma$ is of the form $\Pi, \K_\agi \Gamma \Rightarrow \K_\agi \C \varphi^\f, \Sigma$ and the single premise $\sigma_1$ is of the form $\Gamma \Rightarrow \C \varphi^\f$.  By assumption there exists an epistemic model $\fw = (\W, \leq, \rel, \V)$ and a world $w \in \W$ with $\fw, w \not \models \sigma(\mu(\sigma))$. Thus, there exists $v \in w^\uparrow$ with $\fw, v  \models \bigwedge \K_\agi \Gamma^-$ and ${\fw, v \not \models \K_\agi \E^{\mu(\sigma)} \varphi}$. This implies that there exists a world $u \in R_\agi(v)$ with ${\fw, u \not \models \E^{\mu(\sigma)} \varphi}$. Note that $\fw, u \models \bigwedge \Gamma^-$. Therefore $(\fw, u)$ falsifies $\sigma_1(\mu(\sigma))$, implying that $\sigma_1$ is invalid and  $\mu(\sigma_1) \leq \mu(\sigma)$. Note that this also covers the case for the class of reflexive frames. \smallskip

    \noindent \textsc{Case for $\mathsf{S5_\agi}$.} Suppose $\mathscr{F}$ is the class of S5 frames and $\mathsf{r} = \mathsf{S5_\agi}$. Then the conclusion $\sigma$ is of the form:
			\[
			\Pi, \K_\agi \Gamma \Rightarrow \K_\agi \C \psi^\f, \K_\agi \Delta, \Sigma.
			\]
			Since $\sigma$ is invalid, there is an S5 model $\fw = (\W, \leq, \rel, \V)$ and a world $ w \in \W$  such that $\fw, w \not \models \sigma(n)$ where $n= \mu(\sigma)$. Thus there exists $w' \in w^\uparrow$ such that $\fw, w' \models \bigwedge \Gamma_\sigma^-$ and $\fw, w' \not \models \bigvee \Delta_\sigma^- \vee \K_\agi \E^n \psi$. In particular it holds that
			\[
			\fw, w' \not \models \K_\agi \E^{n} \psi.
			\]
			It follows that there is a world $v \in R_\agi(w')$ such that $\fw, v \not \models \E^n \psi$. Clearly this also means that $\fw, v \not \models \C \psi$.  We claim that, in fact,
			\[
			\fw,v \not \models (\K_\agi \Gamma \Rightarrow \C \psi^\f, \E^n \psi^u,  \K_\agi\Delta)^I,
			\]
			which implies that the premise $\sigma_1$ is invalid and that $\mu(\sigma_1) \leq \mu(\sigma)$.
			
			Since $R_\agi$ is transitive, it holds for all $\varphi$ with $\fw, w' \models \K_\agi \varphi$, that  $\fw, v \models \K_\agi \varphi$. Hence $M, v \models \K_\agi \varphi$ for each $\K_\agi \varphi^\u \in \K_\agi \Gamma$. Moreover, suppose that $\K_\agi \gamma^\u \in \K_\agi \Delta$. Then ${\fw, w' \not \models \K_\agi \gamma}$. Thus there is a state $u \in \W$ such that $w' \mathrel{R_\agi} u$ and $\fw, u \not \models \gamma$. By symmetry and transitivity, we get $v \mathrel{R_\agi} u$, whence $\fw, v \not \models \K_\agi \gamma$, as required. \smallskip

            \noindent \textsc{Case for $\mathsf{S4_\agi}$}. Suppose $\mathscr{F}$ is the class of all S4 frames and $\mathsf{r} = \mathsf{S4_\agi}$. Observe that $\mathsf{S4_\agi}$ is a special case of $\mathsf{S5_\agi}$, namely it consists of all instances of $\mathsf{S5_\agi}$ where $\K_\agi \Delta = \emptyset$. In the case above, the symmetry of $R_\agi$ is exclusively used to deal with formulae in $\K_\agi \Delta$. Hence, we obtain the result by following the previous case using a pointed S4 model instead of a pointed S5 model, and skipping the part dealing with $\K_\agi \Delta$. \smallskip

            \noindent \textsc{Case for $\mathsf{\C R}$.} Suppose $\mathscr{F}$ is the class of all epistemic frames and $\mathsf{r} = \mathsf{CR}$. Then the conclusion $\sigma$ is of the form $\Gamma \Rightarrow \C \varphi^\f, \Delta$ with premises $\sigma_0$ given by $\Gamma \Rightarrow \varphi^\u, \Delta$, and $\sigma_i$ for $1 \leq i \leq \lvert \A \rvert$ given by $\Gamma \Rightarrow \K_{\agi_i}\C \varphi^\f, \Delta$, respectively.  By assumption there exists an epistemic model $\fw = (\W, \leq, \rel, \V)$ and a world $w \in \W$ that falsifies $\sigma(\mu(\sigma))$. Hence there exists ${v \in w^\uparrow}$ such that $\fw,v\models \bigwedge \Gamma^-$ and $\fw,v\not\models \C \varphi \lor \E^{\mu(\sigma)} \varphi\lor \bigvee\Delta^{-}$. If $\mu(\sigma) = 0$, then ${\fw,v \not \models \varphi}$, so $(\fw,v)$ falsifies the left premise $\sigma_0$. By Lemma~\ref{l: proper application of rules}, $\sigma_0$ does not have a formula in focus, and so the statement of the lemma holds. If $\mu(\sigma) > 0$, then $\fw,v \not \models \K_{\agi}\E^{\mu(\sigma) - 1} \varphi$ for some $\agi \in \A$. Hence $(\fw,v)$ falsifies $\sigma_{i}(\mu(\sigma)-1)$. So $\sigma_i$ is invalid and we have  $\mu(\sigma_i) < \mu(\sigma)$. Note that this covers all classes of frames we consider.
\end{proof}

\begin{theorem}[Soundness]\label{t: CIM soundness}
   Let $\hickp \in \{\cick, \cickt, \cicks, \cickss\}$. If there is a $\hickp$-proof of a sequent $\sigma$, then $\sigma$ is valid over the class of frames associated with $\hickp$.
\end{theorem}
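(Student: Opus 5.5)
We argue by contradiction, following the strategy announced at the start of this subsection. Suppose $\pi$ is a cyclic $\hickp$-proof of $\sigma$ and $\sigma$ is invalid over the associated class $\mathscr{F}$. We construct a path through $\pi$ consisting only of invalid sequents. Start at the root. At each node, if it is the conclusion of a rule instance, use Lemma~\ref{l: global soundness} to pick an invalid premise. Whenever possible, choose a premise that keeps a formula in focus, so that the measure $\mu$ is guaranteed not to increase.

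Such a path can never end at an axiomatic leaf, since axioms ($\mathsf{id}$, $\bot$) are valid. So whenever the path reaches a leaf $l$, that leaf is non-axiomatic and has a companion $c(l)$ labelled by the same invalid sequent. We then jump back to $c(l)$ and continue the construction from there. Since $\pi$ is finite, this yields an infinite sequence of nodes. It visits some non-axiomatic leaf, and hence the repetition $(c(l),l)$, infinitely often.

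The key point is the measure along one traversal of a successful repetition $(c(l),l)$. Every sequent on the path from $c(l)$ to $l$ has a formula in focus, because the repetition is successful. So whenever our chosen path follows that segment, Lemma~\ref{l: global soundness} gives $\mu$ non-increasing at each step, and at least one step is an application of $\mathsf{\C R}$ with principal formula in focus, where $\mu$ strictly decreases. Since the labels of $c(l)$ and $l$ coincide, we would get $\mu(\text{label}) < \mu(\text{label})$, which is impossible. Alternatively, following the segment infinitely often gives an infinite strictly descending sequence of natural numbers, which is equally impossible.

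The main obstacle is ensuring that the chosen invalid path really follows the successful segment between companion and leaf. Lemma~\ref{l: global soundness} only promises some invalid premise, and at a branching rule such as $\mathsf{\C R}$ or ${\to}\mathsf{L}$ that premise might leave the segment toward a different leaf. I would handle this by not fixing a single repetition in advance. Instead, consider the segment of the constructed path between two consecutive visits to companion/leaf pairs, or better, the final cycle that is repeated infinitely often by pigeonhole on the finite set of nodes.

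Take a node $u$ visited infinitely often and look at the portion of the path between two visits. It is a concatenation of root-ward jumps from leaves to their companions and upward tree paths. Each upward piece from a companion to its leaf is contained in a successful path. The subtlety is that the piece actually followed from a companion $c(l)$ may end at a different leaf $l'$, whose own companion lies elsewhere on the branch to $l'$.

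To resolve this, I would use the tree structure. Among the companions visited infinitely often, take one, $c^*$, of minimal depth. Every subsequent segment stays in the subtree above $c^*$, and every sequent on the branch from $c^*$ to each leaf reached is on a successful path (the relevant companion lies at or below and on the same branch). This ensures that focus persists and that at least one focused $\mathsf{\C R}$ is passed per return, if necessary invoking the measure-monotonicity in the order sketched above. With that bookkeeping in place, the strict decrease of $\mu$ between returns to $c^*$ gives the contradiction, so $\sigma$ is valid.
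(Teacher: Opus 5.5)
Your ``key point'' paragraph is the paper's proof, but the obstacle you then raise does not exist. By the definition of a repetition, there is a path from $c(l)$ to $l$, so every companion is a proper ancestor of its leaf. You never need to jump back. Build the path of invalid sequents from the root using Lemma~\ref{l: global soundness} until it first reaches a leaf $l$; this happens because $\pi$ is finite, and $l$ is non-axiomatic because axioms are valid. This root-to-$l$ path passes through $c(l)$. Its segment from $c(l)$ to $l$ is the unique tree path between them, which is successful. So $\mu$ is non-increasing along it and strictly decreases at the focused $\mathsf{\C R}$, giving $\mu(l)<\mu(c(l))$ for identical labels. A premise chosen at a branching rule cannot ``leave the segment toward a different leaf'': whichever leaf you end at determines the segment, and that segment is automatically on your path.

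The workaround with the minimal-depth companion $c^*$ is not complete as written. You assert that every sequent on the branch from $c^*$ to each reached leaf $l$ lies on a successful path. Your justification, that the companion lies on the same branch, only covers the part from $c(l)$ to $l$, not the part from $c^*$ to $c(l)$. You also assert, rather than show, that a focused $\mathsf{\C R}$ is crossed between consecutive returns to $c^*$, even though jumps may skip parts of the segment. Both claims can be proved, but that takes an argument of the kind in Lemma~\ref{l: infinite looping path has good suffix}, which the paper needs only later for proof search, not for soundness.

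Drop the heuristic ``whenever possible, choose a premise that keeps a formula in focus, so that $\mu$ does not increase.'' The measure bound in Lemma~\ref{l: global soundness} holds for the particular witness premise the lemma produces, not for every invalid focused premise. Over epistemic frames, $\K_\agi p^\u \Rightarrow \C p^\f$ has measure $0$. Its right $\mathsf{\C R}$-premise for agent $\agi$, $\K_\agi p^\u \Rightarrow \K_\agi \C p^\f$, is invalid but has measure $1$, and the lemma's witness there is the unfocused left premise. Your path must follow the lemma's witnesses. On a successful segment these witnesses are automatically focused, and that is exactly what forces the strict decrease.
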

\begin{proof}
    Let $\pi$ be a $\hickp$-proof of $\sigma$ and suppose for contradiction that $\sigma$ is invalid over the respective class of frames $\mathscr{F}$. By repeatedly applying Lemma~\ref{l: global soundness} we obtain a path of invalid sequents
    \[
        \rho = \sigma_1,\sigma_2 \ldots, \sigma_n
    \]
    through $\pi$ such that $\sigma=\sigma_1$ and  $\sigma_n$ is a leaf. As $\sigma_n$ cannot be an axiom and $\pi$ is a proof, there exists $i \in [n]$ such that $(\sigma_i, \sigma_n)$ is a successful repetition. Then the path from $\sigma_i$ to $\sigma_n$ always has a formula in focus and passes through at least one instance of $\mathsf{\C R}$ in which the formula in focus is principal. Hence, by construction, we have $\mu(\sigma_n) < \mu(\sigma_i)$. Since $\sigma_n=\sigma_i$, this contradicts the fact that the measure $\mu$ is well-defined. 
\end{proof}

\subsection{Completeness}\label{c: IM, section completeness of the cyclic calculus cIM}

We now turn towards proving completeness of our calculi. For $\cick$, $\cickt$ and $\cicks$ we proceed in two steps. First, we set up a general framework for proving completeness via proof-search games in Subsection~\ref{ss: proof-search games}, from which we infer completeness with respect to non-wellfounded proofs in Subsection~\ref{ss: completeness for epistemic, reflexive and s4 frames}. Crucially, our framework is robust and can be successfully applied for all three systems, with only minor changes required. Second, we then show how to transform arbitrary non-wellfounded proofs into cyclic proofs in Subsection~\ref{ss: translating non-wellfounded into cyclic proofs}, whence obtaining completeness of our calculi with respect to cyclic proofs as well. For $\cickss$ however, the proof-search strategy does not seem to work due to the presence of the cut rule. Fortunately, using analytic cuts allows us to establish (analytic) completeness with respect to cyclic proofs directly via a canonical model construction, which will be done in Subsection~\ref{ss: completeness for s5 frames}.

\subsubsection{Proof-search games}\label{ss: proof-search games}

Each sequent $\sigma$ is associated with a \emph{proof-search tree} which forms the arena of a two-player game between \emph{Prover}, whose winning strategies establish proofs of $\sigma$, and \emph{Refuter}, whose winning strategies describe countermodels for $\sigma$. 
Completeness then becomes a corollary of determinacy of the game. In the following subsections we refer to the rules $\mathsf{{\rightarrow} R}$, $\mathsf{\K_\agi}$ and $\mathsf{S4_\agi}$ as \emph{non-invertible rules} and to all other rules of $\hickp \in \{\nick, \nickt, \nicks\}$ as \emph{invertible rules}.\footnote{This is a slight abuse of terminology, since $\mathsf{{\rightarrow} L}$ is not invertible. However, when building proof-search trees we will restrict rule instances of `invertible' rules to so-called \emph{preserving} applications (see below) and preserving applications of $\mathsf{{\rightarrow} L}$ are in fact invertible.} Moreover all rules apart from the focus rules are called \emph{logical rules}.

A proof-search tree for $\sigma$ is built by applying rules bottom-up to $\sigma$. The invertible rules are applied first until a \emph{saturated} sequent is obtained. 
\begin{definition}
    Let $\hickp \in \{\nick, \nickt, \nicks\}$. A sequent $\Gamma \Rightarrow \Delta$ is \emph{$\hickp$-saturated} if the following hold.
    \begin{enumerate}
        \item If $\varphi\wedge \psi^\u  \in \Gamma$, then $\varphi^\u  \in \Gamma$ and $\psi^\u  \in \Gamma$.
        \item If $\varphi \vee \psi^\u  \in \Gamma$, then $\varphi^\u  \in \Gamma$ or $\psi^\u  \in \Gamma$.
        \item If $\varphi \rightarrow \psi^\u  \in \Gamma$, then $\varphi^\u  \in \Delta$ or $\psi^\u  \in \Gamma$.
        \item If $\C \varphi^\u  \in \Gamma$, then $\varphi^\u  \in \Gamma$ and $\{\K_\agi \C \varphi^\u \mid \agi \in \A\} \subseteq \Gamma$.
        \item If $\varphi \wedge \psi^\u \in \Delta$, then $\varphi^\u \in \Delta$ or $\psi^\u \in \Delta$.
        \item If $\varphi \vee \psi^\u \in \Delta$, then $\varphi^\u \in \Delta$ and  $\psi^\u \in \Delta$.
        \item If $\C \varphi^\u \in \Delta$, then $\varphi^\u \in \Delta$ or $\K_\agi \C \varphi^\u \in \Delta$ for some $\agi \in \A$.
        \item $\C \varphi^\f \not \in \Delta$ for all formulae $\varphi$.
        \item If $\hickp \in \{\nickt, \nicks\}$ and $\K_\agi \varphi \in \Gamma$ for $\agi \in \A$, then $\varphi \in \Gamma$.
    \end{enumerate}
    Given a sequent $\sigma$, a formula occurring in $\sigma$ is said to be \emph{$\hickp$-saturated} if $\sigma$ satisfies the corresponding clause above for that formula. 
\end{definition}
If $\hickp$ is clear from the context or irrelevant to the argument, we will usually drop it and simply call sequents \emph{saturated}. As we are working with set sequents, formulae can simultaneously function as principal and as side formulae. We call an application of a rule \emph{preserving} if the principal formula(e) also occurs as a side formula. 

\begin{example}
    In the following two applications of $\wedge \mathsf{L}$ to the sequent $\varphi \wedge \psi^\u \Rightarrow \gamma^\u$, the left application is preserving but the right application is not:
\begin{center}
    \begin{tabular}{l l}
       $\infer[\wedge \mathsf{L}]{\varphi \wedge \psi^\u \Rightarrow \gamma^\u}{\varphi \wedge \psi^\u, \varphi^\u, \psi^u \Rightarrow \gamma^\u}$  &  $\infer[\wedge \mathsf{L}]{\varphi \wedge \psi^\u \Rightarrow \gamma^\u}{\varphi^\u, \psi^u \Rightarrow \gamma^\u}$\\
    \end{tabular}
\end{center}
\end{example}

When a rule is applied preservingly to a sequent $\sigma$, we ensure that the premise(s) preserve the information contained in $\sigma$ by preserving all formulae of $\sigma$. Note that applications of the non-invertible rules $\mathsf{\K_\agi}$, $\mathsf{S4_\agi}$ and ${\to}{\mathsf{R}}$ are never preserving. Moreover, applications of $\mathsf{\C \mathsf{R}}$ where the principal formula is in focus are not preserving either (since sequents can only have one formula in focus).

We will now give a general definition of proof-search trees, that will be employed in the following completeness arguments. The particular form of the proof-search tree depends on the kind of countermodel one wants to obtain from a winning strategy of Refuter. In general, proof-search trees are built by first applying invertible rules until a saturated sequent is encountered. Then a non-invertible rule must be applied. Since we do not know which non-invertible rule to apply to which formula, all possible non-invertible rule instances (as well as the focus rules) are merged into a new rule, called the \emph{choice rule}. Depending on the specific form of the choice rule, different kinds of countermodels can be read off from a failed proof-search and so each completeness proof we present will be relative to a suitable `choice' rule $\mathsf{C_n}$. Therefore, the following general definition of proof-search tree is given relative to an arbitrary choice rule.

\begin{definition}\label{d: proof-search tree nIM}
    Let $\hickp \in \{\cick, \cickt, \cicks\}$ and fix some inference rule $\mathsf{C_n}$ and a sequent $\sigma$. A \emph{$\hickp$-proof-search tree (with choice rule $\mathsf{C_n}$)} for $\sigma$ consists of a finite or countably infinite tree $\mathcal{T}$ whose nodes are labeled by sequents according to $\mathsf{C_n}$  and the invertible logical rules of $\hickp$ such that the following hold.
    \begin{enumerate}
        \item The root is labeled by $\Gamma_\sigma\Rightarrow \Delta_\sigma$.
        \item Every invertible logical rule is applied preservingly, with the exception of $\mathsf{\C \mathsf{R}}$ if the principal formula is in focus.
        \item No invertible logical rule is applied to a sequent in which the principal formula is already $\hickp$-saturated.
         \item A node is a leaf if and only if it is labeled by an axiom or by a $\hickp$-saturated sequent to which the $\mathsf{C_n}$-rule cannot be applied.
         \item The rule $\mathsf{C_n}$ is only applied to $\hickp$-saturated sequents.
    \end{enumerate}
\end{definition}

 Let $\hickp \in \{\nick, \nickt, \nicks\}$ and let $\sigma$ be a sequent and let $\Gamma_\sigma^{ns} \subseteq \Gamma_\sigma$ and $\Delta_\sigma^{ns} \subseteq \Delta_\sigma$ be the sets of formulae occurring on the left side and right side of $\sigma$, respectively, which are \emph{not} $\hickp$-saturated and are not of the form $\K_\agi \varphi$. Recall the definition of complexity of a formula or a set of formulae (c.f. Definition \ref{d: complexity}).

\begin{lemma}\label{l: proof-search trees exist}
   Every sequent $\sigma$ has a $\hickp$-proof-search tree. 
\end{lemma}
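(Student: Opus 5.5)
We build the tree $\mathcal{T}$ top-down (root first) by co-recursion. At each node we apply a fixed deterministic procedure. If the node is labeled by an axiom, it is a leaf. Otherwise, if the sequent is not $\hickp$-saturated, we pick some formula that is not saturated and apply the corresponding invertible logical rule to it. The application is preserving, except for $\mathsf{\C R}$ when the principal formula is in focus. Concretely, the offending formula and its rule are as follows:
\begin{itemize}
\item $\varphi\wedge\psi^\u$ on the left, with $\wedge\mathsf{L}$;
\item $\varphi\vee\psi^\u$ on the left, with $\vee\mathsf{L}$;
\item $\varphi\to\psi^\u$ on the left, with ${\to}\mathsf{L}$;
\item $\C\varphi^\u$ on the left, with $\mathsf{\C L}$;
\item $\K_\agi\varphi^\u$ on the left, with $\mathsf{T_\agi}$ (for $\nickt,\nicks$);
\item a right conjunction, disjunction or unfocused $\C$, with $\wedge\mathsf{R}$, $\vee\mathsf{R}$, $\mathsf{\C R}$;
\item a focused $\C\varphi^\f$ on the right, with $\mathsf{\C R}$ applied non-preservingly.
\end{itemize}
Conditions 2 and 3 of Definition~\ref{d: proof-search tree nIM} hold by construction. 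If the sequent is saturated, we apply $\mathsf{C_n}$ whenever it is applicable, and otherwise declare the node a leaf. This gives conditions 4 and 5. It remains to check that the preserving applications make sense, that is, their premises are well-formed sequents. The only delicate point is focus, and for focused $\C\varphi^\f$ Lemma~\ref{l: proper application of rules} guarantees that the premises of $\mathsf{\C R}$ are sequents. The only thing left is that the saturation phase between two applications of $\mathsf{C_n}$ terminates, so that every branch either reaches a saturated sequent or an axiom, or passes through infinitely many applications of $\mathsf{C_n}$. This termination is the main point.

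For termination we use the measure suggested just before the lemma, namely $c(\Gamma^{ns}_\sigma)+c(\Delta^{ns}_\sigma)$, perhaps combined lexicographically with the number of unsaturated formulae. A preserving application of an invertible rule makes its principal formula saturated, so that formula leaves $\Gamma^{ns}\cup\Delta^{ns}$. The residuals it introduces have strictly smaller complexity, or they are of the form $\K_\agi\C\varphi$, which are excluded from the measure. No formula already present becomes unsaturated, since formulae are only added. For the focused $\mathsf{\C R}$ case, the focused $\C\varphi^\f$ is removed and replaced by $\varphi^\u$ or by $\K_\agi\C\varphi^\f$, so the measure again drops. For $\mathsf{T_\agi}$, $\K_\agi\varphi$ is not counted but yields $\varphi$, whose complexity is smaller. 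I expect the obstacle to be exactly this bookkeeping. A newly added residual may itself be unsaturated and have complexity close to that of its principal formula, as with $\mathsf{\C L}$ producing $\K_\agi\C\varphi$. This is why $\K_\agi$-formulae are excluded from the measure, and why I would instead use the multiset ordering on complexities of unsaturated non-$\K$ formulae, which is well-founded. Under that ordering every invertible step strictly decreases the multiset.

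Hence from every node, after finitely many invertible steps, we reach an axiom or a saturated sequent. There we either apply $\mathsf{C_n}$ or stop, and the premises of $\mathsf{C_n}$ restart the procedure. The resulting tree is finite branching and at most countable, since all sequents come from the finite set $\Cl(\sigma)$ with annotations. It is therefore a $\hickp$-proof-search tree for $\sigma$.
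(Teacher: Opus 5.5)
\textbf{The gap: your termination measure fails for $\mathsf{T_\agi}$.} Your construction matches the paper's: saturate by preserving invertible rules (focused $\mathsf{\C R}$ excepted), then apply $\mathsf{C_n}$ or declare a leaf, and repeat co-recursively. Your measure $c(\Gamma^{ns}_\sigma)+c(\Delta^{ns}_\sigma)$ is also exactly the one in the paper's sketch. However, the termination step you call the main point does not go through for $\mathsf{T_\agi}$ in $\nickt$ and $\nicks$, and the paper's ``inspection of the rules'' has the same hole.

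Because $\K_\agi\varphi$ is excluded from the measure, a $\mathsf{T_\agi}$-step removes nothing from it. Meanwhile the residual $\varphi^\u$ may be unsaturated, and then it is counted. For example, $\K_\agi(p\wedge q)^\u\Rightarrow$ has measure $0$, but its premise $\K_\agi(p\wedge q)^\u,(p\wedge q)^\u\Rightarrow$ has measure $1$. Your remark that $\varphi$ has smaller complexity compares it with a formula that was never counted. The multiset variant breaks in the same way, since one element is added and none removed. So the claim that every invertible step strictly decreases the multiset is false. (Strictly, bare existence of the tree only needs some rule to be applicable at every non-leaf node. But termination is what your construction asserts, and what the next lemma on infinite branches uses.)

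\textbf{The fix: drop the exclusion of $\K$-formulae.} The exclusion was never needed. In a preserving $\mathsf{\C L}$-step the new $\K_\agi\C\varphi^\u$ is already saturated. In $\nick$, left $\K$-formulae have no saturation clause at all. In $\nickt$ and $\nicks$, clause 9 holds because $\C\varphi^\u$ stays on the left. Right-hand $\K$-formulae are always saturated. So take $\sum c(\chi)$ over all unsaturated annotated formulae of the sequent. The cases then go as follows:
\begin{itemize}
\item $\mathsf{T_\agi}$ removes $c(\K_\agi\varphi)$ and adds at most $c(\varphi)$, so the measure drops.
\item The other invertible rules go through as you describe.
\item Deleting $\C\varphi^\f$ in a focused $\mathsf{\C R}$-step cannot unsaturate any other formula, because the saturation clauses only demand the presence of unfocused formulae.
\end{itemize}

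\textbf{A simpler alternative.} A preserving step on an unsaturated principal formula always adds a missing annotated formula, drawn from the finite set generated by $\Cl(\sigma)$. The only non-monotone step is focused $\mathsf{\C R}$, and it occurs at most once per saturation phase, since no invertible rule creates a focused $\C$-formula. Hence every saturation phase has bounded length.
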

\begin{proof}[Proof sketch.]
    Let $\sigma$ be a sequent and suppose that $\sigma_1$ is a premise of some preserving rule application of an invertible logical rule to $\sigma$ where the principal formula is not saturated or a premise of a rule instance of $\mathsf{\C R}$ with the principal formula in focus. An inspection of the rules yields that 
    \begin{equation}\label{e: proof-search tree exists}
        c(\Gamma_\sigma^{ns}) + c(\Delta_\sigma^{ns}) > c(\Gamma_{\sigma_1}^{ns}) + c(\Delta_{\sigma_1}^{ns}).
    \end{equation}
    In particular if $\sigma$ is $\Gamma, \C \varphi^\u \Rightarrow \Delta$ and the rule applied is $\mathsf{\C L}$ with $\C \varphi^\u$ principal, then the premise $\sigma_1$ is $\Gamma, \C \varphi^\u, \varphi^\u, \K_\agi \C \varphi^\u \Rightarrow \Delta$. Note that $c(\varphi) < c(\C \varphi)$ and $\C \varphi, \K_\agi \C \varphi \not \in \Gamma_{\sigma_1}^{ns}$, implying that (\ref{e: proof-search tree exists}) holds. By induction on $c(\Gamma_\sigma^{ns}) + c(\Delta_\sigma^{ns})$ we can therefore prove that preservingly applying invertible logical rules bottom-up to non-saturated formulae starting at $\sigma$ results in a finite tree where each leaf is either an axiom or saturated. 
    
    Leafs that are axioms or saturated sequents which are not conclusions of a $\mathsf{C_n}$-rule instance are closed. To any other leaf we apply the $\mathsf{C_n}$-rule bottom-up. We then co-recursively repeat this procedure for each open leaf, resulting in the construction of a proof-search tree.
\end{proof}
 
  As a corollary of this construction we obtain the following result.
  
\begin{lemma}\label{lemma every inf branch inf many C}
    Every infinite branch of a proof-search tree contains infinitely many applications of $\mathsf{C_n}$. 
\end{lemma}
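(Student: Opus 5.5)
We argue by contradiction, using the termination measure from the proof of Lemma~\ref{l: proof-search trees exist}. For a sequent $\tau$, let
\[
m(\tau) \coloneqq c(\Gamma_\tau^{ns}) + c(\Delta_\tau^{ns}).
\]
Suppose $\rho$ is an infinite branch of a proof-search tree $\mathcal{T}$ that contains only finitely many applications of $\mathsf{C_n}$. Then some suffix $\rho'$ of $\rho$ passes through no application of $\mathsf{C_n}$ at all. By Definition~\ref{d: proof-search tree nIM}, the only rules labelling $\mathcal{T}$ are $\mathsf{C_n}$ and the invertible logical rules. Hence every step along $\rho'$ is an invertible logical rule, applied in one of two ways: preservingly to a principal formula that is not yet saturated (clause 3 of the definition), or as an application of $\mathsf{\C R}$ with the principal formula in focus.

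The key step is to invoke inequality (\ref{e: proof-search tree exists}). It says that for each such step, from $\tau$ to a premise $\tau_1$, the measure strictly decreases: $m(\tau_1) < m(\tau)$. Since $m$ takes values in the natural numbers, along $\rho'$ we would obtain an infinite strictly descending sequence $m(\rho'(0)) > m(\rho'(1)) > \cdots$. This is impossible. Therefore $\rho$ must contain infinitely many applications of $\mathsf{C_n}$.

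The only point needing care is that (\ref{e: proof-search tree exists}) really applies to every step in $\rho'$. In particular, one must check the focused $\mathsf{\C R}$ step, whose premise $\Gamma \Rightarrow \K_\agi \C \varphi^\f, \Delta$ loses $\C\varphi^\f$. Here $\K_\agi \C\varphi$ is excluded from the $ns$-sets by definition, so the measure drops. One must also check the $\mathsf{T_\agi}$ case in $\nickt$ and $\nicks$. There the premise adds $\varphi$ while $\K_\agi\varphi$ is not counted, but $\K_\agi\varphi$ becomes saturated and the newly added $\varphi$ has smaller complexity, so the descent still holds. I expect this case check, essentially that the proof sketch of Lemma~\ref{l: proof-search trees exist} covers all invertible rules, to be the only real obstacle. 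In fact the finite-tree argument there already yields the claim directly: between consecutive $\mathsf{C_n}$ applications only finitely many invertible steps can occur.
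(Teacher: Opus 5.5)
Your strategy is the paper's: the lemma is read off from the termination argument in the sketch of Lemma~\ref{l: proof-search trees exist}, where the measure $c(\Gamma^{ns})+c(\Delta^{ns})$ strictly decreases at every invertible step, so every $\mathsf{C_n}$-free segment of a branch is finite. The gap is in the case you single out as the real obstacle, and your justification for it does not work. The sets $\Gamma^{ns}$ and $\Delta^{ns}$ exclude \emph{all} formulae of the form $\K_\agi\varphi$. So the principal formula $\K_\agi\varphi$ of a $\mathsf{T_\agi}$ step contributes nothing to the measure, either before or after the step, and its becoming saturated gives no decrease. The new residual $\varphi^\u$, however, may contribute $c(\varphi)$. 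For instance, in $\nickt$ the sequent $\K_\agi(p\wedge q)^\u \Rightarrow r^\u$ has measure $0$. The premise of its forced preserving $\mathsf{T_\agi}$ step, $\K_\agi(p\wedge q)^\u, (p\wedge q)^\u \Rightarrow r^\u$, has measure $c(p\wedge q)=1$. Hence inequality~(\ref{e: proof-search tree exists}), with the measure as defined there, fails for $\mathsf{T_\agi}$. Citing it, or the finite-tree argument built on it, does not establish your descent for $\nickt$ and $\nicks$.

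The fix is to drop the exclusion of $\K_\agi$-formulae and take the total complexity of all non-saturated formulae as the measure. The exclusion is not actually needed where it seems to be used:
\begin{itemize}
\item In the preserving premise of $\mathsf{\C L}$, every $\K_\agi\C\varphi^\u$ is already saturated. There is no clause for left $\K$-formulae in $\nick$, and in $\nickt$, $\nicks$ clause 9 holds because $\C\varphi^\u$ stays on the left.
\item The $\K_\agi\C\varphi^\f$ produced by a focused $\mathsf{\C R}$ lies on the right, where no saturation clause applies.
\end{itemize}
With the corrected measure, the descent holds at every step:
\begin{itemize}
\item Every preserving step makes its principal formula saturated in each premise and adds residuals of strictly smaller total complexity. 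For $\mathsf{T_\agi}$, $c(\varphi)+1$ is removed and at most $c(\varphi)$ is added.
\item Formulae saturated before stay saturated. All clauses other than 8 are preserved by adding formulae, no invertible step adds a focused $\C$-formula, and removing $\C\varphi^\f$ destroys no witness because all witnesses are unfocused.
\item A focused $\mathsf{\C R}$ removes $\C\varphi^\f$, of complexity $c(\varphi)+1$, and adds at most $\varphi^\u$.
\end{itemize}
Your descent argument then goes through verbatim. Alternatively, each preserving step adds a new annotated formula from the finite set $\Cl(\sigma)\times\{\u,\f\}$. The only non-preserving step, a focused $\mathsf{\C R}$, can occur at most once between two $\mathsf{C_n}$ applications: afterwards the focus is on a $\K_\agi\C$-formula or gone, and only $\mathsf{C_n}$ creates focus.
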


Next, we define \emph{proof-search games}, which are played by two players called Prover and Refuter on a given proof-search tree $\mathcal{T}$.

\begin{definition}\label{d: proof-search tree for iltln}
    Let $\hickp \in \{\cick, \cickt, \cicks\}$, $\sigma$ be a sequent and $\mathcal{T}$ a $\hickp$-proof-search tree for $\sigma$ with choice rule $\mathsf{C_n}$. The \emph{proof-search game} $\mathcal{G}_\hickp(\mathcal{T}, \mathsf{C_n})$ is played by two players called \emph{Prover} and \emph{Refuter}. The \emph{arena} is the proof-search tree $\mathcal{T}$, where each \emph{position} is a node of $\mathcal{T}$. Prover \emph{owns} every position $t \in \mathcal{T}$ which is labeled by the conclusion of a $\mathsf{C_n}$-rule instance. Refuter owns every other position. If the game is in position $t \in \mathcal{T}$ owned by Prover/Refuter, then the \emph{admissible moves} for Prover/Refuter are the children of $t$, i.e. Prover/Refuter moves by chosing a child of $t$ (if one exists). A \emph{play} is a sequence of positions $(t_i)_i$ such that $t_0$ is the root of $\mathcal{T}$ and any two consecutive positions are related by an admissible move. A play is either finite and ends in a leaf of $\mathcal{T}$ or infinite. Note that every play is a branch of $\mathcal{T}$. The \emph{winning conditions} are as follows.
    \begin{enumerate}
        \item Prover wins a play $(t_i)_i$ if the play is finite and ends in an axiom or if it is infinite and $(t_i)_i$ has a good suffix.
        \item Refuter wins a play $(t_i)_i$ if the play is finite and ends in a non-axiomatic sequent or infinite and does not have a good suffix.
    \end{enumerate}
\end{definition}

 We will usually identify plays in $\mathcal{G}_\hickp(\mathcal{T}, \mathsf{C_n})$ and branches of $\mathcal{T}$ without explicit mention. Given a partial function $f$, we write $dom(f)$ for the \emph{domain} and $ran(f)$ for the \emph{range} of $f$. The following definitions are given for both players Prover and Refuter. We simply write Player instead.

\begin{definition}\label{d: strategy}
    Given a proof-search game $\mathcal{G}_\hickp(\mathcal{T}, \mathsf{C_n})$, a \emph{strategy} for Player is a partial function $f : \mathcal{T} \longrightarrow \mathcal{T}$ such that for any node $u$ of $\mathcal{T}$ if $u$ is not owned by Player or if $u$ is a leaf, then  $u \not \in dom(f)$ and if $u$ is owned by Player and not a leaf, then $u \in dom(f)$ where $f(u)$ is a child node of $u$.
\end{definition}

A strategy thus tells Player how to move in every owned position. Note that each node $u \in \mathcal{T}$ is unique, implying that strategies as defined here are aware of the history of the game. Player \emph{uses strategy $f$} if whenever the current play is in position $u$ owned by Player and $u \in dom(f)$, then Player moves to $f(u)$.

\begin{definition}\label{d: winning strategy}
    Let $\mathcal{G}_\hickp(\mathcal{T}, \mathsf{C_n})$ be a proof-search game and $f$ a strategy for Player. The strategy $f$ is \emph{winning} if Player wins every play in which $f$ is used.
\end{definition}

Given a game $\mathcal{G}_\hickp(\mathcal{T}, \mathsf{C_n})$ and a strategy $f$ for Player, the \emph{strategy tree} of $f$ is the subtree of $\mathcal{T}$ which consists of all plays that can occur when Player uses $f$. The formal definition is as follows.

\begin{definition}\label{d: strategy tree}
    The \emph{strategy tree} $\mathcal{T}_f$ of a strategy $f$ for Player in $\mathcal{G}_\hickp(\mathcal{T}, \mathsf{C_n})$ is the subtree of $\mathcal{T}$ defined by
    \begin{enumerate}
        \item $\mathcal{T}_f$ contains the root of $\mathcal{T}$.
        \item If $u \in \mathcal{T}$ belongs to $\mathcal{T}_f$ and $u \in dom(f)$, then for any child $v$ of $u$ in $\mathcal{T}$ holds that $v \in \mathcal{T}_f$ if and only if $v=f(u)$.
        \item If $u \in \mathcal{T}$ belongs to $\mathcal{T}_f$ and $u \not \in dom(f)$, then all children $v$ of $u$ in $\mathcal{T}$ belong to $\mathcal{T}_f$.
    \end{enumerate}
\end{definition}


\begin{definition}\label{def refutation}
    Let $\sigma$ be a sequent and $\mathcal{T}$ a $\hickp$-proof-search tree for $\sigma$. A \emph{refutation} of $\sigma$ is a subtree $\mathscr{R}$ of $\mathcal{T}$ satisfying the following properties.
    \begin{enumerate}
        \item $\mathscr{R}$ contains the root of $\mathcal{T}$.
        \item No leaf of $\mathscr{R}$ is an axiom. 
        \item No infinite branch of $\mathscr{R}$ has a good suffix.
        \item If $\mathscr{R}$ contains a node $u$ that is labeled by the conclusion of a $\mathsf{C_n}$-application, then $\mathscr{R}$ contains all children of $u$ in $\mathcal{T}$.
        \item If $\mathscr{R}$ contains a node $u$ that is labeled by the conclusion of any other rule than $\mathsf{C_n}$, $\mathsf{id}$ or $\bot$, then $\mathscr{R}$ contains exactly one child of $u$ in $\mathcal{T}$.
    \end{enumerate}
\end{definition}

It is immediate to see from the definition of the winning conditions of Refuter that the strategy trees of winning strategies for Refuter are refutations.

\begin{lemma}\label{l: winning strategy for refuter is refutation IM}
    Let $f$ be a winning strategy for Refuter in the game $\mathcal{G}_\hickp(\mathcal{T}, \mathsf{C_n})$. Then $\mathcal{T}_f$ is a refutation.
\end{lemma}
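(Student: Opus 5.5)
We verify the five clauses of Definition~\ref{def refutation} one at a time for $\mathcal{T}_f$, using Definition~\ref{d: strategy tree} together with the ownership convention of the game $\mathcal{G}_\hickp(\mathcal{T}, \mathsf{C_n})$: Prover owns exactly the conclusions of $\mathsf{C_n}$-instances and Refuter owns all other positions.

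Clause 1 holds by the first clause of Definition~\ref{d: strategy tree}. For clause 4, let $u \in \mathcal{T}_f$ be labelled by the conclusion of a $\mathsf{C_n}$-application. Then $u$ is owned by Prover, so $u \notin dom(f)$ (a strategy for Refuter is only defined on Refuter's positions). By the third clause of Definition~\ref{d: strategy tree}, all children of $u$ in $\mathcal{T}$ lie in $\mathcal{T}_f$. For clause 5, let $u \in \mathcal{T}_f$ be labelled by the conclusion of another rule that is not $\mathsf{id}$ or $\bot$. Then $u$ is owned by Refuter and is not a leaf, since leaves of $\mathcal{T}$ are either axioms or saturated sequents where nothing is applied. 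Hence $u \in dom(f)$, and by the second clause of Definition~\ref{d: strategy tree} exactly one child, namely $f(u)$, belongs to $\mathcal{T}_f$.

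For clauses 2 and 3, the key observation is that every branch of $\mathcal{T}_f$ is a play in which Refuter uses $f$. To see this, take a branch $(t_i)_i$ of $\mathcal{T}_f$ starting at the root. At each step, $t_{i+1}$ is a child of $t_i$ in $\mathcal{T}_f$. When $t_i \in dom(f)$, this child is forced to be $f(t_i)$, so every move is admissible and conforms to $f$. A branch of $\mathcal{T}_f$ also ends exactly where a play ends. Indeed, if $t_i$ has children in $\mathcal{T}$ then it has at least one child in $\mathcal{T}_f$: either all of them, or $f(t_i)$. So the leaves of $\mathcal{T}_f$ are exactly leaves of $\mathcal{T}$ reached along the branch.

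Since $f$ is winning, Refuter wins every such play. A finite branch therefore ends in a non-axiomatic sequent, which gives clause 2. An infinite branch has no good suffix, which gives clause 3.

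The only mildly delicate point is this identification of maximal branches of $\mathcal{T}_f$ with complete plays consistent with $f$. In particular we need that $\mathcal{T}_f$ has no extra leaves at nodes which are internal in $\mathcal{T}$; this is exactly the nonemptiness of children argued above.
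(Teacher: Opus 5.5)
Your proposal is correct and takes the same route as the paper. The paper just asserts that the lemma is immediate from Refuter's winning conditions; you spell out the clause-by-clause check. You also handle correctly the one point worth making explicit: $\mathcal{T}_f$ acquires no leaves at internal nodes of $\mathcal{T}$, so its maximal branches are exactly the complete plays in which Refuter uses $f$.
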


In the next subsection we will show that when using specific choice rules $\mathsf{C_n}$, refutations of $\sigma$ correspond to countermodels for $\sigma$, while winning strategies for Prover correspond to non-wellfounded proofs of $\sigma$. For showing that refutations correspond to countermodels, we will make use of the following `skeleton' model construction. For this construction, we assume that the premises of the choice rule $\mathsf{C_n}$ have been partitioned into $\lvert \A \rvert + 1$ groups: the first group being the \emph{intuitionistic premises} and the remaining $\lvert \A \rvert$ groups being the \emph{modal premises of agent $\agi$} for each $\agi \in \A$.

\begin{definition}\label{d: canonical model for nIM}
    Let $\sigma$ be a sequent, $\mathcal{T}$ a $\hickp$-proof-search tree for $\sigma$ and $\mathscr{R}$ a refutation of $\sigma$. The \emph{skeleton model} based on $\mathscr{R}$ is the structure $\fw_\mathscr{R}=(\W, \leq, \rel, \V)$ defined as follows.
    \begin{enumerate}
        \item $\W = \sfrac{\mathscr{R}}{\sim}$, where $s \sim t$ iff there exists a path between $s$ and $t$ in $\mathscr{R}$ in which no instance of the $\mathsf{C_n}$-rule occurs.

        \item $\leq$ is the reflexive, transitive closure of the relation $ {\leq_0} \subseteq \W \times \W$ given by
        \begin{equation*}
        \begin{split}
            w \leq_0 v \text{ iff } &\text{there exist } s \in w \text{ and } t \in v \text{ such that } s \text{ is the conclusion and }\\ &t \text{ an intuitionistic premise}
            \text{ of the same } \mathsf{C_n}\text{-rule instance.}
         \end{split}
        \end{equation*}

        \item $\rel = \{R_\agi \mid \agi \in \A\}$ where $R_\agi\subseteq \W\times \W$ is such that
        \[
        \begin{split}
            w \mathrel{R_\agi} v \text{ iff } &\text{there exist } s \in w \text{ and } t \in v \text{ such that } s \text{ is the conclusion and }\\ &t \text{ is a modal} \text{ premise of agent $\agi$ of the same } \mathsf{C_n}\text{-rule instance.}
         \end{split}
        \]

        \item $\V \colon \W \longrightarrow \mathcal{P}(\Prop)$ is defined by  $V(w) = \Gamma_w \cap \Prop$ where $\Gamma_w$ is the left side of the sequent labeling the unique node in \( w \) that is the conclusion of a $\mathsf{C_n}$-rule application or a leaf of $\mathscr{R}$.
    \end{enumerate}

\end{definition}
\noindent The construction of $\fw_\mathscr{R}$ reflects the idea that applying invertible rules to sequents in $\mathscr{R}$ provides more information about the current `world', while applying non-invertible rules (captured by the $\mathsf{C_n}$-rule) corresponds to taking either a modal or an intuitionistic step. Next, we introduce the following operations on birelational models.

\begin{definition}
    Let $\fw=(\W, \leq, \rel, \V)$ be a birelational model.
    \begin{enumerate}
        \item The \emph{triangle closure} $\hat{\fw}$ of $\fw$ is defined as $\hat{\fw} \coloneqq (\W, \leq, \{(\R_\agi \circ \leq)\}_{\R_\agi \in \rel}, \V)$ where $\circ$ denotes relation composition.
        \item The \emph{reflexive closure} $r(\fw)$ of $\fw$ is defined as 
        \begin{equation*}
            {r(\fw) \coloneqq (\W, \leq, \{\R_\agi \cup \{(w,w) \mid w \in \W\}\}_{\R_\agi \in \rel}, \V)}.
        \end{equation*}
        \item The \emph{transitive closure} $r(\fw)$ of $\fw$ is defined as 
        \begin{equation*}
            {r(\fw) \coloneqq (\W, \leq, \{\R_\agi \cup \{(w,u) \mid (w,v), (v,u) \in \R_\agi\}\}_{\R_\agi \in \rel}, \V)}.
        \end{equation*}
    \end{enumerate}
\end{definition}

The following lemma is then easy to check. The proof is omitted.

\begin{lemma}\label{l: model operations}
    If $\fw$ is a birelatonal model, then $\hat{\fw}$ is an epistemic model. Moreover, $r(\hat{\fw})$ is a reflexive model and $t(r(\hat{\fw}))$ is an S4 model.
\end{lemma}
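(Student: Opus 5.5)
The plan is to verify the three claims directly from the definitions, in the order stated, by checking each frame condition pointwise. Throughout I read the composition in the triangle closure as in Subsection~\ref{ss: expressivity}: $w \mathrel{(R_\agi\circ\leq)} u$ iff there is $v$ with $w \leq v$ and $v \Rel_\agi u$. In the other order the first claim is already doubtful. The underlying poset $(\W,\leq)$ and the valuation $\V$ are never changed by any of the three operations. Hence $\leq$ stays a partial order and $\V$ stays monotone, and only the conditions on the modal relations need checking.

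For $\hat{\fw}$, write $\hat R_\agi := R_\agi\circ\leq$ and check triangle confluence. Suppose $w \leq v$ and $v \mathrel{\hat R_\agi} u$. Then $v \leq v' \Rel_\agi u$ for some $v'$. Transitivity of $\leq$ gives $w \leq v'$, hence $w \mathrel{\hat R_\agi} u$, so $\hat{\fw}$ is epistemic. For $r(\hat{\fw})$ the relations are $R'_\agi := \hat R_\agi \cup \mathrm{Id}$, so reflexivity is immediate. For triangle confluence, a pair $v \mathrel{\hat R_\agi} u$ is handled as above. The new case is $u=v$, where $w \leq v$ and one needs $w \mathrel{R'_\agi} v$. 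This holds if $w=v$, or if $v$ is reachable by $\leq;R_\agi$ from $w$, but nothing in the definition forces it in general.

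For $t(r(\hat{\fw}))$ the relations are $R''_\agi := R'_\agi \cup (R'_\agi;R'_\agi)$. Reflexivity is inherited, and I would check confluence by splitting a two-step pair $v \mathrel{R'_\agi} x \mathrel{R'_\agi} u$ into the four cases for which of the two steps are identity steps. Transitivity is a further issue, because $R''_\agi$ only adds compositions of length two. A chain $v R''_\agi x R''_\agi u$ can have length four in $R'_\agi$, so it need not lie in $R''_\agi$.

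These two gaps are the step I expect to be the main obstacle. I would first try to derive $w \mathrel{R'_\agi} v$ from $w\le v$ and to close chains of length four. If that fails, I would record a small counterexample. For confluence, take two worlds $w<v$ with all $R_\agi=\emptyset$: then $r(\hat{\fw})$ contains $(v,v)$ but not $(w,v)$. For transitivity, take a single agent, $\leq$ the identity, and $R_\agi$ a path of length four: then $R''_\agi$ reaches only two steps. This would show that the last two claims hold only under the intended reading of the operations as closures. For reflexivity that means closing $R_\agi\cup\mathrm{Id}$ under triangle confluence, which gives $\hat R_\agi\cup\leq$. For transitivity it means taking the full transitive closure. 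Under that reading each closure step preserves the properties already obtained. Triangle-closing a reflexive relation keeps it reflexive. The transitive closure of a reflexive, triangle-confluent relation is again triangle confluent, because confluence only needs to be pushed through the first step of a chain. I would then check that the later uses of this lemma need only that reading.
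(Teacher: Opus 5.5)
Your check of the first claim is correct. Your reading of $R_\agi\circ\leq$ (first $\leq$, then $R_\agi$) is the one fixed by the footnote in Subsection~\ref{ss: expressivity}, and it is the only reading under which the claim holds.

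Your counterexamples to the other two claims are also correct. Take $r$ and $t$ literally as the paper defines them: adjoin only the diagonal, respectively only two-step compositions.
\begin{itemize}
\item $r(\hat{\fw})$ already fails triangle confluence for $w<v$ with all $R_\agi=\emptyset$.
\item $t(r(\hat{\fw}))$ already fails transitivity with $\leq$ the identity and $R_\agi$ a path of length three.
\end{itemize}
The paper omits the proof as ``easy to check,'' so nothing in it gets around these examples. The second and third claims are false as stated. You therefore need not first attempt to derive $w\mathrel{R'_\agi}v$ from $w\leq v$, or to close longer chains. Your examples settle the matter.

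Your repaired reading is the right one. It is also exactly what the lemma's only use, Proposition~\ref{p: refutation gives triangle model}, requires. For $\nickt$ take $\hat{R}_\agi\cup{\leq}$, which is the triangle closure of $R_\agi\cup\mathrm{Id}$. For $\nicks$ take its full transitive closure.

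The only new edges are $\leq$-steps and chains through them, and these do no harm in clause (a) for $\K_\agi$ and $\C$. Left-hand formulae persist along $\leq$, because invertible rules are applied preservingly and the intuitionistic premises of $\mathsf{C_n}$ keep $\Gamma$. Saturation clause 9, available because $\mathsf{T_\agi}$ is present, then turns $\K_\agi\varphi\in\Gamma_{t_v}$ into $\varphi\in\Gamma_{t_v}$. Clause (b) only uses that the new relations contain the skeleton relations.

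Finally, the paper's own S4 case splits $w\Rel_\agi v$ into chains $w_0,\ldots,w_n$ of arbitrary length. That argument already presupposes the full transitive closure rather than the two-step $t$ as defined.
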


\subsubsection{Completeness for Epistemic, Reflexive and  S4 Frames}\label{ss: completeness for epistemic, reflexive and s4 frames}
By using the framework introduced above, we now prove completeness for $\nick$, $\nickt$ and $\nicks$ with respect to the classes of epistemic, reflexive and S4 frames, respectively. Recall that for a set of formula $\Gamma$, $\K_\agi \Gamma = \{ \K_\agi \varphi \mid \varphi \in \Gamma\}$. Define $\K_\agi^0 \Gamma \coloneqq \Gamma$ and $\K_\agi^{n+1} \Gamma \coloneqq \K_\agi \K_\agi^n \Gamma$. We define the following choice rules.


\begin{definition}
The choice rule $\mathsf{C_n}$ for $n \in \{0,1\}$ is given by
    \begin{equation*}
            \infer[\mathsf{C_n}]{\Gamma \Rightarrow \Delta}{\{\Gamma, \varphi^\u \Rightarrow \psi^\u\}_{\varphi \rightarrow \psi \in \Delta^-} & \{ \{\K_\agi^n \K_\agi^{-1} \Gamma \Rightarrow \chi^\an \}_{\chi \in \K_\agi^{-1}\Delta^{-}} \}_{\agi \in \A}}
        \end{equation*}
        where the annotation $\an$ is equal to $\f$ if the underlying formula $\chi$ is a formula of the form $\chi = \C \gamma$ for some formula $\gamma$, and equal to $\u$ otherwise. The modal premises are those of the form $\K_\agi^n\K_\agi^{-1} \Gamma \Rightarrow \chi^\agi$ for each $\agi \in \A$, where the set of premises $\{\K_\agi^n\K_\agi^{-1}\Gamma \Rightarrow \chi^{\an} \}_{\chi \in \K_\agi^{-1}\Delta^-}$ forms the group of \emph{modal premises} of agent $\agi$ and the set of premises $\{\Gamma, \varphi^\u \Rightarrow \psi^\u\}_{\varphi \rightarrow \psi \in \Delta^-}$ forms the group of \emph{intuitionistic premises}.
\end{definition}

Note that for $n=0$ the modal premises together with the conclusion of $\Ck$ form instances of $\mathsf{\K_\agi}$ while for $n=1$ the modal premises of $\Crt$ together with the conclusion form instances of $\mathsf{S4_\agi}$. In case $\K_\agi \chi \in \Delta^-$ where $\chi = \C \gamma$ for some formula $\gamma$, the choice rules also apply the focus rules if $\K_\agi \chi$ does not already occur focused in $\Delta$. Furthermore, the conclusion and intuitionistic premises of $\mathsf{C_n}$ form instances of ${\rightarrow}\mathsf{R}$.

\begin{lemma}\label{lem WS for P with Ct correspond to proofs}
     Let $\hickp \in \{\nick, \nickt, \nicks\}$ and $\sigma$ be a sequent. If $\mathcal{T}$ is a $\hickp$-proof-search tree for $\sigma$ with choice rule $\mathsf{C_n}$, where $n=1$ if $\hickp = \nicks$ and $n=0$ otherwise, and Prover has a winning strategy in $\mathcal{G}_\hickp(\mathcal{T}, \mathsf{C_n})$, then $\sigma$ has a $\hickp$-proof. 
 \end{lemma}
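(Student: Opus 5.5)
Given a winning strategy $f$ for Prover in $\mathcal{G}_\hickp(\mathcal{T}, \mathsf{C_n})$, I will take the strategy tree $\mathcal{T}_f$ and turn it into a $\hickp$-proof of $\sigma$. The only obstruction is that $\mathcal{T}_f$ uses the choice rule $\mathsf{C_n}$, which is not a rule of $\hickp$. Everywhere else $\mathcal{T}_f$ already consists of instances of invertible logical rules of $\hickp$. This is because preserving applications are still rule instances: in set sequents the principal formula may reappear among the side formulae.

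\textbf{Replacing $\mathsf{C_n}$.} At each $\mathsf{C_n}$-node $u$ of $\mathcal{T}_f$, Prover chooses exactly one child $f(u)$, and this child is either an intuitionistic premise or a modal premise. In each case I replace the $\mathsf{C_n}$-step by a short derivation in $\hickp$.
\begin{enumerate}
\item If $f(u)$ is an intuitionistic premise $\Gamma, \varphi^\u \Rightarrow \psi^\u$ with $\varphi \to \psi \in \Delta^-$, then $u$ and $f(u)$ form an instance of ${\to}\mathsf{R}$. If $\varphi\to\psi$ happens to be annotated differently in $\Delta$, I first insert a focus rule $\mathsf{u}$.
\item If $f(u)$ is a modal premise $\K_\agi^n \K_\agi^{-1}\Gamma \Rightarrow \chi^\an$, then $u$ and $f(u)$ form an instance of $\mathsf{\K_\agi}$ when $n=0$, and of $\mathsf{S4_\agi}$ when $n=1$. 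Here $\Pi$ and $\Sigma$ absorb the remaining formulae. I insert focus rules $\mathsf{u}$/$\mathsf{f}$ below this step where needed, so that $\K_\agi\chi$ carries annotation $\an$ in the conclusion. This is legal: if $\an=\f$ then $\chi=\C\gamma$, so $\K_\agi\C\gamma$ may be focused, and focusing it first requires unfocusing any other focused formula in $\Delta$. By saturation clause 8 that other formula can only be of the form $\K_\agi\C\gamma'$.
\end{enumerate}
Leaves of $\mathcal{T}_f$ are leaves of $\mathcal{T}$ at which the play ends. Because $f$ is winning, every such leaf is an axiom. Hence the result $\pi$ is a $\hickp$-pre-proof whose leaves are all axioms.

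\textbf{Infinite branches.} An infinite branch of $\pi$ corresponds to an infinite play consistent with $f$, obtained by deleting the finitely many inserted focus steps at each replaced node. That play has a good suffix because $f$ is winning. I expect this step to be the main obstacle: the inserted focus rules must not break goodness, which requires every sequent in the suffix to have a focused formula. The key observation is as follows. In a good suffix every sequent already carries a focus. The choice rule, at a modal premise with $\chi=\C\gamma$, forces the annotation $\f$, so along a good suffix the focused formula passing into the premise is $\K_\agi\C\gamma$, and in that case no $\mathsf{u}$ needs to be inserted. If a $\mathsf{u}$ were needed, meaning some other $\K_\agi\C\gamma'$ is focused and must be unfocused, then the premise $f(u)$ would itself carry the focus on $\C\gamma$ anyway. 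I would argue that this can happen only finitely often along a good suffix, or else choose the suffix to start after it. For the argument I would use that in the original play the focus passes from conclusion to premise through $\mathsf{C_n}$, and compare with Lemma~\ref{lemma good suffix contains inf many box appl}. The $\mathsf{\C R}$ applications with focused principal formula are untouched, so they still occur infinitely often, and $\pi$ is a non-wellfounded $\hickp$-proof of $\sigma$.
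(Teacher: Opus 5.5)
\paragraph{Verdict.} Your construction is the paper's, but the step you leave open is a genuine gap, and in fact it cannot be closed: the statement is false with the definitions as given.

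\paragraph{Where the gap is.} Like the paper, you take $\mathcal{T}_f$, keep the invertible steps, and realise each $\mathsf{C_n}$-step as ${\to}\mathsf{R}$ or as $\mathsf{\K_\agi}$/$\mathsf{S4_\agi}$ plus focus rules. You put the focus rules below the modal rule; the paper inserts a single $\mathsf{f}$ above it. That difference is immaterial, and your extra $\mathsf{u}$ in case 1 is vacuous, since implications are never focused. The problem is the step you flag and leave open. Call a $\mathsf{C_n}$-node a \emph{switch} if its focused formula is some $\K_\agii\C\delta^\f$, while Prover moves to the modal premise $\K_\agi^n\K_\agi^{-1}\Gamma\Rightarrow\C\gamma^\f$ for a different formula $\K_\agi\C\gamma$. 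In $\mathcal{T}_f$ both nodes are focused, so a switch is invisible to Prover's winning condition. However, any realisation of the switch in $\hickp$ must pass through a sequent with no focused formula: the premise of your $\mathsf{u}$, or the paper's intermediate node $\K_\agi^n\K_\agi^{-1}\Gamma\Rightarrow\C\gamma^\u$. Nothing in the definition of a good suffix bounds the number of switches, and Lemma~\ref{lemma good suffix contains inf many box appl} says nothing about them. So your claim that switches occur only finitely often along a good suffix has no support.

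\paragraph{Counterexample.} Take one agent $\agi$ and let $\sigma$ have left side $\C\neg\K_\agi\C p^\u, \C\neg\K_\agi\C q^\u, \C(\K_\agi p\vee\K_\agi q)^\u$ and empty right side.
\begin{enumerate}
\item On every play not ending in an axiom, ${\to}\mathsf{L}$ puts $\K_\agi\C p^\u$ and $\K_\agi\C q^\u$ on the right of each saturated sequent, because the other premise contains $\bot^\u$. Also $\vee\mathsf{L}$ puts $\K_\agi p$ or $\K_\agi q$ on the left.
\item Let Prover, at each $\mathsf{C_0}$-node, move to the premise with succedent $\C p^\f$ if $\K_\agi p$ is on the left, and to the one with $\C q^\f$ otherwise.
\item That premise has $p^\u$ (resp.\ $q^\u$) on the left. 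So the left premise of the ensuing focused $\mathsf{\C R}$ is an instance of $\mathsf{id}$.
\item Hence Prover wins every play: after the first $\mathsf{C_0}$ every sequent is focused, and each stretch between consecutive $\mathsf{C_0}$-nodes contains a focused $\mathsf{\C R}$. If Refuter alternates between $\K_\agi p$ and $\K_\agi q$, the focus switches at every $\mathsf{C_0}$-node.
\item Yet $\sigma$ is invalid. Take $\W=\{w_0,w_1\}$, $\leq$ the identity, $R_\agi=\{(w_0,w_1),(w_1,w_0)\}$, with $p$ true only at $w_1$ and $q$ true only at $w_0$. All three antecedent formulae hold at $w_0$.
\end{enumerate}
By Theorem~\ref{t: CIM soundness} and Lemma~\ref{l: non-wellfounded proof implies cyclic proof}, $\sigma$ therefore has no $\nick$-proof.

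\paragraph{The paper's proof.} It has the same hole. It simply asserts that at every $\mathsf{C_n}$-node of a good suffix the focused formula is $\K_\agi\chi^\f$ for the chosen premise $\chi^\f$, and this example refutes that assertion.

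\paragraph{A possible repair.} One has to change the definitions. For instance, let $\mathsf{C_n}$ annotate $\C\gamma$ with $\f$ in a modal premise of agent $\agi$ only if $\K_\agi\C\gamma^\f$ is the focused formula of the conclusion, or the conclusion has no focused formula. Then a good suffix contains no switch, and your construction inserts no unfocused node there. The Refuter-side argument of Proposition~\ref{p: refutation gives triangle model} survives with a trivial adjustment: its focused path starts one $\mathsf{C_n}$-step later.
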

 
 \begin{proof}
     Let $f$ be a winning strategy for Prover in $\mathcal{G}_\hickp(\mathcal{T}, \mathsf{C_n})$ and $\mathcal{T}_f$ be the strategy tree of $f$. By definition $\mathcal{T}_f$ contains the root of  $\mathcal{T}$ which is labeled by $\sigma$. By definition of $\mathcal{G}_\hickp(\mathcal{T}, \mathsf{C_n})$, the only positions owned by Prover are those nodes of $\mathcal{T}$ that are labeled by the conclusion of a $\mathsf{C_n}$-rule instance. Therefore whenever $\mathcal{T}_f$ contains a node $u$ labeled by the conclusion of an invertible rule instance, $\mathcal{T}_f$ contains all of $u$'s children in $\mathcal{T}$, implying that every such instance forms a correct rule instance of an invertible rule. If $u$ is labeled by the conclusion of a $\mathsf{C_n}$-rule instance, then $\mathcal{T}_f$ contains exactly one child $v$ of $u$. Observe that if $v$ is an intuitionistic premise, then the sequents labeling $v$ and $u$ form an instance of the rule ${\rightarrow}\mathsf{R}$, and if $v$ is a modal premise of agent $\agi$, then the sequents labeling $v$ and $u$ form an instance of $\mathsf{\K_\agi}$, potentially succeeded by an instance of $\mathsf{f}$, in case $n=0$, or an instance of $\mathsf{S4_\agi}$, potentially succeeded by an instance of $\mathsf{f}$, in case $n=1$. Let $\pi$ be $\mathcal{T}_f$ where whenever $u$ is labeled by the conclusion of an instance of $\mathsf{C_n}$ of the form
     \begin{equation*}
         \Gamma \Rightarrow \K_\agi \chi^\u, \Delta
     \end{equation*}
    and its unique child $v$ in $\mathcal{T}_f$ is a modal premise labeled by
     \begin{equation*}
         \K_\agi^n\K_\agi^{-1} \Gamma \Rightarrow \chi_\agi^\f
     \end{equation*}
    then $u$ has a unique child $u'$ in $\pi$ (where $u'$ does not occur in $\mathcal{T}_f$ or $\mathcal{T}$) labeled by
     \begin{equation*}
          \K_\agi^n \K_\agi^{-1}\Gamma \Rightarrow \chi_\agi^\u
     \end{equation*}
     and $u'$ has a unique child which is $v$. Observe that $(u,u')$ forms an instance of $\mathsf{\K_\agi}$ in case $n=0$ and an instance of $\mathsf{S4_\agi}$ if $n=1$, and $(u', v)$ forms an instance of $\mathsf{f}$. Otherwise we leave $\mathcal{T}_f$ unchanged. It follows from our construction that  $\pi$ is a $\hickp$-pre-proof of $\sigma$. Since $\mathcal{T}_f$ is the strategy tree of a \emph{winning strategy} for Prover, every leaf of $\mathcal{T}_f$ is labeled by an axiom, implying that the same holds for $\pi$, and every infinite branch contains a good suffix $\rho = (\rho(i))_i$. Note that whenever $\rho(i)$ is the conclusion of an instance of $\mathsf{C_n}$ in $\mathcal{T}_f$, then $\rho(i)$ contains a formula $\K_\agi \chi^\f$ in focus and $\rho(i+1)$ is the modal premise with $\chi^\f$ as the only formula on the right. Therefore our construction does not alter this suffix, implying that the corresponding infinite branch in $\pi$ contains the same suffix $\rho$ and is therefore good. Thus $\pi$ is a $\hickp$-proof of $\sigma$.
 \end{proof}

Prover having a winning strategy in a proof-search game for sequent $\sigma$ therefore implies that $\sigma$ is provable. In order to show that Refuter having a winning strategy implies that $\sigma$ is falsifiable, we need the following lemma.

\begin{lemma}\label{l: skeleton model is birelation}
    Let $\sigma$ be a sequent, $\mathcal{T}$ a $\hickp$-proof-search tree for $\sigma$ relative to choice rule $\mathsf{C_n}$ for $n \in \{0,1\}$ and $\mathscr{R}$ a refutation of $\sigma$. The skeleton model $\fw_\mathscr{R}$ based on $\mathscr{R}$ is a birelational model.
\end{lemma}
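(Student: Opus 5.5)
We have to check the four defining clauses of a birelational model for $\fw_\mathscr{R}=(\W,\leq,\rel,\V)$: $\W$ is a non-empty set, $(\W,\leq)$ is a poset, each $R_\agi\subseteq \W\times\W$, and $\V$ is monotone in $\leq$. Non-emptiness holds because $\mathscr{R}$ contains the root of $\mathcal{T}$. The relations $R_\agi$ are defined directly as subsets of $\W\times\W$, so that clause needs nothing further.

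The first real step is to understand the equivalence classes. Since $\mathscr{R}$ is a tree and $\sim$ relates nodes joined by a $\mathsf{C_n}$-free path, each class $w$ is a connected subtree. Within $w$ only invertible rules are applied. By Definition~\ref{def refutation}, Item 5, each such node has exactly one child in $\mathscr{R}$ (axiom leaves are excluded by Item 2), so $w$ is a path. By the termination argument in the proof of Lemma~\ref{l: proof-search trees exist}, this path is finite. It therefore has a unique topmost node, which is either the conclusion of a $\mathsf{C_n}$-instance or a leaf of $\mathscr{R}$, and so $\V$ is well defined.

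Next comes the poset property. Reflexivity and transitivity hold by construction. For antisymmetry, observe that $w\leq_0 v$ means that the bottom node of $v$ is a child of the top node of $w$. Any $\leq_0$-chain therefore strictly increases the depth in $\mathscr{R}$ of the lowest node of the class. A cycle $w\leq v\leq w$ with $w\neq v$ would thus contradict the tree structure, so $\leq$ is a partial order.

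The main step is monotonicity of $\V$. It suffices to show $\V(w)\subseteq \V(v)$ whenever $w\leq_0 v$, since inclusion is transitive. Along the path inside a class, invertible rules are applied preservingly; the only non-preserving exception is $\mathsf{\C R}$ with focused principal formula, which acts on the right. So the left side of the sequent only grows going upward, and in particular $\Gamma_w$ contains the left side at every node of $w$. If $w\leq_0 v$, then the top node of $w$ is $\Gamma_w\Rightarrow\Delta$, and the bottom node of $v$ is an intuitionistic premise $\Gamma_w,\varphi^\u\Rightarrow\psi^\u$ of $\mathsf{C_n}$, whose left side contains $\Gamma_w$. Propagating upward through $v$ gives $\Gamma_w\subseteq\Gamma_v$, hence $\Gamma_w\cap\Prop\subseteq\Gamma_v\cap\Prop$.

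I expect the main obstacle to be making the structure of the classes rigorous: establishing that each class has a unique top node, and that left sides only grow along invertible steps. This requires inspecting every invertible rule, using that applications are preserving. The modal premises carry no monotonicity obligation, since they induce $R_\agi$ rather than $\leq$.
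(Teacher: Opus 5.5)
Your proposal is correct and takes essentially the paper's route. The key step is the same: for $w \leq_0 v$, the left side of the $\mathsf{C_n}$-conclusion is contained in the left side of the intuitionistic premise, and since invertible rules are applied preservingly it is contained in the left side of the top node of $v$, with the general case following by transitivity. You also check antisymmetry of $\leq$ and that $\V$ is well defined (each class has a unique top node), which the paper treats as trivial, and your arguments for both are sound.
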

\begin{proof}
    The only non-trivial part is that $\V$ is monotone in $\leq$. So suppose $w \leq v$. First, let us consider the case where $w \leq_0 v$. Then there exist $s \in w$ and $t \in v$ such that $s$ is the conclusion and $t$ an intuitionistic premise of an instance of $\mathsf{C_n}$. Let $\Gamma_s \Rightarrow \Delta_s$ and $\Gamma_t \Rightarrow \Delta_t$ be the sequents labeling $s$ and $t$, respectively. By inspection of the rule $\mathsf{C_n}$, note that $\Gamma_s \subseteq \Gamma_t$. Since every rule is applied preservingly, it further follows that $\Gamma_t \subseteq \Gamma_{t'}$ where $t'$ is the unique node in $v$ which is the conclusion of an instance of $\mathsf{C_n}$ or a leaf. Hence $\Gamma_s \subseteq \Gamma_t \subseteq \Gamma_{t'}$, implying that $\V(w) \subseteq \V(v)$. The general case where $w \leq v$ is then shown by induction on the length of the path from $w$ to $v$, where the base case (where $v = w)$ is trivial and the induction step follows by using the induction hypothesis and the argument above.
\end{proof}

\begin{proposition}\label{p: refutation gives triangle model}
Let $\sigma$ be a sequent.
\begin{enumerate}
    \item If $\mathcal{T}$ is a $\nick$-proof-search tree for $\sigma$ with choice rule $\mathsf{C_0}$ and Refuter has a winning strategy in $\mathcal{G}_\nick(\mathcal{T}, \mathsf{C_0)}$, then $\sigma$ is falsifiable over the class of epistemic frames.

    \item If $\mathcal{T}$ is a $\nickt$-proof-search tree for $\sigma$ with choice rule $\mathsf{C_0}$ and Refuter has a winning strategy in $\mathcal{G}_\nickt(\mathcal{T}, \mathsf{C_0)}$, then $\sigma$ is falsifiable over the class of reflexive frames.

    \item If $\mathcal{T}$ is a $\nicks$-proof-search tree for $\sigma$ with choice rule $\mathsf{C_1}$ and Refuter has a winning strategy in $\mathcal{G}_\nicks(\mathcal{T}, \mathsf{C_1)}$, then $\sigma$ is falsifiable over the class of S4 frames.
\end{enumerate}

\end{proposition}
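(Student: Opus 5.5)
We take a winning strategy $f$ for Refuter. By Lemma~\ref{l: winning strategy for refuter is refutation IM}, its strategy tree $\mathscr{R}=\mathcal{T}_f$ is a refutation. By Lemma~\ref{l: skeleton model is birelation}, the skeleton model $\fw_\mathscr{R}$ is a birelational model. The countermodel will be $\hat{\fw}_\mathscr{R}$ in case 1, $r(\hat{\fw}_\mathscr{R})$ in case 2, and $t(r(\hat{\fw}_\mathscr{R}))$ in case 3. By Lemma~\ref{l: model operations} these are epistemic, reflexive and S4 models respectively. (We should check that the closures keep $\leq$ a partial order: $\leq$ is generated by tree edges, so antisymmetry holds.) Writing $\fw$ for the relevant model and $[s]$ for the world containing node $s$, the core is a truth lemma: for every node $s\in\mathscr{R}$ labelled $\Gamma\Rightarrow\Delta$, $\fw,[s]\models\varphi$ for all $\varphi\in\Gamma^-$ and $\fw,[s]\not\models\psi$ for all $\psi\in\Delta^-$. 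Applied at the root, this shows that $\sigma$ is falsified at $[\mathrm{root}]$.

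We prove both halves simultaneously by induction on formula complexity. Since all invertible rules are applied preservingly (except $\mathsf{CR}$ with focus), formulae only accumulate along a $\sim$-class. So it suffices to check the claim at the representative node $s^*$ of each world, namely the saturated $\mathsf{C_n}$-conclusion or leaf. That sequent is saturated and not an axiom, since leaves of a refutation are not axioms; by condition 4 of proof-search trees, saturated non-axiomatic leaves are exactly those where $\mathsf{C_n}$ has no premises. The cases are as follows.

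\emph{Atoms} hold by the definition of $\V$; for atoms on the right we use non-axiomaticity.

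\emph{Propositional cases} follow from saturation together with the choice, in $\mathscr{R}$, of one branch of each branching rule.

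\emph{Implication on the right:} $\varphi\to\psi\in\Delta$ gives an intuitionistic premise $\Gamma,\varphi\Rightarrow\psi$ in $\mathscr{R}$, which yields an $\leq$-successor refuting it.

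\emph{Implication on the left:} use saturation clause 3 at every $\leq$-successor, together with monotonicity of left sides.

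\emph{$\K_\agi\varphi$ on the left:} every $R_\agi\circ\leq$-successor (plus, for $\mathsf{T}$, reflexive successors, and for S4, transitive chains) lies above a modal premise whose left side contains $\varphi$ (in S4, also $\K_\agi\varphi$, which handles transitivity). Reflexivity uses saturation clause 9.

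\emph{$\K_\agi\psi$ on the right:} the corresponding modal premise gives an $R_\agi$-successor falsifying $\psi$.

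\emph{$\C\varphi$ on the left:} saturation clause 4 puts $\varphi$ and every $\K_\agi\C\varphi$ on the left, and persistence along all $R^*$-paths follows by induction on path length, reusing the $\K_\agi$ case with $\C\varphi$ inherited into the modal premise.

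The main obstacle is $\C\psi$ on the right, because here the winning condition must be used. Since $\C\psi^\u\in\Delta$ is saturated, either $\psi\in\Delta$, which by the induction hypothesis finishes the case, or some $\K_\agi\C\psi\in\Delta$. In the latter case the choice rule yields a modal premise $\ldots\Rightarrow\C\psi^\f$. That premise is unfolded by non-preserving focused $\mathsf{CR}$, and Refuter's choice in $\mathscr{R}$ selects either the unfocused $\psi$ premise or a $\K_\bagi\C\psi^\f$ premise. If the $\psi$-premise is eventually chosen, the induction hypothesis gives a world in $R^*([s])$ refuting $\psi$. If not, following the focus produces an infinite branch in which every sequent has a formula in focus (the focus is never removed, since the choice rule keeps $\C\chi$ focused) and which passes through infinitely many focused $\mathsf{CR}$ applications. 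That is a good suffix, contradicting clause 3 of refutations. One subtle point we must verify is that we can always follow the focused formula through $\mathsf{C_n}$: a modal premise for $\chi=\C\psi$ always exists, and it is focused by definition of $\mathsf{C_n}$. Consequently some finite $R$-path reaches a $\psi$-refuting world. Since the triangle, reflexive and transitive closures only add edges, such paths survive in $\fw$, while the added edges are handled in the left-side cases above. This establishes the truth lemma and hence all three items.
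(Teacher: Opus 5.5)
Your proposal follows the paper's proof essentially step for step: the same refutation $\mathscr{R}$, the same skeleton model and closures, the same simultaneous truth lemma, and the same good-suffix contradiction for $\C$ on the right. It also inherits a genuine error from the paper. The claim that $r(\hat{\fw}_\mathscr{R})$ and $t(r(\hat{\fw}_\mathscr{R}))$ are reflexive resp.\ S4 models, i.e.\ the second half of Lemma~\ref{l: model operations}, is false, because adding identity pairs \emph{after} the triangle closure destroys triangle confluence. In a reflexive epistemic frame, $w\leq v$ and $v \mathrel{R_\agi} v$ force $w \mathrel{R_\agi} v$, so ${\leq}\subseteq R_\agi$, and nothing in $r(\hat{\fw}_\mathscr{R})$ guarantees this. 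Concretely, take $\sigma = ({\Rightarrow} p\to q^\u)$ in $\nickt$. The refutation consists of the root and the single intuitionistic premise $p^\u \Rightarrow q^\u$ of $\mathsf{C_0}$. The skeleton has two worlds $w\leq v$ and no modal edges, so $r(\hat{\fw}_\mathscr{R})$ has $R_\agi=\{(w,w),(v,v)\}$, which is not triangle confluent, and the transitive closure does not repair this. Hence your countermodels for items 2 and 3 do not lie in the required frame classes.

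The repair is easy and fits your truth lemma. Close under reflexivity \emph{before} the triangle closure, i.e.\ use $\widehat{r(\fw_\mathscr{R})}$, whose relations are $\hat{R}_\agi \cup {\leq}$; these are reflexive and triangle confluent. For item 3, take the full transitive closure of these relations, which preserves both properties. The only new edges are $w \mathrel{R_\agi} v$ with $w \leq v$, and they are handled as follows.
\begin{itemize}
\item In the left $\K_\agi$ case, monotonicity of left sides along $\leq$ together with saturation clause 9 gives $\varphi$ at $v$.
\item In S4, $\K_\agi\varphi$ itself persists along these edges, so your chain argument still works.
\item The left $\C$ case is handled in the same way.
\end{itemize}

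A smaller point: your reduction ``formulae only accumulate along a $\sim$-class'' fails for $\C\psi^\f$ on the right, which by clause 8 never reaches the representative. You need to add that the focused $\mathsf{CR}$ below the representative leaves $\psi^\u$ or some $\K_\agii\C\psi^\f$, and that this formula does persist to the representative. Your right-$\C$ argument then applies.
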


\begin{proof}
We prove all three items simultaneously. Let $\hickp \in \{\nick, \nickt, \nicks\}$ and let $f$ be a winning strategy for Refuter in $\mathcal{G}_\hickp(\mathcal{T}, \mathsf{C_n})$, where $n=1$ in case $\hickp = \nicks$ and $n=0$ otherwise, and let $\mathcal{T}_f$ be the strategy tree of $f$. By Lemma \ref{l: winning strategy for refuter is refutation IM},  $\mathcal{T}_f = \mathscr{R}$ is a subtree of $\mathcal{T}$ that is a refutation of $\sigma$. Let $\fw_\mathscr{R}$ be the skeleton model based on $\mathscr{R}$. If $\hickp = \nick$, then define $\fw \coloneqq \hat{\fw_\mathscr{R}}$. If $\hickp = \nickt$, then define $\fw \coloneqq r(\hat{\fw_\mathscr{R}})$ and if $\hickp = \nicks$, then define $\fw \coloneqq t(r(\hat{\fw_\mathscr{R}}))$. We write $\fw =(\W, \leq, \rel, \V)$. By Lemma \ref{l: skeleton model is birelation} and Lemma \ref{l: model operations}, $\fw$ is an epistemic, reflexive or S4 model, respectively. We will show that $\fw$ falsifies $\sigma$.

For each $s \in \mathcal{T}$ let $\Gamma_s \Rightarrow \Delta_s$ be the sequent labeling $s$. For each $w\in \W$, let $\Gamma_w \coloneqq \bigcup_{s \in w} \Gamma_s$ and let $\Delta_w \coloneqq \bigcup_{s \in w} \Delta_s$. Let $\varphi$ be a formula. By induction on the structure of $\varphi$, we simultaneously prove that for any $w\in \W$ holds
\begin{enumerate}
    \item[(a)] $\fw,w\models \varphi$ if $\varphi\in \Gamma^-_{w}$, and
    \item[(b)] $\fw,w\not\models \varphi$ if $\varphi\in \Delta^-_{w}$.
\end{enumerate}

 For any $w \in \W$ let $t_w \in w$ be the unique node labeled by the conclusion of a $\mathsf{C_n}$-instance or a leaf of $\mathscr{R}$. For the case for $p \in \Prop$ suppose for (a) that $p \in \Gamma_w^-$. Then there exists $t \in w$ such that $p \in \Gamma_t^-$. Since there are no applications of $\mathsf{\K_\agi}$ or $\mathsf{S4_\agi}$ on the path from $t$ to $t_w$, we have $p \in \Gamma_{t_w}^-$ and hence $p \in \V(w)$, implying that $\fw, w \models p$. For (b) suppose that $p \in \Delta_w^-$. Then there exists $t \in w$ such that $p \in \Delta_{t}^-$. Since there are no applications of ${\rightarrow\mathsf{R}}$, $\mathsf{\K_\agi}$ or $\mathsf{S4_\agi}$ on the path from $t$ to $t_w$, we have that $p \in \Delta_{t_w}^-$. Suppose for contradiction that $p \in \Gamma_{t_w}^-$. Then $\Gamma_{t_w} \Rightarrow \Delta_{t_w}$ is an instance of $\mathsf{id}$. By definition of a proof-search tree, $t_w$ is an axiomatic leaf, which contradicts the assumption that $\mathscr{R}$ is a refutation. Hence $p \not \in \Gamma_{t_w}^-$ and so $p \not \in \V(w)$, implying that $\fw, w \not \models p$.  For $\varphi = \bot$ note that if $\bot \in \Gamma_w^-$, then there exists $t \in w$ such that $\bot \in \Gamma_t^-$. Therefore $\Gamma_t \Rightarrow \Delta_t$ is an instance of the axiom $\bot$. By definition of a proof-search tree, $t$ is a leaf of $\mathcal{R}$, implying that $\mathcal{R}$ contains an axiomatic leaf, which contradicts the assumption that $\mathcal{R}$ is a refutation. Hence $\bot \not \in \Gamma_w^-$. If $\bot \in \Delta_w^-$, then by definition $\fw, w \not \models \bot$. The cases for $\varphi = \psi \wedge \chi$ and $\varphi = \psi \vee \chi$ follow immediately from saturation and the induction hypothesis. For example if $\psi \wedge \chi \in \Gamma_w^-$, then, since invertible rules are applied preservingly, $\psi \wedge \chi \in \Gamma_{t_w}^-$. Since $\Gamma_{t_w} \Rightarrow \Delta_{t_w}$ is saturated, we have $\psi, \chi \in \Gamma_{t_w}^-$ and hence $\psi, \chi \in \Gamma_w^-$. By induction hypothesis $\fw, w \models \psi$ and $\fw, w \models \chi$, implying that $\fw, w \models \psi \wedge \chi$. The other cases involving $\psi \wedge \chi$ and $\psi \vee \chi$ are similar and omitted. \smallskip

\noindent \textsc{Case for $\rightarrow$.} For (a) if $\varphi\rightarrow \psi\in \Gamma^-_w$, then since rules are applied preservingly we have $\varphi \rightarrow \psi \in \Gamma_{t_w}^-$. If $v\geq w$, then since rules are applied preservingly and by definition of $\mathsf{C_n}$, it holds that $\varphi\to \psi\in \Gamma^-_{t_v}$. Since $\Gamma_{t_v} \Rightarrow \Delta_{t_v}$ is saturated, $\varphi \in \Delta^-_v$ or $\psi \in \Gamma^-_v$. By the induction hypothesis, $\fw,v\models \psi$ or $\fw,v\not\models \varphi$, so we obtain $\fw,w\models \varphi\to\psi$. 
   For (b) if $\varphi\to \psi\in \Delta_w^-$, then $\varphi \rightarrow \psi \in \Delta_{t_w}^-$, since $\varphi \rightarrow \psi$ may only be principal in instances of $\mathsf{C_n}$. By definition of $\mathsf{C_n}$ and construction of $\leq$ there exists a $v\geq_0 w$ such that $\varphi\in \Gamma^-_v$ and $\psi\in \Delta^-_v$. The induction hypothesis then implies that $\fw,v\models \varphi$ and $\fw,v\not\models\psi$, and so $\fw,w\not\models \varphi\to\psi$. \smallskip
   
\noindent \textsc{Case for $\K_\agi$.} For (a) if $\K_\agi\varphi\in \Gamma_w^-$, then $\K_\agi \varphi \in \Gamma_{t_w}^-$, since $\K_\agi \varphi$ can only be principal in the rule $\mathsf{C_n}$ and - if present in $\hickp$ - in the rule $\mathsf{T_\agi}$, which is applied preservingly. There are three cases to consider. 1. Let $\hickp = \nick$ and suppose $w \Rel_\agi v$. There are two subcases. (i) Suppose there exists a node $s \in v$ which is a modal premise of the $\mathsf{C_0}$-instance with $t_w$ as conclusion, then by definition of $\mathsf{C_0}$ we have $\varphi\in \Gamma_s^-$ and so $\varphi \in \Gamma_v^-$. (ii) Suppose there exists $u \geq w$ and a node $s \in v$ which is a modal premise of a $\mathsf{C_0}$-instance with conclusion $t_u$. Then, by definition of $\mathsf{C_0}$ and the proof-search tree, $\K_\agi \varphi \in \Gamma_{t_u}^-$ and hence, as before, $\varphi \in \Gamma_v^-$.\footnote{Since $\fw$ is the skeleton model where each modal relation was closed under triangle confluence, we have $w \Rel_\agi v$. However, in the refutation $v$ does not contain a node which is a modal premise of the same $\Ck$-rule instance of which $t_w$ is a conclusion of. Therefore this additional case has to be considered.} The induction hypothesis then implies  $\fw,v\models \varphi$, so we obtain $\fw,w\models \K_\agi\varphi$. 2. Let $\hickp = \nickt$ and suppose $w \Rel_\agi v$. There are two subcases. (i) Suppose that $w \not = v$. The argument is then identical to case 1. (ii) Suppose that $w = v$. Then since $\K_\agi \varphi \in \Gamma_t^-$ for some $t \in w$ by assumption, the definition of proof-search tree guarantees that on the path from $t$ to $t_w$ the rule $\mathsf{T_\agi}$ is applied with $\K_\agi \varphi^\u$ principal. Therefore $\varphi \in \Gamma_{t_w}^-$ and so the induction hypothesis yields $\fw, w \models \varphi$. Combining both subcases gives the desired result that $\fw, w \models \K_\agi \varphi$. 3. Let $\hickp = \nicks$ and suppose $w \Rel_\agi v$. Since $\fw$ for this case is an S4 model and by construction of the skeleton model, there are pairwaise distinct worlds $w= w_0, \ldots w_n = v$ such that $w_i \Rel_\agi w_{i+1}$ and whenever there is a world $u$ with $w_i \Rel_\agi u \Rel_\agi w_{i+1}$ for $i \in [n]$, then $u = w_i$ or $u= w_{i+1}$. We show that $\varphi, \K_\agi \varphi \in \Gamma_v^-$ by induction on $n$. For $n=0$ we have $w = v$ and $\fw, w \models \varphi$ follows by the same argument as used for 2.(ii). For $n>0$ we have that $\varphi, \K_\agi \varphi \in \Gamma_{w_{n-1}}^-$ by induction hypothesis. By assumption $w_{n-1} \not = w_n$ and $w_{n-1} \Rel_\agi w_n$ with no world strictly inbetween. Suppose there exists $s \in w_n$ such that $s$ is a modal premise of agent $\agi$ of the same instance of $\mathsf{C_1}$ of which $t_{w_{n-1}}$ is the conclusion of. Since $\K_\agi \varphi \in \Gamma_{w_{n-1}}^-$, we have $\K_\agi \varphi \in \Gamma_{t_{w_{n-1}}}$ by the same argument as before. By inspection of the rule $\mathsf{C_1}$, note that then $\K_\agi \varphi \in \Gamma_{w_n}$. Since $\Gamma_{w_n} \Rightarrow \Delta_{w_n}$ is saturated, we also have $\varphi \in \Gamma_{w_n}$. Otherwise suppose there exists $u \geq w$ such that there exists $s \in w_n$ which is a modal premise of agent $\agi$ of the same instance of $\mathsf{C_1}$ of which $t_{u}$ is the conclusion of. As before we have $\K_\agi \varphi \in t_u$ since rules are applied preservingly and by definition of $\mathsf{C_1}$. It then follows that $\K_\agi \varphi, \varphi \in \Gamma_{w_n}^-$ by the same argument as before. Hence for any $v \in \W$ with $w \Rel_\agi v$ holds that $\fw,v \models \varphi$ and so $\fw, w \models \K_\agi \varphi$. 

For (b) if $\K_\agi\varphi\in \Delta_w^-$, then $\K_\agi \varphi \in \Delta_{t_w}^-$. By definition of $\mathsf{C_n}$ there exists a modal premise $s$ of agent $\agi$ with $\varphi \in \Delta_s^-$. For $v = [s]$, we therefore have $\varphi \in \Delta_v$. By induction hypothesis $\fw,v\not\models \varphi$. By construction of $\fw$ it holds that $w \Rel_\agi v$ and  so $\fw,w\not\models \K_\agi\varphi$.\footnote{Note that it does not matter to which class of frames $\fw$ belongs for this argument.} \smallskip
    
\noindent \textsc{Case for $\C$.} For (a) if $\C\varphi\in \Gamma^-_w$, then $\C \varphi \in \Gamma_{t_w}^-$, since invertible rules are applied preservingly. Suppose $w\Rel^* v$. Let 
\begin{equation*}
    w = u_0 \Rel_{\agi_0} u_1 \Rel_{\agi_1} \ldots \Rel_{\agi_{n-1}} u_n = v
\end{equation*}
be the $R$-path from $w$ to $v$. We prove by induction that for every $i \in [n+1]$ holds that ${\varphi, \K_\agi \C \varphi \in \Gamma_{u_i}^-}$ for each $\agi \in \A$. For $u_0$, note that $u_0= w$ and so it follows from saturation that  $\varphi, \K_\agi \C \varphi \in \Gamma_w^-$ for each $\agi \in \A$. For $i >0$ we have by induction hypothesis that $\K_\agi \C \varphi \in \Gamma_{u_{i-1}}^-$ for each $\agi \in \A$ and hence $\K_\agi \C \varphi \in \Gamma_{t_{u_{i-1}}}^-$ for each $\agi \in \A$ since invertible rules are applied preservingly. If $u_i$ contains a node $s$ which is a modal premise of some agent $\agii$ of the same $\mathsf{C_n}$-instance of which $t_{u_{i-1}}$ is the conclusion of, then since $\K_\agii \C \varphi \in \Gamma_{t_{u_{i-1}}}^-$, the definition of $\mathsf{C_0}$ yields that $\C \varphi \in \Gamma_s^-$ and the definition of $\mathsf{C_1}$ yields that $\K_\agii \C \varphi \in \Gamma_s^-$. By the presence of $\mathsf{T_\agii}$ in $\nicks$ and saturation we thus have $\C \varphi \in \Gamma_{u_i}^-$. Moreover, by saturation, we have $\varphi, \K_\agi \C \varphi \in \Gamma_{u_i}^-$ for each $\agi \in \A$. Otherwise $u_i$ is a modal successor of some agent $\agii$ of some world $u' \geq u_{i-1}$. Since $\K_\agi \C \varphi \in \Gamma_{t_{u_{i-1}}}^-$ for each $\agi \in \A$, it follows from the definition of $\mathsf{C_n}$ and the construction of the proof-search tree that $\K_\agi \C \varphi \in \Gamma_{\Tilde{u}}^-$ for all $\Tilde{u} \geq u_{i-1}$ and all $\agi \in \A$. Hence, in particular, $\K_\agii \C \varphi \in \Gamma_{u'}^-$ and so $\varphi, \K_\agi \C \varphi \in \Gamma_{u_\agi}^-$ for each $\agi \in \A$ by the same argument as in the previous case. We conclude that $\varphi \in \Gamma_v^-$ and so by induction hypotheses $\fw, v \models \varphi$. Hence $\fw, w \models \C \varphi$.

For (b) if $\C\varphi\in \Delta^-_w$, then saturation implies $\varphi\in \Delta^-_{t_w}$ or $\K_\agi\C\varphi\in \Delta^-_{t_w}$ for some $\agi \in \A$.\footnote{If $\C \varphi$ is unfocused, then the saturation clause 7. guarantees this. Otherwise the saturation clause 8. guarantees it. In the second case, there exists a node $s \in w$ with $\C \varphi^\f \in \Delta_s$. Since $\C \varphi^\f$ cannot occur in $\Delta_{t_w}$, there must be a (non-preserving) application of $\mathsf{\C R}$ with $\C \varphi^\f$ as principal formula.} Suppose, for contradiction, that for all $w \Rel^* v$ holds that $\varphi \not \in \Delta^-_v$. This implies that $\K_\agi \C \varphi \in \Delta_{t_w}^-$ for some $\agi \in \A$. Then we can define an infinite path $\rho$ in $\mathscr{R}$ starting from $t_w$ as follows: at each $\mathsf{C_n}$-application, we pick the modal premise that has $\C\varphi^\f$ as consequent. Since no $w \Rel^* v$ satisfies $\varphi\in \Delta^-_v$, this choice is always available. The suffix of the path $\rho$ which starts at the modal premise of the $\mathsf{C_n}$ instance that has $t_w$ as conclusion always has a formula in focus and between any two instances of $\mathsf{C_n}$ passes through an instance of $\mathsf{C \mathsf{R}}$ where the principal formula is in focus. Since by Lemma~\ref{lemma every inf branch inf many C} the suffix passses through infinitely many instances of $\mathsf{C_n}$, it passes through infinitely many instances of $\mathsf{C \mathsf{R}}$ where the principal formula is in focus and is therefore good. Thus the branch of $\mathscr{R}$ which contains $\rho$ as a suffix has a good suffix, contradicting the assumption that $\mathscr{R}$ is a refutation. So there must be some $w \Rel^* v$ with $\varphi\in \Delta^-_v$, and thus $\fw,v\not\models \varphi$ by the induction hypothesis. Therefore $\fw,w\not\models \C\varphi$.

Let $w \in \W$ contain the root of $\mathscr{R}$. By definition $\Gamma_\sigma \subseteq \Gamma_w$ and $\Delta_\sigma \subseteq \Delta_w$. Thus  $\fw,w \not \models \sigma$. Since $\fw$ is an epistemic/reflexive/S4 model depending on $\hickp$, we thus obtain that $\sigma$ is falsifiable over the respective class of frames.
\end{proof}

We have established that winning strategies for Prover correspond to non-wellfounded proofs and winning strategies for Refuter to countermodels. In order to obtain completeness, we require that the game $\mathcal{G}_\hickp(\mathcal{T}, \mathsf{C_n})$ is determined, which means that exactly one of the two players has a winning strategy. It is well-known that proof-search games like $\mathcal{G}_\hickp(\mathcal{T}, \mathsf{C_n})$ can be reformulated as \emph{Gale--Steward games}. In fact, since Gale--steward games only permit infinite plays, it suffices to extend every finite branch in the proof-search tree $\mathcal{T}$ by an infinite sequence of nodes labeled by the same sequent as the leaf and determine that such plays are then won by Prover if the suffix contains only axioms and by Refuter otherwise.  The resulting game is then a Gale--Steward game. It is a standard result in non-wellfounded proof theory that the set of winning plays for each player in such a game is Borel. In fact, the winning sets belongs to a low level of the Borel hierarchy, namely $\Delta_3$ (see for example~\cite{menendez_2024}). Therefore we may apply the following theorem.

\begin{theorem}[Martin 1975~\cite{Martin_1975}]
    If $\mathcal{G}$ is a Gale--Steward game and the set of winning plays for each player is Borel, then $\mathcal{G}$ is determined.
\end{theorem}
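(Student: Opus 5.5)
The plan is to follow Martin's inductive proof via \emph{coverings} (unravelings) of game trees, which reduces the determinacy of a Borel game to that of a clopen game on a larger tree. The base case is the Gale--Steward theorem: every game whose winning set for Player~I is open (or closed) is determined. I would prove this first by the standard rank argument. Call a position \emph{good for the open player} if that player can force entry into the open winning set in finitely many moves, ranked by ordinals. If the initial position is not good, the closed player wins by always moving to positions that are not good; this is possible because, at a position that is not good, no move of the opponent leads to a good position, and the closed player has some move keeping the position non-good.

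The core construction is a \emph{covering} of a game tree $T$ by a tree $\tilde T$ with a continuous projection $\pi\colon[\tilde T]\to[T]$, satisfying two conditions. First, strategies on $\tilde T$ lift to strategies on $T$ in a way that preserves winning for the preimage payoff. Second, the preimage $\pi^{-1}(A)$ of a given closed set $A$ is \emph{clopen} in $[\tilde T]$. I would then establish two facts.
\begin{enumerate}
\item Every closed set $A\subseteq[T]$ is unraveled by some covering. This uses Martin's auxiliary game: Player~I declares a quasistrategy in the subgame associated with $A$, and Player~II either accepts it or challenges it with a finite position witnessing leaving $A$.
\item Coverings compose, and countable inverse limits of $k$-coverings (coverings that fix the first $k$ levels, for increasing $k$) are again coverings.
\end{enumerate}
By induction on Borel rank, these give that every Borel set $A$ is unraveled by some covering, i.e.\ it becomes clopen upstairs. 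Complements are immediate. For countable unions $A=\bigcup_n A_n$, I would successively unravel each $A_n$ with a $k$-covering for increasing $k$, take the inverse limit, in which each $A_n$ has clopen preimage, so the union is open, and then unravel once more.

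Determinacy then follows. Given a Borel game on $T$, take a covering $\tilde T$ that unravels the winning set. The lifted game on $\tilde T$ is clopen, hence determined by Gale--Steward. The strategy-lifting property of the covering transfers the winning strategy back to $T$.

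The main obstacle is step~1, the unraveling of closed sets. This requires carefully defining the auxiliary game and verifying that the strategy-lifting map is well defined and preserves winning, especially in the branch where Player~II challenges and the play must be rerouted. It also requires uses of choice (and, in Martin's original formulation, larger cardinalities of the move sets at each level). For the application in this paper the full theorem is overkill. Since the winning sets lie in $\Delta_3$, one could instead invoke Davis' earlier direct proof of determinacy for $\Sigma^0_3$ sets, which iterates the Gale--Steward argument three times without coverings. I would mention this as an alternative route.
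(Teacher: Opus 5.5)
The paper gives no proof of this theorem. It imports Martin's result as a black box, so the only meaningful comparison is with Martin's own argument. Your outline follows that argument, in the covering/unraveling form of his later ``purely inductive'' proof. As a roadmap it is right. However, the notion of covering you state is too weak to carry your step~2, and that is where the sketch would actually break.

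\textbf{The main gap: your definition of covering.} Two requirements are missing.
\begin{enumerate}
\item Lifting must be payoff-independent. Every play consistent with $\phi(\tilde\sigma)$ must be $\pi(\tilde x)$ for some play $\tilde x$ consistent with $\tilde\sigma$. Equivalently, if $\tilde\sigma$ wins $\pi^{-1}(B)$ then $\phi(\tilde\sigma)$ wins $B$, for \emph{every} $B$. Preserving winning only for the single set being unraveled is not enough. The tower built to unravel $A_0,A_1,\dots$ must transfer strategies for the final payoff $\bigcup_n A_n$, and later for whatever payoff the composite covering is used with.
\item You need $\pi$ to be induced by a monotone, length-preserving map on positions, together with Martin's locality condition. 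This says the restriction of $\phi(\tilde\sigma)$ to positions of length $\le n$ depends only on the restriction of $\tilde\sigma$ to positions of length $\le n$. A strategy on $T_\infty$ determines a strategy on $T_j$ only below a level that grows with $j$. Locality is exactly what makes the strategies on $T$, obtained by pushing these partial strategies down, stabilize level by level, so that their limit is a well-defined lifting. Without it, the step ``inverse limits of $k$-coverings are coverings'' cannot be carried out.
\end{enumerate}

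\textbf{Smaller repairs.}
\begin{enumerate}
\item Step~1 must produce, for \emph{every} $k$, a $k$-covering unraveling $A$, since the inverse limit needs $k_n\to\infty$. You get this by delaying the auxiliary round until after the first $2k$ moves.
\item The induction must be stated for all pruned trees and all $k$ simultaneously. The sets $\pi^{-1}(A_n)$ you unravel live on the intermediate trees; they keep their Borel class because $\pi$ is continuous.
\item In the auxiliary game, $\Sigma_{\mathrm I}$ is a quasistrategy in $T$ itself. II's ``acceptance'' must consist in playing a quasistrategy $\Sigma_{\mathrm{II}}\subseteq\Sigma_{\mathrm I}$ all of whose plays lie in $A$. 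Bare acceptance would not settle membership in $A$ after the first round, so $\pi^{-1}(A)$ would not be clopen.
\item The genuinely hard part, defining $\phi$ on both players' strategies via auxiliary open/closed games, remains entirely undone in your sketch.
\item The large move sets are not peculiar to Martin's original formulation. By Friedman's theorem, every proof of Borel determinacy needs them.
\end{enumerate}

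\textbf{On your shortcut.} Davis's $\Sigma^0_3$ argument is considerably more delicate than three rounds of Gale--Stewart; already Wolfe's $\Sigma^0_2$ case needs a transfinite nesting of open-game arguments. For this paper, though, far less is needed. Assign priorities as follows:
\begin{itemize}
\item $3$ to sequents with no formula in focus;
\item $2$ to conclusions of $\mathsf{\C R}$ with the principal formula in focus;
\item $1$ to all other nodes;
\item self-loops at leaves, with priority $2$ at axioms and $1$ otherwise.
\end{itemize}
Then ``has a good suffix'' is exactly the max-parity condition. So $\mathcal{G}_\hickp(\mathcal{T},\mathsf{C_n})$ is a three-priority parity game on a countable arena. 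Its (even positional) determinacy follows from the elementary attractor argument. This is the same kind of game the paper itself uses in Section~\ref{s: automated proof-search}.
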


With Borel determincacy at hand, we may prove completeness.

\begin{theorem}[Completeness of Non-Wellfounded Calculi]\label{t: nonwellfounded completeness}
Let $\sigma$ be a sequent.
\begin{enumerate}
    \item If $\sigma$ is valid over the class of epistemic frames, then $\sigma$ has a $\nick$-proof.
    \item If $\sigma$ is valid over the class of reflexive frames, then $\sigma$ has a $\nickt$-proof.
    \item If $\sigma$ is valid over the class of S4 frames, then $\sigma$ has a $\nicks$-proof.
\end{enumerate}
\end{theorem}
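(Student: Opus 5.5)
We argue by contraposition, uniformly for the three calculi. Let $\hickp \in \{\nick, \nickt, \nicks\}$ and take $n = 1$ if $\hickp = \nicks$ and $n = 0$ otherwise. Suppose $\sigma$ has no $\hickp$-proof.

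By Lemma~\ref{l: proof-search trees exist}, fix a $\hickp$-proof-search tree $\mathcal{T}$ for $\sigma$ with choice rule $\mathsf{C_n}$, and consider the game $\mathcal{G}_\hickp(\mathcal{T}, \mathsf{C_n})$. The first task is to verify that this game is determined. The plan is to recast it as a Gale--Steward game in the way indicated above.
\begin{itemize}
\item Each finite branch is padded by infinitely repeating its leaf. Such a padded play is awarded to Prover if the leaf is an axiom and to Refuter otherwise.
\item For infinite plays, Prover's winning set is the set of branches with a good suffix. This set is $\bigcup_k \bigcap_{i\ge k}\{\text{focus at } i\} \cap \bigcap_m \bigcup_{j \ge m}\{\text{focused } \mathsf{\C R} \text{ at } j\}$, which is Borel (indeed $\Delta_3$) because each basic condition depends on only finitely many positions.
\item Refuter's winning set is the complement of Prover's, so it is Borel as well.
\end{itemize}
Martin's theorem then gives that exactly one player has a winning strategy. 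This Borel bookkeeping is the only step needing any care; I expect it to be routine once padded finite plays are handled consistently.

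Next, if Prover had a winning strategy, Lemma~\ref{lem WS for P with Ct correspond to proofs} would give a $\hickp$-proof of $\sigma$, contradicting our assumption. Hence Refuter has a winning strategy. Proposition~\ref{p: refutation gives triangle model} then gives that $\sigma$ is falsifiable over the class of epistemic frames (for $\nick$), reflexive frames (for $\nickt$), or S4 frames (for $\nicks$). So $\sigma$ is not valid over that class, which is the contrapositive of each item.

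The real work sits in the already proved lemmas: the strategy-to-proof translation and the refutation-to-countermodel construction. This theorem only assembles them via determinacy.
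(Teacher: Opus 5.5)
Your proposal is correct and follows the paper's proof: you fix a proof-search tree with choice rule $\mathsf{C_n}$, invoke determinacy of the game via the Gale--Steward recasting and Martin's theorem, rule out a Prover win using the strategy-to-proof lemma, and turn Refuter's winning strategy into a countermodel with the refutation-to-countermodel proposition. The only differences are presentational: you argue by contraposition rather than directly, and you write out the Borel ($\Delta_3$) description of the winning sets, which the paper only asserts.
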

\begin{proof}
   Suppose a sequent $\sigma$ is valid over class of epistemic frames. Let $\mathcal{T}$ be a proof-search tree with choice rule $\mathsf{C_0}$ for $\sigma$ and consider the two-player game $\mathcal{G}_\nick(\mathcal{T}, \mathsf{C_0})$. By determinacy, exactly one of the two players Prover or Refuter has a winning strategy. Since $\sigma$ is valid, Proposition \ref{p: refutation gives triangle model} implies that Refuter cannot have a winning strategy. Hence Prover must have a winning strategy. Therefore, by Lemma \ref{lem WS for P with Ct correspond to proofs}, $\sigma$ is provable in $\nick$. The cases for the classes of reflexive and S4 frames are argued similarly, where the choice rule $\mathsf{C_1}$ is used for $\nicks$.
\end{proof}

\subsubsection{Translating Non-Wellfounded into Cyclic Proofs}\label{ss: translating non-wellfounded into cyclic proofs}

With completeness of the non-wellfounded calculi established, we now show that the corresponding cyclic calculi are also complete by translating non-wellfounded proofs into cyclic proofs.

\begin{lemma}\label{l: infinite branches contain good repetitions}
    Let $\hickp \in \{\nick, \nickt, \nicks\}$ and let $\rho$ be an infinite branch of a $\hickp$-proof $\pi$. Then $\rho$ contains a successful repetition.
\end{lemma}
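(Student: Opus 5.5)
Since $\pi$ is a proof, $\rho$ has a good suffix $\rho'$. The plan is a pigeonhole argument on sequents occurring along $\rho'$, combined with analyticity of the rules.

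First, every sequent on $\rho$ is built from formulae in $\Cl(\sigma)$, where $\sigma$ is the root, annotated with $\u$ or $\f$. This follows because all rules of $\hickp \in \{\nick, \nickt, \nicks\}$ are analytic: $\mathsf{cut}$ is absent, and the focus rules only change annotations. By Lemma~\ref{l: negation closure is finite}, $\Cl(\sigma)$ is finite. Since sequents are pairs of finite sets of annotated formulae from a finite set, only finitely many distinct sequents can label nodes of $\rho$; call this number $N$.

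Next, I use goodness of $\rho'$. Every sequent on $\rho'$ has a formula in focus, and $\rho'$ passes through infinitely many applications of $\mathsf{\C R}$ with the principal formula in focus. Let $k_0 < k_1 < k_2 < \dots$ be the positions in $\rho'$ at which such focused $\mathsf{\C R}$ applications occur; here $\rho'(k_i)$ is the conclusion. Among the sequents $\rho'(k_0), \rho'(k_1), \ldots, \rho'(k_N)$ there are $N+1$ nodes, so by pigeonhole two of them, say $\rho'(k_i)$ and $\rho'(k_j)$ with $i<j$, carry the same sequent.

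It remains to check that $(u,v) = (\rho'(k_i), \rho'(k_j))$ is a successful repetition. We have $u \neq v$, there is a path from $u$ to $v$ (the segment of $\rho$), and the labels coincide. The path lies inside $\rho'$, so every sequent on it has a formula in focus. It also passes through the focused $\mathsf{\C R}$ instance whose conclusion is $u$, since the step from $\rho'(k_i)$ to $\rho'(k_i+1)$ is that instance and $k_i+1 \le k_j$.

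There is one subtlety to settle: whether "passes through an instance" counts an instance whose conclusion is the start of the path. If the definition requires the instance strictly inside, I would instead pigeonhole on the nodes $\rho'(k_i+1)$. Alternatively, I pick $i<j$ with $j \ge i+2$, using $2N+1$ positions and pigeonholing among the even-indexed ones; then the instance at $k_{i+1}$ lies strictly between $u$ and $v$. The main obstacle is therefore only the finiteness claim, which rests on verifying analyticity of every rule (including $\mathsf{\C L}$, $\mathsf{\C R}$ and the modal rules) with respect to $\Cl$. This is stated in the paper as an observation after Table~\ref{d: additional rules for ICK}.
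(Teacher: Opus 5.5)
Your proof is correct and follows essentially the same route as the paper: analyticity of all rules and finiteness of $\Cl(\sigma)$ bound the number of sequents on $\rho$, and a pigeonhole argument inside the good suffix then gives the successful repetition. The only cosmetic difference is where you pigeonhole. You apply it directly to the conclusions of the focused $\mathsf{\C R}$ instances, so success is immediate, and your reading of ``passes through'' is the one the soundness argument needs. The paper instead fixes a sequent occurring infinitely often and takes the least later occurrence beyond a focused $\mathsf{\C R}$ instance.
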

\begin{proof}
    Recall that each rule of $\hickp$ is analytic. This implies that every sequent occurring in $\rho$ consists of formulae of $\Cl(\sigma)$ where $\sigma$ is the sequent labeling the root of $\pi$. Since $\Cl(\sigma)$ is finite by Lemma \ref{l: negation closure is finite}, there are only finitely many pairwise different sequents occurring in $\rho$. Thus, there exists a sequent $\Gamma \Rightarrow \Delta$ which labels infinitely many nodes of $\rho$. Since $\pi$ is a $\hickp$-proof, there exists a good suffix $\rho'$ of $\rho$, i.e. every sequent in $\rho'$ has a formula in focus and $\rho'$ passes infinitely often through $\mathsf{\C R}$ with the principal formula in focus. Let $i < \omega$ be the least natural number such that $\rho(i)$ belongs to $\rho'$ and is labeled by $\Gamma \Rightarrow \Delta$. Since there are infinitely many nodes labeled by $\Gamma \Rightarrow \Delta$ in $\rho'$ and $\rho'$ passes through infinitely many instances of $ \mathsf{\C R}$ with the principal formula in focus, there exists a least natural number $i < j < \omega$ such that $\rho(j)$ is labeled by $\Gamma \Rightarrow \Delta$ as well and the path from $\rho(i)$ to $\rho(j)$ passes through an instance of $ \mathsf{\C R}$ with the principal formula in focus. Therefore $(\rho(i), \rho(j))$ is a successful repetition occurring in the branch $\rho$.
\end{proof}

Note that the existence of a successful repetition in $\rho$ implies the existence of a \emph{lowermost} successful repetition in $\rho$.

\begin{definition}
    Let $\hickp \in \{\nick, \nickt, \nicks\}$ and let $\pi$ be a $\hickp$-proof. The \emph{induced cyclic proof} $\pi_c$ is the subtree of $\pi$ obtained by pruning every infinite branch at the lowermost successful repetition.
\end{definition}

The following then follows immediately from Lemma \ref{l: infinite branches contain good repetitions}.

\begin{lemma}\label{l: non-wellfounded proof implies cyclic proof}
   Let $\hickp \in \{\mathsf{ICK}, \mathsf{ICKT}, \mathsf{ICKS4}\}$. If $\pi$ is a $\mathsf{n}\hickp$-proof, then $\pi_c$ is a $\mathsf{c}\hickp$-proof.
\end{lemma}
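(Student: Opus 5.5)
The plan is to check directly that $\pi_c$ satisfies the definition of a cyclic $\mathsf{c}\hickp$-proof: it is a finite pre-proof built from the rules of the corresponding cyclic calculus, and every leaf is either axiomatic or has a companion forming a successful repetition.

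First, the rules of $\mathsf{c}\hickp$ and $\mathsf{n}\hickp$ coincide for $\hickp \in \{\mathsf{ICK}, \mathsf{ICKT}, \mathsf{ICKS4}\}$. So every internal node of $\pi_c$, together with its children, still forms a rule instance, provided the pruning keeps either all children of a node or none of them. To arrange this, we make the pruning precise. For each infinite branch $\rho$, let $(\rho(i_\rho), \rho(j_\rho))$ be its lowermost successful repetition, chosen by minimising $j_\rho$; this exists by Lemma~\ref{l: infinite branches contain good repetitions}. Call $\rho(j_\rho)$ a cut node. Then $\pi_c$ consists of all nodes of $\pi$ having no cut node as a proper ancestor. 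Whether a node is a cut node depends only on the path from the root to it, since the repetition lies on that initial segment. Hence cut nodes are well defined independently of which infinite branch through them was used. A node of $\pi_c$ that is not a cut node and not a leaf of $\pi$ keeps all its children, so rule instances are preserved.

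Next we classify the leaves of $\pi_c$. A leaf is either a leaf of $\pi$, and hence an axiom since $\pi$ is a proof, or a cut node $l = \rho(j_\rho)$. In the latter case $c(l) := \rho(i_\rho)$ is an ancestor of $l$ lying in $\pi_c$, labelled by the same sequent. The path from $c(l)$ to $l$ is successful: every sequent on it has a formula in focus and it passes through an instance of $\mathsf{\C R}$ with principal formula in focus. Thus $(c(l), l)$ is a successful repetition inside $\pi_c$.

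The main obstacle is finiteness of $\pi_c$. Suppose $\pi_c$ were infinite. It is finitely branching, so by K\H{o}nig's Lemma it has an infinite branch, which is also an infinite branch $\rho$ of $\pi$. That branch contains its cut node $\rho(j_\rho)$, below which nothing of $\rho$ survives in $\pi_c$, a contradiction. Hence $\pi_c$ is finite, and it is a $\mathsf{c}\hickp$-proof. The one subtle point to verify carefully is the path-dependence remark: if a successful repetition is found along one branch, it lies entirely within the common initial segment shared by every branch through that node. This guarantees the pruning is consistent.
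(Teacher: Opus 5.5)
Your proposal is correct and follows essentially the same route as the paper: prune each infinite branch at its first successful repetition (which exists by Lemma~\ref{l: infinite branches contain good repetitions}), get finiteness of $\pi_c$ from K\H{o}nig's Lemma, and note that every leaf of $\pi_c$ is either an axiomatic leaf of $\pi$ or the endpoint of a successful repetition. You also show that whether a node is a cut node depends only on its root-to-node path. This guarantees that the pruning is consistent across branches and keeps full rule instances, a point the paper's proof leaves implicit.
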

\begin{proof}
    Since $\pi$ is an $\mathsf{n}\hickp$-proof, every infinite branch of $\pi$ contains a successful repetition by Lemma \ref{l: infinite branches contain good repetitions}. Therefore, by K\H{o}nig's Lemma, $\pi_c$ is finite and every branch $\rho$ of $\pi_c$ corresponds to an initial segment of a branch $\rho'$ of $\pi$. If $\rho'$ is a finite branch, then $\rho = \rho'$ and since $\pi$ is an $\mathsf{n}\hickp$-proof, $\rho$ ends in an axiom. Otherwise $\rho'$ is an infinite branch and $\rho$ is the initial segment of $\rho'$ ending at the first successful repetition of $\rho'$. Thus every leaf of $\pi_c$ is an axiom or belongs to a successful repetition, implying that $\pi_c$ is a $\mathsf{n}\hickp$-proof.
\end{proof}

Completeness of the cyclic calculi then follows as a corollary of Theorem~\ref{t: nonwellfounded completeness} and Lemma~\ref{l: non-wellfounded proof implies cyclic proof}.

\begin{theorem}[Completeness of Cyclic Calculi]\label{t: cyclic completeness}
   Let $\sigma$ be a sequent.
   \begin{enumerate}
       \item If $\sigma$ is valid over the class of epistemic frames, then $\sigma$ has a $\cick$-proof.
        \item If $\sigma$ is valid over the class of reflexive frames, then $\sigma$ has a $\cickt$-proof.
         \item If $\sigma$ is valid over the class of S4 frames, then $\sigma$ has a $\cicks$-proof.
   \end{enumerate}
\end{theorem}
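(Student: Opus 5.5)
This theorem is a direct consequence of results already established, so the plan is to chain them. Fix $\hickp \in \{\mathsf{ICK}, \mathsf{ICKT}, \mathsf{ICKS4}\}$ and a sequent $\sigma$ valid over the corresponding class of frames (epistemic, reflexive, S4).

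First, I invoke Theorem~\ref{t: nonwellfounded completeness}. It yields a non-wellfounded $\mathsf{n}\hickp$-proof $\pi$ of $\sigma$. That theorem in turn rests on three ingredients: a proof-search tree $\mathcal{T}$ with choice rule $\Ck$ (or $\Crt$ in the S4 case), Borel determinacy of the game $\mathcal{G}_\hickp(\mathcal{T}, \mathsf{C_n})$, and Proposition~\ref{p: refutation gives triangle model}, which excludes a winning strategy for Refuter.

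Second, I apply Lemma~\ref{l: non-wellfounded proof implies cyclic proof} to $\pi$ to obtain the induced cyclic proof $\pi_c$. Here I should check the details of that lemma's proof, since this is where the real content lies. By Lemma~\ref{l: infinite branches contain good repetitions}, every infinite branch of $\pi$ contains a successful repetition; this uses analyticity and the finiteness of $\Cl(\sigma)$ (Lemma~\ref{l: negation closure is finite}). Pruning each infinite branch at its lowermost successful repetition leaves a finitely branching tree with no infinite branch, which is finite by K\H{o}nig's Lemma. Each leaf of $\pi_c$ is then either an original axiomatic leaf or the upper node of a successful repetition whose companion lies on the same branch. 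Finally, the rules of $\mathsf{c}\hickp$ coincide with those of $\mathsf{n}\hickp$, so every rule instance in $\pi_c$ is a legitimate rule instance of $\mathsf{c}\hickp$.

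The only delicate point is the pruning step in the second stage. We need the companion to be an ancestor on the same branch, and we need the path between the two nodes to stay within the good suffix, which ensures every sequent on it carries a focused formula. Lemma~\ref{l: infinite branches contain good repetitions} already chooses both nodes inside the good suffix, so this holds. I would also note the small typo in that lemma's conclusion: $\pi_c$ should be called a $\mathsf{c}\hickp$-proof, not an $\mathsf{n}\hickp$-proof. With these checks in place, combining the two stages gives $\vdash_{\mathsf{c}\hickp} \sigma$ for each of the three cases.
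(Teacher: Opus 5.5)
Your proposal is correct and is exactly the paper's proof: Theorem~\ref{t: nonwellfounded completeness} yields an $\mathsf{n}\hickp$-proof $\pi$ of $\sigma$, and Lemma~\ref{l: non-wellfounded proof implies cyclic proof} shows that the induced cyclic proof $\pi_c$ is a $\mathsf{c}\hickp$-proof (you are right that the lemma's last line should read $\mathsf{c}\hickp$). One small simplification is possible: the repetition at which you prune need not lie in the good suffix, because successfulness is defined by the connecting path itself, so any successful repetition (in particular the lowermost one) already supplies a valid companion.
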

\begin{proof}
     Suppose $\sigma$ is valid over the class of epistemic frames. By Theorem \ref{t: nonwellfounded completeness}, $\sigma$ has a $\nick$-proof $\pi$. Lemma \ref{l: non-wellfounded proof implies cyclic proof} implies that $\pi_c$ is a $\cick$-proof of $\sigma$. The cases for reflexive and S4 frames are similar.
\end{proof}

\subsubsection{Completeness for S5 frames}\label{ss: completeness for s5 frames}

 This section shows that $\cickss$ is complete with respect to the class of S5 frames. Due to the presence of $\mathsf{cut}$ in the system we can prove completeness directly via a canonical model construction, which arguably simplifies the completeness proof. To obtain \emph{analytic completeness}, applications of $\mathsf{cut}$ will be restricted to a finite set of formulae relevant to the root sequent. So far we counted as relevant the formulae in the closure of the root sequent. However, as we have seen in the completeness of the axiomatization for the S5 case, the stronger notion of negation closure is needed to deal with the classicality of the modalities. In the following we provide the basic definitions and lemmas to obtain the canonical model for $\cickss$.

\begin{definition}
Let $\Sigma$ be a non-empty, negation closed and finite set of formulae. A sequent $\sigma$ is called a
\begin{enumerate}
    \item \emph{$\Sigma$-sequent} if $\Gamma_\sigma \cup \Delta_\sigma \subseteq \Sigma$;
    \item \emph{$\Sigma$-provable} if there exists a $\cickss$-proof of $\sigma$ in which only $\Sigma$-sequents occur;
    \item \emph{$\Sigma$-saturated} if $\sigma$ is $\Sigma$-unprovable and $\Gamma_\sigma \cup \Delta_\sigma = \Sigma$.
\end{enumerate} 
\end{definition}

Not that if $\Sigma=\Cl^{\neg}(\sigma)$, then $\sigma$ being $\Sigma$-provable means that $\sigma$ has an analytic proof. $\Sigma$-saturated sequents will be the worlds of the canonical model. Let us first prove an analogue of the Lindenbaum Lemma for saturated sequents.

\begin{lemma}[Lindenbaum]\label{l: Lindenbaum for saturated sequents}
    If a $\Sigma$-sequent $\sigma$ is $\Sigma$-unprovable, then there exists a $\Sigma$-saturated sequent $\sigma'$ with $\Gamma_\sigma \subseteq \Gamma_{\sigma'}$ and $\Delta_\sigma \subseteq \Delta_{\sigma'}$.
\end{lemma}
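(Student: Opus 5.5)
The plan is the standard Lindenbaum-style extension using analytic cuts. Fix an enumeration $\psi_0,\ldots,\psi_k$ of the finitely many formulae in $\Sigma$ (finite by assumption). Starting from $\sigma_0 \coloneqq \sigma$, I build a chain of $\Sigma$-unprovable $\Sigma$-sequents $\sigma_0,\sigma_1,\ldots,\sigma_{k+1}$. At stage $n$ I place $\psi_n$ on one side. Given $\sigma_n = \Gamma_n \Rightarrow \Delta_n$, I consider the two candidates $\Gamma_n, \psi_n^\u \Rightarrow \Delta_n$ and $\Gamma_n \Rightarrow \psi_n^\u, \Delta_n$. The key claim is that at least one of them is $\Sigma$-unprovable; I then let $\sigma_{n+1}$ be such a candidate, choosing the left one if both qualify. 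Finally $\sigma' \coloneqq \sigma_{k+1}$ is $\Sigma$-unprovable, it extends $\sigma$ on both sides, and every formula of $\Sigma$ occurs in $\Gamma_{\sigma'} \cup \Delta_{\sigma'}$. Since $\sigma'$ is also a $\Sigma$-sequent, this gives $\Gamma_{\sigma'}\cup\Delta_{\sigma'} = \Sigma$, so $\sigma'$ is $\Sigma$-saturated.

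The key claim follows from the cut rule. Suppose both candidates have $\Sigma$-proofs $\pi_1$ and $\pi_2$. Since $\psi_n \in \Sigma$, an application of $\mathsf{cut}$ on $\psi_n^\u$ with premises $\Gamma_n, \psi_n^\u \Rightarrow \Delta_n$ and $\Gamma_n \Rightarrow \psi_n^\u, \Delta_n$ has a conclusion $\sigma_n$ that is a $\Sigma$-sequent. Placing $\pi_1$ and $\pi_2$ above this cut produces a cyclic pre-proof of $\sigma_n$ in which only $\Sigma$-sequents occur, and this contradicts the unprovability of $\sigma_n$.

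The main point requiring care is that this glued tree really is a cyclic proof. Every leaf of $\pi_1$ and $\pi_2$ is either axiomatic, or has a companion inside its own subproof with a successful path between them. These paths are unchanged in the combined tree, and the notions of repetition and success refer only to the path from companion to leaf, so each leaf still has a successful companion. Two smaller checks remain. First, sequents may contain at most one focused formula, so adding $\psi_n^\u$ unfocused never violates the sequent conditions. Second, if $\psi_n$ already occurs in $\sigma_n$ with some annotation, the step is trivial: the sequent is unchanged (or I simply skip that formula), and unprovability is preserved.
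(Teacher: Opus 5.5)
Your proof is correct and is essentially the paper's argument. The paper builds the whole tree of cuts on fresh $\Sigma$-formulae and notes that if every leaf were $\Sigma$-provable the tree would extend to a $\Sigma$-proof of $\sigma$; you instead follow a single branch of that tree, choosing an unprovable premise at each cut, and your explicit check that gluing cyclic proofs above a cut preserves their successful repetitions is the step the paper leaves implicit.
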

\begin{proof}
    Suppose $\sigma$ is $\Sigma$-unprovable. Let $\pi$ be the pre-proof of $\sigma$ built as follows: starting with $\sigma$ at the root, repeatedly apply the rule $\mathsf{cut}$ bottom-up to introduce fresh $\Sigma$-formulae (i.e. formulae not yet occurring in the conclusion) until every leaf is labeled by a sequent $\Gamma \Rightarrow \Delta$ with $\Gamma \cup \Delta = \Sigma$. At least one of these sequents must be $\Sigma$-unprovable (and thus $\Sigma$-saturated), as otherwise $\pi$ could be extended into a $\Sigma$-proof of $\sigma$.
\end{proof}

Next, we will establish some important properties of saturated sequents, that will become useful in the proof of the Truth Lemma (c.f. Lemma \ref{l: truth lemma for S5}).

\begin{lemma}\label{l: properties of saturated sequents}
    Let $\Gamma \Rightarrow \Delta$ be a $\Sigma$-saturated sequent. 
    \begin{enumerate}
        \item If $\varphi \wedge \psi \in \Sigma$, then $\varphi \wedge \psi \in \Gamma^-$ if and only if $\varphi \in \Gamma^-$ and $\psi \in \Gamma^-$. 
        \item If $\varphi \vee \psi \in \Sigma$, then $\varphi \vee \psi \in \Gamma^-$ if and only if $\varphi \in \Gamma^-$ or $\psi \in \Gamma^-$.
        \item If $\K_\agi \varphi \in \Gamma^-$, then $\varphi \in \Gamma^-$ for all $\agi \in \A$.
        \item $\K_\agi \varphi \in \Gamma^-$ if and only if $\neg \K_\agi \varphi \in \Delta^-$ for all $\agi \in \A$.
        \item $\C \varphi \in \Gamma^-$ if and only if $\varphi \in \Gamma^-$ and $\K_\agi \C \varphi \in \Gamma^-$ for all $\agi \in \A$.
    \end{enumerate}
\end{lemma}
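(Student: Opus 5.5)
The strategy is to exploit that $\Gamma \Rightarrow \Delta$ is $\Sigma$-saturated, so $\Gamma \cup \Delta = \Sigma$ and the sequent is $\Sigma$-unprovable. Every formula of $\Sigma$ therefore lies on the left or on the right. Each item is proved by contraposition. Whenever the claimed membership fails, the formula must sit on the other side. From that placement we build a short cut-free derivation, using only $\Sigma$-sequents, of some sequent whose left side is contained in $\Gamma$ and whose right side is contained in $\Delta$. Such a derivation can always be weakened into one of $\Gamma \Rightarrow \Delta$ itself. Weakening is admissible here: the axioms $\mathsf{id}$ and $\bot$ carry arbitrary contexts, and so do the rules $\mathsf{\K_\agi}$ and $\mathsf{S5_\agi}$ through $\Pi,\Sigma$. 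In any case, the derivations we need are finite trees ending in axioms, so we can simply add the extra context throughout. This contradicts unprovability. Annotations play no role, since all formulae can be taken unfocused (every formula in $\Gamma$ is unfocused anyway).

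For items 1 and 2 we use that $\Sigma$ is closed, so $\varphi,\psi \in \Sigma$ and each sits either in $\Gamma$ or in $\Delta$. For instance, suppose $\varphi\wedge\psi \in \Gamma^-$ but $\varphi \in \Delta^-$. Then $\wedge\mathsf{L}$ followed by $\mathsf{id}$ proves the sequent. Conversely, suppose $\varphi,\psi\in\Gamma^-$ but $\varphi\wedge\psi\in\Delta^-$. Then $\wedge\mathsf{R}$ with two $\mathsf{id}$ leaves closes it. Disjunction is handled symmetrically with $\vee\mathsf{L}$ and $\vee\mathsf{R}$. For item 3, suppose $\K_\agi\varphi\in\Gamma^-$ and $\varphi\in\Delta^-$. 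Then $\mathsf{T_\agi}$ followed by $\mathsf{id}$ gives a proof. For item 5, suppose $\C\varphi \in \Gamma^-$ while $\varphi$ or some $\K_\agi\C\varphi$ lies in $\Delta$ (closure condition 3 puts these in $\Sigma$). Then $\mathsf{\C L}$ and $\mathsf{id}$ give a proof. Conversely, suppose $\varphi$ and all $\K_\agi\C\varphi$ lie in $\Gamma$ but $\C\varphi \in \Delta$. Then $\mathsf{\C R}$ applied with $\C\varphi$ unfocused has premises $\Gamma\Rightarrow\varphi^\u,\dots$ and $\Gamma \Rightarrow \K_\agi\C\varphi^\u,\dots$, each closed by $\mathsf{id}$.

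Item 4 is the main obstacle, because it needs the S5-specific rule and negation closure. Here $\neg\K_\agi\varphi\in\Sigma$ by condition 4 of negation closure, and $\bot \in \Sigma$ by closure. For the left-to-right direction, suppose $\K_\agi\varphi\in\Gamma^-$ and $\neg\K_\agi\varphi \in\Gamma^-$. Then ${\to}\mathsf{L}$ yields the premises $\Rightarrow \K_\agi\varphi$, closed by $\mathsf{id}$, and $\bot \Rightarrow$, closed by the $\bot$ axiom. Hence $\neg\K_\agi\varphi\in\Delta^-$. For the right-to-left direction, suppose $\neg\K_\agi\varphi \in \Delta^-$ and $\K_\agi\varphi\in\Delta^-$. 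Then $\mathsf{\K_\agi{\rightarrow}}$ applied to $\K_\agi\varphi\to\bot$ gives the premise $\Gamma,\K_\agi\varphi^\u\Rightarrow\bot^\u,\Delta$, which contains $\K_\agi\varphi$ on both sides and is an instance of $\mathsf{id}$. Since $\K_\agi\varphi$ must lie on one side, it lies in $\Gamma$. The one check here is that $\bot$ and $\K_\agi\varphi$ lie in $\Sigma$, so that every sequent in these derivations is a $\Sigma$-sequent. This follows from negation closure together with closure under subformulae.
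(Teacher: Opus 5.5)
Your proposal is correct and is essentially the paper's own proof. In each item you use that $\Gamma^-$ and $\Delta^-$ partition $\Sigma$, and you close the offending placement with a one-rule derivation: $\wedge\mathsf{L}/\wedge\mathsf{R}$, $\vee\mathsf{L}/\vee\mathsf{R}$, $\mathsf{T_\agi}$, $\mathsf{\C L}/\mathsf{\C R}$, and for item 4 ${\to}\mathsf{L}$ with $\mathsf{id}$ and $\bot$, respectively $\mathsf{\K_\agi{\rightarrow}}$ with $\mathsf{id}$. Your remarks on carrying the context through these context-preserving rules and on removing focus via $\mathsf{u}$ make explicit what the paper covers with ``without loss of generality''.
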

\begin{proof}
    \textsc{1.} Let $\varphi \wedge \psi \in \Sigma$ and suppose towards contradiction that $\varphi \wedge \psi \in \Gamma^-$ but (without loss of generality) $\varphi \not \in \Gamma^-$. By saturation $\varphi \in \Delta^-$. We assume (without loss of generality) that $\varphi$ occurs unfocused in $\Delta$. Thus $\Gamma \Rightarrow \Delta$ is $\Sigma$-provable as follows:
    \begin{prooftree}
        \AxiomC{}
        \RightLabel{$\mathsf{id}$}
        \UnaryInfC{$\Gamma',\varphi^\u, \psi^\u \Rightarrow \varphi^\u, \Delta'$}
        \RightLabel{$\wedge\mathsf{L}$}
        \UnaryInfC{$\Gamma', \varphi \wedge \psi^\u \Rightarrow \varphi^\u, \Delta'$}
    \end{prooftree}
    where $\Gamma = \Gamma' \cup \{\varphi \wedge \psi^\u\}$ and $\Delta = \Delta'\cup\{\varphi^\u\}$. For the other direction suppose that $\varphi \wedge \psi \in \Sigma$ and both $\varphi \in \Gamma^-$ and $\psi \in \Gamma^-$. Assume towards contradiction that $\varphi \wedge \psi \not \in \Gamma^-$. By saturation $\varphi \wedge \psi \in \Delta^-$. Therefore $\Gamma \Rightarrow \Delta$ is $\Sigma$-provable as follows:
    \begin{prooftree}
        \AxiomC{}
        \RightLabel{$\mathsf{id}$}
        \UnaryInfC{$\Gamma', \varphi^\u, \psi^\u \Rightarrow \varphi^\u, \Delta'$}
        \AxiomC{}
        \RightLabel{$\mathsf{id}$}
        \UnaryInfC{$\Gamma', \varphi^\u, \psi^\u \Rightarrow \psi^\u, \Delta'$}
        \RightLabel{$\wedge \mathsf{R}$}
        \BinaryInfC{$\Gamma', \varphi^\u, \psi^\u \Rightarrow \varphi \wedge \psi^\u, \Delta'$}
    \end{prooftree}
    where $\Gamma = \Gamma' \cup \{\varphi^\u, \psi^\u\}$ and $\Delta = \Delta' \cup \{\varphi \wedge \psi^\u\}$. \smallskip
    
    \noindent The proofs of 2. and 3. are analogous by using the rules for disjunction and the rule $\mathsf{T_\agi}$, respectively. \smallskip \\
    \textsc{4.} Suppose towards contradiction that $\K_\agi \varphi \in \Gamma^-$ and $\neg \K_\agi \varphi \not \in \Delta^-$. Note that $\K_\agi \varphi \in \Gamma$ implies $\K_\agi \varphi \in \Sigma$ and since $\Sigma$ is negation closed, $\neg \K_\agi \varphi \in \Sigma$ as well. By saturation $\neg \K_\agi \varphi \in \Gamma^-$. Then $\Gamma \Rightarrow \Delta$ is $\Sigma$-provable as follows:
    \begin{prooftree}
        \AxiomC{}
        \RightLabel{$\mathsf{id}$}
        \UnaryInfC{$\Gamma', \K_\agi \varphi^\u \Rightarrow \K_\agi \varphi^\u, \Delta$}
        \AxiomC{}
        \RightLabel{$\bot$}
        \UnaryInfC{$\Gamma', \K_\agi \varphi^\u, \bot^\u \Rightarrow \Delta$}
        \RightLabel{${\rightarrow}\mathsf{L}$}
        \BinaryInfC{$\Gamma', \K_\agi \varphi^\u, \K_\agi \varphi \rightarrow \bot^\u \Rightarrow \Delta$}
    \end{prooftree}
    where $\Gamma = \Gamma' \cup \{\K_\agi \varphi^\u, \K_\agi \varphi \rightarrow \bot^\u\}$. For the other direction suppose towards contradiction that $\neg \K_\agi \varphi \in \Delta^-$ and $\K_\agi \varphi \not \in \Gamma^-$. By saturation $\K_\agi \varphi \in \Delta^-$. We assume (without loss of generality) that $\K_\agi \varphi$ occurs unfocused in $\Delta$. Then $\Gamma \Rightarrow \Delta$ is $\Sigma$-provable as follows:
    \begin{prooftree}
        \AxiomC{}
        \RightLabel{$\mathsf{id}$}
        \UnaryInfC{$\Gamma, \K_\agi \varphi^\u \Rightarrow \K_\agi \varphi^\u, \bot^\u, \Delta'$}
        \RightLabel{${\rightarrow}\mathsf{\K_\agi}$}
        \UnaryInfC{$\Gamma \Rightarrow \K_\agi \varphi^\u, \K_\agi \varphi \rightarrow \bot^\u, \Delta'$}
    \end{prooftree}
    where $\Delta = \Delta' \cup \{ \K_\agi \varphi^\u, \K_\agi \varphi \rightarrow \bot^\u\}$. \smallskip
    
    \noindent The proof of 5. is similar, using saturation and the rules for common knowledge.
\end{proof}

The canonical model relative to $\Sigma$ is defined as follows.

\begin{definition}
    Let $\Sigma$ be a non-empty, negation closed and finite set of formulae. The \emph{canonical model} (relative to $\Sigma)$ is given by $\fw^\Sigma =(\W^\Sigma, \leq^\Sigma,\rel^\Sigma, \V^\Sigma)$ where
    \begin{itemize}
		\item $\W^\Sigma \coloneqq \{\Gamma^- \mid \Gamma \Rightarrow \Delta \text{ is a $\Sigma$-saturated sequent}\}$;
        \item $\Gamma^- \leq^\Sigma \Pi^-$ if and only if $\Gamma^- \subseteq \Delta^-$ for any $\Gamma^-,\Pi^- \in \W^\Sigma$;
        \item $\rel^\Sigma \coloneqq \{R_\agi^\Sigma \mid \agi \in \A\}$ where for each $\agi \in \A$, $R_\agi^\Sigma \subseteq \W^\Sigma \times \W^\Sigma$ given by 
        \begin{center}
            $\Gamma^- \Rel_\agi^\Sigma \Pi'$ iff $\K_\agi \K_\agi^{-1} \Gamma^- = \K_\agi \K_\agi^{-1} \Pi^-$,
        \end{center} 
        for any $\Gamma^-, \Pi^- \in \W$;
        \item $\V^\Sigma(\Gamma^-) := \Gamma^- \cap \Prop$. 
		\end{itemize}

\end{definition}

\begin{lemma}\label{l: canonical model is S5}
    The canonical model is an S5 model.
\end{lemma}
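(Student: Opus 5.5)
The plan mirrors the proof of Lemma~\ref{l: canonical model} for $\fw^{(\Sigma,\hickss)}$, with the Lindenbaum and saturation facts for $\Sigma$-saturated sequents (Lemma~\ref{l: Lindenbaum for saturated sequents} and Lemma~\ref{l: properties of saturated sequents}) replacing those for $\K$-maximally consistent prime theories.

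First, $\W^\Sigma$ is non-empty. I will pick an unprovable $\Sigma$-sequent: the sequent $\bot^\u \Rightarrow$ is an axiom, so that choice fails. Instead, since $\Sigma$ is non-empty, I consider the empty sequent $\Rightarrow$. It is a $\Sigma$-sequent, and it is invalid since $\bigwedge\emptyset\to\bigvee\emptyset = \top\to\bot$. By Theorem~\ref{t: CIM soundness} it is therefore unprovable, in particular $\Sigma$-unprovable. Lemma~\ref{l: Lindenbaum for saturated sequents} then yields a $\Sigma$-saturated sequent, hence a world. Finiteness is clear because $\Sigma$ is finite.

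Second, the standard structural properties. Here $\leq^\Sigma$ is $\subseteq$ (reading the typo $\Delta^-$ as $\Pi^-$), so it is a partial order. The valuation $\Gamma^-\cap\Prop$ is monotone in $\subseteq$. Each $R_\agi^\Sigma$ is defined by an equality $\K_\agi\K_\agi^{-1}\Gamma^-=\K_\agi\K_\agi^{-1}\Pi^-$, so it is automatically reflexive, symmetric and transitive, i.e. an equivalence relation.

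Third, triangle confluence, which is the only substantive step. I assume $\Gamma^-\subseteq\Pi^-$ and $\Pi^- \Rel_\agi^\Sigma \Omega^-$, and must show $\K_\agi\K_\agi^{-1}\Gamma^-=\K_\agi\K_\agi^{-1}\Pi^-$. The inclusion $\subseteq$ is immediate. For $\supseteq$, I take $\K_\agi\varphi\in\Pi^-$ and suppose $\K_\agi\varphi\notin\Gamma^-$. Since $\K_\agi\varphi\in\Sigma$ and the sequent $\Gamma\Rightarrow\Delta$ is saturated ($\Gamma\cup\Delta=\Sigma$), we get $\K_\agi\varphi\in\Delta^-$. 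By Lemma~\ref{l: properties of saturated sequents}, Item 4 (reading it in its intended form: $\K_\agi\varphi\in\Gamma^-$ iff $\neg\K_\agi\varphi\notin\Delta^-$, equivalently iff $\neg\K_\agi\varphi\notin\Gamma^-$, which its proof establishes via the two derivations), we obtain $\neg\K_\agi\varphi\in\Gamma^-\subseteq\Pi^-$.

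Then $\Pi^-$ contains both $\K_\agi\varphi$ and $\neg\K_\agi\varphi$ on the left. The first derivation in the proof of Item 4, namely ${\to}\mathsf{L}$ followed by $\mathsf{id}$ and $\bot$, makes the saturated sequent with left side $\Pi$ $\Sigma$-provable, which contradicts saturation. Hence the two sets are equal, and transitivity of equality gives $\Gamma^-\Rel_\agi^\Sigma\Omega^-$.

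The main obstacle is exactly this triangle confluence step, specifically getting the right form of the ``$\K$-maximality'' of saturated sequents from Lemma~\ref{l: properties of saturated sequents}, Item 4. It is also where negation closure of $\Sigma$ and the rule ${\rightarrow}\mathsf{\K_\agi}$ are needed. A secondary subtlety is that worlds are the left sides $\Gamma^-$, so I must check that each $\Gamma^-$ determines its saturated sequent (its right side is $\Sigma\setminus\Gamma^-$ up to annotations). This ensures that properties of the sequent transfer to the world.
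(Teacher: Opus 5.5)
Your proof is correct and follows the paper's argument. The structural properties are immediate, and triangle confluence goes exactly as in the paper: Item 4 of Lemma~\ref{l: properties of saturated sequents} forces both $\K_\agi\varphi$ and $\neg\K_\agi\varphi$ into $\Pi^-$, a contradiction. Your non-emptiness check via soundness and Lemma~\ref{l: Lindenbaum for saturated sequents} is a welcome addition that the paper omits.

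One slip: Item 4 is already correct as stated in the paper. Your ``intended form'' ($\K_\agi\varphi\in\Gamma^-$ iff $\neg\K_\agi\varphi\notin\Delta^-$) is actually wrong. The ``equivalently'' version you go on to use ($\neg\K_\agi\varphi\notin\Gamma^-$) is right and is equivalent to the paper's statement, so the argument stands.
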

\begin{proof}
    That $(\W^\Sigma, \leq^\Sigma)$ is a partial order and each $\R_\agi^\Sigma$ is reflexive, transitive and symmetric is immediate. Furthermore, by definition of $\V^\Sigma$ and $\leq^\Sigma$ the valuation is monotone. It remains to show that each $\R_\agi^\Sigma$ is triangle confluent. Suppose that $\Gamma^- \leq^\Sigma \Pi^-$ and $\Pi^- \mathrel{R_\agi^\Sigma} \Omega^-$. By definition $\K_\agi \K_\agi^{-1} \Pi^- = \K_\agi \K_\agi^{-1}\Omega^-$. Moreover, $\K_\agi \K_\agi^{-1} \Gamma^- \subseteq \K_\agi \K_\agi^{-1} \Pi^-$. Suppose towards contradiction that there exists a formula $\K_\agi \varphi \in  \Pi^-$ such that $\K_\agi \varphi \not \in \Gamma^-$. Let $\Gamma \Rightarrow \Delta$ be a saturated sequent. By saturation $\K_\agi \varphi \in \Delta^-$. By Lemma \ref{l: properties of saturated sequents}, $\neg \K_\agi \varphi \in \Gamma^-$. Hence $\neg \K_\agi \varphi \in \Pi^-$, contradicting that $\K_\agi \varphi \in \Pi^-$. Therefore $\K_\agi \K_\agi^{-1} \Gamma^- = \K_\agi \K_\agi^{-1} \Pi^- = \K_\agi \K_\agi^{-1} \Omega^-$, implying that $\Gamma^- \mathrel{R_\agi^\Sigma} \Omega^-$.
\end{proof}

\begin{lemma}[Truth Lemma]\label{l: truth lemma for S5}
    For any $\varphi \in \Sigma$ and any $\Gamma^- \in \W^\Sigma$: $\varphi \in \Gamma^-$ if and only if $\fw^\Sigma, \Gamma^- \models \varphi$. 
\end{lemma}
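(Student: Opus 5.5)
We argue by induction on the structure of $\varphi \in \Sigma$, mirroring the Truth Lemma for the Hilbert systems (Lemma~\ref{l: truth lemma}), but replacing derivability by $\Sigma$-provability and prime theories by $\Sigma$-saturated sequents. Throughout we use that $\Sigma$-saturated means $\Gamma \cup \Delta = \Sigma$ and unprovable. Unprovability then forces $\Gamma^- \cap \Delta^- = \emptyset$, since otherwise $\mathsf{id}$ applies; so a formula of $\Sigma$ not in $\Gamma^-$ lies in $\Delta^-$. The atomic case holds by definition of $\V^\Sigma$, and the $\bot$ case by the axiom $\bot$. The cases for $\wedge$ and $\vee$ are immediate from Lemma~\ref{l: properties of saturated sequents} and the induction hypothesis.

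\textbf{Implication.} Left-to-right uses monotonicity in $\leq^\Sigma$ (inclusion of left sides). If $\psi\to\chi\in\Gamma^-$ and $\Gamma^-\subseteq\Pi^-$ with $\psi\in\Pi^-$, then $\chi\in\Pi^-$. Otherwise $\chi$ lies on the right of $\Pi$'s sequent, and ${\to}\mathsf{L}$ closes both premises by $\mathsf{id}$. For right-to-left, suppose $\psi\to\chi\notin\Gamma^-$. Then $\psi\to\chi$ is on the right of $\Gamma\Rightarrow\Delta$, and the sequent $\Gamma,\psi^\u\Rightarrow\chi^\u$ is $\Sigma$-unprovable, since otherwise ${\to}\mathsf{R}$ yields a $\Sigma$-proof of $\Gamma\Rightarrow\Delta$. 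By Lemma~\ref{l: Lindenbaum for saturated sequents} we extend it to a saturated $\Pi\Rightarrow\Omega$ with $\Gamma^-\subseteq\Pi^-$, $\psi\in\Pi^-$ and $\chi\notin\Pi^-$, and conclude by the induction hypothesis.

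\textbf{Knowledge.} Left-to-right: if $\K_\agi\psi\in\Gamma^-$ and $\Gamma^- \Rel^\Sigma_\agi \Pi^-$, then $\K_\agi\psi\in\Pi^-$, and Lemma~\ref{l: properties of saturated sequents}(3) gives $\psi\in\Pi^-$. Right-to-left is the expected main obstacle. Suppose $\K_\agi\psi\notin\Gamma^-$, so $\K_\agi\psi$ is on the right. Consider
\[
\K_\agi\K_\agi^{-1}\Gamma^{-,\u} \Rightarrow \psi^\u,\ \K_\agi \Theta^\u, \qquad \Theta=\{\theta \mid \K_\agi\theta\in\Sigma,\ \K_\agi\theta\notin\Gamma^-\}.
\]
This sequent is $\Sigma$-unprovable: otherwise $\mathsf{S5_\agi}$, with $\K_\agi\Theta$ in the role of $\K_\agi\Delta$ and principal $\K_\agi\psi\in\Delta$, yields a proof of $\Gamma\Rightarrow\Delta$. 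Here we must check that $\K_\agi\Theta\subseteq\Delta$, which holds by saturation, and handle annotations, for which we unfocus $\K_\agi\psi$ with $\mathsf{u}$ if necessary. By Lemma~\ref{l: Lindenbaum for saturated sequents} we extend to a saturated $\Pi\Rightarrow\Omega$. Then $\psi\notin\Pi^-$. Also $\K_\agi\K_\agi^{-1}\Pi^- = \K_\agi\K_\agi^{-1}\Gamma^-$, because every $\K_\agi\theta\notin\Gamma^-$ sits on the right of $\Pi$ and hence not in $\Pi^-$. So $\Gamma^- \Rel^\Sigma_\agi \Pi^-$, and the induction hypothesis gives $\fw^\Sigma,\Gamma^-\not\models\K_\agi\psi$. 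Note $\neg\K_\agi$ formulas are handled automatically through Lemma~\ref{l: properties of saturated sequents}(4) and the implication case.

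\textbf{Common knowledge.} Left-to-right: by Lemma~\ref{l: properties of saturated sequents}(5) and the knowledge case, $\C\psi$ propagates along $R_\agi^\Sigma$-steps, since $\K_\agi\C\psi\in\Gamma^-$ transfers to related worlds. Hence $\psi$ holds throughout $R^*$. For right-to-left, suppose $\C\psi\notin\Gamma^-$ but, for contradiction, $\psi\in\Pi^-$ for every $\Pi^-$ reachable from $\Gamma^-$. We build a cyclic proof of $\Gamma\Rightarrow\Delta$ using only $\Sigma$-sequents:
\begin{itemize}
\item focus $\C\psi$ and apply $\mathsf{\C R}$;
\item close the left premise by $\mathsf{id}$, since $\psi\in\Gamma^-$;
\item for each premise with $\K_\agi\C\psi^\f$, apply $\mathsf{S5_\agi}$ as above, then $\mathsf{cut}$ on formulae of $\Sigma$ to reach saturated sequents.
\end{itemize}
Every saturated sequent reached is either provable or of the form of a reachable world with $\C\psi$ on the right, where we repeat. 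Since there are finitely many saturated sequents, every open branch eventually repeats a sequent. We check that the path between repetitions stays focused (cuts keep the focused $\C\psi^\f$ as a side formula) and passes a focused $\mathsf{\C R}$, so the repetitions are successful. This gives a $\Sigma$-proof, contradicting saturation.
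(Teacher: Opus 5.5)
Your proposal is correct and follows the paper's proof essentially step for step: structural induction, a Lindenbaum extension after a failed ${\to}\mathsf{R}$ or $\mathsf{S5_\agi}$ step, and for $\C$ a cyclic $\Sigma$-proof built by cutting to saturation and closing branches at repetitions by pigeonhole. The only deviations are small and slightly cleaner. First, you put every $\K_\agi$-formula outside $\Gamma^-$, including $\K_\agi\psi$ itself, into the $\mathsf{S5_\agi}$-premise, which makes $\Gamma^- \mathrel{R^\Sigma_\agi} \Pi^-$ immediate. Second, you cut to saturation directly after $\mathsf{S5_\agi}$, so the $\psi$-premise of every later $\mathsf{\C R}$ closes by $\mathsf{id}$, whereas the paper needs a separate Lindenbaum argument to show its $\sigma'$ is $\Sigma$-provable.
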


\begin{proof}
    By induction on the structure of $\varphi$. For $\varphi = \bot$ observe that if $\bot \in \Gamma^-$ then for any $\Delta \subseteq \Sigma$, the sequent $\Gamma \Rightarrow \Delta$ is an instance of the axiom $\bot$ and therefore $\Sigma$-provable. Hence $\bot \not \in \Gamma^-$ and by definition $\fw^\Sigma, \Gamma^- \not \models \bot$. For $\varphi = p$ where $p \in \Prop$, observe that $p \in \Gamma^-$ if and only if $p \in \V^\Sigma(\Gamma^-)$ if and only if $\fw^\Sigma, \Gamma^- \models p$. The cases for $\varphi = \psi \wedge \chi$ and $\varphi = \psi \vee \chi$ follow directly from Lemma \ref{l: properties of saturated sequents} and the induction hypothesis. \smallskip
    
    \noindent \textsc{Case for $\varphi = \psi \rightarrow \chi$.} First suppose that $\psi \rightarrow \chi \in \Gamma^-$. We want to show that $\fw^\Sigma, \Gamma^- \models \psi \rightarrow \chi$. Let $\Pi^- \in \W^\Sigma$ be any world such that $\Gamma^- \leq^\Sigma \Pi^-$. Hence $\Gamma^- \subseteq \Pi^-$ and so $\psi \rightarrow \chi \in \Pi^-$. Suppose $\fw^\Sigma, \Pi^- \models \psi$. By induction hypothesis $\psi \in \Pi^-$. Let $\Pi \Rightarrow \Omega$ be $\Sigma$-saturated and suppose towards contradiction that $\chi \in \Omega^-$. We may assume without loss of generality that $\chi$ occurs unfocused in $\Omega$ (otherwise apply $\mathsf{u}$). Write $\Pi_0$ for $\Pi \setminus \{\psi^\u, \psi \rightarrow \chi^\u\}$ and $\Omega_0$ for $\Omega \setminus \{\chi^\u\}$. The following is a $\Sigma$-proof of $\Pi \Rightarrow \Omega$, which contradicts the assumption that $\Pi \Rightarrow \Omega$ is $\Sigma$-saturated:
    \begin{prooftree}
        \AxiomC{}
        \RightLabel{$\mathsf{id}$}
        \UnaryInfC{$\Pi_0, \psi^\u \Rightarrow \psi^\u, \chi^\u, \Omega_0 $}
        \AxiomC{}
        \RightLabel{$\mathsf{id}$}
        \UnaryInfC{$\Pi_0, \psi^\u, \chi^\u \Rightarrow  \chi^\u, \Omega_0 $}
        \RightLabel{${\rightarrow} \mathsf{L}$}
        \BinaryInfC{$\Pi_0, \psi^\u, \psi \rightarrow \chi^\u \Rightarrow \chi^\u, \Omega_0$} 
    \end{prooftree}
    Therefore $\chi \in \Pi^-$. The induction hypothesis now yields that $\fw^\Sigma, \Pi^- \models \chi$, implying that $\fw^\Sigma, \Gamma^- \models \psi \rightarrow \chi$. 
    
    For the other direction suppose $\psi \rightarrow \chi \not \in \Gamma^-$. Let $\Gamma \Rightarrow \Delta$ be $\Sigma$-saturated. By saturation $\psi \rightarrow \chi \in \Delta^-$. Apply the rule ${\rightarrow}\mathsf{R}$ to $\psi \rightarrow \chi$. The premise of this application is $\Gamma, \psi^u \Rightarrow \chi^u$ which must be $\Sigma$-unprovable. Lemma \ref{l: Lindenbaum for saturated sequents} implies that there exists a $\Sigma$-saturated sequent $\Pi \Rightarrow \Omega$ with $\Gamma, \psi^u \subseteq \Pi$ and $\chi \in \Omega^-$. By induction hypothesis $\fw^\Sigma, \Pi^- \models \psi$ and ${\fw^\Sigma, \Pi^- \not \models \chi}$.  Since $\Gamma^- \leq^\Sigma \Pi^-$ it follows that $\fw^\Sigma, \Gamma^- \not \models \psi \rightarrow \chi$.  \smallskip
    
    \noindent \textsc{Case for $\varphi = \K_\agi \psi$.} First suppose that $\K_\agi \psi \in \Gamma^-$. Let $\Pi^- \in \W^\Sigma$ be any world such that $\Gamma^- \mathrel{R^\Sigma_\agi} \Pi^-$. By definition $\K_\agi \psi \in \Pi^-$ and so by Lemma \ref{l: properties of saturated sequents}, $\psi \in \Pi^-$. Hence, by induction hypothesis, $\fw^\Sigma, \Pi^- \models \psi$, implying that $\fw^\Sigma, \Gamma^- \models \K_\agi \psi$.
    
    For the other direction suppose $\K_\agi \psi \not \in \Gamma^-$. Let $\Gamma \Rightarrow \Delta$ be $\Sigma$-saturated. By saturation $\K_\agi\psi^\an \in \Delta$ for some $\an \in \{\u, \f \}$. Let $\Delta_0 = \Delta \setminus \{\K_\agi \psi^\an \}$. Observe that the sequent
    \begin{equation*}
        \K_\agi \K_\agi^{-1} \Gamma \Rightarrow \psi^\an, \K_\agi \K_\agi^{-1} \Delta_0
    \end{equation*}
    is $\Sigma$-unprovable, as otherwise, by applying $\mathsf{S5_\agi}$, we could derive $\Gamma \Rightarrow \Delta$. By Lemma~\ref{l: Lindenbaum for saturated sequents} there exists a saturated sequent $\Pi \Rightarrow \Omega$ with $\K_\agi\K_\agi^{-1}\Gamma \subseteq \Pi$ and $ \{\psi^\an \} \cup \K_\agi \K_\agi^{-1} \Delta_0 \subseteq \Omega$. By construction $\psi \not \in \Pi^-$ and so the induction hypothesis yields $\fw^\Sigma, \Pi^- \not \models \psi$. It remains to show that $\Gamma^- \mathrel{R_\agi^\Sigma} \Pi^-$. By construction $\K_\agi \K_\agi^{-1} \Gamma^- \subseteq \K_\agi \K_\agi^{-1} \Pi^-$. For the other direction suppose $\K_\agi \gamma \in \Pi^-$. Then $\K_\agi \gamma \not \in \Omega^-$, and hence in particular $\K_\agi \gamma \not \in \Delta^-$. Thus, by saturation, $\K_\agi \gamma \in \Gamma^-$. Therefore $\Gamma^- \mathrel{R_\agi^\Sigma} \Pi^-$ and so $\fw^\Sigma, \Gamma^- \not \models \K_\agi \psi$.\smallskip
    
    \noindent \textsc{Case for $\varphi = \C \psi$.} First suppose that $\C \psi \in \Gamma^-$. We want to show that $\fw^\Sigma, \Pi^- \models \psi$ for any $\Pi^- \in \W^\Sigma$ such that $\Gamma^- \mathrel{(R^\Sigma)^*} \Pi^-$. We prove by induction on the length $n$ of the path from $\Gamma^-$ to $\Pi^-$ that $\psi \in \Pi^-$ and $\C \psi \in \Pi^-$. For the base case by assumption $\C \psi \in \Gamma^-$. Moreover, by Lemma \ref{l: properties of saturated sequents}, $\psi \in \Gamma^-$. For the induction step suppose that $\Gamma^- \mathrel{(R^\Sigma)^n} \Omega^-$ and $\Omega^- \mathrel{R_\agi^\Sigma} \Pi^-$ for some $\agi \in \A$. By induction hypothesis $\psi \in \Omega^-$ and $\C \psi \in \Omega^-$. By Lemma~\ref{l: properties of saturated sequents} also $\K_\agi \C \psi \in \Omega^-$. Hence, by definition of $R_\agi^\Sigma$, we have that $\K_\agi \C \psi \in \Pi^-$. Lemma~\ref{l: properties of saturated sequents} implies that $\C \psi \in \Pi^-$ and $\psi \in \Pi^-$. Hence $\psi \in \Pi^-$ for any $\Pi^- \in \W^\Sigma$ with $\Gamma^- \mathrel{(R^\Sigma)^*} \Pi^-$. By induction hypothesis $\fw^\Sigma, \Pi^- \models \psi$ and therefore $\fw^\Sigma, \Gamma^- \models \C \psi$. \smallskip
    
	For the other direction suppose $\C \psi \notin \Gamma^-$ and consider a $\Sigma$-saturated sequent $\Gamma \Rightarrow \Delta$.  By the presence of the rules $\mathsf{u}$ and $\mathsf{f}$, we may assume without loss of generality that $\C \psi^\f \in \Delta$. Now suppose, towards a contradiction, that $\fw^\Sigma, \Gamma^-  \models \C \psi$. For every $\Pi^- \in \W^\Sigma$ with $\Gamma^- \mathrel{(R^\Sigma)^*} \Pi^-$ then holds that $\fw^\Sigma, \Pi^- \models \psi$. In particular, it follows that $\fw^\Sigma, \Gamma^- \models \psi$, hence, by induction hypothesis, we have $\psi^\u \in \Gamma$.
			
	Let $\Delta_0 = \Delta \setminus \{\C \psi^\f\}$. Consider the following proof, where we assume without loss of generality that there are $n$ agents $\agi_1, \ldots, \agi_n$:
			\begin{prooftree}
			\AxiomC{}
			\RightLabel{$\mathsf{id}$}
			\UnaryInfC{$\Gamma \Rightarrow \psi^\u, \Delta_0$}
			\noLine
			\AxiomC{$\pi_1$}
			\UnaryInfC{$\Gamma \Rightarrow \K_{\agi_1} \C \psi^\f, \Delta_0$}
			\AxiomC{}
			\noLine
			\UnaryInfC{$\cdots$}
			\AxiomC{$\pi_n$}
			\noLine
			\UnaryInfC{$\Gamma \Rightarrow \K_{\agi_n} \C \psi^\f, \Delta_0$}
			\RightLabel{$\mathsf{\C R}$}
			\QuaternaryInfC{$\Gamma \Rightarrow \C \psi^\f, \Delta_0$}
            \end{prooftree}
			 where each $\pi_i$ is constructed as follows:
			\begin{prooftree}
			\AxiomC{$\pi'$}
			\noLine
			\UnaryInfC{$\sigma'$}
			\AxiomC{$\pi'_1$}
			\noLine
			\UnaryInfC{$\sigma'_1$}
			\AxiomC{$\cdots$}
			\noLine
			\UnaryInfC{$\cdots$}
			\AxiomC{$\pi'_n$}
			\noLine
			\UnaryInfC{$\sigma'_n$}
			\RightLabel{$\mathsf{\C R}$}
			\QuaternaryInfC{$\K_{\agi_i} \K_{\agi_i}^{-1} \Gamma \Rightarrow \C \psi^\f, \K_{\agi_i} \K_{\agi_i}^{-1} \Delta_0$}
			\RightLabel{$\mathsf{S5_{\agi_i}}$}
			\UnaryInfC{$\Gamma \Rightarrow \K_{\agi_i} \C \psi^\f, \Delta_0$}
			\end{prooftree}
			In the above proof the sequent $\sigma'$ is given by
			\[
			\K_{\agi_i} \K_{\agi_i}^{-1} \Gamma \Rightarrow \psi^\u, \K_{\agi_i} \K_{\agi_i}^{-1} \Delta_0
			\]
			and the proof $\pi'$ is obtained from the $\Sigma$-provability of $\sigma'$. Indeed, if $\sigma'$ were not $\Sigma$-provable, then by applying Lemma \ref{l: Lindenbaum for saturated sequents} and the induction hypothesis, we would obtain a saturated sequent $\Pi \Rightarrow \Omega$ extending $\sigma'$ and such that $\fw^\Sigma, \Pi^- \not \models \psi$. First note that $\K_{\agi_i} \C \psi \not \in \Pi^-$, as otherwise $\psi \in \Pi^-$, implying that the sequent $\Pi \Rightarrow \Omega$ is an instance of $\mathsf{id}$ and hence $\Sigma$-provable. Therefore $\K_{\agi_i} \C \psi \in \Omega^-$. Note that $\Gamma^- \mathrel{R_{\agi_i}^\Sigma} \Pi^.$: by construction $\K_{\agi_i} \K_{\agi_i}^{-1} \Gamma^- \subseteq \K_{\agi_i} \K_{\agi_i}^{-1} \Pi^-$. Suppose towards contradiction that there exists a formula $\K_{\agi_i} \chi \in \Pi^-$ such that $\K_{\agi_i} \chi \not \in \Gamma^-$. Therefore, by saturation, $\K_{\agi_i} \chi \in \Delta^-$. Since $\chi \not = \C \psi$, $\K_{\agi_i} \chi \in \Delta_0^-$ and so $\K_{\agi_i} \chi \in \K_{\agi_i} \K_{\agi_i}^{-1} \Delta_0^-$.  Therefore $\K_{\agi_i} \chi \in \Omega^-$, implying that $\K_{\agi_i} \chi \not \in \Pi^-$, a contradiction. Therefore $\Gamma^- \mathrel{R_{\agi_i}^\Sigma} \Pi^-$. Since $\fw^\Sigma, \Pi^- \not \models \psi$, this contradicts the assumption that ${\fw^\Sigma, \Gamma^- \models \C \psi}$. Hence, $\sigma'$ must be $\Sigma$-provable. \smallskip
			
		    Furthermore, each sequent $\sigma'_l$ for $ l \in [n+1]$ in the derivation $\pi_i$ is given by
			\[
			  \K_{\agi_i} \K_{\agi_i}^{-1} \Gamma \Rightarrow \K_{\agi_l} \C \psi^\f, \K_{\agi_i} \K_{\agi_i}^{-1} \Delta_0
			\]
			and each derivation $\pi'_l$ is constructed by repeatedly applying $\mathsf{cut}$ to add formulae from $\Sigma$ until every leaf is either saturated or $\Sigma$-provable. To the leaves that are $\Sigma$-provable we append their respective proofs. Suppose $\Pi \Rightarrow \K_{\agi_l} \C \psi^\f, \Omega$ is a saturated leaf. Note that by construction $\K_{\agi_i} \K_{\agi_i}^{-1} \Gamma = \K_{\agi_i} \K_{\agi_i}^{-1} \Pi$ and therefore $\Gamma^- \mathrel{R_{\agi_i}^\Sigma} \Pi^-$. The assumption $\fw^\Sigma, \Gamma^- \models \C \psi$ therefore entails that $\fw^\Sigma, \Pi^- \models \C \psi$ and thus $\fw^\Sigma, \Pi^- \models \psi$ which, by induction hypothesis, implies that $\psi \in \Pi^-$. Hence we can apply the same process to $\Pi \Rightarrow \K_{\agi_l} \C \psi^\f, \Omega$ as we have just applied to $\Gamma \Rightarrow \K_{\agi_i} \C \psi^\f, \Delta_0$. \smallskip
			
			Since $\Sigma$ is finite, there are only finitely many distinct $\Sigma$-saturated sequents. This entails, by the pidgeonhole principle, that at some point one of the saturated leaves obtained from our construction must be identical to a saturated leaf reached earlier in the construction. Note that in this case the upward path from the earlier saturated leaf to the later one is successful. We then terminate the construction of this branch. Since every branch is terminated at some point, we end up with a $\Sigma$-proof of $\Gamma \Rightarrow \Delta$, a contradiction. Hence $\fw^\Sigma, \Gamma^- \not \models \C \psi$, which finishes the proof.
\end{proof}

\begin{theorem}[Analytic Completeness]\label{t: completeness of S5}
     If a sequent $\sigma$ is valid over the class of S5 frames, then $\sigma$ has an analytic proof in $\cickss$.
\end{theorem}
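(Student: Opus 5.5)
We argue by contraposition, using the canonical model $\fw^\Sigma$ built above with $\Sigma \coloneqq \Cl^\neg(\sigma)$. By Lemma~\ref{l: negation closure is finite}, $\Sigma$ is finite, and by definition it is negation closed and non-empty. Recall that an analytic proof of $\sigma$ is exactly a $\Sigma$-proof, i.e.\ a $\cickss$-proof in which every sequent is a $\Sigma$-sequent; in particular all cut formulae lie in $\Sigma$. So suppose $\sigma$ has no analytic proof, i.e.\ $\sigma$ is $\Sigma$-unprovable. We will show that $\sigma$ is falsifiable over the class of S5 frames.

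First, apply Lemma~\ref{l: Lindenbaum for saturated sequents} to obtain a $\Sigma$-saturated sequent $\Gamma \Rightarrow \Delta$ with $\Gamma_\sigma \subseteq \Gamma$ and $\Delta_\sigma \subseteq \Delta$. This requires $\sigma$ to be a $\Sigma$-sequent, which it is by the choice of $\Sigma$. Then $\Gamma^-$ is a world of $\fw^\Sigma$, so in particular $\W^\Sigma$ is non-empty. By Lemma~\ref{l: canonical model is S5}, $\fw^\Sigma$ is an S5 model.

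Next, apply the Truth Lemma~\ref{l: truth lemma for S5} at the world $\Gamma^-$. Every $\varphi \in \Gamma_\sigma^-$ lies in $\Gamma^-$, so $\fw^\Sigma, \Gamma^- \models \varphi$. For every $\psi \in \Delta_\sigma^-$ we have $\psi \in \Delta^-$. Since $\Gamma \Rightarrow \Delta$ is $\Sigma$-unprovable, $\psi \notin \Gamma^-$: otherwise the sequent would be an instance of $\mathsf{id}$, possibly after one application of $\mathsf{u}$ if $\psi$ is focused. Hence $\fw^\Sigma, \Gamma^- \not\models \psi$. Because $\leq^\Sigma$ is reflexive, $\Gamma^-$ itself serves as the witness: it satisfies $\bigwedge \Gamma_\sigma^-$ and refutes $\bigvee \Delta_\sigma^-$. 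Therefore $\fw^\Sigma, \Gamma^- \not\models \sigma^I$, and $\sigma$ is falsifiable over S5 frames, contradicting its validity.

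Almost all the weight of this argument rests on the Truth Lemma, which is already established. The only points needing care are the following.
\begin{enumerate}
\item Checking that ``analytic proof'' coincides with $\Sigma$-provability for $\Sigma = \Cl^\neg(\sigma)$.
\item Handling the focus annotation when arguing that $\Delta^-$ and $\Gamma^-$ are disjoint.
\item Verifying that the Lindenbaum construction by repeated cuts stays inside $\Sigma$-sequents.
\end{enumerate}
I expect none of these to be a genuine obstacle.
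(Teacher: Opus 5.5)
Your proposal is correct and follows the paper's proof essentially step for step: contraposition with $\Sigma = \Cl^\neg(\sigma)$, Lemma~\ref{l: Lindenbaum for saturated sequents} to get a $\Sigma$-saturated extension, Lemma~\ref{l: canonical model is S5}, and the Truth Lemma at $\Gamma^-$. Your remark that $\Gamma^- \cap \Delta^- = \emptyset$ by $\Sigma$-unprovability fills in a step the paper leaves implicit; the $\mathsf{u}$ detour is unnecessary, since $\mathsf{id}$ already allows either annotation on the right.
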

\begin{proof}
    We prove the contrapositive. Let $\Sigma = \Cl^\neg(\sigma)$ and suppose that $\sigma $ is not $\Sigma$-provable. By Lemma \ref{l: Lindenbaum for saturated sequents} there exists a saturated sequent $\Gamma \Rightarrow \Delta$ such that $\Gamma_\sigma \subseteq \Gamma$ and $\Delta_\sigma \subseteq \Delta$. By Lemma \ref{l: canonical model is S5} the canonical model $\fw^{\Sigma}$ is an S5 model. Furthermore, by definition $\Gamma^- \in \W^\Sigma$. By the Truth Lemma we have that $\fw^\Sigma, \Gamma^- \models \varphi$ for each $\varphi \in \Gamma_\sigma$ and $\fw^\Sigma, \Gamma^- \not \models \psi$ for each $\psi \in \Delta_\sigma$. Hence, $\fw^\Sigma, \Gamma^- \not \models \sigma^I$, implying that $\sigma$ is not valid over the class of S5 frames.
\end{proof}

\section{Automated proof-search}\label{s: automated proof-search}

 The last section established that for every valid sequent \emph{there exists} a cyclic proof. In this section we are going to show \emph{how to find} them. We will prove that the cyclic calculi are amenable for automated proof-search. To that achieve this we will show how to reduce the problem of finding a cyclic proof for a sequent to the problem of solving a certain game. As for the proof-search games employed in the completeness proof of the non-wellfounded calculi, this game is played two players, one called Prover and the other called Refuter. Intuitively, given a sequent $\sigma$, Prover is trying to build a cyclic proof of $\sigma$, while Refuter tries to show that such a proof does not exist. In difference to the proof-search games employed before, the games here are \emph{parity games} and not Gale--Steward games. Crucially, parity games are \emph{memoryless determined}, meaning that exactly one of the two players has a memoryless winning strategy (see e.g.~\cite[Chapter 6]{graedel_2001}). From this we will be able to obtain that our cyclic calculi are \emph{uniformly complete}, c.f. Theorem \ref{t: strategy iff provable}. Furthermore, efficient algorithms exist to solve parity games, i.e. algorithms that given a parity game compute the winner as well as the winning strategy. Our games are set up in such a way that winning strategies for Prover correspond directly to cyclic proofs, implying that the algorithm either constructs a proof for a given sequent or determines that the sequent is not provable. The results of this section will then also be used to establish an exponential upper bound for the validity problem for all considered logics in Section \ref{s: complexity}, by using known complexity bounds for solving parity games from the literature. Furthermore, using the translation of $\mathbf{CKS5}$ into $\mathbf{ICKS5}$ as a reduction, we establish a precise complexity bound for $\mathbf{ICKS5}$: its validity problem is \textsc{ExpTime}-complete. We assume familiarity with parity games. For an introduction, the reader is referred to~\cite[Chapter 2]{graedel_2001}. The presented material closely follows the joint publication with Rooduijn~\cite[Section 6]{rooduijn_analytic_2022}, where a proof-search game for $\mathbf{CKS5}$ is defined. The presented proof there contains a mistake, which is fixed here.

\subsection{A parity game for proof-search}

In this subsection we set up the parity games for proof-search in our cyclic calculi. The next subsection then shows how to reduce the problem of finding a cyclic proof for a given sequent to the corresponding parity game.
 
\begin{definition}
    Let $\hickp \in \{\cick, \cickt, \cicks, \cickss\}$ and let $\mathsf{r} \in \hickp$ be a rule. A \emph{rule position} in $\hickp$ is a triple $i =\langle \sigma, \mathsf{r}, \langle \sigma_1, \ldots, \sigma_n\rangle \rangle$ such that
    \begin{equation*}
        \infer[\mathsf{r}]{\sigma}{\sigma_1 & \ldots & \sigma_n}
    \end{equation*}
       is a rule instance of $\mathsf{r}$. Given a finite set of formulae $\Sigma$, a \emph{$\Sigma$-rule position} is a rule position involving only $\Sigma$-sequents.
\end{definition}

  For the remainder of this section $\Sigma$ always denotes a finite set of formulae which is closed or, if we consider $\cickss$, negation closed. We use $\pi_n^k$ to denote the projection function which takes as input an $n$-tuple and outputs the k-th component.

	\begin{definition}
		Let $\hickp \in \{\cick, \cickt, \cicks, \cickss\}$, $\Sigma$ be a finite and closed (negation closed) set of formulae and let $\sigma$ be a $\Sigma$-sequent. The \emph{proof-search game} $\mathscr{G}(\hickp,\sigma)$ associated to $\sigma$ and $\hickp$ takes positions in $S \cup I$, where $S$ is the set of $\Sigma$-sequents and $I$ is the set of $\Sigma$-rule positions in $\hickp$. The ownership function and admissible moves are as described in the following table:
		\begin{center}
			\begin{tabular}{|c|c|c|}
				\hline
				Position & Owner & Admissible moves \\
				\hline
				$\sigma$ & Prover & $\{i \in I \mid \pi_3^1(i) = \sigma\}$ \\
				$\langle \sigma, \mathsf{r}, \langle \sigma_1, \ldots, \sigma_n \rangle\rangle$ & Refuter & $\{\sigma_i \mid 1 \leq i \leq n\}$\\
				\hline 
			\end{tabular}
		\end{center}
		The positions are given the following priorities:
		\begin{enumerate}
			\item Every position of the form $\Gamma \Rightarrow \Delta^\u$ has priority $3$;
			\item Every position of the form $\langle\sigma, \mathsf{C R}, \langle \sigma_1, \ldots, \sigma_n \rangle\rangle$ where the principal formula is in focus has priority $2$;
			\item Every other position has priority $1$.
		\end{enumerate}
		A position is called a \emph{dead end} if its owner has no admissible moves in this position available. A \emph{play} in $\mathscr{G}(\hickp,\sigma)$ is a sequence of positions starting in $\sigma$, such that any two consecutive positions are related by an admissible move. A play is either finite and ends in a dead end or infinite. The \emph{winning conditions} are as follows: Prover wins every finite play in which the dead end belongs to Refuter and every infinite play in which the highest priority encountered infinitely often is even. Refuter wins every finite play in which the dead end belongs to Prover and every infinite play in which the highest priority encountered infinitely often is odd.
	\end{definition}

Observe that the only dead ends are rule instances of axioms. Therefore Prover wins every finite play. It is straightforward to check that for every $\Sigma$-sequent $\sigma$ and every considered calculus $\hickp$, the game $\mathscr{G}(\hickp,\sigma)$ is a parity game. Given a set $X$, let $X^n$ denote the cartesian product 
\begin{equation*}
   \underbrace{X \times X \times ... \times X}_{n \text{ times}} 
\end{equation*} 
and define $X^{< \omega} \coloneqq \bigcup_{n < \omega} X^n$. Therefore the set of all initial segments of all possible plays in a game is a \emph{subset} of $(S \cup I)^{< \omega}$. The following definitions apply to both Prover and Refuter. We write `Player' instead.

  \begin{definition}
      Let $\mathscr{G}(\hickp,\sigma)$ be a proof-search game with positions $S \cup I$. 
      \begin{enumerate}
          \item A \emph{strategy}\index{strategy} for Player is partial function $\mathcal{S}: (S \times I)^{< \omega} \longrightarrow S \cup I$ which maps each initial segment of a play ending in a position owned by Player onto an admissible move if such a move exist.
          \item A \emph{memoryless strategy}\index{strategy!memoryless} for Player is a partial function $\mathcal{S}: S \cup I \longrightarrow S \cup I$ which maps each position owned by Player onto an admissible move if such a move exist.
      \end{enumerate}
  \end{definition}

A strategy $\mathcal{S}$ for Player is \emph{winning} if Player wins every play in which $\mathcal{S}$ is \emph{used}. The \emph{strategy tree} of a memoryless strategy $\mathcal{S}$ is the tree of all possible plays that can occur when Player uses $\mathcal{S}$, formally defined as follows.

\begin{definition}
    Let $\sigma$ be a $\Sigma$-sequent and $\mathscr{G}(\hickp,\sigma)$ be the proof-search game associated to $\sigma$ and $\hickp$. Let $\mathcal{S}$ be a memoryless strategy for Player. The \emph{strategy tree} $\mathcal{T}_\mathcal{S}$ is the tree whose nodes are labeled by elements of $(S \cup I)$ defined as follows.
    \begin{enumerate}
        \item The root of $\mathcal{T}_\mathcal{S}$ is labeled by $\sigma$.
        \item If a node $t$ of $\mathcal{T}_\mathcal{S}$ is labeled by $p \in S \cup I$, $p$ is owned by Player and $p \in dom(\mathcal{S})$, then $t$ has a unique child $u$ labeled by $\mathcal{S}(p)$. If $p \not \in dom(\mathcal{S})$, then $p$ is a leaf.
        \item If a node $t$ of $\mathcal{T}_\mathcal{S}$ is labeled by $p \in S \cup I$ which is not owned by Player, then 
        \begin{enumerate}
            \item if $p \in S$ and $I_p = \{i \in I \mid \pi_3^1(i)=p\}$, then $t$ has $\lvert{I_p}\rvert$ children, each labeled with a different $i \in I_p$.
            \item if $p \in I$ and $\pi_3^3(i) = \langle \sigma_1, \ldots, \sigma_n\rangle$, then $t$ has $n$ children each labeled by a different $\sigma_i$.
        \end{enumerate}
    \end{enumerate}
\end{definition}

Note that due to $\Sigma$ being finite and rules having finitely many premises, strategy trees are finite branching (and possibly non-wellfounded). The goal is to show that a sequent $\sigma$ has a $\hickp$-proof if and only if Prover has a memoryless winning strategy in the game $\mathscr{G}(\hickp,\sigma)$. To that end we require to do some preliminary work. Recall that in a cyclic proof $\pi$ there exists for every non-axiomatic leaf $l$ a node $u$ such that $(u, l)$ is a successful repetition. As there might exist several candidates for the node $u$, we fix for each non-axiomatic leaf $l$ one candidate $c(l)$ which we call the \emph{companion} of $l$.
      
      \begin{definition}
          A \emph{finite tree with back-edges} is a pair $(\mathcal{T},f)$ where $\mathcal{T}$ is a finite tree and $f: \mathcal{T} \longrightarrow \mathcal{T}$ a partial function, such that every $u \in dom(f)$ is a leaf of $\mathcal{T}$ and the node $f(u)$ is a proper ancestor of $u$.\footnote{By proper ancestor we mean that $f(u)$ occurs on the branch from the root to $u$ and $f(u) \not = u$.}
      \end{definition}

     Observe that cyclic proofs can be considered to be finite trees with back-edges, that satisfy the property that if $u \in dom(f)$, then $u$ is a non-axiomatic leaf and $f(u)$ is the companion of $u$. We introduce two notions of `path' through a finite tree with back-edges.

     \begin{definition}
         Let $(\mathcal{T}, f)$ be a finite tree with back-edges. An \emph{upward path} $\tau$ through $(\mathcal{T},f)$ is a finite sequence $\tau = \tau(0), \tau(1),\ldots, \tau(n)$ of nodes in $\mathcal{T}$ such that for each $i \in [n]$ $\tau(i+1)$ is a child of $\tau(i)$.
	  \end{definition}

Given an upward path $\tau=\tau(0), \ldots, \tau(n)$ observe that $\tau(0)$ may be any node of $\mathcal{T}$ and $\tau(n)$ is an ancestor of $\tau(0)$. If $n > 0$, then we call $\tau$ \emph{non-trivial}.
	
	  \begin{definition}
	      Let $(\mathcal{T}, f)$ be a finite tree with back-edges. A \emph{looping path} $\rho$ through $(\mathcal{T},f)$ is a (possibly infinite) sequence $\rho = \rho(0), \rho(1),\ldots$ of nodes in $\mathcal{T}$ which satisfies the following properties:
	      \begin{enumerate}
	         \item $\rho(0)$ is the root of $\mathcal{T}$.
	         \item If $\rho(i)$ is a leaf of $\mathcal{T}$ and $\rho(i) \not \in dom(f)$, then $\rho$ is finite and ends at $\rho(i)$.
	          \item If $\rho(i)$ is a leaf of $\mathcal{T}$ and $\rho(i) \in dom(f)$, then $\rho(i+1)$ is a child of $f(\rho(i))$.
	          \item Otherwise, $\rho(i+1)$ is a child of $\rho(i)$.
	      \end{enumerate}
	  \end{definition}
	  
	   If condition 3. applies, then we say that $\rho$ \emph{passes through the leaf} $\rho(i)$. Looping paths may thus be infinite and loop through $\mathcal{T}$ via the back-edge function. Moreover, they are maximal in the sense that they begin at the root and are either infinite or end in a leaf that does not belong to $dom(f)$. We write $\tau$ for upward paths and $\rho$ for looping paths. The following lemma states a first basic result about infinite looping paths through finite trees with back-edges. The proof of the lemma follows immediately from the fact that such trees are finite.

	  \begin{lemma}\label{l: infinite path passes through leaf}
	      Suppose $(\mathcal{T},f)$ is a finite tree with back-edges and $\rho$ is an infinite looping path through $(\mathcal{T},f)$. Then there exists a leaf $u \in dom(f)$ through which $\rho$ passes infinitely often.
	  \end{lemma}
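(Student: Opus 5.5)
The plan is a pigeonhole argument on the leaves in $dom(f)$, using only that $\mathcal{T}$ is finite and that $\rho$ is infinite.

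First I show that $\rho$ passes through leaves in $dom(f)$ infinitely often. Between two consecutive leaf-passages, the definition of a looping path forces $\rho$ to move by clause 4, from a node to one of its children. Such a stretch is therefore an upward path in $\mathcal{T}$, and its length is bounded by the height $h$ of $\mathcal{T}$. Concretely, suppose from some index $i$ onward $\rho$ never passes through a leaf. Then $\rho(i),\rho(i+1),\ldots$ is an infinite sequence in which each node is a child of its predecessor. Its depths strictly increase, which is impossible in a finite tree.

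Next I check which leaves those passages can be. By clause 2, any leaf not in $dom(f)$ terminates $\rho$. Since $\rho$ is infinite, every leaf it reaches lies in $dom(f)$, and at each such leaf clause 3 applies.

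So the set $\{i \mid \rho(i) \text{ is a leaf}\}$ is infinite, and each such $\rho(i)$ lies in $dom(f)$, which is a finite set because $\mathcal{T}$ is finite. By the pigeonhole principle, some leaf $u \in dom(f)$ equals $\rho(i)$ for infinitely many $i$. Hence $\rho$ passes through $u$ infinitely often.

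I expect no real obstacle. The only point needing care is the first step, that an infinite looping path cannot avoid leaves forever. This is handled by the depth argument above: depth strictly increases along clause-4 steps and only resets at leaf-passages.
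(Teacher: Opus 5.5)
Your proof is correct and is essentially the paper's approach. The paper only remarks that the lemma follows from the finiteness of $\mathcal{T}$, and your argument makes this explicit. Runs of clause-4 steps are bounded by the height of $\mathcal{T}$, so an infinite $\rho$ reaches leaves infinitely often. Each such leaf must lie in $dom(f)$, and pigeonhole on the finite set $dom(f)$ finishes the proof.
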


      Next, we define a preorder on the domain of the back-edge function $f$.

      \begin{definition}
          Let $(\mathcal{T},f)$ be a finite tree with back-edges. Define the \emph{dependency order} $\preceq$ on $dom(f)$ as follows:
          \begin{center}
              $u \preceq v$ if and ony if there exists an upward path from $f(v)$ to $f(u)$.
          \end{center}
      \end{definition}
      
      It is routine to check that the dependency order $\preceq$ is reflexive and transitive: $u \preceq u$ since $\tau = u$ is an upward path (a trivial one); moreover if $u_0 \preceq u_1 \preceq u_2$, then there are upward paths $\tau_0$ from $f(u_1)$ to $f(u_0)$ and $\tau_1$ from $f(u_2)$ to $f(u_1)$ and so $\tau_1, \tau_0$ is an upward path from $f(u_2)$ to $f(u_0)$ implying that $u_0 \preceq u_2$. Note that $\preceq$ is not antisymmetric, since two leafs might have the same companion. Hence $\preceq$ is a preorder on $dom(f)$. For a finite tree with back-edges $(\mathcal{T}, f)$ and an infinite looping path $\rho$, we denote by $\rho^{\infty}$ the set of nodes of $\mathcal{T}$ that occur infinitely often in $\rho$. A leaf $u \in dom(f) \cap \rho^\infty$ is called \emph{$\preceq$-maximal} if for all $v \in dom(f) \cap \rho^\infty$ whenever $u \preceq v$, then $v \preceq u$. Observe that $dom(f) \cap \rho^\infty$ may have multiple maximal elements. We will now show that every maximal element has the same companion, implying that this companion is lowermost among all companions in $dom(f) \cap \rho^\infty$.
      
      \begin{lemma}\label{l: existence of greatest element}
          Let $(\mathcal{T},f)$ be a finite tree with back-edges and let $\rho$ be an infinite looping path through $(\mathcal{T},f)$. Then every $\preceq$-maximal element of $\rho^{\infty} \cap dom(f)$ has the same companion.
      \end{lemma}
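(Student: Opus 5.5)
The plan is to prove that for any two $\preceq$-maximal elements $u,v$ of $\rho^\infty \cap dom(f)$ we have $f(u) = f(v)$. Two facts drive the argument. The companions $f(u)$ and $f(v)$ are nodes of a finite tree. The looping path $\rho$ visits both leaves infinitely often and, between consecutive visits, moves only upward in $\mathcal{T}$ except when it jumps back along $f$.

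The first step is a structural observation about segments of $\rho$. Fix an occurrence of $u$ in $\rho$ and a later occurrence of $v$. The segment of $\rho$ between them consists of upward moves interrupted by back-edge jumps $w \mapsto f(w)$, where each $w$ is a leaf in $dom(f)$ that $\rho$ passes through. After discarding a finite prefix, we may assume every leaf passed through lies in $\rho^\infty$. I will show the following claim: if $\rho$ passes through $u$ and later reaches $f(v)$ (or passes through $v$), then some leaf $w$ passed through on that segment satisfies $f(w) \le_{\mathcal{T}} f(v)$, i.e.\ $f(w)$ is an ancestor of $f(v)$, and there is an upward path from $f(w)$ to $f(v)$. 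To prove it, note that the last back-edge jump before reaching the subtree containing $f(v)$ must land at or below $f(v)$. Indeed, afterwards $\rho$ moves only upward, so from the landing point $f(w)$ there is an upward path to $f(v)$, which means $v \preceq w$.

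The second step uses maximality. Since $u$ is maximal and $\rho$ passes through $v$ infinitely often after passing through $u$, consider the leaf $w$ whose jump last precedes the arrival at $v$. The node $v$ is a leaf lying above $f(v)$. Before reaching $v$, the path must have passed through $f(v)$, or else have jumped to a node strictly inside the subtree above $f(v)$. I expect this to be the main obstacle, because the second alternative must be excluded or handled. It is handled by taking $w$ to be the last jump whose target is an ancestor of $f(v)$, or the jump that first enters the relevant region. In every case this yields $v \preceq w$. Applying maximality of $v$ gives $w \preceq v$ as well.

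In the third step I pick, among all leaves of $\rho^\infty \cap dom(f)$, one whose companion $f(m)$ has minimal depth. Every visit of $\rho$ to any leaf of $\rho^\infty$ is eventually preceded by a jump to a companion. Every node visited later lies above the lowest companion jumped to. Hence all companions of leaves in $\rho^\infty$ that are jumped to infinitely often lie above $f(m)$, and each is reachable by an upward path from $f(m)$. Here one uses that the tree is finite and that the set of ancestors of a node is linearly ordered: the companions of leaves in $\rho^\infty$ all lie on branches through $f(m)$, and $f(m)$ is comparable to all of them. Consequently $x \preceq m$ for every $x \in \rho^\infty \cap dom(f)$. Now let $u$ be maximal. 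Since $u \preceq m$, maximality gives $m \preceq u$, so there is an upward path from $f(u)$ to $f(m)$. By the minimality of the depth of $f(m)$ we get $f(u) = f(m)$. Therefore every maximal element has companion $f(m)$, which proves the lemma and shows that this companion is lowermost.
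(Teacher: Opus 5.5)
Your third step is by itself a complete proof, and it takes a different route from the paper. Steps 1 and 2 are never used in it and are best deleted, because as written they do not hold up. The claim of step 1 in its ``passes through $v$'' form fails when $f(v)$ is a proper ancestor of $f(u)$ and $v$ lies in the subtree rooted at $f(u)$. In that case $\rho$ may jump from $u$ to a child of $f(u)$ and climb to $v$ with no further back-edge. This case is excluded only by maximality of $u$, which step 1 does not assume. Step 2 is inconclusive: it never relates the leaf $w$ it produces to $u$, so it does not yield $f(u)=f(v)$. Note also that a back-edge from $w$ lands at a child of $f(w)$, not at $f(w)$.

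In step 3, the sentence ``every node visited later lies above the lowest companion jumped to'' carries the whole argument and should be justified explicitly. After a prefix, every leaf passed through lies in $\rho^\infty\cap dom(f)$. Once $\rho$ then passes through $m$, it is inside the subtree $\mathcal{T}_{f(m)}$ rooted at $f(m)$. A later back-edge from a leaf $w$ of $\mathcal{T}_{f(m)}$ goes to a child of $f(w)$. Since $f(w)$ and $f(m)$ are both ancestors of $w$, they are comparable, so minimality of the depth of $f(m)$ gives $f(w)\in\mathcal{T}_{f(m)}$. Hence $\rho$ never leaves $\mathcal{T}_{f(m)}$, and every $x\in\rho^\infty\cap dom(f)$ lies in it. The same comparability argument places $f(x)$ in it, i.e.\ $x\preceq m$.

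The paper instead argues pairwise. For maximal $u,v$, either $f(v)$ lies in the subtree at $f(u)$, where maximality immediately gives $f(u)=f(v)$, or the two subtrees are disjoint. In the latter case $\rho$ must infinitely often back-jump to a companion strictly below both. This produces $w\in\rho^\infty\cap dom(f)$ with $u\preceq w$, $v\preceq w$ but $w\not\preceq u$, $w\not\preceq v$, contradicting maximality.

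Both proofs rest on the same fact: a looping path can leave the subtree rooted at a companion only via a back-edge to a strictly lower companion. You use it once, on an extremal companion, and so prove more: $m$ is a $\preceq$-greatest element of $\rho^\infty\cap dom(f)$. That is precisely the ``lowermost companion'' the paper mentions after the lemma and relies on in Lemma~\ref{l: infinite looping path has good suffix}. From the paper's formulation one still has to add that every element of the finite preorder lies below some maximal element.
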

      
      \begin{proof}
          Observe that the set $\rho^{\infty} \cap dom(f)$ is finite since $\mathcal{T}$ is a finite tree. Furthermore, $\rho^{\infty} \cap dom(f)$ is non-empty, as $\rho$ must pass through some leaf $u \in dom(f)$ infinitely often (see Lemma \ref{l: infinite path passes through leaf}). Let $u \in dom(f) \cap \rho^\infty$ be $\preceq$-maximal and consider the subtree $\mathcal{T}_u$ of $\mathcal{T}$ rooted at $f(u)$. By maximality of $u$, note that there does not exist a maximal leaf $v \in dom(f) \cap \rho^\infty$ such that $f(v)$ lies strictly below $f(u)$ in $\mathcal{T}$, as otherwise there would be an upward path from $f(v)$ to $f(u)$ contradicting that $u$ is $\preceq$-maximal. So suppose $v \in dom(f) \cap \rho^\infty$ is $\preceq$-maximal. Then either $f(v)$ belongs to $\mathcal{T}_u$ or $\mathcal{T}_u$ and $\mathcal{T}_v$ are distinct. In the former case, there exists an upward path from $f(u)$ to $f(v)$ and so $v \preceq u$. But since $v$ is $\preceq$-maximal, it follows that also $u \preceq v$, which implies that $f(v) = f(u)$. In the latter case, since $\mathcal{T}_u$ and $\mathcal{T}_v$ are distinct subtrees and $u,v \in \rho^\infty$, there must exist a leaf $w \in dom(f)$ such that there are non-trivial upward paths $\tau_0$ and $\tau_1$ from $f(w)$ to $f(u)$ and from $f(w)$ to $f(v)$ respectively, as otherwise, once $\rho$ enters either $\mathcal{T}_u$ or $\mathcal{T}_v$ it can never pass through the other leaf again. Since $\rho$ passes through $u$ and $v$ infinitely often and $dom(f)$ is finite, $\rho$ must pass through $w$ infinitely often as well, implying that $w \in dom(f) \cap \rho^\infty$. Since $u \preceq w$, $v \preceq w$ and $\tau_0$ and $\tau_1$ are non-trivial, it holds that $w \not \preceq u$ and $w \not \preceq v$, contradicting that $u$ and $v$ are maximal. Hence the latter case cannot happen and so all $\preceq$-maximal leafs have the same companion.
      \end{proof}
      
We use Lemma~\ref{l: existence of greatest element} to show that every infinite looping path through a cyclic proof contains a suffix in which every sequent has a formula in focus.

     \begin{lemma}\label{l: infinite looping path has good suffix}
         Suppose $\pi$ is a $\hickp$-proof for $\hickp \in \{\cick, \cickt, \cicks, \cickss\}$ and $\rho$ is an infinite looping path through $\pi$. Then there exists a suffix $\rho'$ of $\rho$ in which every sequent has a formula in focus.
     \end{lemma}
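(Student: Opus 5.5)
Consider the cyclic proof $\pi$ as a finite tree with back-edges $(\pi, c)$, where $c$ maps each non-axiomatic leaf to its fixed companion. By Lemma~\ref{l: infinite path passes through leaf}, the set $\rho^\infty \cap dom(c)$ is non-empty. Choose a $\preceq$-maximal element $u$ of this finite preordered set. By Lemma~\ref{l: existence of greatest element}, all $\preceq$-maximal elements share the companion $c(u)$. The idea is to show that from some point on, $\rho$ never leaves the subtree $\mathcal{T}_u$ rooted at $c(u)$. Every segment of $\rho$ between two consecutive visits to $c(u)$ is then an upward path from $c(u)$ to some non-axiomatic leaf $v$, followed by a jump back to $c(v)$. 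Such a segment is covered by the successful paths $c(v) \to v$, and successful paths have a formula in focus at every sequent.

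First, I will show that after some position, every leaf $v \in dom(c)$ that $\rho$ passes through satisfies $v \in \rho^\infty$. This holds because only finitely many leaves lie outside $\rho^\infty$, and each of them is passed through only finitely often. Next, I claim that after this position every such $v$ satisfies $v \preceq u$, i.e.\ $c(v)$ lies in $\mathcal{T}_u$. Suppose some $v \in \rho^\infty \cap dom(c)$ has $c(v)$ outside $\mathcal{T}_u$. Then $\rho$ still returns to $u$ infinitely often. I will adapt the argument of Lemma~\ref{l: existence of greatest element} to get a contradiction. Since $\rho$ moves infinitely often between $\mathcal{T}_u$ and nodes outside it, there must be a leaf $w \in \rho^\infty\cap dom(c)$ whose companion lies strictly below $c(u)$. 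This gives $u \preceq w$ with a non-trivial upward path, and hence $w \not\preceq u$, contradicting the maximality of $u$.

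Consequently, after some index $N$ with $\rho(N) = c(u)$, the path $\rho$ stays inside $\mathcal{T}_u$. Each maximal segment of $\rho$ after $N$ that contains no back-edge jump is an upward path from a companion $c(v)$ (or from $c(u)$) to a leaf $v'$. I have to make sure the relevant leaves are non-axiomatic. The path $\rho$ is infinite and cannot end at an axiom, so every leaf it reaches after $N$ is in $dom(c)$, and the path then jumps back. The delicate point is that the segment of $\rho$ from the node where it last jumped in, say $c(v)$, up to the next leaf $v'$ need not coincide with the successful path from $c(v')$ to $v'$. I handle this by noting that $c(v')$ is an ancestor of $v'$ lying in $\mathcal{T}_u$.

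The main obstacle is to cover the part of the segment between $c(v)$ and $c(v')$. I would first try the following ordering argument: if $c(v)$ lies strictly below $c(v')$, then $v'\preceq v$ and the segment between them is contained in the successful path from $c(v)$... but it runs towards $v'$, not $v$. So instead I would use the sequent at $c(u)$ itself. Every node of $\mathcal{T}_u$ on an upward path from $c(u)$ to a leaf $v \in dom(c)\cap\rho^\infty$ passes through $c(v)$, and $c(v)$ lies above or at $c(u)$. The portion from $c(u)$ to $c(v)$ is, by maximality, part of the successful path from $c(u')$ to $u'$ for some maximal leaf $u'$ in that direction. If this fails, I would fall back to choosing $N$ later and replacing $u$ by the lowermost companion visited infinitely often inside $\mathcal{T}_u$.
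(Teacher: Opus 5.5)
Your first half is fine and coincides with items 1 and 2 of the paper's proof. After some point $\rho$ passes only through leaves in $\rho^\infty\cap dom(c)$. A back-edge exiting $\mathcal{T}_u$ must go from a leaf inside $\mathcal{T}_u$ to a companion strictly below $c(u)$, which contradicts $\preceq$-maximality of $u$. Hence all these companions lie in $\mathcal{T}_u$, and $\rho$ eventually stays there. (A minor point: $\rho$ need not visit $c(u)$ again, since back-edges land on a child of the companion; use $u$ instead.)

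The real content of the lemma, however, is the covering step you flag as the main obstacle, and there your resolution is false. The segment from $c(u)$ to $c(v)$ need not lie on the successful path to any $\preceq$-maximal leaf. Take $r=c(u)$ with children $a$ and $y$, and $u$ above $a$. Let $y$ have children $x$ and $z$, with leaves $v$ (companion $x$) and $w_1$ (companion $y$) above $x$, and a leaf $w^*$ (companion $r$) above $z$. The looping path can cycle forever as follows:
\begin{itemize}
\item from $u$ back to the child $y$ of $r$, then up through $x$ to $v$;
\item from $v$ back into the subtree at $x$ and up to $w_1$;
\item from $w_1$ back to the child $z$ of $y$ and up to $w^*$;
\item from $w^*$ back to the child $a$ of $r$ and up to $u$.
\end{itemize}
Then $\rho^\infty\cap dom(c)=\{u,v,w_1,w^*\}$ and the maximal leaves are $u$ and $w^*$. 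The node $x=c(v)$ lies on neither the path from $r$ to $u$ nor the path from $r$ to $w^*$; it is covered only by the successful path of the non-maximal leaf $w_1$. Your fallback does not help either: by your own second step, $c(u)$ is already the lowermost companion of a leaf in $\rho^\infty$.

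What is missing is an argument that chains together the successful repetitions of non-maximal leaves. For every $v\in\rho^\infty\cap dom(c)$, prove that every node on the upward path from $c(u)$ to $v$ has a formula in focus, by induction on the distance from $c(u)$ to $c(v)$.
\begin{itemize}
\item If $c(v)=c(u)$, this is just the successfulness of $(c(v),v)$.
\item Otherwise there is a leaf $w\in\rho^\infty\cap dom(c)$ above $c(v)$ with $c(w)$ strictly below $c(v)$. Either $u$ itself lies above $c(v)$, or $\rho$ visits both $v$ and $u$ infinitely often. In the latter case it must exit the subtree rooted at $c(v)$ infinitely often, which is only possible along such a back-edge.
\item The induction hypothesis for $w$ covers the path from $c(u)$ to $c(v)$, since this is an initial segment of the path from $c(u)$ to $w$. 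The repetition $(c(v),v)$ covers the rest.
\end{itemize}
This is exactly the case distinction in the paper's proof: either $f(u_{n+1})$ lies on the path from $f(u_0)$ to $u_0$, or one chooses some $u_i$ above $f(u_{n+1})$ whose companion lies outside the subtree rooted at $f(u_{n+1})$. Once this claim is in place, every jump-free segment of $\rho$ inside $\mathcal{T}_u$ is a subpath of such a path, and the lemma follows.
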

     \begin{proof}
         For each non-axiomatic leaf $u$ of $\pi$ chose one companion $c(u)$ of $u$ and define the back-edge function of $\pi$ to be $f(u) = c(u)$. Let $\rho$ be an infinite looping path through $(\pi, f)$. By Lemma \ref{l: existence of greatest element} every $\preceq$-maximal leaf $u \in dom(f) \cap \rho^\infty$ has the same companion $f(u)$. This implies that the following hold.
         \begin{enumerate}
             \item $\rho$ passes through $u$ infinitely often.
             \item If $dom(f) \cap \rho^{\infty} = \{u_0, \ldots, u_n\}$ with $u=u_0$, then there exists an upward path from $f(u_0)$ to $f(u_i)$ for all $i \in [n+1]$.
         \end{enumerate}
         Let $\pi_0$ be the subtree rooted at $f(u_0)$. We claim that every upward path from $f(u_0)$ to $u_i$ for ${i \in [n+1]}$ always has a formula in focus. The proof proceeds by induction on the cardinality of ${dom(f) \cap \rho^{\infty}}$. The base case where $dom(f) \cap \rho^{\infty} = \{u_0\}$ is trivial since the upward path from $f(u_0)$ to $u_0$ is successful by definition of a cyclic proof. For the induction step suppose that $dom(f) \cap \rho^{\infty} = \{u_0, \ldots, u_n, u_{n+1}\}$. Consider the upward path $\tau$ from $f(u_0)$ to $u_{n+1}$. If $f(u_{n+1})$ lies on the upward path from $f(u_0)$ to $u_0$, then every sequent occurring on the upward path from $f(u_0)$ to $f(u_{n+1})$ has a formula in focus and, by definition of a cyclic proof, also every sequent from $f(u_{n+1})$ to $u_{n+1}$. If the companion $f(u_{n+1})$ of $u_{n+1}$ does not lie on the upward path from $f(u_0)$ to $u_0$, then there must exist $i \in [n+1]$ such that $u_i$ belongs to the subtree $\pi_1$ of $\pi_0$ rooted at $f(u_{n+1})$ and $f(u_i)$ does not belong to $\pi_1$, as otherwise, $\rho$ cannot pass through both $u_0$ and $u_{n+1}$ infinitely often. By induction hypothesis the path from $f(u_0)$ to $u_i$ has always a formula in focus. Observe that this path must contain the path from $f(u_0)$ to $f(u_{n+1})$ as an initial segment, since $u_i$ belongs to $\pi_1$. Hence the path from $f(u_0)$ to $u_{n+1}$ always has a formula in focus.

         Let $i$ be the least natural number such that the suffix $\rho'$ of $\rho$ starting at $\rho(i)$ only passes through non-axiomatic leafs in $\rho^{\infty}$. Then, by the previous observation, $\rho'$ always has a formula in focus.
     \end{proof}

	 We will now show that from a cyclic $\hickp$-proof $\pi$ of $\sigma$ we can construct a winning strategy for Prover in the game $\mathscr{G}(\hickp, \sigma)$. Recall that a play $m$ in a proof-search game is a finite or infinite sequence of positions $m(0),m(1),m(2), \ldots$ and so on. Observe that all even positions $m(2i)$ are owned by Prover and all odd positions $m(2i+1)$ by Refuter. For an initial segment $m(0), \ldots, m(i)$ of $m$ we say that its \emph{length} is $i$.
	
	\begin{definition}
	Let $\hickp \in \{\cick, \cickt, \cicks, \cickss\}$, $\sigma$ be a $\Sigma$-sequent and let $\pi$ be a $\Sigma$-proof of $\sigma$ in $\hickp$. Let $\rho$ be an infinite looping path through $\pi$ and let $m$ be an infinite play in $\mathscr{G}(\hickp,\sigma)$.
	\begin{enumerate}
	    \item An initial segment $m(0), \ldots, m(i)$ of $m$ \emph{corresponds} to an initial segment $\rho(0), \ldots, \rho(j)$ of $\rho$ if $i = 2j$ and for each $k \in [i+1]$ with $k = 2l$ it holds that $m(k)$ is the sequent that labels $\rho(l)$.
	    \item The play $m$ and the path $\rho$ are called \emph{corresponding} if every initial segment of $m$ with even length corresponds to an initial segment of $\rho$.
	\end{enumerate}
	\end{definition}

     \begin{proposition}\label{p: provable implies winning strategy}
		Let $\hickp \in \{\cick, \cickt, \cicks, \cickss\}$ and $\sigma$ be a $\Sigma$-sequent. If $\sigma$ is $\Sigma$-provable in $\hickp$, then Prover has a memoryless winning strategy in $\mathscr{G}(\hickp,\sigma)$.
	\end{proposition}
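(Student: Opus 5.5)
We fix a cyclic $\Sigma$-proof $\pi$ of $\sigma$ in $\hickp$. For each non-axiomatic leaf $l$ we choose a companion $c(l)$, which gives a finite tree with back-edges $(\pi,f)$ with $f=c$. From this we define a strategy by reading off rule instances along looping paths.

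A strategy that follows $\pi$ naively needs memory, since the same sequent may label several nodes of $\pi$ carrying different rule instances. We first define a history-dependent strategy $\mathcal{S}$ and then pass to a memoryless one. By positional (memoryless) determinacy of parity games, if Prover has any winning strategy in $\mathscr{G}(\hickp,\sigma)$, then Prover has a memoryless winning strategy. So it suffices to exhibit a winning strategy with memory.

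\textbf{Construction of $\mathcal{S}$.} The strategy maintains a pointer to a node of $\pi$ and, inductively, a looping path $\rho(0),\ldots,\rho(j)$ through $(\pi,f)$ to which the current play segment corresponds.
\begin{enumerate}
\item At a Prover position $m(2j)$ labelled by the sequent at $\rho(j)$, the pointer is on $\rho(j)$. If $\rho(j)$ is a non-axiomatic leaf, the pointer is first moved to $f(\rho(j))$, which carries the same sequent.
\item Prover then plays the rule position $\langle \sigma', \mathsf{r}, \langle\sigma_1,\ldots,\sigma_n\rangle\rangle$ given by the rule instance applied at that node of $\pi$. This is a legitimate $\Sigma$-rule position, since $\pi$ contains only $\Sigma$-sequents.
\item Refuter chooses some $\sigma_k$, and the pointer moves to the corresponding child of the node. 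This extends the corresponding looping path by one step.
\end{enumerate}

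\textbf{Finite plays.} Every finite play ends at an axiom position, which is a dead end owned by Refuter. Hence Prover wins all finite plays.

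\textbf{Infinite plays.} Let $m$ be an infinite play consistent with $\mathcal{S}$. It corresponds to an infinite looping path $\rho$ through $\pi$. We must show that the largest priority occurring infinitely often in $m$ is $2$.
\begin{enumerate}
\item By Lemma~\ref{l: infinite looping path has good suffix}, $\rho$ has a suffix in which every sequent has a formula in focus. Sequent positions of priority $3$ are exactly those with no formula in focus, so they occur only finitely often in $m$. Rule positions never have priority $3$.
\item We still need infinitely many priority-$2$ positions, i.e.\ infinitely many applications of $\mathsf{\C R}$ with the principal formula in focus. By Lemma~\ref{l: infinite path passes through leaf}, $\rho$ passes infinitely often through some leaf in $dom(f)$. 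Take a $\preceq$-maximal such leaf $u$. Each traversal of the back-edge at $u$ is preceded by the upward path from $f(u)$ to $u$. By the definition of a cyclic proof this path is successful, so it contains an instance of $\mathsf{\C R}$ with the principal formula in focus.
\item Each such instance is played as a rule position of priority $2$. Hence priority $2$ occurs infinitely often and priority $3$ only finitely often, so Prover wins $m$.
\end{enumerate}

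\textbf{Main obstacle.} The delicate step is the correspondence between plays and looping paths at back-edges. When the pointer jumps from a leaf $l$ to $f(l)$, the game position does not change, because both nodes carry the same sequent; only the pointer moves. The correspondence definition has to be read modulo these jumps, so that one game step matches one step of $\rho$ that passes through the leaf.

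The priority-$2$ count is then straightforward: every infinite looping path traverses some back-edge infinitely often, and each traversal runs through a successful path from companion to leaf. The real work of excluding priority $3$ infinitely often is already done by Lemma~\ref{l: infinite looping path has good suffix}.

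Finally, memoryless determinacy converts $\mathcal{S}$ into a memoryless winning strategy.
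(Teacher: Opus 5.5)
Your proposal is correct and takes essentially the same route as the paper. The shared steps are:
\begin{itemize}
\item a history-dependent strategy that tracks a looping path through $\pi$, playing the companion's rule instance at non-axiomatic leaves;
\item Lemma~\ref{l: infinite looping path has good suffix} to exclude priority $3$ from some point on;
\item a $\preceq$-maximal leaf $u$ whose successful companion path yields infinitely many focused $\mathsf{\C R}$ positions;
\item memoryless determinacy of parity games to pass to a memoryless strategy.
\end{itemize}
One phrase should be tightened: between two passes through $u$ the looping path need not literally visit every node from $f(u)$ to $u$. It does, however, play every rule instance on that path, because it can only enter the subtree rooted at the $j$-th node either from the $(j-1)$-th node or via a back-edge whose companion is that node, and this is all your priority-$2$ count needs.
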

	
	\begin{proof}
	 Suppose that $\sigma$ is $\Sigma$-provable in $\hickp$ and let $\pi$ be a $\hickp$-proof of $\sigma$ in which every occurring sequent is a $\Sigma$-sequent. We denote the root of $\pi$ by $r_\pi$. We simultaneously define a strategy $\mathcal{S}$ for Prover in the game $\mathscr{G}(\hickp,\sigma)$ and show how to map each initial segment of a play of even length in which Prover uses $\mathcal{S}$ onto an initial segment of a looping path through $\pi$. The strategy $\mathcal{S}$ is a partial function which maps initial segments of plays $ m(0),\ldots, m(2i) $ of even length onto rule positions. Therefore strategy $\mathcal{S}$ uses a \emph{memory}.\smallskip
	 
	 For the base case observe that each play in $\mathscr{G}(\hickp,\sigma)$ begins in $\sigma$. Therefore $ m(0)$ for $m(0)= \sigma$ is an initial segment of every play in $\mathscr{G}(\hickp,\sigma)$. Similarly, every looping path through $\pi$ starts in $r_\pi$ which is labeled by $\sigma$. Thus $ \rho(0)$ for $\rho(0) = r_\pi$ is an initial segment of every looping path through $\pi$. Observe that $ m(0) $ corresponds to $ \rho(0)$.\smallskip
	 
	 For the inductive step suppose that we have already mapped the initial segment $m_i =  m(0), \ldots, m(2i)$ of a play onto the initial segment $\rho_i =  \rho(0),  \ldots, \rho(i)$ of a looping path, such that $m_i$ corresponds to $\rho_i$, where $i \geq 0$. We distinguish three cases. First, suppose that $\rho(i)$ is an axiomatic leaf of $\pi$. Then let $j \in I$ be the $\Sigma$-rule position $j=\langle m(2i), \mathsf{r}, \langle \rangle \rangle$ where $\mathsf{r} \in \{\mathsf{id, \bot}\}$, depending on whether the sequent labeling $\rho(i)$ is an instance of $\mathsf{id}$ or of $\bot$ and define $\mathcal{S}(m_i) \coloneqq j$. Second, suppose that $\rho(i)$ is a not a leaf of $\pi$. Let $j \in I$ be the $\Sigma$-rule position $j = \langle m(2i), \mathsf{r}, \langle \sigma_1', \ldots, \sigma_k' \rangle\rangle$ which generates $\rho(i)$ in $\pi$ when read top down. Then define $\mathcal{S}(m_i) \coloneqq j$. Third, suppose that $\rho(i)$ is a non-axiomatic leaf of $\pi$ with companion $f(\rho(i))$. Let $j \in I$ be the $\Sigma$-rule $j = \langle \sigma', \mathsf{r}, \langle \sigma_1', \ldots, \sigma_k' \rangle\rangle$ which generates $f(\rho(i))$ when read top down, where $\sigma'$ is the sequent labeling $f(\rho(i))$, and define $\mathcal{S}(m_i) \coloneqq j$.
	
	 In the first case the play has a reached a dead end and therefore ends. In the other two cases suppose that Refuter extends the play by choosing premise $\sigma_l'$ for some $l \in [n+1]$. Then let $m(2i+1) = j$ and let $m(2i+2) = \sigma_l'$ and extend the initial segment $m_i$ to
	 \begin{equation*}
	     m_{i+1} =  m(0),  \ldots, m(2i), m(2i+1), m(2i+2)
	 \end{equation*}
	 Furthermore, let $\rho(i+1)$ be the child of $\rho(i)$ which is labeled by $\sigma_l'$ and extend $\rho_i$ to
	 \begin{equation*}
	     \rho_{i+1} = \rho(0), \ldots, \rho(i), \rho(i+1)
	 \end{equation*}
	 Observe that $m_{i+1}$ corresponds to $\rho_{i+1}$.\smallskip
	 
	 Finally, in order to turn $\mathcal{S}$ into a function which maps \emph{every} initial segment of a play with even length (and thus every initial segment ending in a position owned by Prover) onto a rule instance (and not just those that correspond to initial segments of looping paths), we add the following clause. For any initial segment $m_i' = m(0)', \ldots, m(2i)' $ of a play which is not covered in the above construction let $\mathcal{S}$ map $m_i'$ onto an arbitrary admissible move. Observe that $\mathcal{S}$ is a well-defined strategy for Prover, which has the property that if $m$ is a play of $\mathscr{G}(\hickp,\sigma)$ in which Prover uses $\mathcal{S}$, then there exists a looping path $\rho$ through $\pi$ such that every initial segment of $m$ of even length corresponds to some initial segment of $\rho$. Therefore if $m$ is infinite, then $\rho$ is infinite as well and $m$ corresponds to $\rho$.\smallskip
	 
	 We show that $\mathcal{S}$ is a winning strategy for Prover. To that end let $m$ be a play in $\mathscr{G}(\hickp,\sigma)$ in which Prover uses strategy $\mathcal{S}$. If $m$ is finite, then Prover wins by default. Suppose $m$ is infinite and let $\rho$ be the infinite looping path through $\pi$ to which $m$ corresponds. By Lemma~\ref{l: infinite looping path has good suffix}, $\rho$ has a suffix $\rho'$ in which every sequent has a formula in focus. Note that $\rho'$ passes through infinitely many rule instances of $\mathsf{\C R}$ where the principal formula is in focus: let $f(u)$ be the companion of all $\preceq$-maximal elements of $dom(f) \cap \rho^{\infty}$ (which exists by Lemma \ref{l: existence of greatest element}) where $f$ is the back-edge function of $\pi$. By assumption $\rho'$ passes infinitely often through the non-axiomatic leaf $u$. Between each two passings, $\rho'$ must pass through an instance of $\mathsf{\C R}$ with the principal formula in focus, since the path from $f(u)$ to $u$ is successful. Therefore, since $\rho$ corresponds to $m$, the play $m$ passes, after finitely many moves, only through positions with priorities 1 or 2, and infinitely often through positions with priority 2. Hence, the highest priority encountered infinitely often is even and Prover wins the play. We conclude that $\mathcal{S}$ is a winning strategy for Prover. Finally, since in a given parity game exactly one of the two players has a \emph{memoryless} winning strategy~\cite{graedel_2001}, the existence of a winning strategy for Prover implies the existence of a memoryless winning strategy for Prover.
	 \end{proof}

Next we show the converse direction of Proposition \ref{p: provable implies winning strategy}: from a memoryless winning strategy of Prover in $\mathscr{G}(\hickp, \sigma)$ we can construct a $\Sigma$-proof of $\sigma$ in $\hickp$.

	\begin{proposition}\label{proposition winning strategy implies provable}
		Let $\hickp \in \{\cick, \cickt, \cicks, \cickss\}$ and $\sigma$ be a $\Sigma$-sequent. If Prover has a memoryless winning strategy in $\mathscr{G}(\hickp,\sigma)$, then $\sigma$ is $\Sigma$-provable in $\hickp$. 
	\end{proposition}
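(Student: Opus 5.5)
The plan is to unfold the memoryless winning strategy $\mathcal{S}$ of Prover into its strategy tree $\mathcal{T}_\mathcal{S}$, read it as a (possibly non-wellfounded) $\Sigma$-pre-proof, and then prune it into a cyclic proof. Concretely, I delete from $\mathcal{T}_\mathcal{S}$ the nodes labeled by rule positions: each sequent node $t$ labeled $\tau$ has a unique child labeled $\mathcal{S}(\tau)=\langle \tau,\mathsf{r},\langle\sigma_1,\ldots,\sigma_n\rangle\rangle$, whose children are labeled $\sigma_1,\ldots,\sigma_n$. The resulting tree $\pi^\infty$ is labeled by $\Sigma$-sequents according to the rules of $\hickp$. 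Since Prover wins every play, no play may reach a Prover dead end. Hence every leaf of $\pi^\infty$ is a rule position with no premises, that is, an axiom instance of $\mathsf{id}$ or $\bot$.

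Next I analyse infinite branches. An infinite branch of $\pi^\infty$ corresponds to an infinite play consistent with $\mathcal{S}$, so the highest priority seen infinitely often is $2$. Therefore, from some point on, no position of priority $3$ occurs, which means every sequent has a formula in focus. Moreover, priority $2$ positions occur infinitely often, so the branch passes through infinitely many $\mathsf{\C R}$ instances whose principal formula is in focus.

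For the pruning I use memorylessness. $\mathcal{S}$ depends only on the current sequent, and only finitely many $\Sigma$-sequents exist. So for each branch I look for the first node $v$ such that some proper ancestor $u$ carries the same sequent and the path from $u$ to $v$ is successful, and I cut the branch at $v$ with $u$ as its companion. The successful path must consist of sequents all having a focus and must contain a focused $\mathsf{\C R}$.

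The main obstacle is guaranteeing that such a successful repetition exists on every infinite branch within a bounded depth, so that König's Lemma yields a finite tree. The danger is that the first repeated sequent encountered may not give a successful path. I handle this by taking the good suffix of the branch. From some index $i$ on, every sequent has a focus. Choose a sequent $\tau$ that occurs infinitely often after $i$. Because focused $\mathsf{\C R}$ steps occur infinitely often, there are two occurrences of $\tau$ after $i$ with such a step between them, and this pair is a successful repetition. So every infinite branch has some successful repetition.

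Cutting each branch at the first node that closes a successful repetition gives a subtree with no infinite branch, and it is finitely branching. By König's Lemma it is therefore finite, which makes it a cyclic pre-proof. Every leaf is then either an axiom or closes a successful repetition, so the tree is a cyclic $\Sigma$-proof of $\sigma$. This is essentially the argument of Lemma~\ref{l: infinite branches contain good repetitions} and Lemma~\ref{l: non-wellfounded proof implies cyclic proof}, transferred to the game tree. Memorylessness is needed only to make the pruned tree consistent.
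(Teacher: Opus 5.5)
Your proof is correct, but it takes a different route from the paper. You read the condensed strategy tree as a non-wellfounded $\Sigma$-proof. Its leaves are axioms because a winning strategy never reaches a Prover dead end, and every infinite branch has a good suffix by the parity condition. You then rerun the pigeonhole argument of Lemma~\ref{l: infinite branches contain good repetitions}, prune each branch at the first node closing a \emph{successful} repetition, and get finiteness from K\H{o}nig's Lemma.

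The paper instead prunes every branch at its \emph{first} repetition, successful or not. It then uses memorylessness to show this repetition is automatically successful. If the loop from $c(u)$ to $u$ were unsuccessful, Refuter could repeat the same choices forever. That gives a play of the form $\rho'\cdot\rho'\cdots$ consistent with the strategy, whose highest infinitely recurring priority is $3$ or $1$, contradicting that the strategy is winning.

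Contrary to your closing remark, your argument never actually uses memorylessness. The pigeonhole step needs only that there are finitely many $\Sigma$-sequents, so it works verbatim for the tree of plays of any winning strategy; memorylessness contributes only uniformity.

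What the paper's route buys is the sharper fact that the first repetition on every branch is already successful. Hence the height of the extracted proof is bounded by the number of $\Sigma$-sequents, which gives an explicit size bound on the proof read off from the computed strategy. It also underpins the remark that in the uniform proof so obtained the first repetition on each branch is successful. Your route gives up this explicit bound in exchange for generality and for reusing the non-wellfounded-to-cyclic pruning already in the paper.
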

	\begin{proof}
		Suppose that Prover has a memoryless winning strategy in $\mathscr{G}(\hickp,\sigma)$. Let $\mathcal{T}'$ be the corresponding strategy tree. Define the `condensed' labeled tree $\mathcal{T}$ of $\mathcal{T'}$ as follows.
        \begin{itemize}
            \item A node $t$ of $\mathcal{T}'$ is a node of $\mathcal{T}$ if and only if $t$ is labeled by $p \in S$ in $\mathcal{T}'$.
            \item A node $s$ in $\mathcal{T}$ is a child of a node $t$ in $\mathcal{T}$ if and only if there exists a node $s'$ in $\mathcal{T}'$ such that $s'$ is a child of $t$ and $s$ is a child of $s'$ in $\mathcal{T}'$.
            \item A node $t$ in $\mathcal{T}$ is labeled by $p \in S$ if and only if $t$ is labeled by $p$ in $\mathcal{T}'$. 
        \end{itemize}
        Note that $\mathcal{T}$ is a finite branching (and possibily non-wellfounded) tree labeled by $\Sigma$-sequents according to the rules of $\hickp$. Let $\pi$ be the finite subtree of $\mathcal{T}$ obtained by pruning every infinite branch of $\mathcal{T}$ at the first repetition (note that repetitions exist since there are only finitely many different $\Sigma$-sequents). Observe that the root of $\pi$ is labelled by $\sigma$ and $\pi$ is generated by rules of $\hickp$. Furthermore, since $\mathcal{T}$ is finite branching, $\pi$ is finite by K\H{o}nig's Lemma. Therefore $\pi$ is a cyclic $\hickp$-pre-proof. In order to show that $\pi$ is a proof, let $u$ be an arbitrary leaf of $\pi$. If $u$ is also a leaf of $\mathcal{T}$, then $u$ is a node in $\mathcal{T}'$ which has a unique child $u'$ labeled by a rule position of the form $\langle \sigma', \mathsf{id}, \langle \rangle \rangle$ or of the form $\langle \sigma', \bot, \langle \rangle \rangle$, implying that $\sigma'$ is an instance of $\mathsf{id}$ or of $\bot$. Hence, $u$ is an axiomatic leaf in $\pi$. Otherwise, $u$ was generated by pruning an infinite branch of $\mathcal{T}$. In that case there exists a node $c(u)$, such that $\langle c(u), u \rangle$ is a repetition. It remains to show $\langle c(u), u \rangle$ is successful. Suppose towards a contradiction that $\langle c(u), u \rangle$ is not successful. Then on the path $\rho$ from $c(u)$ to $u$ either some sequent does not have a formula in focus or $\rho$ does not pass through an application of $\mathsf{\C R}$ where the principal formula is in focus. Let $\rho'$ be the finite sequence of positions in $S \cup I$ obtained from $\rho$ by adding after each position $\rho(i) \in S$ of $\rho$ the rule position which has $\rho(i)$ as conclusion and $\rho(i+1)$ as premise. Then either there is a position occurring in $\rho'$ which has priority $3$, or every position in $\rho'$ has priority $1$. Since $\mathcal{T}'$ is the strategy tree of a \emph{memoryless} strategy, there exists an infinite branch in $\mathcal{T}'$ that has a suffix which is an infinite concatenation $\rho' \cdot \rho' \cdot \rho' \cdots$. On this path the highest priority encountered infinitely often is odd, contradicting the assumption that $\mathcal{T}'$ is the strategy tree of a winning strategy for Prover. Therefore each repetition is successful and so we conclude that $\pi$ is a $\Sigma$-proof of $\sigma$ in $\hickp$.
	\end{proof}
	
	Let $R_I$ be the set of all rule instances of $\hickp$ involving only $\Sigma$-sequents. Observe that the above constructed proof is \emph{uniform} in the following sense:
	
	\begin{definition}
		A $\Sigma$-proof $\pi$ is \emph{uniform} if there exists a function $g: S \longrightarrow R_I$ such that whenever a sequent $\sigma \in S$ occurs in $\pi$, it occurs as the conclusion of the rule instance $g(\sigma)$.
	\end{definition}
   Note that in a uniform proof the first repetition in each branch is successful. We conclude:
	\begin{theorem}\label{t: strategy iff provable}
		The following are equivalent for any sequent $\sigma$ and any cyclic calculus $\hickp \in \{\cick, \cickt, \cicks, \cickss\}$:
		\begin{enumerate}
			\item $\sigma$ is $\hickp$-provable.
            \item $\sigma$ has a $\Sigma$-proof in $\hickp$.
			\item Prover has a memoryless winning strategy in $\mathscr{G}(\hickp,\sigma)$.
			\item $\sigma$ has a uniform $\Sigma$-proof in $\hickp$.
		\end{enumerate}
	\end{theorem}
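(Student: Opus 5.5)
We prove the four statements equivalent via the cycle $1 \Rightarrow 2 \Rightarrow 3 \Rightarrow 4 \Rightarrow 1$, with $\Sigma$ fixed as $\Cl(\sigma)$, or $\Cl^\neg(\sigma)$ when $\hickp = \cickss$. The implication $4 \Rightarrow 1$ is immediate: a uniform $\Sigma$-proof is in particular a $\hickp$-proof of $\sigma$.

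For $1 \Rightarrow 2$ we use analyticity. When $\hickp \in \{\cick, \cickt, \cicks\}$, every rule is analytic, so every sequent in any cyclic $\hickp$-proof of $\sigma$ has its formulae in $\Cl(\sigma)$. Hence every proof is already a $\Sigma$-proof.

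For $\cickss$ the cut rule breaks this argument, so we take a semantic detour instead. By Theorem~\ref{t: CIM soundness}, $\sigma$ is valid over S5 frames. Theorem~\ref{t: completeness of S5} then yields an analytic proof, which is a proof with only $\Cl^\neg(\sigma)$-sequents. We expect this step to be the only real obstacle, and it is fully handled by soundness combined with analytic completeness.

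For $2 \Rightarrow 3$ we invoke Proposition~\ref{p: provable implies winning strategy} directly. It states that a $\Sigma$-proof gives Prover a winning strategy in $\mathscr{G}(\hickp,\sigma)$, and memoryless determinacy of parity games upgrades this to a memoryless winning strategy.

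For $3 \Rightarrow 4$ we reuse the construction in the proof of Proposition~\ref{proposition winning strategy implies provable}. The memoryless strategy $\mathcal{S}$ determines a function $g(\sigma') \coloneqq \mathcal{S}(\sigma')$ from $\Sigma$-sequents to $\Sigma$-rule positions. In the condensed strategy tree, every occurrence of a sequent $\sigma'$ is therefore the conclusion of the same rule instance $g(\sigma')$. Pruning at first repetitions preserves this property, so the resulting $\Sigma$-proof is uniform with witness $g$.

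It remains to check that $g$ is well defined, that is, that $\mathcal{S}$ is defined on every sequent reached. This holds because positions owned by Prover that are not dead ends lie in the domain of $\mathcal{S}$, and pruning at leaves only removes nodes, so the uniformity witnessed by $g$ is unaffected.
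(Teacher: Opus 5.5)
Your proposal is correct and follows the paper's own route. Steps $2\Rightarrow 3$ and $3\Rightarrow 4$ are Propositions~\ref{p: provable implies winning strategy} and~\ref{proposition winning strategy implies provable}, plus the observation that the proof read off a memoryless strategy is uniform, and $4\Rightarrow 1$ is trivial. The step $1\Rightarrow 2$, which the paper leaves implicit, you fill in correctly: by analyticity of the cut-free calculi, and for $\cickss$ by soundness together with analytic completeness (Theorem~\ref{t: completeness of S5}). Your last paragraph should add one point: reached Prover positions are never dead ends because $\mathcal{S}$ is winning.
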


\subsection{Exponential upper bound for proof-search}\label{s: complexity}

We will now show that the problem of finding cyclic proofs in our calculi can be solved by a program in exponential time, thus, by soundness and completeness, we also provide an exponential upper bound for solving the validity problem (see Table \ref{tab:validity problem}) of our logics. The problem of finding cyclic proofs is called the \emph{proof-search problem}, see Table \ref{tab:proof-search problem}.

\begin{table}[t]
    \centering
    \begin{tabular}{|l l|}
    \hline
        \textbf{Input:} &  A sequent $\sigma$ and a proof system $\hickp \in \{\cick, \cickt, \cicks, \cickss\}$. \\
         \textbf{Question:} &  Does $\sigma$ have a $\hickp$-proof? \\
         \hline
    \end{tabular}
    \caption{The proof-search problem}
    \label{tab:proof-search problem}
\end{table}
Recall the notion of \emph{complexity} of a formula and of a set of formulae (c.f. Definition \ref{d: complexity}). Given a sequent $\sigma$, define the \emph{complexity} of $\sigma$ to be $c(\sigma) \coloneqq c(\Gamma_\sigma) + c(\Delta_\sigma)$.

Note that if $\Sigma$ is the (negation) closure of $\Gamma^- \cup \Delta^-$, then $\lvert \Sigma \rvert$ is linear in $c(\Gamma \Rightarrow \Delta)$. Moreover it is easy to check that for a $\Sigma$-sequent $\sigma$ the number of sequent positions in a game $\mathscr{G}(\hickp, \sigma)$ is polynomial in the size of the powerset of $\Sigma$, since the left-hand side of each sequent is a subset of $\Sigma$ with each formula unfocused and the right hand side is a subset of $\Sigma$ with at most one formula in focus. Furthermore, for each rule $\mathsf{r} \in \hickp$ the number of rule instances with a given sequent as conclusion is bounded by the number of possible principal formulae times a constant, since rules may be applied preservingly or for modality rules different formulae may be weakened. Thus overall we obtain the following result. 
    
	\begin{lemma}\label{l: number of positions in games}
		Let $\hickp \in \{\cick, \cickt, \cicks, \cickss\}$. Given a $\Sigma$-sequent $\sigma$, the number of positions in $\mathscr{G}(\hickp,\sigma)$ is polynomial in $\lvert \mathcal{P}(\Sigma) \rvert$.
	\end{lemma}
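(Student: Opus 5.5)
The plan is to bound separately the two kinds of positions, sequent positions $S$ and rule positions $I$, and then to add the two bounds.

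For $S$, every $\Sigma$-sequent $\Gamma \Rightarrow \Delta$ is determined by three pieces of data. The first is the set $\Gamma^- \subseteq \Sigma$; every formula in it is unfocused. The second is the set $\Delta^- \subseteq \Sigma$. The third is the choice of at most one focused formula in $\Delta$, that is, either no focus or one formula of $\Delta^-$ of the form $\C\psi$ or $\K_\agi\C\psi$. This gives
\[
\lvert S \rvert \leq \lvert \mathcal{P}(\Sigma)\rvert^2 \cdot (\lvert \Sigma \rvert + 1).
\]
Since $\lvert \Sigma \rvert \leq \log_2 \lvert \mathcal{P}(\Sigma)\rvert \leq \lvert \mathcal{P}(\Sigma)\rvert$, this is at most $\lvert \mathcal{P}(\Sigma)\rvert^3$.

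For $I$, I would go through the rules of $\hickp$ and bound, for a fixed conclusion $\sigma$, the number of rule instances $\langle \sigma, \mathsf{r}, \langle \sigma_1, \ldots, \sigma_n\rangle\rangle$. Once $\sigma$ and the principal formula(e) are fixed, the premises are almost determined; the only extra freedom comes from whether the principal formula is kept as a side formula (preserving or not), because sequents are sets. So the propositional rules, $\mathsf{id}$, $\bot$, $\mathsf{u}$, $\mathsf{f}$, $\mathsf{T_\agi}$, $\mathsf{CL}$, $\mathsf{CR}$ and $\mathsf{\K_\agi{\rightarrow}}$ each allow $O(\lvert\Sigma\rvert)$ instances per conclusion. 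The same count holds for $\mathsf{cut}$: the cut formula ranges over $\Sigma$, and the instances are restricted to $\Sigma$-sequents.

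The main obstacle is the family of modal rules $\mathsf{\K_\agi}$, $\mathsf{S4_\agi}$ and $\mathsf{S5_\agi}$. Here the conclusion splits into the weakened parts $\Pi, \Sigma$ and the active parts $\K_\agi\Gamma$, $\K_\agi\varphi$ (and, for $\mathsf{S5_\agi}$, $\K_\agi\Delta$). Counting instances per principal formula does not work here, since the active part $\K_\agi\Gamma$ can be any subset of the $\K_\agi$-formulae on the left. I would not try to keep this count linear. Instead I would bound the whole instance directly by its premise: an instance of one of these rules is fixed by the conclusion $\sigma$ together with the agent and the premise sequent. The premise is itself a $\Sigma$-sequent, so there are at most $\lvert \A\rvert \cdot \lvert S\rvert$ choices for it. The premises of every rule are $\Sigma$-sequents, so this argument also covers the other rules.

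Altogether $\lvert I \rvert$ is at most $\lvert S \rvert$ times (number of rules times $\lvert \A \rvert$ times $\lvert S \rvert$), that is, $O(\lvert \mathcal{P}(\Sigma)\rvert^6)$, where $\lvert \A\rvert$ and the number of rules are constants. Hence $\lvert S\rvert + \lvert I\rvert$ is polynomial in $\lvert \mathcal{P}(\Sigma)\rvert$, which is the claim of the lemma.
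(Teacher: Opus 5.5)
Your argument is correct and follows the same route as the paper. You bound the sequent positions by counting left sides, right sides and the at most one focused formula as subsets of $\Sigma$, then bound the rule positions with a given conclusion; your handling of $\mathsf{\K_\agi}$, $\mathsf{S4_\agi}$ and $\mathsf{S5_\agi}$ (bounding these positions by the premise sequent rather than by principal formulae) is more careful than the paper's remark that the count is ``the number of possible principal formulae times a constant'', since for these rules the active context is an arbitrary subset and the per-conclusion count can be exponential in $\lvert\Sigma\rvert$, though still polynomial in $\lvert\mathcal{P}(\Sigma)\rvert$.
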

	
    In order to get a polynomial bound for deciding the winner of a given proof-search game we can now make use of one of the many existing algorithms for solving parity games. For instance the following result by Calude et al.~\cite{calude_2017}. 
	\begin{theorem}[{\cite[Theorem 2.9]{calude_2017}}]\label{Theorem 2.9 Calude}
		There is an algorithm which finds the winner of a parity game in time $\mathcal{O}(n^{log(m)+6})$ for a parity game with $n$ positions and priorities in $\{1,2,...,m\}$. Furthermore, the algorithm can compute a memoryless winning strategy for the winner in time $\mathcal{O}(n^{log(m)+7} \cdot log(n))$.
	\end{theorem}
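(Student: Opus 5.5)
The statement is Theorem~2.9 of Calude et al., an external result, so the plan is to reconstruct their quasi-polynomial argument. The idea is to replace the parity condition by a \emph{reachability} condition on a product arena of quasi-polynomial size.

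The product arena records, along each play, a compact \emph{winning statistic} for the even player. This is a tuple $(b_0,\ldots,b_k)$ with $k=\lceil\log n\rceil+1$. Each entry $b_i$ is either empty or a priority in $\{1,\ldots,m\}$. It certifies that a suffix of the history so far contains a segment of $2^i$ consecutive ``even-dominated'' subplays whose maximal priority is at least $b_i$. When a new priority $p$ is read, the update works like a binary counter. Find the largest $i$ such that all $b_j$ with $j<i$ are defined, even, and at most $p$; if $p$ is even, set $b_i:=p$ and erase the $b_j$ for $j<i$. Independently of parity, erase every $b_j<p$.

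The key combinatorial lemma has two halves.
\begin{enumerate}
\item Soundness: if $b_k$ ever becomes defined, the play has passed through more than $n$ positions on which an even priority dominated. By pigeonhole some position repeats with an even maximal priority in between. If the even player follows a positional strategy, this cycle can be iterated, so reaching $b_k$ is safe for Even.
\item Completeness: if Even plays a positional winning strategy in the original game, then along every consistent play the statistic eventually reaches level $k$. The reason is that on every cycle of the strategy graph the maximum priority is even, so the counter cannot be reset indefinitely without an even maximum being recorded.
\end{enumerate}
I expect this lemma, especially the completeness half and the interaction of resets with odd priorities, to be the main obstacle. I would first prove it for the sequences produced by a fixed positional strategy, using positional determinacy of parity games (already invoked in the paper via~\cite{graedel_2001}). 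Only afterwards would I lift it to arbitrary plays.

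Granting the lemma, Even wins the parity game from $v$ iff Even wins the reachability game ``reach $b_k$ defined'' on the product of the arena with the statistics. The product has $n\cdot(m+1)^{k+1}=n^{\log m+O(1)}$ states. Reachability games are solved by an attractor computation in time linear in the number of edges, which stays within $\mathcal{O}(n^{\log(m)+6})$ after careful bookkeeping of the representation of statistics.

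For the strategy part, the reachability solution yields a strategy with memory (the statistic). To obtain a memoryless one I would use the winning regions rather than the product strategy itself. Compute $W_{\mathrm{even}}$, then fix Even's choices position by position. At each of Even's $n$ positions, tentatively delete all but one outgoing edge, re-solve the game, and keep the edge if the position remains winning. This is correct because a positional winning strategy exists, so some single edge always survives. It uses at most $n$ re-solves of games of the same size, with $\log n$ overhead from binary search over edges, which accounts for the extra factor $n\log n$ in the stated bound.
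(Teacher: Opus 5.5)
The paper does not prove this statement; it cites Calude et al.\ as a black box, which is all its complexity bound needs. As a reconstruction, your proposal fails at its central step. The winning statistic you define is not complete, so the claimed equivalence ``Even wins the parity game iff she can force $b_k$ to become defined'' is false for it.

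Take the game consisting of the single cycle $x\to y\to z\to x$ with priorities $4,2,3$. Its only play $(4\,2\,3)^\omega$ is won by Even. Under your rule:
\begin{itemize}
\item the $4$ sets $b_0:=4$;
\item the $2$ selects $i=0$ (because $b_0=4>2$) and sets $b_0:=2$;
\item the $3$ erases every entry below $3$, leaving the statistic empty.
\end{itemize}
This repeats forever, so no level above $0$ is ever reached. Forbidding the overwrite of a defined $b_i$ does not help. On $u\to v$, $v\to v$ with priorities $10,4$, the play $10\,4^\omega$ gets stuck at $b_0=10$, because every $4$ again selects $i=0$. So the completeness half of your key lemma is false for this statistic. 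The justification you offer for it is also not an argument: the resets come from odd priorities below the dominant even one, and these do occur on cycles whose maximum is even.

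The missing idea is how Calude et al.\ treat an incoming priority $p$ that exceeds an entry $b_i$.
\begin{itemize}
\item The entry is not erased but raised to $p$, clearing only the entries below it. Odd values therefore survive as provisional entries, which a later, larger even priority raises again.
\item A smaller even priority never overwrites an even entry.
\end{itemize}
These rules give the monotonicity your completeness argument needs. Once priorities above the dominant even priority $p^*$ have stopped occurring, read off the levels holding entries $\ge p^*$ just after each visit to $p^*$. From some point on, these form a binary number that strictly increases from visit to visit, so the top level is eventually reached.

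Your soundness half uses the wrong player. Iterating the even-maximum cycle requires Odd's positional strategy $\tau$. A cycle in a play consistent with $\tau$ lies in the $\tau$-restricted graph, so Even can repeat it forever against $\tau$, contradicting that $\tau$ is winning. With Even's positional strategy, nothing forces Odd to stay on the cycle.

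The strategy-extraction step, by contrast, is sound. Keeping $v$ winning suffices by prefix independence, and binary search over $v$'s edges gives $O(n\log n)$ re-solves, matching the stated bound.
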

	
	Let $\hickp \in \{\cick, \cickt, \cicks, \cickss\}$, $\sigma$ be a sequent and let $\Sigma$ be its negation closure. By Lemma \ref{l: number of positions in games} the number $n$ of positions in $\mathscr{G}(\hickp,\sigma)$ is polynomial in the size of $\lvert \mathcal{P}(\Sigma) \lvert$. Since the number of different priorities in our games is constant, Theorem~\ref{Theorem 2.9 Calude} implies that there is an algorithm deciding the winner of $\mathscr{G}(\hickp,\sigma)$ in time polynomial in $\lvert \mathcal{P}(\Sigma) \lvert$ and so exponential in $c(\sigma)$. By the same argument the above mentioned algorithm also computes a memoryless winning strategy in time exponential in  $c(\sigma)$. From this winning strategy we can then read of a proof of $\sigma$ by using Proposition~\ref{proposition winning strategy implies provable}.

   \begin{theorem}
       Given $\hickp \in \{\cick, \cickt, \cicks, \cickss\}$ and a sequent $\sigma$, there is an algorithm deciding whether $\sigma$ is $\hickp$-provable that runs in exponential time in $c(\sigma)$. The proof-search problem is therefore solvable in exponential time.
   \end{theorem}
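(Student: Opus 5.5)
The plan is to reduce the proof-search problem to solving the parity game $\mathscr{G}(\hickp,\sigma)$, and then bound the running time with Theorem~\ref{Theorem 2.9 Calude}. Given $\sigma$ and $\hickp$, let $\Sigma$ be $\Cl(\sigma)$, or $\Cl^\neg(\sigma)$ when $\hickp=\cickss$. By Lemma~\ref{l: negation closure is finite} and the remark preceding Lemma~\ref{l: number of positions in games}, $\lvert\Sigma\rvert$ is linear in $c(\sigma)$, and $\Sigma$ can be computed in polynomial time by saturating under the closure conditions of Definition~\ref{d: closure}. By Theorem~\ref{t: strategy iff provable}, $\sigma$ is $\hickp$-provable if and only if Prover has a memoryless winning strategy in $\mathscr{G}(\hickp,\sigma)$. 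Deciding provability therefore amounts to deciding the winner of this game.

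The key steps, in order, are as follows.

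\begin{enumerate}
\item \emph{Construct the arena.} Enumerate all $\Sigma$-sequents. Each left side is a subset of $\Sigma$ (all formulae unfocused). Each right side is a subset of $\Sigma$ with at most one focused formula, and that formula must have the shape $\C\psi$ or $\K_\agi\C\psi$. This gives at most $2^{\lvert\Sigma\rvert}\cdot 2^{\lvert\Sigma\rvert}\cdot(\lvert\Sigma\rvert+1)$ sequents.
\item \emph{Enumerate the rule positions.} For each sequent, list every $\Sigma$-rule position having it as conclusion. For $\cickss$ this includes $\mathsf{cut}$ instances, but only with cut formulae from $\Sigma$, because positions are restricted to $\Sigma$-sequents. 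For each candidate tuple, membership in $I$ is checked syntactically in time polynomial in $\lvert\Sigma\rvert$.
\item \emph{Assign priorities.} Priorities $1,2,3$ are assigned according to the definition of $\mathscr{G}(\hickp,\sigma)$.
\end{enumerate}

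By Lemma~\ref{l: number of positions in games} the number $n$ of positions is polynomial in $2^{\lvert\Sigma\rvert}$. Hence the whole game graph is built in time $2^{O(c(\sigma))}$.

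Next, I would run the algorithm of Theorem~\ref{Theorem 2.9 Calude} with $m=3$. It decides the winner in time $\mathcal{O}(n^{\log 3+6})$, which is again $2^{O(c(\sigma))}$, since a fixed polynomial of an exponential is exponential. Prover wins if and only if $\sigma$ is provable. If a proof is wanted, the same theorem also yields a memoryless winning strategy within the same exponential bound. The construction in Proposition~\ref{proposition winning strategy implies provable} then turns it into a uniform cyclic proof. That construction condenses the strategy tree and prunes each branch at its first repetition, so every branch has length at most the number of sequents, and the extraction is likewise exponential.

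The step I expect to need the most care is the claim that the number of rule positions per sequent is polynomially bounded in $2^{\lvert\Sigma\rvert}$. Two families of rules need attention.

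\begin{itemize}
\item \emph{Modal rules} $\mathsf{\K_\agi}$, $\mathsf{S4_\agi}$, $\mathsf{S5_\agi}$. These admit a choice of the weakened sets $\Pi,\Sigma$, and for $\mathsf{S5_\agi}$ also of $\K_\agi\Delta$. There can be exponentially many such choices per conclusion, but each choice is a subset of $\Sigma$, so the count is at most $2^{O(\lvert\Sigma\rvert)}$. Squaring or cubing an exponential stays exponential, which is all that is needed.
\item \emph{Rules applied preservingly or not.} The same subset argument covers these, since each application is determined by its conclusion, principal formula and a choice of subsets of $\Sigma$.
\end{itemize}

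Provided Lemma~\ref{l: number of positions in games} is read as polynomial in $\lvert\mathcal{P}(\Sigma)\rvert$ in exactly this sense, the argument goes through, and the theorem follows.
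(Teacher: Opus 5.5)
Your proposal is correct and follows the paper's own argument. You reduce provability to Prover winning $\mathscr{G}(\hickp,\sigma)$ via Theorem~\ref{t: strategy iff provable}, bound the number of positions by Lemma~\ref{l: number of positions in games}, and apply Theorem~\ref{Theorem 2.9 Calude} with a constant number of priorities. Your explicit count of the $2^{O(\lvert\Sigma\rvert)}$ weakening choices in the modal rules is a more careful justification of that lemma than the paper's ``principal formulae times a constant'' remark.

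One side claim is unjustified, though the theorem does not need it. Bounding each branch of the pruned tree by the number of sequents does not make the extracted tree-shaped cyclic proof of Proposition~\ref{proposition winning strategy implies provable} exponential in size, since a branching tree of exponential depth can be doubly exponential. What you obtain within the exponential bound is the memoryless strategy itself, which is a proof graph.
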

As a corollary of this theorem as well as of soundness and completeness (Theorem~\ref{t: CIM soundness}, Theorem~\ref{t: cyclic completeness} and Theorem~\ref{t: completeness of S5}) we obtain the last main results of this paper: the validity problem of each considered logic is in \textsc{ExpTime} and in the case of $\mathbf{ICKS5}$ is \textsc{ExpTime}-complete.

\begin{theorem}\label{t: validity problem is Exptime}
    The validity problem (c.f. Table \ref{tab:validity problem}) is solvable in exponential time.
\end{theorem}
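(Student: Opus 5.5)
Given a formula $\varphi$ and a logic $\mathbf{L} \in \{\mathbf{ICK}, \mathbf{ICKT}, \mathbf{ICKS4}, \mathbf{ICKS5}\}$, the plan is to reduce validity of $\varphi$ in $\mathbf{L}$ to provability of the sequent $\sigma_\varphi \coloneqq (\Rightarrow \varphi^\u)$ in the matching cyclic calculus $\hickp \in \{\cick, \cickt, \cicks, \cickss\}$. The interpretation of $\sigma_\varphi$ is $\top \rightarrow \varphi$, with $\bigwedge \emptyset = \top$, and this is valid over a frame class exactly when $\varphi$ is. Soundness (Theorem~\ref{t: CIM soundness}) then gives the direction from provability to validity. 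For completeness we use Theorem~\ref{t: cyclic completeness} for the first three logics. For $\mathbf{ICKS5}$ we use Theorem~\ref{t: completeness of S5}, which supplies an analytic proof, i.e.\ a $\Sigma$-proof with $\Sigma = \Cl^\neg(\sigma_\varphi)$. So $\varphi \in \mathbf{L}$ iff $\sigma_\varphi$ is $\hickp$-provable, and by Theorem~\ref{t: strategy iff provable} iff $\sigma_\varphi$ has a $\Sigma$-proof. Here $\Sigma$ is the closure, or the negation closure in the S5 case.

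The algorithm is: build $\Sigma$, construct the parity game $\mathscr{G}(\hickp,\sigma_\varphi)$, and solve it with the algorithm of Theorem~\ref{Theorem 2.9 Calude}. We answer ``valid'' exactly when Prover wins. Correctness follows from Theorem~\ref{t: strategy iff provable}, the equivalence of items 1 and 3. If Prover does not have a memoryless winning strategy, then by memoryless determinacy Refuter wins, and $\sigma_\varphi$ is unprovable.

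For the running time, note that $c(\sigma_\varphi) = c(\varphi)$. Also $\lvert \Sigma \rvert$ is linear in $c(\varphi)$, as observed before Lemma~\ref{l: number of positions in games}; negation closure adds at most one formula per $\K_\agi$-subformula. By Lemma~\ref{l: number of positions in games} the number $n$ of positions is polynomial in $2^{\lvert\Sigma\rvert}$, hence $2^{O(c(\varphi))}$. The priorities lie in $\{1,2,3\}$, so $m$ is constant and the solver runs in time $n^{O(1)} = 2^{O(c(\varphi))}$.

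The main obstacle is explicitly constructing the game within this bound. Positions are enumerated as pairs of subsets of $\Sigma$ with annotations, so there are at most $2^{\lvert\Sigma\rvert}\cdot 2^{\lvert\Sigma\rvert}\cdot(\lvert\Sigma\rvert+1)$ of them. For each sequent we enumerate the rule instances with that conclusion and check that each one is a legal instance. For $\mathsf{cut}$ in $\cickss$, only cut formulae from $\Sigma$ are allowed. The modal rules are the delicate case: for $\mathsf{\K_\agi}$, $\mathsf{S4_\agi}$ and $\mathsf{S5_\agi}$, the choice of weakened sets $\Pi,\Sigma$ can range over subsets. Even so, each rule has at most exponentially many instances per conclusion, each checkable in polynomial time. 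So the whole game graph is built in time $2^{O(c(\varphi))}$, and the count stays within the bound of Lemma~\ref{l: number of positions in games}.
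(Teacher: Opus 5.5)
Your proposal is correct and follows the paper's route. You reduce validity of $\varphi$ to provability of $\Rightarrow \varphi^\u$ via soundness and (analytic) completeness, and decide provability by solving the parity game $\mathscr{G}(\hickp,\sigma)$ with the algorithm of Calude et al., using Lemma~\ref{l: number of positions in games} for the exponential bound. Your point that the modal rules can have exponentially many instances per conclusion is more accurate than the paper's remark before that lemma, and as you note, it does not affect the polynomial-in-$\lvert\mathcal{P}(\Sigma)\rvert$ bound.
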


\begin{theorem}
    The validity problem for $\mathbf{ICKS5}$ is \textsc{ExpTime}-complete.
\end{theorem}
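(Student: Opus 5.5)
The statement has two halves: membership in \textsc{ExpTime} and hardness. The upper bound is Theorem~\ref{t: validity problem is Exptime} specialised to $\mathbf{ICKS5}$. Concretely, given $\varphi$, we decide whether the sequent $\Rightarrow \varphi^\u$ is $\cickss$-provable with the exponential-time algorithm for the proof-search problem. By soundness (Theorem~\ref{t: CIM soundness}) and analytic completeness (Theorem~\ref{t: completeness of S5}), this holds iff $\varphi \in \mathbf{ICKS5}$.

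For hardness I reduce the validity problem of $\mathbf{CKS5}$ to that of $\mathbf{ICKS5}$ via the modal Kuroda translation $tr$. I take as known from the literature (Halpern--Moses) that validity for classical S5 common knowledge logic is \textsc{ExpTime}-complete whenever $\lvert \A \rvert \geq 2$. Under the paper's standing assumptions on $\A$, this is the only external ingredient. Item 2 of Theorem~\ref{t: negative translation} states exactly that $\varphi \in \mathbf{CKS5}$ iff $tr(\varphi) \in \mathbf{ICKS5}$. It therefore remains to check that $tr$ is polynomial-time computable.

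\textbf{Size of the translation.} I argue by induction on $\varphi$ that $c(\tau(\varphi)) \leq 5\, c(\varphi)$, up to an additive constant. Each modal clause adds a fixed number of symbols: $\neg\neg\psi$ abbreviates $(\psi \rightarrow \bot)\rightarrow\bot$, which adds two connectives and two $\bot$'s. Every other clause is homomorphic, and the outer $\neg\neg$ in $tr$ adds a constant. Since $\tau$ is defined by structural recursion with constant overhead per node, $tr(\varphi)$ is computable in linear time.

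\textbf{Main obstacle.} The only subtle point is the number of agents. Classical \textsc{ExpTime}-hardness of common knowledge needs at least two agents, since with one agent $\C$ collapses to $\K_\agi$ under S5 and the logic is only \textsc{coNP}. I would therefore state the hardness for $\lvert \A \rvert \geq 2$ and otherwise follow the paper's convention on $\A$. Everything else is immediate from the cited results.
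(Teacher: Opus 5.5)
Your proposal is correct and follows the paper's proof: membership comes from the general exponential-time proof-search result (Theorem~\ref{t: validity problem is Exptime}), and hardness comes from using $tr$ as a polynomial-time reduction from $\mathbf{CKS5}$ via Theorem~\ref{t: negative translation} together with Halpern--Moses. Your explicit linear size bound for $tr$ and your caveat that the hardness argument needs $\lvert \A \rvert \geq 2$ are correct refinements that the paper leaves implicit; the caveat is needed because over S5 frames with a single agent $\C$ collapses to $\K_\agi$.
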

\begin{proof}
    That the validity problem belongs to \textsc{ExpTime} is Theorem~\ref{t: validity problem is Exptime}. For \textsc{ExpTime}-hardness consider the translation $tr$ from Section~\ref{s: translation}. Note that given a formula $\varphi$ we can compute $tr(\varphi)$ in polynomial time in $c(\varphi)$. Theorem~\ref{t: negative translation} therefore implies that $tr$ is a polynomial-time reduction from the validity problem for $\mathbf{CKS5}$ onto the validity problem for $\mathbf{ICKS5}$. Since the former problem is \textsc{ExpTime}-complete~\cite{halpern_1992}, we obtain that the latter problem is \textsc{ExpTime}-hard.
\end{proof}

In a recent publication, Santiago-Fern{\'a}ndez, Fern{\'a}ndez-Duque and Joosten showed that the validity problem for $\mathbf{ICK}$ is also \textsc{ExpTime}-complete~\cite{Fernandez-Duque_complexity_2025}. We conjecture that the same result holds for $\mathbf{ICKT}$ and $\mathbf{ICKS4}$.

\section{Conclusion}\label{s: conclusion}

We have presented an in-depth study of intuitionstic common knowledge logic over various classes of frames. We have presented sound and complete axiomatizations for each logic as well as analytic cyclic sequent calculi. Completeness of the cyclic calculi was proven via a detour to non-wellfounded proofs and a robust proof-search construction, which was developed in a previous publication with Afshari, Grotenhuis and Leigh~\cite{afshari_intuitionistic_2024}. We have also shown that proof-search in the cyclic calculi can be automated by reducing the problem of finding a proof to the problem of solving a parity game, for which efficient algorithms exist. Using such an algorithm established an exponential upper bound on the proof-search and validity problems for all considered logics. Finally, using a translation of $\mathbf{CKS5}$ into $\mathbf{ICKS5}$, we showed that the validity problem of $\mathbf{ICKS5}$ is in fact \textsc{ExpTime}-complete. Our work leaves several questions open for future research. In particular, with analytic cyclic calculi for $\ICK$ at hand, it is natural to study the proof-theoretic properties of these systems, such as whether we can extract interpolants in a way similar as in~\cite{Afshari_lyndon_2022, Shamkanov_circular_2014}. Moreover, we plan to study extensions of $\ICK$ with co-implication from bi-intuitionistic logic~\cite{Rauszer_1974a, Rauszer_1974b}. As co-implication ($\dimp$) is order-dual to the intuitionistic implication (in the sense that $w \models \varphi \dimp \psi$ if there exists $v \leq w$ with $v \models \varphi$ and $v \not \models \psi$), such a logic is capable of expressing facts about worlds `below' and could thus be used in our interpretation of $\ICK$ to reason about knowledge in a setting where information may be gained or lost. We believe that such a system would be of interest to the knowledge representation and reasoning (KRR) community (see e.g.~\cite{fernandez-duque_family_2023}).

\bibliographystyle{plain}
\bibliography{references}

\end{document}